\newcommand{\N}{\mathbb{N}}
\newcommand{\R}{\mathbb{R}}
\newcommand{\A}{\mathcal{A}}
\newcommand{\C}{\mathcal{C}}
\newcommand{\D}{\mathcal{D}}
\newcommand{\T}{\mathcal{T}}
\newcommand{\B}{\mathcal{B}}
\renewcommand{\S}{\mathscr{S}}
\newcommand{\M}{\mathscr{M}}
\newcommand{\dx}{\, {\rm d} x}
\newcommand{\dy}{\, {\rm d} y}
\newcommand{\dt}{\, {\rm d} t}
\newcommand{\ds}{\, {\rm d} s}
\newcommand{\dz}{\, {\rm d} z}
\newcommand{\eps}{\varepsilon}
\newcommand{\loc}{{\rm loc}}
\newcommand{\compact}{\mathrel{\mathpalette\comp@emb\relax}}
\newcommand{\comp@emb}[2]{
  \vcenter{
    \offinterlineskip\m@th
    \ialign{$#1##$\cr\hookrightarrow\cr\noalign{\vskip1pt}\hookrightarrow\cr}
  }
}
\newtheorem{lemma}{Lemma}[section]
\newtheorem{thm}[lemma]{Theorem}
\theoremstyle{definition}
\newtheorem{defi}[lemma]{Definition}
\newtheorem{rmk}[lemma]{Remark}
\numberwithin{equation}{section}
\newcommand\soutD{\bgroup\markoverwith

{\textcolor{blue}{\rule[.5ex]{2pt}{1pt}}}\ULon}
\begin{document}

\title[A Grushin problem in $\R^N$ with singular, convective, critical reaction]{Existence and decay for a Grushin problem  in $\R^N$ \\ with singular, convective, critical reaction}

\author[L. Baldelli]{Laura Baldelli}
\address[L. Baldelli]{IMAG, Departamento de Análisis Matemático, Universidad de Granada, Campus Fuentenueva, 18071 Granada, Spain}
\email{labaldelli@ugr.es}

\author[P. Malanchini]{Paolo Malanchini}
\address[P. Malanchini]{Dipartimento di Matematica e Applicazioni, Universit\`a degli Studi di Milano - Bicocca, via Roberto Cozzi 55, 20125 Milano, Italy}
\email{p.malanchini@campus.unimib.it}

\author[S. Secchi]{Simone Secchi}
\address[S. Secchi]{Dipartimento di Matematica e Applicazioni, Universit\`a degli Studi di Milano - Bi\-coc\-ca, via Roberto Cozzi 55, 20125 Milano, Italy}
\email{simone.secchi@unimib.it}

{
\let\thefootnote\relax
\footnote{{\bf{MSC 2020}}: 35J70, 35J20, 35B33, 35B08, 35B45.}
\footnote{{\bf{Keywords}}: Grushin operator, Mountain pass theorem, Concentration-compactness, Set-valued analysis, Fixed point theory, Decay estimates.}
\footnote{\Letter \quad Corresponding author: .}
}
\setcounter{footnote}{0}

\begin{abstract}
We establish an existence result for a problem set in the whole Euclidean space involving the Grushin operator and featuring a critical term perturbed by a singular, convective reaction. Our approach combines variational methods, truncation techniques, and concentration-compactness arguments, together with set-valued analysis and fixed point theory. Additionally, we prove the decay at infinity of solutions in the absence of the convective term. The result is new even in the case where more than one feature between singularity, convectivity and criticality is taken into account.
\end{abstract}

\maketitle

%\begin{center}
%\begin{minipage}{11cm}
%\begin{small}
%\tableofcontents
%\end{small}
%\end{minipage}
%\end{center}

\section{Introduction}
In the present paper we consider the problem
\begin{equation}
\label{prob}
\tag{${\rm P}$}
\left\{
\begin{alignedat}{2}
-\Delta_\gamma u &=\lambda_1 w_1(z)u^{-\eta}+\lambda_2 w_2(z)|\nabla_\gamma u|^{r-1} +  u^{2^*_\gamma-1} \quad &&\mbox{in $\R^N$}, \\
u &> 0 \quad &&\mbox{in $\R^N$}, 
\end{alignedat}
\right.
\end{equation}
where $N\ge 3$, $\Delta_\gamma$ is the Ba\-ouen\-di-Grushin operator, $\lambda_1>0$, $\lambda_2\ge0$, $0<\eta<1<r<2$, $2^*_\gamma$ is the Sobolev critical exponent defined in \eqref{eq:2stargamma}, $w_1$, $w_2 \colon \R^N\to [0,+\infty)$ are suitable weight functions whose properties will be described below. As such, our problem is singular at $u=0$ and has a critical growth in a sense which will be explained later.

\medskip
For the reader's convenience, we briefly recall the basic terminology of the Ba\-ouen\-di-Grushin operator. We split the euclidean space $\R^N$ as $\R^m \times \R^\ell$, where $m \geq 1$, $\ell \geq 1$ and $m+\ell=N$. 
A generic point $z \in \R^N$ can therefore be written as
\begin{displaymath}
	z = \left(x,y\right) = \left( x_1,\ldots,x_m,y_1,\ldots,y_\ell \right),
\end{displaymath}
where $x \in \R^m$ and $y \in \R^\ell$.
Let $\gamma$ be a nonnegative real parameter. For a differentiable function $u$ we define the Grushin gradient
\begin{displaymath}
	\nabla_\gamma u (z) =  (\nabla_x u(z), |x|^\gamma\nabla_y u(z))= (\partial_{x_1} u(z) , \dots, \partial_{x_m}u(z), |x|^\gamma \partial_{y_{1}}u(z), \dots, |x|^\gamma \partial_{y_{\ell}}u (z)).
\end{displaymath}
The Grushin operator $\Delta_\gamma$ is defined  by
%\footnote{Some authors use the slightly different definition $\Delta_\gamma = \Delta_x + \left(1+\gamma \right)^2 \vert x \vert^{2\gamma}\Delta_y$.}
\begin{displaymath}
	\Delta_	\gamma u(z) = \Delta_x u(z) + |x|^{2\gamma} \Delta_y u (z),
\end{displaymath} 
where $\Delta_x$ and $\Delta_y$ are the Laplace operators in the variables $x$ and $y$, respectively. A crucial property of the Grushin operator is that it is not uniformly elliptic in the space $\R^N$, since it is degenerate on the subspace $\Sigma=\{0\} \times \R^\ell$. We denote by $N_\gamma \coloneqq m + (1+\gamma)\ell$
the \emph{homogeneous dimension} associated to the decomposition $N=m+\ell$.

The operator $\Delta_\gamma$ was first introduced by Baouendi in his 1967 Ph.D. thesis, \cite{baouendi}. A few years later, Grushin investigated the hypoellipticity properties of $\Delta_\gamma$ in the case $\gamma\in\N$, see \cite{grushin}. 

In the early 80's, Franchi and Lanconelli \cite{franchilanconelli1982,franchilanconelli1983,franchilanconelli1984} introduced a class of operators of the form
\begin{displaymath}
	\Delta_\lambda = \sum_{i=1}^N \partial_{x_i} \left( \lambda_i^2 \partial_{x_i} \right),
\end{displaymath}
which include $\Delta_\gamma$. Here $\lambda_1,\ldots,\lambda_N$ are given functions which satisfy suitable conditions.

Moreover, the Grushin operator falls into the class of $X$-elliptic operators introduced in \cite{lanconelli2000x}. In fact, the Grushin operator is uniformly $X$-elliptic with respect to the family of vector fields $X=(X_1,\dots, X_N)$ defined as
\begin{displaymath}
    X_i = \frac{\partial}{\partial x_i} \,\,\hbox{for $i=1,\ldots,m$} \qquad
    X_{m+j} = \vert x \vert^\gamma \frac{\partial}{\partial y_j} \,\,\hbox{for $j=1,\ldots,\ell$}.
\end{displaymath}
With the same notation we can write the Grushin operator $\Delta_\gamma$ as sum of the vector fields
\begin{displaymath}
    \Delta_\gamma = \sum_{j=1}^{N} X_j^2.
\end{displaymath}
Note that, when $\gamma$ is an integer, the $C^\infty$-field $X=\left( X_1,\ldots,X_N \right)$ satisfies the H\"{o}rmander's finite rank condition, see \cite{hormander1967}.
\begin{rmk}
   The latter statement is obvious when $\gamma$ is an even integer. In fact, the operator $\Delta_\gamma$ can be expressed as a sum of smooth vector fields that satisfy H\"{o}rmander's finite rank condition even when $\gamma$ is odd, see \cite{abatangelo2024}. If $\gamma>0$ is a generic real number, the vector field $X$ fails to be smooth, and this is a well-known difficulty related to the Grushin operator, see Remark \ref{rmknat} for more details.
\end{rmk}

\medskip
We assume the following hypothesis on $w_1$ and $w_2$:
\begin{enumerate}[label={$({\rm H_1})$}]
\item \label{hypf} The functions $w_1$, $w_2$ are nonnegative, belong to  $L^1(\R^N) \cap L^\infty(\R^N)$ and  there exist a ball $B(z_0,\varrho) \subset \R^N$ and $\omega>0$ such that
\begin{displaymath}
    \int_{B(z_0,\varrho)}w_1 \dz\ge\omega>0.
\end{displaymath}
\end{enumerate}
\begin{rmk}
    The last condition on $w_1$ is satisfied as soon as $w_1 \not\equiv 0$, see \cite[Theorem 7.7]{Rudin}.
\end{rmk}
We assume the following decay condition on $w_1$: 
\begin{enumerate}[label={$({\rm H_2})$}]
\item \label{condw1} 
There exist constants $c_1>0$, $R>0$ and $\delta> N_\gamma+\eta\,(N_\gamma-2)$ such that, for every $z\in \R^N$ with $d(z)>R$, 
\begin{displaymath}
w_1(z)\le c_1 d(z)^{-\delta-2\gamma},
\end{displaymath}
where
\begin{equation}
d(z) = \left( |x|^{2\left(\gamma +1\right)} + (\gamma+1)^2 |y|^2\right)^{\frac{1}{2\left(\gamma+1\right)}} \label{eq:d}
\end{equation}
is the homogeneous distance associated to $\Delta_\gamma$.
\end{enumerate}
We will look for solutions to \eqref{prob} in the Beppo Levi space having null trace in the Grushin setting, i.e. $D^\gamma_0(\R^N)$, see Section \ref{functsett} for details.

By definition, a function $u\in D_0^\gamma (\R^N)$ is a weak solution to \eqref{prob} if $u>0$ in $\R^N$ and satisfies the following variational identity:
\begin{displaymath}
	\forall \varphi\in D_0^\gamma(\R^N) \colon \quad \int_{\R^N} \nabla_\gamma u \nabla_\gamma \varphi\dz = \int_{\R^N} \left(\lambda_1 w_1(z)u^{-\eta}+\lambda_2 w_2(z)|\nabla_\gamma u|^{r-1} +  u^{2^*_\gamma-1}\right)\varphi\dz.
\end{displaymath}
The main result of this paper is the following.
\begin{thm}
\label{mainthm}
Suppose that \ref{hypf}-\ref{condw1} hold. Then there exists $\Lambda>0$ such that, for any $\lambda_1\in (0,\Lambda)$ and any $\lambda_2 \in [0,\lambda_1]$,
problem \eqref{prob} admits a weak solution $u>0$ in $D_0^\gamma(\R^N) \cap L^\infty(\R^N)\cap C^{0,\tau}(\R^N)$ for some $\tau\in (0,1]$. Moreover, if $\lambda_2 = 0$ then
\begin{displaymath}
\lim_{d(z) \to +\infty} u(z) =0.
\end{displaymath}
\end{thm}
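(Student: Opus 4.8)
The plan is to disentangle the three difficulties --- the singularity $u^{-\eta}$, the gradient dependence $|\nabla_\gamma u|^{r-1}$ and the critical growth $u^{2^*_\gamma-1}$ --- by \emph{freezing} both the singular and the convective terms and recovering them through a fixed point argument, while keeping the critical term under control by confining the variational problem strictly below the first critical energy level. As a preliminary step I would solve the purely singular problem $-\Delta_\gamma \underline{u} = \lambda_1 w_1(z)\,\underline{u}^{-\eta}$ in $\R^N$ by minimizing the associated strictly convex functional on $D_0^\gamma(\R^N)$; hypothesis \ref{hypf} makes it well defined and coercive and produces a unique $\underline{u} \in D_0^\gamma(\R^N)$, $\underline{u}>0$ a.e., which will serve as a positive lower barrier. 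Fix $\rho>0$ small and set $\mathcal{K} = \{ v \in D_0^\gamma(\R^N) : v \ge \underline{u}\ \text{a.e.},\ \|v\|_{D_0^\gamma} \le \rho\}$, a closed convex set. For $v \in \mathcal{K}$ consider the \emph{frozen problem}
\begin{equation*}
-\Delta_\gamma u = g_v(z) + u^{2^*_\gamma-1}, \quad u>0 \text{ in } \R^N, \qquad g_v := \lambda_1 w_1(z)\,v^{-\eta} + \lambda_2 w_2(z)\,|\nabla_\gamma v|^{r-1} \ge 0 ;
\end{equation*}
since $v \ge \underline{u}$ one has $w_1 v^{-\eta} \le w_1 \underline{u}^{-\eta}$, and since $r-1 \in (0,1)$ with $w_2 \in L^1(\R^N)\cap L^\infty(\R^N)$, H\"older's inequality shows that $g_v$ induces a bounded linear functional on $D_0^\gamma(\R^N)$, of norm small uniformly in $v\in\mathcal{K}$ once $\lambda_1$ is small and $\lambda_2\le\lambda_1$. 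If convenient, the singular part can be further truncated by $g_n(s)=\min\{s^{-\eta},n\}$, the truncation being removed at the end via the bound $u\ge\underline{u}$.

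The frozen problem is variational, with functional $J_v(u)=\tfrac12\|u\|_{D_0^\gamma}^2-\langle g_v,u\rangle-\tfrac{1}{2^*_\gamma}\int_{\R^N}(u^+)^{2^*_\gamma}\dz$. I would minimize $J_v$ over the ball $\overline{B}_\rho\subset D_0^\gamma(\R^N)$: for $\rho$ small the critical term is dominated on $\overline{B}_\rho$ by the quadratic one, so $J_v$ stays below the threshold $\tfrac{1}{N_\gamma}\,S_\gamma^{N_\gamma/2}$, where $S_\gamma$ is the best constant in the Grushin--Sobolev inequality, and a Lions-type concentration-compactness argument adapted to the embedding $D_0^\gamma(\R^N)\hookrightarrow L^{2^*_\gamma}(\R^N)$ --- compact only locally --- excludes vanishing and dichotomy along minimizing sequences; the term $\langle g_v,\cdot\rangle$ is weakly continuous since $w_1\in L^1\cap L^\infty$. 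As $\langle g_v,\cdot\rangle$ is uniformly small, $\inf_{\overline{B}_\rho}J_v<0$ and the infimum is attained at an interior point, giving a weak solution $u_v$ of the frozen problem with $\|u_v\|_{D_0^\gamma}<\rho$; the comparison principle for $\Delta_\gamma$ yields $u_v\ge\underline{u}>0$, so $u_v\in\mathcal{K}$. Hence the solution multimap $\mathcal{S}\colon\mathcal{K}\to 2^{\mathcal{K}}$, $v\mapsto\{$weak solutions of the frozen problem that lie in $\overline{B}_\rho$ and dominate $\underline{u}\}$, has nonempty values and maps $\mathcal{K}$ into itself.

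The fixed point comes next. Using the compactness of $D_0^\gamma\hookrightarrow L^p_\loc$ for $p<2^*_\gamma$ together with the concentration-compactness control (which, with the decay of the weights in \ref{hypf}, prevents mass from escaping to infinity) one checks that $\mathcal{S}$ has a closed graph and relatively compact range, hence is upper semicontinuous with nonempty compact values; after verifying convexity of the values --- or, should that fail, passing to the minimal solution, which depends semicontinuously on $v$, or invoking a fixed point theorem for u.s.c.\ multimaps with acyclic values --- the Kakutani--Fan--Glicksberg (Bohnenblust--Karlin) theorem provides $u\in\mathcal{S}(u)$, i.e.\ a weak solution $u\in D_0^\gamma(\R^N)$, $u>0$, of \eqref{prob}. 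Regularity follows from a Moser/De~Giorgi iteration for $X$-elliptic operators: because $u\ge\underline{u}$ the singular term satisfies $\lambda_1 w_1 u^{-\eta}\le\lambda_1 w_1\underline{u}^{-\eta}$, which lies in a high Lebesgue space, the convective term is harmless since $|\nabla_\gamma u|^{r-1}\in L^{2/(r-1)}$ with $2/(r-1)>2$, and the critical term is absorbed by a Brezis--Kato argument; this gives $u\in L^\infty(\R^N)$, and the interior regularity theory for the Grushin operator upgrades it to $u\in C^{0,\tau}(\R^N)$ for some $\tau\in(0,1]$. The main obstacle I anticipate lies precisely here: the critical exponent destroys compactness, so that every estimate (sub-threshold confinement, concentration-compactness, a priori bounds) must be quantitative and uniform in the frozen datum $v$, while the convective term destroys the variational structure, forcing a fixed point scheme whose hypotheses --- invariance of a convex compact set, upper semicontinuity, and above all convexity of the values --- must be reconciled with those very estimates; by contrast the singularity, once $\underline{u}$ is in hand, is a controllable technical nuisance.

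Finally, for the decay statement assume $\lambda_2=0$, so $-\Delta_\gamma u=\lambda_1 w_1(z)u^{-\eta}+u^{2^*_\gamma-1}$ with $u\in D_0^\gamma(\R^N)\cap L^\infty(\R^N)$. Since $u\in L^{2^*_\gamma}(\R^N)$, $u$ is small in average over $\{d(z)>R\}$; combined with hypothesis \ref{condw1}, whose exponent condition $\delta>N_\gamma+\eta(N_\gamma-2)$ is exactly what renders $w_1u^{-\eta}$ integrable against the correct decaying weight once a crude power decay of $u$ is known, this lets one build a supersolution of the form $z\mapsto A\,d(z)^{-(N_\gamma-2)}$ --- modelled on the fundamental solution of $\Delta_\gamma$ --- on $\{d(z)>R\}$ and, by comparison followed by a V\'azquez--V\'eron type bootstrap on dyadic Grushin annuli, obtain a pointwise decay estimate forcing $u(z)\to 0$ as $d(z)\to+\infty$.
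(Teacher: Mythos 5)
Your overall architecture (solve the purely singular problem to get a positive barrier $\underline u$, freeze the non-variational data, solve an auxiliary variational problem below the first critical threshold via concentration-compactness, and recover the original equation through a fixed point for the solution multimap, then $L^\infty$-estimate, H\"older continuity and decay) is the same as the paper's, but one central step of your scheme fails as written. You freeze \emph{both} the convective and the singular term, replacing $u^{-\eta}$ by $v^{-\eta}$ with $v\in\mathcal K=\{v\ge\underline u\}$, and then claim that ``the comparison principle yields $u_v\ge\underline u$''. This is precisely where the inequality points the wrong way: since $v\ge\underline u$ and $t\mapsto t^{-\eta}$ is decreasing, on the set $\{u_v<\underline u\}$ you only know
\begin{displaymath}
-\Delta_\gamma u_v \;\ge\; \lambda_1 w_1 v^{-\eta} \;\le\; \lambda_1 w_1 \underline u^{-\eta} \;=\; -\Delta_\gamma \underline u,
\end{displaymath}
so the weak comparison principle (Lemma \ref{weakcomp}) gives nothing, and there is no reason why $u_v\ge\underline u$. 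Consequently the invariance $\mathcal S(\mathcal K)\subseteq\mathcal K$, on which your whole fixed-point argument rests, is unproved, and the removal of the truncation at the end collapses as well. The paper avoids exactly this by \emph{not} freezing the singular term: it truncates it as $\lambda_1 w_1\max\{s,\underline u_{\lambda_1}\}^{-\eta}$ (see \eqref{a}), keeping the $s$-dependence, so that on $\{u<\underline u_{\lambda_1}\}$ the auxiliary reaction equals $\lambda_1 w_1\underline u_{\lambda_1}^{-\eta}=-\Delta_\gamma\underline u_{\lambda_1}$ and comparison forces $u\ge\underline u_{\lambda_1}$ automatically (Remark \ref{subcomparisonrmk}). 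Your proposed extra truncation $\min\{s^{-\eta},n\}$ does not repair this, because it addresses the size of the singularity, not the lower barrier.

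Two further steps are asserted rather than proved, and they are the technical core. First, the values $\S(v)$ of the solution multimap are not convex (solution sets of a critical semilinear equation have no convexity), so Kakutani--Fan--Glicksberg is not applicable; your fallback, selecting the minimal solution, is the paper's route, but it requires showing that $\S(v)$ is downward directed (via the minimum of two supersolutions, Lemma \ref{lemma:supersolutions}), that a minimum exists (Zorn plus the Palais--Smale analysis), and above all that the selection is continuous, which in the paper is the delicate lower semicontinuity Lemma \ref{lsc} proved through an iterated family of auxiliary problems, a recursive a priori bound (Lemma \ref{reclemma}) and a second concentration-compactness argument — none of which is sketched in your proposal, and all of which again use $u\ge\underline u_{\lambda_1}$. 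Second, for the decay with $\lambda_2=0$, a comparison with a multiple of the fundamental solution $d(z)^{2-N_\gamma}$ cannot work by itself: that barrier is $\Delta_\gamma$-harmonic, so to dominate the right-hand side one must first control the critical term $u^{2^*_\gamma-1}$, which requires a preliminary pointwise decay of $u$. You acknowledge needing ``a crude power decay'' but give no mechanism to obtain it; in the paper this is the blow-up argument based on the doubling lemma (Theorem \ref{prelimdecay}), combined with the weak Lebesgue bound (Lemma \ref{weaklebboundlemma}) and a scaled local boundedness estimate (Theorem \ref{locboundthm}), before any comparison in exterior domains is made.
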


The analysis of critical problems was initiated by Brezis and Nirenberg in \cite{bn} for the Laplacian in a bounded domain. Their work marked the beginning of an endless stream of efforts to extend the seminal results.

In recent years, critical problems involving the Grushin operator have been extensively studied; see, for instance, \cite{BB, L, MMS, ZY}, among others. An analogue of the Brezis–Nirenberg result for the Grushin operator was recently established in \cite{alves2024brezis}.
Critical problems with the Grushin operator have also bees treated in unbounded domains, see for example \cite{AlvesHolanda, ZYb}.

Existence of positive weak solutions for a Grushin problem with singular nonlinearity has been proved in \cite{balbiswas} in a bounded domain. 

In \cite{BRW}, Bahrouni, R\u{a}dulescu, and Winkert introduced a Baouendi–Grushin operator with variable coefficients $\Delta_{G(x,y)}$, which reduces to the `usual' Grushin operator when $G(x,y) \equiv 2$. In the same paper they investigated a boundary value problem involving a convective term in a bounded domain. Problems in the whole space $\mathbb{R}^N$ with singular nonlinearities for the operator $\Delta_{G(x,y)}$ have been studied in \cite{BR} and \cite{BRR}. As far as we know, no results are available for entire problems involving convective terms. 

The first part of our work is inspired by \cite{BG}, where an analogous to problem \eqref{prob} has been treated for the $p$-Laplace operator, and we adapt the results therein to the setting of the Grushin operator. We point out that many ideas of \cite{BG} rely on the uniform ellipticity of the standard Laplace operator, and it is not trivial to adapt them to the degenerate elliptic operator $\Delta_\gamma$. Note also that problem \eqref{prob} mixes variational problems with double lack of compactness, in the sense of Sobolev's embeddings, with non-variational problems since convection terms destroy the variational structure.

Theorem \ref{mainthm} is a first attempt to study problems involving the Grushin operator with a critical, a convective and a singular term in the entire space $\R^N$. To the best of our knowledge, our result is new even in the case where more than one feature among singularity, convectivity and criticality is taken into account.

The last part of the paper is motivated by the regularity and decay properties obtained in \cite{BG2} for a critical problem involving the $p$-Laplace operator, along with gradient estimates. 

\subsection{Outline of the proof}

We start the proof of Theorem \ref{mainthm} by truncating the singular term $u^{-\eta}$ and \textit{freezing} the convective term $|\nabla_\gamma u|^{r-1}$, in order to cast the problem within a classical variational framework.

More precisely, we choose an appropriate function $\underline u_{\lambda_1}$ (to be defined in \eqref{usub}) and an arbitrary, fixed, function $v \in D_0^\gamma(\R^N)$. We then examine the auxiliary problem
\begin{equation}
\label{eq:varprob}
-\Delta_\gamma u = a(z,u) + u_+^{2^*_\gamma-1} \quad \mbox{in} \;\; \R^N,
\end{equation}
where
\begin{displaymath}
a(z,s)\coloneqq\lambda_1 w_1(z)\max\{s,\underline u_{\lambda_1}(z)\}^{-\eta} + \lambda_2 w_2(z) | \nabla_\gamma v(z) |^{r-1} \quad \forall (z,s)\in \R^N\times\R.
\end{displaymath}

The central part of the paper is dedicated to constructing a solution $u \in D_0^\gamma(\R^N)$ to \eqref{eq:varprob} by employing the mountain pass theorem on the associated energy functional $J$ (see Theorem \ref{existencefinal}). Using the concentration-compactness principle, we identify a \emph{critical} level $\hat{c}$ (see \eqref{hatc}) below which the functional $J$ recovers compactness, i.e., $J$ satisfies the Palais-Smale condition (Lemma \ref{PS}); moreover, we show that, for sufficiently small values of $\lambda_1$ and $\lambda_2$, $J$ exhibits the mountain pass geometry (Lemma \ref{mountainpassgeometry}); finally, we verify that the mountain pass level lies below the critical Palais-Smale level $\hat{c}$, again for small $\lambda_1$ and $\lambda_2$ (Lemma \ref{talentihatc}).

We then return to the original problem. Since the truncation was performed at the level of a sub-solution $\underline u_{\lambda_1}$ (see Lemma \ref{subsol} and \eqref{usub}), any solution to \eqref{eq:varprob} stays above $\underline u_{\lambda_1}$ (see Remark \ref{subcomparisonrmk}).

The next task is to \textit{unfreeze} the convection term: this is achieved by using set-valued analysis and fixed point theory. Specifically, we introduce a set-valued function $\S$, which associates to each function $v\in D_0^\gamma(\R^N)$ the set of solutions to \eqref{eq:varprob} having `low' energy (see \eqref{Sdef}). The function $\S$ is compact (Lemma \ref{Scompact}) and, for small values of $\lambda_1$ and $\lambda_2$, lower semi-continuous (Lemma \ref{lsc}). Moreover, its selection $\T$, defined as $\T(v) \coloneqq \min \S(v)$, still has the properties of compactness and continuity; so, Schauder's Fixed Point Theorem guarantees the existence of a fixed point of $\T$ (Theorem \ref{exsol}), that is, a solution to \eqref{prob}.

Then, we prove a pointwise decay of our solution at infinity when $\lambda_2 = 0$.
The lack of precise elliptic estimates for the Grushin gradients forces us to impose $\lambda_2 = 0.$ First, a global $L^\infty$ estimate is ensured via De Giorgi's technique (Theorem \ref{regularity}).

To get the decay of the solutions we then follow \cite{BG2}. We need a doubling  property in Lemma \ref{lem_dou}, a Lemma on weak Lebesgue spaces (Lemma \ref{weaklebboundlemma}) and an adaptation to our setting of a local boundedness theorem (Theorem \ref{locboundthm}), whose proof is postponed to Appendix \ref{appendix_a}.

It is also worth mentioning that we provide quantitative estimates on the threshold $\Lambda$ in Theorem \ref{mainthm}: see \eqref{smallness}, \eqref{smallness2}, \eqref{smallness3}, and \eqref{lsccond2}.

As an additional result, we also prove (Lemma \ref{decay_subsolution}) the decay of a subsolution to \eqref{prob}. This result requires some stronger assumptions on $\ell$ and $w_1$; see Remark \ref{rmk:decay} for further details.

\medskip
Unlike in the paper \cite{BG}, we can only prove that the solutions are locally H\"{o}lder-continuous since, as explained in Remark \ref{rmknat2}, a full regularity theory is not available in this context. We establish the continuity of the solutions (Theorem \ref{regularity}) via the non-homogeneous Harnack inequality for $X$-elliptic operators (see \cite[Theorem 5.5]{gutierrezlanconelli}).

\medskip
 The paper is organized as follows. In Section \ref{prel} we give some classical definitions and state some basic results, such as the concentration-compactness principles, the mountain pass theorem, and a fixed point theorem, together with minor lemmas which will be useful in the sequel. Section \ref{sec:existence} is devoted to prove an existence result for problem \eqref{prob}; in particular, in Section \ref{trfr} we study a truncated and frozen problem, while Section \ref{unfr} addresses the unfreezing of the convection term via set-valued analysis, finally giving a solution to \eqref{prob}. Section \ref{reg} discusses the boundedness and the decay of the solutions, concluding the proof of Theorem \ref{mainthm}.

\section{Preliminaries}\label{prel}

\subsection{The fundamental solution for $\Delta_\gamma$}

We have already introduced in \eqref{eq:d} the homogeneous distance associated to the Grushin operator $\Delta_\gamma$. 
Notice that this distance is homogeneous of degree $1$ with respect to the anisotropic dilatation
\begin{displaymath}
	\delta_\lambda(z,y) = (\lambda x, \lambda^{\gamma+1}y),\quad \lambda >0
\end{displaymath}
and, for $\gamma=0$, it reduces to the usual Euclidean distance in $\R^N$.
The function 
\begin{equation}\label{gammadef}
	\Gamma(z) = \frac{C}{d(z)^{N_\gamma -2}}
\end{equation}
is a fundamental solution for the Grushin operator with singularity at $z=0$, see \cite[Appendix B]{ambrosio2003}.
The constant $C$ is a suitable positive constant that depends on $m,\ell$ and $\gamma$. Precisely,
\[
C^{-1} = (N_\gamma -2) \int\limits_{\{z:\, d(z) = 1\}} \frac{| x | ^{2\gamma}}{\left(|x|^{2\left(1+2\gamma\right)}+ \left(1+\gamma\right)^2 |y|^2\right)^{1/2}} \, \rm d \sigma,
\]
where $\sigma$ denotes the surface measure of the manifold $\left\{ z:\, d(z)=1 \right\}$.

\subsection{Notation}

The complement of any set $A$ in a set $X$ is $A^c = X \setminus A$.
We indicate with $B_\gamma(z,R) =\{\xi\in\R^N :\,  d(z -\xi)<R \}$ the $d$-ball of center $z\in\R^N$ and radius $R>0$. 
We will omit $\gamma$ when it is equal to $0$ and the $d$-ball $B(z,R)$ reduces to the euclidean one.

Given any $A\subseteq \R^N$, we write $\chi_A$ to indicate the characteristic function of $A$. For any $N$-dimensional Lebesgue measurable set $\Omega$, the symbol $|\Omega|$ denotes the $N$-dimensional Lebesgue measure of $\Omega$.

We denote by $C^\infty_c(\R^N)$ the space of the compactly supported smooth functions on $\R^N$, while $C^{0,\tau}_\loc(\R^N)$, for any $\tau\in(0,1]$, denotes the space of locally $\tau$-H\"{o}lder continuous functions.
Given any measurable set $\Omega\subseteq \R^N$ and $q\in[1,+\infty]$, $L^q(\Omega)$ stands for the standard Lebesgue space, whose norm will be indicated with $\|\cdot\|_{L^q(\Omega)}$, or simply $\|\cdot\|_q$ when $\Omega=\R^N$. 

Given a real-valued function $\varphi$, we denote its positive and negative part by 
\begin{displaymath}
    \varphi_+\coloneqq \max\{\varphi,0\}, \quad \varphi_-\coloneqq \max\{-\varphi,0\}.
\end{displaymath}
We abbreviate $\{u>v\} = \{x\in\R^N: \, u(x)>v(z)\}$, and similarly for $\{u<v\}$, etc.
 
We indicate with $X^*$ the dual of a Banach space $X$, while $\langle\cdot,\cdot\rangle$ stands for the duality brackets. Given two Banach spaces $X,Y$, the continuous embedding of $X$ into $Y$ is indicated by $X\hookrightarrow Y$; if the embedding is compact, we write $X\compact Y$. If a sequence $(u_n)$ strongly converges to $u$ we write $u_n\to u$; if the convergence is in weak sense, we use $u_n\rightharpoonup u$. The letter $S$ denotes the best Sobolev constant; see the next subsection for details.

The space of all finite signed Radon measures is denoted by $\M(\R^N,\R)$. Concerning convergence of measures $(\mu_n)\subseteq \M(\R^N,\R)$, we write $\mu_n\stackrel{*}{\rightharpoonup}\mu$ and $\mu_n\rightharpoonup\mu$ to signify tight and weak convergence, respectively (the definitions are given in the next subsection). The symbol $\delta_z$ indicates the Dirac delta  concentrated at $z\in\R^N$.

The letter $C$ denotes a generic positive constant whose dependence on other variables is irrelevant in the specific context. Subscripts on $C$ emphasize its dependence from the specified parameters. 

For the sake of brevity, the sentence ``in $\R^N$'' should be understood as ``a.e.\,in $\R^N$''. We would like to point out that the degenerate region $\Sigma$ is \emph{always} a negligible set in the sense of Lebesgue measure. For instance, a function which is positive almost everywhere might well be negative on the whole $\Sigma$. 

\subsection{Functional setting}\label{functsett}

Given an open subset $\Omega\subseteq\R^N$ we introduce the Sobolev space $H^1_\gamma(\Omega)$ as
\begin{displaymath}
    H^1_\gamma(\Omega) = \left\{u\in L^2(\Omega):\,  |\nabla_\gamma u| \in L^2(\Omega) \right\}
\end{displaymath}
and the Beppo-Levi space $D^\gamma(\Omega)$ as 
\begin{displaymath}
    D^\gamma(\Omega) = \left\{u\in L^{2^*_\gamma}(\Omega) : |\nabla_\gamma u|\in L^2(\Omega) \right\},
\end{displaymath}
where $2_\gamma^*$ is the Sobolev critical exponent associated to $N_\gamma$ to be the number
\begin{gather} \label{eq:2stargamma}
	2_\gamma^* \coloneqq \frac{2N_\gamma}{N_\gamma -2}.
\end{gather}
The spaces $H^1_{0,\gamma}(\Omega)$, $D_0^\gamma(\Omega)$ are defined as the completion of $C_c^\infty(\Omega)$ with respect to the norms $\|\cdot\|_2+\|\nabla_\gamma(\cdot)\|_2$ and $\|\nabla_\gamma(\cdot)\|_2$, respectively.
In what follows we will denote the norm of the space $D_0^\gamma(\R^N)$ with the symbol
\begin{equation}\label{normdef}
	\| u \|_\gamma = \left(\int_{\R^N} \|\nabla_\gamma u\| ^2\dz \right)^{1/2}.	
\end{equation}
It is easy to check that $D_0^\gamma(\R^N)$ is a Hilbert space with respect to the inner product
\begin{displaymath}
    \langle u , v \rangle_\gamma = \int_{\R^N} \nabla_\gamma u \nabla_\gamma v \dz.
\end{displaymath}
Moreover, we trivially have the embedding
\begin{displaymath}
    H^1_{0,\gamma}(\R^N)=H^1_{\gamma}(\R^N)\hookrightarrow D_0^\gamma(\R^N)=D^\gamma(\R^N),
\end{displaymath}
while $H^1_\gamma(\R^N)\hookrightarrow L^q(\R^N)$ for any $q\in [2,2^*_\gamma]$ by the results of \cite[p. 1249]{AlvesHolanda23}.

We denote by $D^{-1,2,\gamma}_0(\R^N)$ the dual space of $D_0^\gamma(\R^N)$.
Following \cite[Eq. (3.1)]{alves2024brezis}, we define the best constant of the Sobolev-Gagliardo-Nirenberg embedding $D_0^\gamma(\R^N)\hookrightarrow L^{2^*_\gamma}(\R^N)$, see  \cite[Lemma 5.1]{loiudice2006}, as 
\begin{equation}
	\label{eq:sobconst}
	S \coloneqq \inf_{\substack{ u \in D_0^\gamma(\R^N) \\ u \neq 0}} \frac{\int_{\R^N} \left\vert \nabla_\gamma u \right\vert^2 \dz}{\left( \int_{\R^N} \left\vert u \right\vert^{2_\gamma^*} \dz \right)^{2/2_\gamma^*}}.
\end{equation}
For the sake of completeness, we now recall some embedding theorems in bounded domains.
If $\Omega\subset\R^N$ is bounded and satisfies the regularity condition (D)\footnote{From now on, we will assume that the bounded sets $\Omega\subseteq\R^N$ satisfy condition (D). We point out that the balls $B_\gamma(z,R)$ satisfies (D), as remarked in \cite{abatangelo2024}.} of \cite[Section 5]{abatangelo2024}, then $H^1_\gamma(\Omega)\hookrightarrow L^q(\Omega)$ for every $q\in [1,2^*_\gamma]$, and $H^1_\gamma(\Omega)\compact L^q(\Omega)$, for every $q\in [1,2^*_\gamma)$, see \cite[Proposition 5.5]{abatangelo2024}.
This latter fact, together with the continuous embedding $D_0^\gamma(\R^N)\hookrightarrow H^1_\gamma(\Omega)$,  gives $D_0^\gamma(\R^N)\compact L^q(\Omega)$ for all $q\in [1,2^*_\gamma)$.

A sequence of measures $(\mu_n)\subseteq \M(\R^N,\R)$ converges tightly to a measure $\mu$, written as $\mu_n \stackrel{*}{\rightharpoonup} \mu$, if
\begin{equation}
\label{measconv}
\int_{\R^N} f \, {\rm d}\mu_n \to \int_{\R^N} f \, {\rm d}\mu \quad \mbox{for all} \;\; f\in C_b(\R^N),
\end{equation}
where $C_b(\R^N)$ is the space of the bounded, continuous functions on $\R^N$. On the other hand, $(\mu_n)\subseteq \M(\R^N,\R)$ is said to converge weakly to $\mu$, written as $\mu_n \rightharpoonup \mu$, if \eqref{measconv} holds for all $f\in C_0(\R^N)$, where $C_0(\R^N)$ is the space of the continuous functions that vanish at infinity.\footnote{A function $f$ vanishes at infinity if, for each $\varepsilon >0$, there exists a compact set $K_\varepsilon$ such that $\sup_{x \in K_\varepsilon} |f(x)| < \varepsilon$.} Since $C_0(\R^N)\subseteq C_b(\R^N)$, tight convergence implies weak convergence. Moreover, if $(\mu_n)\subseteq \M(\R^N,\R)$ is bounded, then (up to sub-sequences) $\mu_n\rightharpoonup \mu$ for some $\mu\in \M(\R^N,\R)$: see \cite[Proposition 1.202]{FL}. Notice that weak convergence is the `natural' convergence in the space $\M(\R^N,\R)$, since $\M(\R^N,\R)=(C_0(\R^N))'$. It is worth pointing out that tight convergence can be seen as non-concentration at infinity: see \cite{BBF, BBFS}.

Let $(X,\|\cdot\|_X)$ be a Banach space and $J$ be a functional of class $C^1$ (hereafter indicated as $J\in C^1(X)$). A sequence $(u_n)\subseteq X$ is a Palais-Smale sequence at level $c\in\R$ if $J(u_n) \to c$ in $\R$ and $J'(u_n)\to 0$ in $X^*$. If each Palais-Smale sequence at level $c$ admits a strongly convergent sub-sequence, then $J$ is said to satisfy the Palais-Smale condition at level $c$; briefly, $J$ satisfies ${\rm (PS)}_c$.

A partially ordered set $(A,\leq)$ is said to be downward directed if for any $a,b\in A$ there exists $c\in A$ such that $c\leq a$ and $c\leq b$. We recall that, if $a$ is a minimal element of the downward directed poset $A$, then $a=\min A$: indeed, since $A$ is downward directed, for any $b\in A$ there exists $c\in A$ such that $c\leq a$ and $c\leq b$, and minimality of $a$ forces $c=a$, so that $c\leq b$ for all $b\in A$, i.e., $c=\min A$.

Let $(X,d_X),(Y,d_Y)$ be two metric spaces. A set-valued function $\S\colon X \to 2^Y$\footnote{Here $2^Y$ is the power set of $Y$.} is said to be lower semi-continuous if, for any $x_n\to x$ in $X$ and $y\in\S(x)$, there exists $(y_n)\subseteq Y$ such that $y_n\to y\in Y$ and $y_n\in\S(x_n)$ for all $n\in\N$; it is said to be compact if, for any bounded $K\subseteq X$, the set $\S(K)$ is relatively compact in $Y$.

\subsection{Some tools}\label{sec:tools}
We start by proving a simple result concerning weak convergence of the positive part of functions.
\begin{lemma}
\label{posparts}
Let $(u_n)\subseteq D_0^\gamma(\R^N)$, $u\in D_0^\gamma(\R^N)$ be such that $u_n \rightharpoonup u$ in $D_0^\gamma(\R^N)$. Then $(u_n)_+ \rightharpoonup u_+$ in $ D_0^\gamma(\R^N)$.
\end{lemma}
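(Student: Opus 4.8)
The plan is to prove the claim in two stages: first establish the weak convergence of the gradients against test functions supported on the region where $u>0$, and then handle the region where $u=0$ via a truncation or absolute-continuity argument. We are given $u_n \rightharpoonup u$ in $D_0^\gamma(\R^N)$, hence $(u_n)$ is bounded in $D_0^\gamma(\R^N)$, so $((u_n)_+)$ is bounded there as well (since $\|\nabla_\gamma (u_n)_+\|_2 \le \|\nabla_\gamma u_n\|_2$, using that $\nabla_\gamma (u_n)_+ = \chi_{\{u_n>0\}}\nabla_\gamma u_n$ a.e.). Thus, up to a subsequence, $(u_n)_+ \rightharpoonup g$ in $D_0^\gamma(\R^N)$ for some $g$. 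By the compact embedding $D_0^\gamma(\R^N) \compact L^q_{\loc}$ for $q\in[1,2^*_\gamma)$ (and a diagonal argument over an exhaustion by balls), $u_n \to u$ and $(u_n)_+ \to g$ almost everywhere along a further subsequence; since $s\mapsto s_+$ is continuous, $(u_n)_+ \to u_+$ a.e., whence $g = u_+$. Because the limit is independent of the subsequence, the full sequence satisfies $(u_n)_+ \rightharpoonup u_+$ in $D_0^\gamma(\R^N)$.

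More explicitly, I would argue as follows. Fix $\varphi \in D_0^\gamma(\R^N)$; we must show $\int_{\R^N} \nabla_\gamma (u_n)_+ \cdot \nabla_\gamma \varphi \dz \to \int_{\R^N} \nabla_\gamma u_+ \cdot \nabla_\gamma \varphi \dz$. Using $\nabla_\gamma (u_n)_+ = \chi_{\{u_n > 0\}} \nabla_\gamma u_n$ and $\nabla_\gamma u_+ = \chi_{\{u>0\}} \nabla_\gamma u$ (standard chain-rule facts for Sobolev-type functions, valid in the Grushin setting since $\nabla_\gamma$ is built from ordinary weak partial derivatives), we write
\begin{displaymath}
\int_{\R^N} \nabla_\gamma (u_n)_+ \cdot \nabla_\gamma \varphi \dz = \int_{\R^N} \nabla_\gamma u_n \cdot \nabla_\gamma (\chi_{\{u_n>0\}}\varphi) \dz,
\end{displaymath}
but it is cleaner to instead exploit the $a.e.$ convergence established above together with weak compactness: since $((u_n)_+)$ is bounded in the Hilbert space $D_0^\gamma(\R^N)$, every subsequence has a weakly convergent sub-subsequence, and the $a.e.$ limit of $(u_n)_+$ along such a sub-subsequence is forced to be $u_+$, hence the weak limit (which for a $D_0^\gamma$-bounded sequence converging $a.e.$ must coincide with the $a.e.$ limit, by uniqueness of distributional limits) is $u_+$. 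A standard subsequence principle then upgrades this to convergence of the entire sequence.

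The key technical ingredients I would invoke are: (i) the chain rule $\nabla_\gamma(\max\{f,0\}) = \chi_{\{f>0\}}\nabla_\gamma f$ for $f \in D_0^\gamma(\R^N)$, which reduces to the Euclidean Stampacchia-type result because $\nabla_\gamma f = (\nabla_x f, |x|^\gamma \nabla_y f)$ involves only ordinary weak derivatives multiplied by the fixed weight $|x|^\gamma$; (ii) the compact embedding $D_0^\gamma(\R^N) \compact L^q(\Omega)$ for bounded $\Omega$ and $q \in [1,2^*_\gamma)$ stated in the functional-setting subsection, used on a countable exhaustion of $\R^N$ by $d$-balls plus a diagonal extraction to get $a.e.$ convergence; and (iii) the uniqueness of weak limits in the Hilbert space $D_0^\gamma(\R^N)$ combined with the fact that a bounded sequence in $D_0^\gamma(\R^N)$ which converges $a.e.$ converges weakly precisely to that $a.e.$ limit.

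The main obstacle, and the point deserving the most care, is justifying that the weak $D_0^\gamma$-limit $g$ of $((u_n)_+)$ really equals $u_+$ rather than some other function with the same pointwise values on a null set — i.e., matching the $a.e.$-pointwise information with the weak-derivative information. This is handled by noting that weak convergence in $D_0^\gamma(\R^N)$ implies $(u_n)_+ \rightharpoonup g$ in $L^{2^*_\gamma}_{\loc}$ as well, and $a.e.$ convergence (along a subsequence) plus local boundedness in $L^{2^*_\gamma}$ pins down $g = u_+$ almost everywhere, hence as elements of $D_0^\gamma(\R^N)$. A secondary, more bookkeeping-level point is verifying the chain-rule identity in the degenerate Grushin context; this is routine because the degeneracy set $\Sigma = \{0\}\times\R^\ell$ is Lebesgue-null and the weight $|x|^\gamma$ is fixed and independent of $n$, so the classical truncation lemmas apply verbatim to the components of $\nabla_\gamma$.
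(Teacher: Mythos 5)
Your argument is correct and is essentially the proof the paper has in mind: the paper's proof simply refers to \cite[Lemma 2.1]{BG}, whose ingredients are exactly the ones you use, namely the Stampacchia-type identity $\nabla_\gamma (u_n)_+=\chi_{\{u_n>0\}}\nabla_\gamma u_n$ to get boundedness of $((u_n)_+)$ in $D_0^\gamma(\R^N)$, the compact embedding $D_0^\gamma(\R^N)\compact L^q(B_\gamma(0,R))$ with a diagonal extraction to identify the weak limit as $u_+$ via a.e. convergence, and the subsequence principle to pass to the whole sequence. No changes needed.
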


\begin{proof}
 The proof proceeds with only minor modifications to that of \cite[Lemma 2.1]{BG}, thanks to the compact embedding $D_0^\gamma(\mathbb{R}^N) \compact L^p(B_\gamma(0,R))$ for all fixed $R>0$ (see Section~\ref{functsett}), together with Stampacchia's lemma (cf., e.g., \cite[Corollary 2.2]{gutierrezlanconelli}).
\end{proof}
The following lemma establishes the (S)$_+$ property of the Grushin operator. We first recall the definition.
\begin{defi}
    Let $(X,\| \cdot \|)$ be a Banach space. An operator $A\colon X\to X^*$ is \emph{of type} (S)$_+$ if, for any $(u_n)\subseteq X$ and $u\in X$ such that 
    $$u_n\rightharpoonup u \,\, \hbox{in $X$}, \quad \limsup_{n\to\infty} \langle A(u_n), u_n-u\rangle \le 0,$$
    one has $u_n\to u$ in $X$.
\end{defi}
\begin{lemma}
\label{lemma:S+}
    The operator $A\colon D_0^\gamma(\R^N)\to D^{-1,2,\gamma}_0(\R^N)$ defined by 
        \begin{displaymath}
        \langle A(u), v\rangle \coloneqq \int_{\R^N} \nabla_\gamma u \nabla_\gamma v\dz
        \end{displaymath}
    is of type (S)$_+$.
\end{lemma}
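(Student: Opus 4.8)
The plan is to exploit the Hilbert space structure of $D_0^\gamma(\R^N)$ together with the fact that $\langle A(u),v\rangle = \langle u,v\rangle_\gamma$ is nothing but the inner product, so that $A$ is essentially the Riesz isomorphism. In a Hilbert space this makes the $(S)_+$ property almost immediate, and the only thing to check is that the defining conditions force strong convergence.

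First I would take a sequence $(u_n)\subseteq D_0^\gamma(\R^N)$ and $u\in D_0^\gamma(\R^N)$ with $u_n\rightharpoonup u$ in $D_0^\gamma(\R^N)$ and $\limsup_{n\to\infty}\langle A(u_n),u_n-u\rangle\le 0$. By definition of $A$ and of the inner product $\langle\cdot,\cdot\rangle_\gamma$,
\begin{displaymath}
\langle A(u_n),u_n-u\rangle = \int_{\R^N}\nabla_\gamma u_n\nabla_\gamma(u_n-u)\dz = \|u_n\|_\gamma^2 - \langle u_n,u\rangle_\gamma.
\end{displaymath}
Since $u_n\rightharpoonup u$, testing the weak convergence against the fixed element $u$ gives $\langle u_n,u\rangle_\gamma\to\|u\|_\gamma^2$. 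Hence
\begin{displaymath}
0\ge\limsup_{n\to\infty}\langle A(u_n),u_n-u\rangle = \limsup_{n\to\infty}\|u_n\|_\gamma^2 - \|u\|_\gamma^2,
\end{displaymath}
so $\limsup_{n\to\infty}\|u_n\|_\gamma^2\le\|u\|_\gamma^2$. On the other hand, weak lower semicontinuity of the norm yields $\|u\|_\gamma^2\le\liminf_{n\to\infty}\|u_n\|_\gamma^2$. Combining the two inequalities gives $\|u_n\|_\gamma\to\|u\|_\gamma$.

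Finally, I would conclude by the standard Radon–Riesz (Kadec–Klee) argument: in a Hilbert space, $u_n\rightharpoonup u$ together with $\|u_n\|_\gamma\to\|u\|_\gamma$ implies $u_n\to u$ strongly, since
\begin{displaymath}
\|u_n-u\|_\gamma^2 = \|u_n\|_\gamma^2 - 2\langle u_n,u\rangle_\gamma + \|u\|_\gamma^2 \to \|u\|_\gamma^2 - 2\|u\|_\gamma^2 + \|u\|_\gamma^2 = 0.
\end{displaymath}
This proves $u_n\to u$ in $D_0^\gamma(\R^N)$, i.e. $A$ is of type $(S)_+$. There is no real obstacle here: the argument is purely Hilbert-space-theoretic and uses only that $D_0^\gamma(\R^N)$ is a Hilbert space (established earlier in Section \ref{functsett}) and that $A$ coincides with the associated inner product pairing; the degeneracy of the Grushin operator plays no role since everything is phrased in terms of the norm $\|\cdot\|_\gamma$.
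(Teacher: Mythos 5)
Your proof is correct and follows essentially the same Hilbert-space route as the paper: both arguments exploit only that $A$ is the inner-product pairing on $D_0^\gamma(\R^N)$. The paper identifies $\langle A(u_n)-A(u),u_n-u\rangle=\|u_n-u\|_\gamma^2\ge 0$ directly and forces this to vanish using the two hypotheses, whereas you pass through norm convergence and the Radon--Riesz identity --- the same expansion of the quadratic form, merely rearranged.
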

\begin{proof}
Let $(u_n), u\in D_0^\gamma(\R^N)$ such that
\begin{equation}\label{hpS}
u_n\rightharpoonup u \quad \text{in}\,\, D_0^\gamma(\R^N), \quad \limsup_{n\to\infty} \langle A(u_n), u_n-u\rangle \le 0.
\end{equation}
We have
\[
\langle A(u_n) - A(u), u_n - u\rangle \ge 0 \quad \forall n\in\N,
\]
implying
\[
\liminf_{n\to\infty} \langle A(u_n) - A(u), u_n - u \rangle \ge 0.
\]
Therefore by \eqref{hpS}
\[
\lim_{n\to\infty} \langle A(u_n) - A(u), u_n - u \rangle = 0,
\]
which is equivalent to require $\lVert u_n - u \rVert_\gamma ^2 \to 0$ by recalling \eqref{normdef}.    
\end{proof} 
Let us introduce two lemmas that are useful for handling concentration of compactness at points and at infinity, respectively.

The first concentration compactness lemma was proved in \cite[Theorem 3.3]{MMSp} for a $p$-Grushin operator in a bounded domain $\Omega\subseteq \R^N$, but the case in the whole $\R^N$, where the tight convergence appears, follows with the same argument.
\begin{thm}
	\label{thm:concentration}
    Suppose $(u_n)\subseteq D_0^\gamma(\R^N)$ to be such that $u_n\rightharpoonup u$ in $D_0^\gamma(\R^N)$, and both $| \nabla_\gamma u_n|\rightharpoonup \mu$, $| u_n |^{2^*_\gamma} \stackrel{*}{\rightharpoonup}\nu$ in the sense of measures, for some $u\in D_0^\gamma(\R^N)$ and $\mu,\nu$ bounded non-negative measures on $\R^N$.
 	Then there exist an at most countable set $\A$, a family $\lbrace z_j\rbrace_{j\in \A}$ of distinct points in $\R^N$ and two families of numbers $\lbrace \mu_j \rbrace_{j\in \A}$, $\lbrace \nu_j \rbrace_{j\in \A}\subseteq (0,+\infty)$ such that
		\begin{equation*}
		   \nu = | u |^{2^*_\gamma} + \sum_{j\in \A} \nu_j \delta_{z_j},  \quad  
              \mu\ge | \nabla_\gamma u|^2 + \sum_{j\in \A}\mu_j \delta_{z_j}, \quad
              \mu_j \ge S \nu_j^{\frac{2}{2^*_\gamma}}\quad\hbox{for all $j\in \A$,} 
		\end{equation*}
		where $\delta_z$ is the Dirac measure concentrated at $z\in\R^N$ and
		$S$ is defined in \eqref{eq:sobconst}. In particular,
		\begin{displaymath}
			\sum_{j\in \A} (\nu_j)^{\frac{2}{2^*_\gamma}}<\infty.
		\end{displaymath}
\end{thm}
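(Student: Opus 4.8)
The plan is to reduce to the classical Lions concentration-compactness lemma by working with the sequence $v_n \coloneqq u_n - u$, which converges weakly to $0$ in $D_0^\gamma(\R^N)$. First I would apply the Brezis--Lieb lemma to the sequence $|u_n|^{2^*_\gamma}$: since $u_n \rightharpoonup u$ in $D_0^\gamma(\R^N)$, one has $u_n \to u$ a.e.\ in $\R^N$ (via the compact embeddings $D_0^\gamma(\R^N) \compact L^q(B_\gamma(0,R))$ for $q<2^*_\gamma$ and a diagonal argument over $R \to \infty$), whence $\bigl\||u_n|^{2^*_\gamma} - |v_n|^{2^*_\gamma} - |u|^{2^*_\gamma}\bigr\|_{L^1} \to 0$. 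Consequently the measure $\nu$ splits as $\nu = |u|^{2^*_\gamma} + \tilde\nu$, where $\tilde\nu$ is the tight (hence weak) limit of $|v_n|^{2^*_\gamma}$; similarly, expanding $|\nabla_\gamma u_n|^2 = |\nabla_\gamma v_n|^2 + 2\nabla_\gamma v_n \cdot \nabla_\gamma u + |\nabla_\gamma u|^2$ and using $\nabla_\gamma v_n \rightharpoonup 0$ in $L^2$, the cross term vanishes in the duality against $C_b$-functions, so $\mu = |\nabla_\gamma u|^2 + \tilde\mu$ with $\tilde\mu$ the weak limit of $|\nabla_\gamma v_n|^2$. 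It therefore suffices to prove the statement for the pair $(\tilde\mu, \tilde\nu)$ with $v_n \rightharpoonup 0$.

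The core of the argument is then the reverse Hölder / Sobolev inequality at scale of a ball. For any $\varphi \in C_c^\infty(\R^N)$, applying the Sobolev inequality \eqref{eq:sobconst} to $\varphi v_n \in D_0^\gamma(\R^N)$ gives
\begin{displaymath}
S \left( \int_{\R^N} |\varphi v_n|^{2^*_\gamma} \dz \right)^{2/2^*_\gamma} \le \int_{\R^N} |\nabla_\gamma(\varphi v_n)|^2 \dz.
\end{displaymath}
Here one must expand $\nabla_\gamma(\varphi v_n) = \varphi \nabla_\gamma v_n + v_n \nabla_\gamma \varphi$, where $\nabla_\gamma \varphi = (\nabla_x \varphi, |x|^\gamma \nabla_y \varphi)$; the product rule holds in the Grushin setting because $\varphi$ is smooth with compact support and the weight $|x|^\gamma$ is locally bounded. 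Since $v_n \rightharpoonup 0$ in $D_0^\gamma(\R^N)$ and $v_n \to 0$ strongly in $L^2_\loc$, the cross term $\int \varphi v_n \nabla_\gamma \varphi \cdot \nabla_\gamma v_n \dz$ and the term $\int |v_n|^2 |\nabla_\gamma \varphi|^2 \dz$ both tend to $0$ (the latter using local compactness and boundedness of $|\nabla_\gamma\varphi|^2$ on the compact support of $\varphi$). Passing to the limit and using tight convergence of $|v_n|^{2^*_\gamma}$ and weak convergence of $|\nabla_\gamma v_n|^2$, one obtains the crucial functional inequality
\begin{displaymath}
S \left( \int_{\R^N} |\varphi|^{2^*_\gamma} \, {\rm d}\tilde\nu \right)^{2/2^*_\gamma} \le \int_{\R^N} |\varphi|^2 \, {\rm d}\tilde\mu \qquad \text{for all } \varphi \in C_c^\infty(\R^N).
\end{displaymath}
A standard density/approximation step extends this to all bounded Borel $\varphi$, and in particular to $\varphi = \chi_{B_\gamma(z,\varepsilon)}$, yielding $\tilde\mu(B_\gamma(z,\varepsilon)) \ge S\, \tilde\nu(B_\gamma(z,\varepsilon))^{2/2^*_\gamma}$ for all $z$ and $\varepsilon>0$.

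From this reverse-Hölder inequality the conclusion follows by a by-now-classical measure-theoretic argument (as in Lions' original lemma). Letting $\varepsilon \to 0$ shows that the atomic part of $\tilde\mu$ dominates $S$ times the $2/2^*_\gamma$-power of the atomic part of $\tilde\nu$ at each point, and that $\tilde\nu$ is concentrated on the (at most countable) set $\A = \{z : \tilde\nu(\{z\}) > 0\}$ — countability because $\tilde\nu$ is a finite measure and, by the inequality, each atom $\tilde\nu(\{z_j\}) = \nu_j$ contributes at least $S \nu_j^{2/2^*_\gamma}$ to the finite mass of $\tilde\mu$, forcing $\sum_j \nu_j^{2/2^*_\gamma} < \infty$. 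The absence of a non-atomic diffuse part of $\tilde\nu$ again uses that $v_n \rightharpoonup 0$: testing against $\varphi$ supported away from $\A$, the Sobolev inequality would force $\tilde\nu$ to vanish there. Writing $\tilde\nu = \sum_{j\in\A}\nu_j \delta_{z_j}$, $\mu_j \coloneqq \tilde\mu(\{z_j\})$, and recombining with the splitting from the first paragraph gives the stated decomposition, together with $\mu \ge |\nabla_\gamma u|^2 + \sum_j \mu_j \delta_{z_j}$ (inequality, not equality, since $\tilde\mu$ may carry additional diffuse mass) and $\mu_j \ge S\nu_j^{2/2^*_\gamma}$.

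The main obstacle I anticipate is \textbf{not} the measure-theoretic combinatorics — which is identical to the Euclidean case — but rather justifying the product/chain rule $\nabla_\gamma(\varphi v_n) = \varphi\nabla_\gamma v_n + v_n \nabla_\gamma\varphi$ and the vanishing of the cross terms rigorously in the degenerate Grushin geometry, where the coefficient $|x|^{2\gamma}$ degenerates on $\Sigma = \{0\}\times\R^\ell$. One has to check that $\varphi v_n \in D_0^\gamma(\R^N)$ with the expected gradient (this is where the cutoff being smooth and compactly supported, so that $|x|^\gamma|\nabla_y\varphi|$ is bounded, is essential) and that the local strong $L^2$ convergence $v_n \to 0$ genuinely kills $\int |v_n|^2|\nabla_\gamma\varphi|^2$; since $|\nabla_\gamma\varphi|^2 \le |\nabla\varphi|^2(1 + |x|^{2\gamma})$ is bounded on the compact $\supp\varphi$, this goes through. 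Everything else is bookkeeping. Given the authors' remark that the bounded-domain version is \cite[Theorem 3.3]{MMSp} and "the case in the whole $\R^N$ follows with the same argument," I would in fact present the proof as a reduction: localize with a partition of unity, invoke the bounded-domain result on each $B_\gamma(0,R)$, and handle the passage $R\to\infty$ using tightness of $|u_n|^{2^*_\gamma}$ (which is exactly the hypothesis that rules out escape of mass to infinity).
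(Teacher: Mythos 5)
Your proposal is correct, and it is essentially the proof the paper implicitly relies on: the paper does not prove Theorem \ref{thm:concentration} itself but cites the bounded-domain $p$-Grushin version \cite[Theorem 3.3]{MMSp}, remarking that the whole-space case follows by the same argument, and that argument is precisely the classical Lions scheme you sketch (Brezis--Lieb splitting with $v_n=u_n-u$, the reverse H\"older inequality obtained by applying \eqref{eq:sobconst} to $\varphi v_n$ and killing the cross terms via $D_0^\gamma(\R^N)\compact L^2_\loc$, then the measure-theoretic atomic decomposition). Two small points of wording, neither a gap: the extension of the functional inequality is not literally to all bounded Borel $\varphi$ but to nested balls, i.e.\ $S\,\tilde\nu(B_\gamma(z,\eps))^{2/2^*_\gamma}\le\tilde\mu(B_\gamma(z,2\eps))$ via cut-offs, which suffices for the differentiation argument; and the exclusion of a diffuse part of $\tilde\nu$ does not use $v_n\rightharpoonup 0$ a second time --- once the clean reverse H\"older between $\tilde\mu$ and $\tilde\nu$ is in hand (and that is where the weak convergence to zero was spent), atomicity of $\tilde\nu$ is purely measure-theoretic, since $\tilde\nu\ll\tilde\mu$ with density vanishing at every non-atom of the finite measure $\tilde\mu$ because $2^*_\gamma/2>1$. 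Also note the exponent typo in the hypothesis ($|\nabla_\gamma u_n|^2\rightharpoonup\mu$ is what is meant), and, as you observe, tightness of $|u_n|^{2^*_\gamma}$ is not needed for this local statement --- it only enters through Lemma \ref{bennaoum}.
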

Lemma \ref{thm:concentration} requires the tight convergence of the sequence of measures involving $2^*_\gamma$. However, the direct proof of this condition is rather difficult and technical, also for $\gamma=0$. Thus, we report \cite[Lemma 3.3]{alves2024brezis}, that is a version of Lemma \ref{thm:concentration} known as {\it escape to infinity principle}, where the concentration at infinity is contained in the parameters $\nu_\infty$ and $\mu_\infty$. 

\begin{lemma}
\label{bennaoum}
Suppose that $(u_n) \subseteq D_0^\gamma(\R^N)$ is bounded and define
$$ \nu_\infty \coloneqq  \lim_{R\to+\infty} \limsup_{n\to\infty} \int_{B^c_\gamma(0,R)} |u_n|^{2^*_\gamma} \dz, \quad \mu_\infty \coloneqq  \lim_{R\to+\infty} \limsup_{n\to\infty} \int_{B^c_\gamma(0,R)} |\nabla_\gamma u_n|^2 \dz.$$
Then, $S\nu_\infty^{2/2^*_\gamma} \leq \mu_\infty$ and
\begin{displaymath}
\begin{split}
\limsup_{n\to\infty} \int_{\R^N} |u_n|^{2^*_\gamma} \dz = \int_{\R^N} \, {\rm d}\nu + \nu_\infty, \qquad
\limsup_{n\to\infty} \int_{\R^N} |\nabla_\gamma u_n|^2 \dz = \int_{\R^N} \, {\rm d}\mu + \mu_\infty, 
\end{split}
\end{displaymath}
where $\nu$ and $\mu$ are as in Lemma \ref{thm:concentration}.
\end{lemma}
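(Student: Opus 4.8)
The plan is to adapt Lions' \emph{concentration at infinity} argument to the degenerate setting, replacing the Euclidean cut‑offs by functions compatible with the anisotropic dilations $\delta_\lambda$, so that their Grushin gradients decay like $1/\rho$. First I would fix the objects. Since $(u_n)$ is bounded in $D_0^\gamma(\R^N)$, after passing to a subsequence (not relabelled) we may assume $u_n\rightharpoonup u$ in $D_0^\gamma(\R^N)$, $|\nabla_\gamma u_n|^2\dz\rightharpoonup\mu$ and $|u_n|^{2^*_\gamma}\dz\rightharpoonup\nu$ weakly in $\M(\R^N,\R)$ and, by a further diagonal extraction, that the inner $\limsup$'s defining $\nu_\infty$ and $\mu_\infty$ are attained as limits along a countable family of radii increasing to $+\infty$; nothing below depends on the chosen subsequence once it is fixed. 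That $\mu$ and $\nu$ have exactly the structure of Theorem~\ref{thm:concentration} is a local statement: applying the bounded-domain version \cite[Theorem 3.3]{MMSp} on each ball $B_\gamma(0,k)$, $k\in\N$ (which satisfies condition (D)), to the restricted measures and letting $k\to+\infty$ produces the at most countable set $\A=\{z_j\}$, the weights $\mu_j,\nu_j$ and the relations $\nu=|u|^{2^*_\gamma}+\sum_j\nu_j\delta_{z_j}$, $\mu\ge|\nabla_\gamma u|^2+\sum_j\mu_j\delta_{z_j}$, $\mu_j\ge S\nu_j^{2/2^*_\gamma}$. No tight convergence is needed here: the weak limit $\nu$ carries this decomposition, but possibly less mass than $\limsup_n\int_{\R^N}|u_n|^{2^*_\gamma}\dz$, the defect being precisely $\nu_\infty$.

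Next come the cut-offs and the mass splitting. Fix $\psi\in C^\infty(\R^N;[0,1])$ with $\psi\equiv0$ on $B_\gamma(0,1)$ and $\psi\equiv1$ outside $B_\gamma(0,2)$, and set $\psi_\rho\coloneqq\psi\circ\delta_{1/\rho}$. Since $\nabla_\gamma$ is homogeneous of degree $-1$ under $\delta_\lambda$, one has $|\nabla_\gamma\psi_\rho(z)|=\rho^{-1}|\nabla_\gamma\psi|(\delta_{1/\rho}z)\le C/\rho$, with $\nabla_\gamma\psi_\rho$ supported in the annulus $A_\rho\coloneqq B_\gamma(0,2\rho)\setminus B_\gamma(0,\rho)$; moreover $\chi_{B^c_\gamma(0,2\rho)}\le\psi_\rho^{2^*_\gamma}\le\psi_\rho^{2}\le\chi_{B^c_\gamma(0,\rho)}$. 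Squeezing between these characteristic functions and using the definitions gives at once $\lim_{\rho\to+\infty}\limsup_n\int_{\R^N}\psi_\rho^{2^*_\gamma}|u_n|^{2^*_\gamma}\dz=\nu_\infty$ and $\lim_{\rho\to+\infty}\limsup_n\int_{\R^N}\psi_\rho^{2}|\nabla_\gamma u_n|^{2}\dz=\mu_\infty$. For the first identity, write for fixed $\rho$ the decomposition $\int_{\R^N}|u_n|^{2^*_\gamma}\dz=\int_{\R^N}(1-\psi_\rho^{2^*_\gamma})|u_n|^{2^*_\gamma}\dz+\int_{\R^N}\psi_\rho^{2^*_\gamma}|u_n|^{2^*_\gamma}\dz$; the first integrand is continuous with compact support, so that term tends to $\int_{\R^N}(1-\psi_\rho^{2^*_\gamma})\,{\rm d}\nu$ as $n\to+\infty$, whence $\limsup_n\int_{\R^N}|u_n|^{2^*_\gamma}\dz=\int_{\R^N}(1-\psi_\rho^{2^*_\gamma})\,{\rm d}\nu+\limsup_n\int_{\R^N}\psi_\rho^{2^*_\gamma}|u_n|^{2^*_\gamma}\dz$. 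Letting $\rho\to+\infty$ (monotone convergence on the $\nu$-term, the previous limit on the other) yields $\limsup_n\int_{\R^N}|u_n|^{2^*_\gamma}\dz=\int_{\R^N}\,{\rm d}\nu+\nu_\infty$; the identity for $|\nabla_\gamma u_n|^{2}$ follows verbatim with $\psi_\rho^{2}$ in place of $\psi_\rho^{2^*_\gamma}$.

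For the inequality $S\nu_\infty^{2/2^*_\gamma}\le\mu_\infty$, I would test the Sobolev inequality \eqref{eq:sobconst} against $\psi_\rho u_n\in D_0^\gamma(\R^N)$ and expand $\nabla_\gamma(\psi_\rho u_n)=\psi_\rho\nabla_\gamma u_n+u_n\nabla_\gamma\psi_\rho$:
\[
S\Big(\int_{\R^N}\psi_\rho^{2^*_\gamma}|u_n|^{2^*_\gamma}\dz\Big)^{2/2^*_\gamma}\le\int_{\R^N}\psi_\rho^{2}|\nabla_\gamma u_n|^{2}\dz+2\int_{A_\rho}\psi_\rho u_n\,\nabla_\gamma u_n\cdot\nabla_\gamma\psi_\rho\dz+\int_{A_\rho}u_n^{2}|\nabla_\gamma\psi_\rho|^{2}\dz.
\]
On the bounded set $A_\rho$ the compact embedding $D_0^\gamma(\R^N)\compact L^2(B_\gamma(0,2\rho))$ gives $u_n\to u$ in $L^2(A_\rho)$, while $\nabla_\gamma u_n\rightharpoonup\nabla_\gamma u$ in $L^2(\R^N)$; hence, for fixed $\rho$, the $\limsup_n$ of the last two terms equals their analogues with $u$, bounded respectively by $C\rho^{-1}\|u\|_{L^2(A_\rho)}\|\nabla_\gamma u\|_{L^2(A_\rho)}$ and $C\rho^{-2}\|u\|_{L^2(A_\rho)}^{2}$. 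Since $B_\gamma(0,\rho)=\delta_\rho(B_\gamma(0,1))$ and $\det\delta_\rho=\rho^{N_\gamma}$, we have $|A_\rho|=c\,\rho^{N_\gamma}$, so Hölder with exponents $2^*_\gamma/2$ and $N_\gamma/2$ yields $\|u\|_{L^2(A_\rho)}^{2}\le c\,\rho^{2}\|u\|_{L^{2^*_\gamma}(A_\rho)}^{2}$; the powers of $\rho$ cancel and both terms are $\le C\|u\|_{L^{2^*_\gamma}(A_\rho)}\big(\|u\|_{L^{2^*_\gamma}(A_\rho)}+\|\nabla_\gamma u\|_{L^2(A_\rho)}\big)\to0$ as $\rho\to+\infty$, because $u\in L^{2^*_\gamma}(\R^N)$ and $|\nabla_\gamma u|\in L^2(\R^N)$. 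Taking $\limsup_n$ in the displayed inequality (the map $t\mapsto t^{2/2^*_\gamma}$ is continuous and nondecreasing, hence commutes with $\limsup$) and then $\rho\to+\infty$, the left-hand side converges to $S\nu_\infty^{2/2^*_\gamma}$ and the right-hand side to $\mu_\infty$, which is the claim.

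I expect the genuine obstacle to be not the measure bookkeeping but the cut-off construction in the degenerate geometry: one must ensure that the \emph{Grushin} gradient of $\psi_\rho$ is $O(1/\rho)$ — which forces $\psi_\rho$ to be built from the dilations $\delta_\lambda$ rather than from $d(\cdot)/\rho$ directly — and then exploit the precise volume growth $|B_\gamma(0,\rho)|\sim\rho^{N_\gamma}$ so that the $\rho$-powers coming from $|\nabla_\gamma\psi_\rho|$ and from Hölder on the annulus cancel exactly. A secondary point, worth a sentence in the write-up, is that the atomic structure of $\mu$ and $\nu$ furnished by Theorem~\ref{thm:concentration} must be recovered \emph{without} tight convergence, hence by localizing to the exhausting family of $d$-balls.
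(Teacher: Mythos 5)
Your argument is correct, but it is worth pointing out that the paper does not prove this lemma at all: it is quoted verbatim from \cite[Lemma 3.3]{alves2024brezis} as an ``escape to infinity'' principle. What you have written is essentially the standard Chabrowski/Ben-Naoum--Troestler--Willem proof of that cited result, transplanted to the Grushin setting: cut-offs $\psi_\rho=\psi\circ\delta_{1/\rho}$ whose Grushin gradient is $O(1/\rho)$ and supported in the annulus $\rho\le d(z)\le 2\rho$, the squeeze $\chi_{B_\gamma^c(0,2\rho)}\le\psi_\rho^{2^*_\gamma}\le\psi_\rho^2\le\chi_{B_\gamma^c(0,\rho)}$, the splitting $\limsup_n(a_n+b_n)=\lim_n a_n+\limsup_n b_n$ against the compactly supported function $1-\psi_\rho^{2^*_\gamma}$, and the Sobolev inequality \eqref{eq:sobconst} applied to $\psi_\rho u_n$ with the error terms killed by the compact embedding $D_0^\gamma(\R^N)\compact L^2(B_\gamma(0,2\rho))$, the bound $|A_\rho|\le C\rho^{N_\gamma}$ and H\"older with exponents $2^*_\gamma/2$ and $N_\gamma/2$. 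All the key computations check out: $\nabla_\gamma\psi_\rho(z)=\rho^{-1}(\nabla_\gamma\psi)(\delta_{1/\rho}z)$, the cross term genuinely converges (strong $\times$ weak in $L^2(A_\rho)$) to its analogue with $u$, and the $\rho$-powers cancel exactly as you say, so the tails of $\|u\|_{L^{2^*_\gamma}}$ and $\|\nabla_\gamma u\|_2$ force the errors to zero. Your self-contained proof buys independence from the external reference; the paper's citation buys brevity.

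Two small blemishes, neither fatal. First, the ``further diagonal extraction'' at the start, turning the inner $\limsup$'s into limits, is never used afterwards and is best deleted: passing to a sub-subsequence can strictly decrease both $\nu_\infty,\mu_\infty$ and $\limsup_n\int_{\R^N}|u_n|^{2^*_\gamma}\dz$, so if you did rely on it you would prove the identities only for the sub-subsequence rather than for the sequence along which $\mu,\nu$ are the weak limits; as written, your squeeze and sum-splitting arguments work directly with $\limsup$'s, so just fix the subsequence realizing the weak convergences and stop there. Second, the opening recovery of the atomic structure of $\mu,\nu$ via exhaustion by $d$-balls is not needed for this lemma (only the total masses of the weak limits enter), and your closing remark that cut-offs built from $d(\cdot)/\rho$ must be avoided is overstated: since $|\nabla_\gamma d|\le 1$ away from the pole, such cut-offs also have Grushin gradient $O(1/\rho)$ and are in fact what one usually writes; your dilation-based $\psi_\rho$ is equivalent to them. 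Also, the convergence $\int_{\R^N}(1-\psi_\rho^{2^*_\gamma})\,{\rm d}\nu\to\nu(\R^N)$ is by dominated convergence with respect to the finite measure $\nu$ (monotonicity in $\rho$ need not hold unless $\psi$ is radial in $d$), a cosmetic fix.
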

Lemmas \ref{thm:concentration} and \ref{bennaoum} will be used to avoid concentration both at points, i.e. $\nu_j=\mu_j=0$ for all $j\in \A$, and at
infinity, i.e. $\nu_\infty=\mu_\infty=0$.

\begin{rmk}
\label{nujmuj}
As observed in \cite[Remark 2.4]{BG}, the numbers $\nu_j$, $\mu_j$ in Theorem \ref{thm:concentration} represent the concentration of the measures $\nu$ and $\mu$ at the points $z_j$ for all $j\in \A$, i.e. $\nu_j=\nu(\{z_j\})$ and $\mu_j=\mu(\{z_j\})$.
\end{rmk}
We recall two celebrated results in Nonlinear Analysis.
\begin{thm}[Ambrosetti, Rabinowitz; cf. {\cite[Theorem 5.40]{MMP}}]
\label{mountainpass}
Let $(X,\|\cdot\|_X)$ be a Banach space and $J\in C^1(X)$. Let $u_0,u_1\in X$, $\rho>0$ such that
\begin{displaymath}
    \max\{J(u_0),J(u_1)\} < \inf_{\partial B(u_0,\rho)} J \eqqcolon  b \quad \mbox{and} \quad \|u_1-u_0\|_X > \rho.
\end{displaymath}
Set
\begin{displaymath} 
    \Phi\coloneqq  \{\phi\in C^0([0,1];X) : \, \phi(0)=u_0, \, \phi(1)=u_1\}, \quad c_M\coloneqq  \inf_{\phi\in\Phi} \sup_{t\in[0,1]} J(\phi(t)).
\end{displaymath} 
If $J$ satisfies ${\rm (PS)}_{c_M}$, then $c_M\geq b$ and there exists $u\in X$ such that both $J(u)=c_M$ and $J'(u)=0$. Moreover, if $c_M=b$, then $u$ can be taken on $\partial B(u_0,\rho)$.
\end{thm}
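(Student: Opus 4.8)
\emph{Plan of proof.} This is the classical mountain pass theorem, and the natural route is the deformation lemma. I would begin with the topological linking: for any $\phi\in\Phi$ the continuous map $t\mapsto\|\phi(t)-u_0\|_X$ equals $0$ at $t=0$ and exceeds $\rho$ at $t=1$ (because $\|u_1-u_0\|_X>\rho$), hence attains the value $\rho$ at some $t_\phi\in(0,1)$; thus $\phi(t_\phi)\in\partial B(u_0,\rho)$ and $\sup_{t\in[0,1]}J(\phi(t))\ge J(\phi(t_\phi))\ge b$. Taking the infimum over $\Phi$ gives $c_M\ge b$, and consequently $c_M>\max\{J(u_0),J(u_1)\}$; this strict inequality is exactly what will allow the deformations below to keep the endpoints $u_0,u_1$ fixed.

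The engine of the argument is the deformation lemma: if $J\in C^1(X)$, $c\in\R$, and there are $\bar\varepsilon>0$, $\delta>0$ with $\|J'(u)\|_{X^*}\ge\delta$ whenever $|J(u)-c|\le\bar\varepsilon$, then for all sufficiently small $\varepsilon\in(0,\bar\varepsilon)$ there is a continuous map $\eta\colon[0,1]\times X\to X$ with $\eta(0,\cdot)=\mathrm{id}_X$, with $\eta(s,u)=u$ whenever $|J(u)-c|\ge\bar\varepsilon$, with $s\mapsto J(\eta(s,u))$ non-increasing, and with $\eta(1,\{J\le c+\varepsilon\})\subseteq\{J\le c-\varepsilon\}$. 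I would recall its proof: since $J'$ is only continuous one cannot flow along $-J'$ directly, so one uses Palais' lemma to obtain a locally Lipschitz \emph{pseudo-gradient} field $V$ on $\{J'\ne 0\}$ with $\|V(u)\|_X\le 2\|J'(u)\|_{X^*}$ and $\langle J'(u),V(u)\rangle\ge\|J'(u)\|_{X^*}^2$; cutting $V$ off smoothly outside the strip $\{|J-c|<\bar\varepsilon\}$, renormalising it, and letting $\eta(\cdot,u)$ be the flow of the resulting bounded, locally Lipschitz vector field — global in time by Picard--Lindel\"{o}f — one reads off all the stated properties from these two inequalities.

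For the existence of a critical point at level $c_M$ I would argue by contradiction: if $c_M$ is a regular value, then ${\rm (PS)}_{c_M}$ yields $\bar\varepsilon,\delta>0$ with $\|J'(u)\|_{X^*}\ge\delta$ on $\{|J(u)-c_M|\le\bar\varepsilon\}$ — otherwise a sequence with $J(u_n)\to c_M$ and $J'(u_n)\to 0$ would, by ${\rm (PS)}_{c_M}$, subconverge to a critical point at level $c_M$. Shrinking $\bar\varepsilon$ so that also $c_M-\bar\varepsilon>\max\{J(u_0),J(u_1)\}$, the map $\eta$ of the deformation lemma fixes $u_0$ and $u_1$; picking $\phi\in\Phi$ with $\sup_tJ(\phi(t))\le c_M+\varepsilon$ and setting $\tilde\phi(t)\coloneqq\eta(1,\phi(t))$, we get $\tilde\phi\in\Phi$ yet $\sup_tJ(\tilde\phi(t))\le c_M-\varepsilon<c_M$, contradicting the definition of $c_M$. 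Hence there is $u\in X$ with $J(u)=c_M$ and $J'(u)=0$. (This step can equally be carried out through Ekeland's variational principle applied to $\phi\mapsto\sup_tJ(\phi(t))$ on $\Phi$, which produces a Palais--Smale sequence at level $c_M$ to which ${\rm (PS)}_{c_M}$ then applies.)

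Finally, suppose $c_M=b$ and, for contradiction, that $K_{c_M}\coloneqq\{u:J(u)=c_M,\,J'(u)=0\}$ is disjoint from $\mathcal N\coloneqq\partial B(u_0,\rho)$. By ${\rm (PS)}_{c_M}$, $K_{c_M}$ is compact, so $\mathrm{dist}(K_{c_M},\mathcal N)=3\sigma>0$; put $\mathcal O\coloneqq\{u:\mathrm{dist}(u,\mathcal N)\le\sigma\}$. As above, ${\rm (PS)}_{c_M}$ gives $\bar\varepsilon,\delta>0$ with $\|J'(u)\|_{X^*}\ge\delta$ on $\{|J(u)-c_M|\le\bar\varepsilon\}\cap\{\mathrm{dist}(u,\mathcal O)\le\sigma\}$, and we shrink $\bar\varepsilon$ so that $c_M-\bar\varepsilon>\max\{J(u_0),J(u_1)\}$. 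The \emph{localised} deformation lemma — the same construction, but with $V$ cut off also outside the $\sigma$-neighbourhood of $\mathcal O$, keeping $\|\eta(s,u)-u\|_X\le\sigma$ — then yields $\eta$ fixing $u_0,u_1$ with $\eta(1,\{J\le c_M+\varepsilon\}\cap\mathcal O)\subseteq\{J\le c_M-\varepsilon\}$ for $\varepsilon$ small. Choosing $\phi\in\Phi$ with $\sup_tJ(\phi(t))\le c_M+\varepsilon$ and $\tilde\phi\coloneqq\eta(1,\phi(\cdot))\in\Phi$, the topological linking gives $s^*\in(0,1)$ with $\tilde\phi(s^*)\in\mathcal N\subseteq\mathcal O$; then $\|\phi(s^*)-\tilde\phi(s^*)\|_X\le\sigma$ forces $\phi(s^*)\in\mathcal O$, and since $J(\phi(s^*))\le c_M+\varepsilon$ we obtain $J(\tilde\phi(s^*))\le c_M-\varepsilon$, contradicting $J(\tilde\phi(s^*))\ge\inf_{\mathcal N}J=b=c_M$. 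Thus $K_{c_M}\cap\partial B(u_0,\rho)\ne\emptyset$. The real obstacle throughout is the deformation lemma and its localised variant: because $J'$ is merely continuous, the flow must be built from a Palais pseudo-gradient instead of the differential, and one must verify that the cut-off and renormalisation keep $u_0,u_1$ fixed while pushing the relevant sub-level sets strictly downward; once that is in place, the remaining steps are short contradiction arguments.
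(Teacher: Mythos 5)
The paper does not prove this statement: it is quoted as a classical result with a citation to \cite[Theorem 5.40]{MMP}, so there is no in-paper argument to compare against. Your sketch reproduces the standard proof from the literature correctly — the intermediate-value linking giving $c_M\ge b>\max\{J(u_0),J(u_1)\}$, the quantitative deformation lemma built from a Palais pseudo-gradient flow (with the endpoints fixed because they lie below the deformed strip) yielding a critical point at level $c_M$ under ${\rm (PS)}_{c_M}$, and, for the limit case $c_M=b$, the localised deformation with displacement at most $\sigma$ near $\partial B(u_0,\rho)$, where the key observation that $\|J'\|$ is bounded away from zero on the strip intersected with a neighbourhood of $\partial B(u_0,\rho)$ indeed follows from ${\rm (PS)}_{c_M}$ and ${\rm dist}(K_{c_M},\partial B(u_0,\rho))>0$ — so the proposal is sound and in line with the cited source.
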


\begin{thm}[Schauder; cf. {\cite[Theorem 6.3.2 p.119]{GD}}]
\label{schauder}
Let $K$ be a non-empty bounded convex subset of a normed linear space $E$, and let $T\colon K\to K$ be a compact map. Then $T$ has a fixed point.
\end{thm}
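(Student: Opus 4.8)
The plan is to reduce the infinite-dimensional fixed point problem to a finite-dimensional one, where Brouwer's fixed point theorem applies, by means of the classical \emph{Schauder projections}, and then to pass to the limit using the compactness of $T$.

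First, since $T$ is compact, $M \coloneqq \overline{T(K)}$ is a nonempty compact subset of $E$. Fix $\eps>0$. By compactness there are $y_1,\dots,y_n\in T(K)\subseteq K$ with $M\subseteq\bigcup_{i=1}^{n}B(y_i,\eps)$. Put $E_\eps\coloneqq\operatorname{conv}\{y_1,\dots,y_n\}$; since $K$ is convex and each $y_i\in K$, we get $E_\eps\subseteq K$, and $E_\eps$ is a nonempty compact convex subset of the finite-dimensional subspace of $E$ spanned by $y_1,\dots,y_n$. One cannot simply replace $K$ by $\overline{\operatorname{conv}}(T(K))$, since in a general, possibly incomplete, normed space the closed convex hull of a compact set need not be compact; this is why I proceed directly in finite dimensions.

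Next I would introduce the Schauder projection $P_\eps\colon M\to E_\eps$,
\begin{displaymath}
P_\eps(y)\coloneqq\frac{\sum_{i=1}^{n}\max\{0,\,\eps-\|y-y_i\|\}\,y_i}{\sum_{i=1}^{n}\max\{0,\,\eps-\|y-y_i\|\}},\qquad y\in M.
\end{displaymath}
The denominator is strictly positive on $M$ because the balls $B(y_i,\eps)$ cover $M$, so $P_\eps$ is well defined and continuous; it takes values in $E_\eps$, being a convex combination of the $y_i$; and $\|P_\eps(y)-y\|<\eps$ for every $y\in M$, since only the indices with $\|y-y_i\|<\eps$ contribute with positive weight. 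Hence $T_\eps\coloneqq P_\eps\circ(T|_{E_\eps})\colon E_\eps\to E_\eps$ is a continuous self-map of a compact convex finite-dimensional set.

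Finally, Brouwer's fixed point theorem (in the version for compact convex subsets of a finite-dimensional normed space) yields $x_\eps\in E_\eps\subseteq K$ with $x_\eps=P_\eps(T(x_\eps))$, so that $\|x_\eps-T(x_\eps)\|<\eps$. Taking $\eps=1/k$, $k\in\N$, produces $(x_k)\subseteq K$ with $\|x_k-T(x_k)\|<1/k$. Since $T(x_k)\in M$ and $M$ is compact, along a subsequence $T(x_{k_j})\to z\in M$; then $x_{k_j}\to z$ as well, and continuity of $T$ gives $z=\lim_j T(x_{k_j})=T(z)$. Moreover $z=T(z)\in T(K)\subseteq K$, so $z$ is a fixed point of $T$ in $K$. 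I expect the main point to be the construction and the two key properties (continuity and the $\eps$-approximation) of the Schauder projection, together with the verification that $E_\eps$ is a genuine compact convex set on which Brouwer's theorem is available; the assumption that $K$ is merely bounded, and not necessarily closed, causes no trouble, since the fixed point automatically lands in $T(K)\subseteq K$.
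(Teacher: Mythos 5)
The paper does not actually prove this statement: Theorem \ref{schauder} is quoted from Granas--Dugundji and used as a black box in the proof of Theorem \ref{exsol}, so there is no internal proof to compare against. Your argument is the standard one behind the cited result: a finite $\eps$-net for the compact set $M=\overline{T(K)}$ with centers in $T(K)$, the Schauder projection $P_\eps$ onto the convex hull $E_\eps$ of the net, Brouwer's theorem for the continuous self-map $P_\eps\circ T$ of the compact convex finite-dimensional set $E_\eps\subseteq K$, and a limiting argument on the resulting approximate fixed points. These steps are set up correctly, including the pertinent remark that one should not replace $K$ by $\overline{\operatorname{conv}}(T(K))$, whose compactness would require completeness of $E$.

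There is, however, one genuine lacuna at the very end. From $\|x_{k_j}-T(x_{k_j})\|<1/k_j$ and $T(x_{k_j})\to z\in M$ you deduce $x_{k_j}\to z$ and then write $z=\lim_j T(x_{k_j})=T(z)$ ``by continuity''; but to evaluate $T$ at $z$ you must already know $z\in K$, and your subsequent justification ``$z=T(z)\in T(K)\subseteq K$'' presupposes exactly that, so it is circular. The point is not cosmetic: $M=\overline{T(K)}$ is closed in $E$, whereas $K$ is only assumed bounded and convex, so a priori $z$ could lie in $\overline{K}\setminus K$. Indeed, if ``compact map'' is read as ``continuous with relatively compact image in $E$'', the statement as displayed is false without further hypotheses: take $E=\R$, $K=(0,1]$, $T(x)=x/2$; then $T$ maps $K$ into $K$, is continuous with relatively compact image, and has no fixed point, and your approximate fixed points converge to $0\notin K$. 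The argument closes precisely under the definition used in the cited book, where a compact map has image contained in a compact subset of the \emph{codomain} $K$: then $T(K)\subseteq C_0$ for some compact $C_0\subseteq K$, hence $M\subseteq C_0\subseteq K$ and $z\in K$, after which continuity legitimately gives $T(z)=z$. (Alternatively, closedness of $K$ would do, since then $z\in\overline{K}=K$.) You should make this step explicit; as written, the final implication does not follow from what precedes it.
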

We will apply the following result about boundedness of sequences defined by recursion.

\begin{lemma}{\cite[Lemma 2.7]{BG}}
\label{reclemma}
Let the sequence $(b_k) \subseteq [0,+\infty)$ satisfy, for some $c>0$, $K>1$ and $\alpha>1$, the recursion
\begin{displaymath}
b_k \leq c + Kb_{k-1}^{\alpha} \quad \mbox{for all}\;\; k\in\N.
\end{displaymath}
If
\begin{equation}
\label{smallnessconds}
K b_0^{\alpha-1}\leq\frac{1}{2} \quad \mbox{and} \quad Kc^{\alpha-1}<2^{-\alpha},
\end{equation}
then the sequence $(b_k)$ is bounded.
\end{lemma}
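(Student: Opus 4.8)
The plan is to prove Lemma \ref{reclemma} by showing that the auxiliary sequence obtained by replacing the inequality with equality is bounded, and then invoking monotonicity of the recursion to control $(b_k)$ from above. Concretely, I would define $(a_k)$ by $a_0 = b_0$ and $a_k = c + K a_{k-1}^\alpha$, and first prove by induction that $b_k \le a_k$ for all $k$: the base case is trivial, and if $b_{k-1} \le a_{k-1}$ then $b_k \le c + K b_{k-1}^\alpha \le c + K a_{k-1}^\alpha = a_k$ since $t \mapsto K t^\alpha$ is nondecreasing on $[0,+\infty)$. Thus it suffices to bound $(a_k)$.

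Next I would guess a uniform bound of the form $a_k \le M$ for a suitable $M>0$ and check that the smallness conditions \eqref{smallnessconds} make the induction close. A natural candidate is $M = \max\{2b_0,\, 2c\}$, or even just $M = 2c$ combined with a separate argument for the first few steps; let me take $M = \max\{2c, 2b_0\}$ to be safe. The base case $a_0 = b_0 \le M$ holds. For the inductive step, assuming $a_{k-1} \le M$, I compute
\begin{displaymath}
a_k = c + K a_{k-1}^\alpha \le c + K M^\alpha = c + K M^{\alpha-1} M.
\end{displaymath}
The goal is to show $K M^{\alpha-1} \le \tfrac12$, so that $a_k \le c + \tfrac12 M \le \tfrac12 M + \tfrac12 M = M$ (using $c \le \tfrac12 M$). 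Now $K M^{\alpha-1} = K \max\{2c, 2b_0\}^{\alpha-1} = 2^{\alpha-1}\max\{K c^{\alpha-1}, K b_0^{\alpha-1}\}$, and by \eqref{smallnessconds} we have $K b_0^{\alpha-1} \le \tfrac12$ and $K c^{\alpha-1} < 2^{-\alpha}$, whence $2^{\alpha-1}\max\{K c^{\alpha-1}, K b_0^{\alpha-1}\} < 2^{\alpha-1}\cdot \tfrac12 \cdot \max\{1, 2^{1-\alpha}\cdot 1\}$. Since $\alpha > 1$ gives $2^{1-\alpha} < 1$, the maximum inside is at most $2^{-1}$ coming from the $b_0$ term — more carefully, $2^{\alpha-1} K b_0^{\alpha-1} \le 2^{\alpha-1}\cdot\tfrac12 = 2^{\alpha-2}$, which is $\le \tfrac12$ only when $\alpha \le 2$. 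So the constant $2$ in the definition of $M$ is slightly too small for the $b_0$ term when $\alpha$ is large; I would instead set $M = \max\{(2K)^{-1/(\alpha-1)},\, 2c\}$ if $b_0 \le (2K)^{-1/(\alpha-1)}$, which is exactly what $K b_0^{\alpha-1} \le \tfrac12$ encodes, and then $K M^{\alpha-1} = \max\{\tfrac12,\, K(2c)^{\alpha-1}\} = \max\{\tfrac12,\, 2^{\alpha-1}K c^{\alpha-1}\} \le \max\{\tfrac12,\, 2^{\alpha-1}\cdot 2^{-\alpha}\} = \tfrac12$, using $K c^{\alpha-1} < 2^{-\alpha}$. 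This cleanly gives $a_k \le M$ for all $k$ by induction, hence $b_k \le a_k \le M$.

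The only delicate point — and the step I expect to require the most care — is choosing the bound $M$ so that both smallness hypotheses feed in with the right powers of $2$; the sharp choice is $M \coloneqq \max\{2c,\, (2K)^{-1/(\alpha-1)}\}$ (note $b_0 \le (2K)^{-1/(\alpha-1)}$ is precisely the first condition in \eqref{smallnessconds}), and then one verifies directly that $KM^{\alpha-1} \le \tfrac12$ and $c \le \tfrac12 M$, so the induction $a_{k-1}\le M \Rightarrow a_k = c + K a_{k-1}^\alpha \le \tfrac12 M + \tfrac12 M = M$ closes. Everything else is the routine monotone-comparison argument described above, and no further input is needed.
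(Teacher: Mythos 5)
Your argument is correct. The paper itself gives no proof of this lemma (it is quoted verbatim from \cite[Lemma 2.7]{BG}), so there is nothing in-text to compare against; your monotone-comparison plus induction with $M \coloneqq \max\{2c,\,(2K)^{-1/(\alpha-1)}\}$ is exactly the standard argument, and the final version closes cleanly: $a_0=b_0\le (2K)^{-1/(\alpha-1)}\le M$, $KM^{\alpha-1}=\max\{2^{\alpha-1}Kc^{\alpha-1},\tfrac12\}\le\tfrac12$ by the second smallness condition, and $c\le M/2$, so $a_k\le M$ for all $k$. One small simplification you may note: $Kc^{\alpha-1}<2^{-\alpha}$ is equivalent to $2c<(2K)^{-1/(\alpha-1)}$, so your $M$ is in fact just $(2K)^{-1/(\alpha-1)}$, i.e.\ the induction amounts to showing $Kb_k^{\alpha-1}\le\tfrac12$ for every $k$ (and the auxiliary sequence $(a_k)$ can be dispensed with by inducting on $b_k\le M$ directly). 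The abandoned first attempt with $M=\max\{2b_0,2c\}$ is correctly self-diagnosed and does not affect the validity of the final proof.
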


\medskip
For technical reasons we will need the following generalization of a well-known inequality. The proof is elementary, but we include it for completeness.

\begin{lemma}[Peter-Paul's inequality]\label{peterpaul}
Suppose that $\eps>0$, $b_1,\ldots,b_k\geq 0$ and $p_1,\ldots,p_k\in(1,+\infty)$ satisfy $\sum_{j=1}^k p_j^{-1} = 1$. The following inequality holds:
\[
    \prod_{j=1}^k b_j \leq \eps \sum_{j=1}^{k-1} \frac{b_j^{p_j}}{p_j} + \frac{1}{\eps^{p_k-1}}\cdot \frac{b_k^{p_k}}{p_k}.
\]
\end{lemma}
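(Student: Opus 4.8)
The plan is to reduce the multiplicative estimate to the classical two-factor Young inequality by a standard induction on $k$, handling the weight $\eps$ carefully so that it lands entirely on the last term. First I would dispose of the degenerate case: if some $b_j = 0$ the left-hand side is $0$ and the right-hand side is a sum of non-negative terms, so the inequality is trivial; hence I may assume $b_1,\dots,b_k > 0$.

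For the base case $k=2$ we have $p_1^{-1} + p_2^{-1} = 1$, i.e.\ $p_1$ and $p_2$ are conjugate exponents. Applying the ordinary Young inequality to the pair $(\eps^{1/p_1} b_1, \eps^{-1/p_1} b_2)$ — equivalently, writing $b_1 b_2 = (\eps^{1/p_1} b_1)(\eps^{-1/p_1} b_2)$ — gives
\[
b_1 b_2 \leq \frac{(\eps^{1/p_1} b_1)^{p_1}}{p_1} + \frac{(\eps^{-1/p_1} b_2)^{p_2}}{p_2} = \eps \frac{b_1^{p_1}}{p_1} + \eps^{-p_2/p_1}\frac{b_2^{p_2}}{p_2},
\]
and since $p_2/p_1 = p_2(1 - p_2^{-1}) = p_2 - 1$, this is exactly the claimed inequality for $k=2$.

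For the inductive step, suppose the statement holds for $k-1$ factors and that $p_1,\dots,p_k \in (1,+\infty)$ satisfy $\sum_{j=1}^k p_j^{-1} = 1$. Set $q \coloneqq \left(\sum_{j=1}^{k-1} p_j^{-1}\right)^{-1} = (1 - p_k^{-1})^{-1} = p_k/(p_k-1)$, so that $q^{-1} + p_k^{-1} = 1$ and $q \in (1,+\infty)$. Apply the two-factor case, with weight $\eps$, to the product $\bigl(\prod_{j=1}^{k-1} b_j\bigr)\cdot b_k$ and exponents $q, p_k$:
\[
\prod_{j=1}^k b_j \leq \eps\,\frac{\bigl(\prod_{j=1}^{k-1} b_j\bigr)^{q}}{q} + \frac{1}{\eps^{p_k-1}}\cdot\frac{b_k^{p_k}}{p_k}.
\]
It remains to bound the first term. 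Put $\tilde p_j \coloneqq p_j/q$ for $j = 1,\dots,k-1$; then $\tilde p_j \in (1,+\infty)$ (since $p_j^{-1} < q^{-1}$ forces $p_j > q$) and $\sum_{j=1}^{k-1} \tilde p_j^{-1} = q \sum_{j=1}^{k-1} p_j^{-1} = q\cdot q^{-1} = 1$, so the induction hypothesis applies to $\prod_{j=1}^{k-1} b_j^{q}$ with, say, weight $\eps = 1$:
\[
\Bigl(\prod_{j=1}^{k-1} b_j\Bigr)^{q} = \prod_{j=1}^{k-1} (b_j^{q}) \leq \sum_{j=1}^{k-2} \frac{(b_j^{q})^{\tilde p_j}}{\tilde p_j} + \frac{(b_{k-1}^{q})^{\tilde p_{k-1}}}{\tilde p_{k-1}} = q\sum_{j=1}^{k-1} \frac{b_j^{p_j}}{p_j},
\]
using $(b_j^q)^{\tilde p_j} = b_j^{p_j}$ and $\tilde p_j^{-1} = q^{-1} p_j$, hence $1/\tilde p_j = q/p_j$. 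Substituting back cancels the factor $q$ and yields $\prod_{j=1}^k b_j \leq \eps \sum_{j=1}^{k-1} \frac{b_j^{p_j}}{p_j} + \eps^{-(p_k-1)}\frac{b_k^{p_k}}{p_k}$, completing the induction.

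The argument is entirely elementary and there is no genuine obstacle; the only point requiring a little care is the bookkeeping of exponents in the inductive step — verifying that the auxiliary exponents $q$ and $\tilde p_j$ lie in $(1,+\infty)$ and that the weight $\eps$ can be confined to the last factor while the induction hypothesis is invoked with trivial weight. One could alternatively prove it in one shot from the $k$-term weighted AM–GM inequality $\prod a_j^{\theta_j} \le \sum \theta_j a_j$ with $\theta_j = p_j^{-1}$, choosing $a_j = \eps b_j^{p_j}$ for $j<k$ and $a_k = \eps^{-(p_k-1)}b_k^{p_k}$ so that $\prod_{j<k}(\eps)^{1/p_j}\cdot \eps^{-(p_k-1)/p_k} = \eps^{\,1 - 1/p_k}\eps^{-(p_k-1)/p_k} = 1$; I would likely present whichever is shorter, but the induction above is the most self-contained.
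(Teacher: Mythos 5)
Your proposal is correct. Your primary argument, however, takes a different route from the paper: the paper's proof is a two-line affair that quotes the $k$-factor Young inequality $\prod_{j=1}^k b_j \le \sum_{j=1}^k b_j^{p_j}/p_j$ (citing Brezis) and then simply rewrites $\prod_{j=1}^k b_j = \eps \prod_{j=1}^{k-1} b_j \cdot (b_k/\eps)$, applies that inequality to the rescaled factors, and multiplies out, producing the weight $\eps^{-(p_k-1)}$ on the last term. Your induction on $k$ instead derives everything from the two-factor Young inequality, splitting off $b_k$ with the conjugate exponent $q=p_k/(p_k-1)$ and invoking the inductive hypothesis with trivial weight for the exponents $\tilde p_j=p_j/q$; the bookkeeping you carry out ($q,\tilde p_j\in(1,\infty)$, $\sum\tilde p_j^{-1}=1$, the factor $q$ cancelling) is all correct, and the degenerate case $b_j=0$ and the base case are handled properly. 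What your induction buys is self-containedness — only the two-term inequality is needed — at the cost of length; what the paper's (and your closing weighted AM--GM remark, which is essentially the same computation as the paper's, with $a_j=\eps b_j^{p_j}$ for $j<k$ and $a_k=\eps^{-(p_k-1)}b_k^{p_k}$) buys is brevity, at the cost of citing the multi-factor Young inequality as known. Either version would serve the paper's purposes.
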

\begin{proof}
Take any $b_1,\ldots,b_k\geq 0$ and $p_1,\ldots,p_k\in(1,+\infty)$ such that $\sum_{j=1}^k p_j^{-1} = 1$. By Young's inequality for products,  see for instance \cite[p. 92]{B},
we have
\[
\prod_{j=1}^k b_j \leq \sum_{j=1}^{k} \frac{b_j^{p_j}}{p_j}.
\]
For any $\eps>0$ we write
\begin{align*}
    \prod_{j=1}^k b_j &=\eps\prod_{j=1}^{k-1} b_j\,\cdot \frac{b_k}{\eps} 
    \leq \eps \left( \sum_{j=1}^{k-1} \frac{b_j^{p_j}}{p_j} + \frac{1}{p_k} \left( \frac{b_k}{\eps} \right)^{p_k}\right) \\
    &= \eps \sum_{j=1}^{k-1} \frac{b_j^{p_j}}{p_j} + \frac{1}{\eps^{p_k-1}} \frac{b_k^{p_k}}{p_k}.
\end{align*}
\end{proof}
The following result is a straightforward adaptation of the weak comparison principle (cf. \cite[Theorem 3.4.1]{PS}) to our framework. The proof relies on the same ideas as in \cite[Lemma 2.8]{BG}; however, we include it here for completeness.
\begin{lemma}
\label{weakcomp}
Suppose that $u,v\in D_0^\gamma(\R^N)$ satisfy
\begin{equation}
\label{weakcomptest}
\langle -\Delta_\gamma u,\varphi \rangle \leq \langle -\Delta_\gamma v,\varphi\rangle
\end{equation}
for all $\varphi\in D_0^\gamma(\R^N)$ such that $\varphi\geq 0$ in $\R^N$ and $\varphi\equiv 0$ on $\{u\leq v\}$. Then $u\leq v$ in $\R^N$.
\end{lemma}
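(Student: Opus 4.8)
The plan is to test the hypothesis with a carefully chosen nonnegative function supported on the set $\{u>v\}$, and conclude that this set is negligible. First I would set $\varphi \coloneqq (u-v)_+ = \max\{u-v,0\}$. Since $u,v \in D_0^\gamma(\R^N)$, the difference $u-v$ lies in $D_0^\gamma(\R^N)$, and by (the Grushin analogue of) Stampacchia's lemma — the same tool invoked in the proof of Lemma~\ref{posparts} — its positive part $(u-v)_+$ also belongs to $D_0^\gamma(\R^N)$, with $\nabla_\gamma \varphi = \nabla_\gamma(u-v)\,\chi_{\{u>v\}}$ a.e. Moreover $\varphi \geq 0$ in $\R^N$ and $\varphi \equiv 0$ on $\{u \leq v\}$, so $\varphi$ is an admissible test function in \eqref{weakcomptest}.

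Next I would rewrite the inequality \eqref{weakcomptest} with this choice as
\begin{displaymath}
\int_{\R^N} \nabla_\gamma(u-v)\,\nabla_\gamma\varphi \dz \leq 0,
\end{displaymath}
which, upon substituting $\nabla_\gamma\varphi = \nabla_\gamma(u-v)\,\chi_{\{u>v\}}$, becomes
\begin{displaymath}
\int_{\{u>v\}} |\nabla_\gamma(u-v)|^2 \dz \leq 0.
\end{displaymath}
Hence $\nabla_\gamma(u-v) = 0$ a.e. on $\{u>v\}$, which means $\nabla_\gamma \varphi = 0$ a.e. on all of $\R^N$, i.e. $\|\varphi\|_\gamma = 0$. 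Since $D_0^\gamma(\R^N) \hookrightarrow L^{2^*_\gamma}(\R^N)$ by the Sobolev--Gagliardo--Nirenberg inequality \eqref{eq:sobconst}, $\|\varphi\|_\gamma = 0$ forces $\varphi = 0$ a.e. in $\R^N$, that is $(u-v)_+ = 0$ a.e., which is precisely $u \leq v$ in $\R^N$ (recall our convention that ``in $\R^N$'' means ``a.e.\ in $\R^N$'').

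\textbf{Main obstacle.} The only genuinely non-routine point is the justification that $(u-v)_+ \in D_0^\gamma(\R^N)$ and that its Grushin gradient has the expected truncated form; this is where the degeneracy of $\Delta_\gamma$ on $\Sigma = \{0\}\times\R^\ell$ could in principle cause trouble, but it is handled exactly as in Lemma~\ref{posparts} via the chain rule for Lipschitz functions composed with Sobolev functions (Stampacchia's lemma, cf. \cite[Corollary 2.2]{gutierrezlanconelli}), since this argument is insensitive to whether the ambient gradient is the Euclidean one or the Grushin one — it only uses the measurability structure and the a.e.\ identity $\nabla_\gamma f(u) = f'(u)\nabla_\gamma u$ for Lipschitz $f$. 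Everything else is a one-line energy estimate, so I expect the proof to be short, essentially mirroring \cite[Lemma 2.8]{BG}.
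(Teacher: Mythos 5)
Your proposal is correct and follows essentially the same route as the paper: test \eqref{weakcomptest} with $(u-v)_+$, use Stampacchia's lemma (cf. \cite[Corollary 2.2]{gutierrezlanconelli}) to justify admissibility and the truncated gradient formula, deduce $\int_{\{u>v\}}|\nabla_\gamma(u-v)|^2\dz\le 0$, hence $\|(u-v)_+\|_\gamma=0$ and so $(u-v)_+=0$ a.e., the last step being exactly the paper's appeal to $(u-v)_+\in D_0^\gamma(\R^N)$ (via the Sobolev embedding). No gaps.
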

\begin{proof}
Testing \eqref{weakcomptest} with $(u-v)_+\in D_0^\gamma(\R^N)$, and by Stampacchia's lemma (cf., e.g., \cite[Corollary 2.2]{gutierrezlanconelli}), gives
$$ \int_{\{u>v\}} \left( \nabla_\gamma u-\nabla_\gamma v\right)^2 \dz \leq 0. $$
Hence, $\| (u-v)_+\|_\gamma =0$ in $\R^N$, which implies $(u-v)_+=0$ in $\R^N$, since $(u-v)_+\in D_0^\gamma(\R^N)$.
\end{proof}
We also state a weak comparison principle for exterior domains, inspired by \cite[Corollary p.830]{DB}. We omit the proof, since it is similar to the proof of Lemma \ref{weakcomp}.
\begin{lemma}
\label{weakcomp_ext}
Let $\Omega \subset \R^N$ be a smooth, bounded domain. Suppose that $u, v \colon \Omega^c \to \R$ are measurable functions such that $|\nabla_\gamma u|,|\nabla_\gamma v| \in L^2(\Omega^c)$. Suppose moreover that 
\begin{displaymath}
    \langle-\Delta_{\gamma} u,\varphi\rangle \leq \langle-\Delta_{\gamma} v,\varphi \rangle 
\end{displaymath}
for all $\varphi\in D^\gamma(\Omega^c)$ such that $\varphi\ge0$ in $\Omega^c$. Moreover, assume $u \leq v$ on $\partial \Omega$, i.e., $(u - v)^+ \in D_0^\gamma(\Omega^c)$. Then $u \leq v$ in $\Omega^c$.
\end{lemma}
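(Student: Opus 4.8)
\textbf{Proof plan for Lemma \ref{weakcomp_ext}.}

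The strategy is the same as in the proof of Lemma \ref{weakcomp}, using the exterior-domain analogue of the test function. First I would observe that the hypothesis $(u-v)^+\in D_0^\gamma(\Omega^c)$ is precisely what is needed to make $(u-v)^+$ an admissible test function: it is nonnegative, lies in $D_0^\gamma(\Omega^c)$, and its support does not ``see'' the boundary $\partial\Omega$ in the sense relevant to the weak formulation. So I would insert $\varphi = (u-v)^+$ into the inequality $\langle -\Delta_\gamma u,\varphi\rangle \le \langle -\Delta_\gamma v,\varphi\rangle$, obtaining
\[
\int_{\Omega^c} \nabla_\gamma u \cdot \nabla_\gamma (u-v)^+ \dz \le \int_{\Omega^c} \nabla_\gamma v \cdot \nabla_\gamma (u-v)^+ \dz,
\]
which rearranges to $\int_{\Omega^c} (\nabla_\gamma u - \nabla_\gamma v)\cdot \nabla_\gamma (u-v)^+ \dz \le 0$.

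Next, by Stampacchia's lemma in the Grushin setting (cf. \cite[Corollary 2.2]{gutierrezlanconelli}), $\nabla_\gamma (u-v)^+ = \nabla_\gamma(u-v)\,\chi_{\{u>v\}}$ almost everywhere, so the left-hand side equals $\int_{\{u>v\}} |\nabla_\gamma(u-v)|^2 \dz$. Combining with the displayed inequality gives
\[
\int_{\{u>v\}} |\nabla_\gamma (u-v)|^2 \dz \le 0,
\]
hence $\nabla_\gamma (u-v)^+ = 0$ almost everywhere in $\Omega^c$, i.e. $\|(u-v)^+\|_\gamma = 0$. Since $(u-v)^+\in D_0^\gamma(\Omega^c)$ and $\|\cdot\|_\gamma$ is a norm on that space (by the Sobolev–Gagliardo–Nirenberg inequality on $\Omega^c$, which controls the $L^{2^*_\gamma}$-norm by the Grushin gradient norm), this forces $(u-v)^+ = 0$ almost everywhere, that is, $u\le v$ in $\Omega^c$.

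The only genuinely delicate point — and the reason the authors say they omit the proof as ``similar'' — is verifying that $(u-v)^+$ is legitimately an admissible test function in the weak inequality for $-\Delta_\gamma$ on the exterior domain. This requires that $D_0^\gamma(\Omega^c)$ functions can indeed be used as test functions against distributional inequalities of the stated type, which in turn relies on $u,v$ having $L^2$ Grushin gradients on $\Omega^c$ together with the boundary condition $(u-v)^+\in D_0^\gamma(\Omega^c)$; a density argument approximating $(u-v)^+$ by $C_c^\infty(\Omega^c)$ functions, combined with the continuity of both sides of the inequality in the $D_0^\gamma(\Omega^c)$ topology, closes this gap exactly as in \cite[Corollary p.830]{DB} and \cite[Lemma 2.8]{BG}.
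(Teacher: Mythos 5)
Your proof is correct and follows exactly the route the paper intends (it omits the proof as being identical to Lemma \ref{weakcomp}): test with $(u-v)^+$, use Stampacchia's lemma to reduce to $\{u>v\}$, and conclude $(u-v)^+=0$ from $\|(u-v)^+\|_\gamma=0$ and the Sobolev inequality on $D_0^\gamma(\Omega^c)$. The only superfluous step is your closing density argument: since the hypothesis allows any nonnegative test function in $D^\gamma(\Omega^c)$ and $(u-v)^+\in D_0^\gamma(\Omega^c)\subseteq D^\gamma(\Omega^c)$, it is admissible directly.
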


We will also need the following doubling Lemma.
\begin{lemma}[{\cite[Theorem 5.1]{Souplet}}]\label{lem_dou}
Let $(X,d)$ be a complete metric space, and let $\emptyset \neq D \subset W \subset X$, with $W$ closed. Set $\Gamma\coloneqq W \setminus D$. Finally, let $k>0$ and $M\colon D \to (0,+\infty)$ be bounded on compact subsets of $D$. If $z\in D$ is such that
$$M(z)>\frac{2k}{d(z,\Gamma)},$$
then there exists $\xi\in D$ such that
\begin{equation}\label{cond_dou}
M(\xi)>\frac{2k}{d(\xi,\Gamma)}, \quad M(\xi)\ge M(z),
\end{equation}
and
$$M(\hat{z})\le 2M(\xi) \quad \mbox{for all} \;\; \hat{z}\in D\cap \overline{B}_X(\xi,kM^{-1}(\xi)),$$
where $\overline{B}_X(z_0,r)$ is the closure of the $X$-ball having center $z_0$ and radius $r$.
\end{lemma}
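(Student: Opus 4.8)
The plan is to argue by contradiction: assuming that no point $\xi$ with the three stated properties exists, I would construct an infinite sequence in $D$ along which $M$ blows up while staying inside a compact subset of $D$, contradicting the hypothesis that $M$ is bounded on compact subsets of $D$. Set $z_0 \coloneqq z$, and suppose inductively that a point $z_n \in D$ has been produced which satisfies $M(z_n) \ge M(z)$ and $M(z_n) > 2k/d(z_n,\Gamma)$ (both valid for $z_0$ by hypothesis). Since $z_n$ is, by the contradiction assumption, not an admissible choice for $\xi$, and since the first two required conclusions already hold for it, the only property that can fail is the \emph{doubling} one; hence there exists $z_{n+1} \in D \cap \overline{B}_X\!\left(z_n, kM^{-1}(z_n)\right)$ with $M(z_{n+1}) > 2M(z_n)$.

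Next I would check that $z_{n+1}$ inherits the inductive hypotheses. Trivially $M(z_{n+1}) > 2M(z_n) \ge M(z)$. For the interior condition, $M(z_n) > 2k/d(z_n,\Gamma)$ gives $kM^{-1}(z_n) < \tfrac{1}{2} d(z_n,\Gamma)$, so by the triangle inequality
\[
d(z_{n+1},\Gamma) \ge d(z_n,\Gamma) - d(z_{n+1},z_n) \ge d(z_n,\Gamma) - kM^{-1}(z_n) > \tfrac{1}{2} d(z_n,\Gamma),
\]
and combining this with $M(z_{n+1}) > 2M(z_n)$ yields $2k/M(z_{n+1}) < k/M(z_n) < \tfrac{1}{2} d(z_n,\Gamma) < d(z_{n+1},\Gamma)$, i.e. $M(z_{n+1}) > 2k/d(z_{n+1},\Gamma)$. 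Thus, unless the construction stops at some finite stage with an admissible $\xi$, it produces an infinite sequence $(z_n) \subseteq D$ with $M(z_n) > 2^n M(z_0)$ and $d(z_{n+1},z_n) \le kM^{-1}(z_n) < 2^{-n} kM^{-1}(z_0)$.

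The last bound makes $(z_n)$ a Cauchy sequence, so by completeness of $X$ it converges to some $z_* \in X$. Moreover $\sum_{n\ge 0} d(z_{n+1},z_n) \le 2kM^{-1}(z_0) < d(z_0,\Gamma)$, so $d(z_n,\Gamma) \ge \delta \coloneqq d(z_0,\Gamma) - 2kM^{-1}(z_0) > 0$ for every $n$, and hence $d(z_*,\Gamma) \ge \delta > 0$ as well (the distance function is $1$-Lipschitz). Since $W$ is closed and $z_n \in D \subseteq W$, we get $z_* \in W$; since $z_* \notin \Gamma = W \setminus D$, it follows that $z_* \in W \setminus \Gamma = D$. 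Therefore $\{z_n : n \in \N\} \cup \{z_*\}$ is a compact subset of $D$ on which $M$ is unbounded, a contradiction. Hence the construction must terminate, and its terminal point is the desired $\xi$. (When $\Gamma = \emptyset$ one uses the convention $d(\cdot,\emptyset) = +\infty$; then $D = W$ is closed, so $z_* \in D$ directly, and the argument is unchanged.)

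I expect the only genuinely delicate point to be the propagation of the interior condition $M(z_n) > 2k/d(z_n,\Gamma)$ along the recursion: it is precisely this that keeps the step-lengths summable against $d(z_0,\Gamma)$ and traps the limit $z_*$ inside $D$, which is exactly what makes the boundedness hypothesis on $M$ usable. Everything else is routine triangle-inequality bookkeeping together with completeness and compactness of a convergent sequence.
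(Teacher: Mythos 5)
Your proof is correct. The paper does not prove this lemma at all — it is quoted from Pol\'a\v{c}ik--Quittner--Souplet \cite[Theorem 5.1]{Souplet} — and your argument is essentially the original one from that reference: the contradictory recursion that doubles $M$ at each step, the propagation of the interior condition $M(z_n)>2k/d(z_n,\Gamma)$, the geometric summability of the step lengths against $d(z_0,\Gamma)-2kM^{-1}(z_0)>0$, and the compactness/boundedness contradiction at the limit point (which lies in $D=W\setminus\Gamma$ because $W$ is closed and the limit stays at distance $\ge\delta>0$ from $\Gamma$) are exactly the standard ingredients, and all the details you give check out.
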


\begin{rmk}
Let $X=\R^N$ and $\Omega$ be an open subset of $\R^N$. Put $D\coloneqq \Omega$ and $W\coloneqq \overline{\Omega}$, so that $\Gamma=\partial\Omega$. Then $\overline{B}_X(\xi,kM^{-1}(\xi))\subset D$ for all $\xi\in D$. Indeed, since $D$ is open, $\eqref{cond_dou}$ implies that
$$d(\xi,X\setminus D)=d(\xi,\Gamma)>2kM^{-1}(\xi).$$
\end{rmk}

We conclude this section recalling the definition of the weak Lebesgue spaces.  For any $s\in(0,\infty)$, we define the weak Lebesgue space $L^{s,\infty}(\R^N)$ as the set of all measurable functions $u\colon \R^N\to\R$ such that
$$\|u\|_{s,\infty}\coloneqq\sup_{h>0}\left(h \left|\left\{\left|u\right|>h\right\}\right|^{1/s}\right)<\infty.$$
The quantity $\|\cdot\|_{s,\infty}$ makes $L^{s,\infty}(\R^N)$ a quasi-normed space (see for instance \cite{Gra}). Moreover, the embedding $L^s(\R^N)\hookrightarrow L^{s,\infty}(\R^N)$ is continuous (see \cite[Proposition 1.1.6]{Gra}). 
The same holds for
\begin{equation}
\label{embedding}
L^{s,\infty}(\Omega)\hookrightarrow L^{s-\eps}(\Omega) \quad \mbox{for all} \;\; \eps\in(0,s),
\end{equation}
provided $\Omega$ has finite measure (see, e.g., \cite[Exercise 1.1.11]{Gra}).
Incidentally, we recall the following interpolation inequality (see \cite[Proposition 1.1.14]{Gra}): given any $0<p,q\leq\infty$,
\begin{equation}
\label{interpolation}
\|f\|_r \leq C \|f\|_{p,\infty}^\iota \|f\|_{q,\infty}^{1-\iota} \quad \mbox{for all} \;\; f\in L^{p,\infty}(\R^N)\cap L^{q,\infty}(\R^N),
\end{equation}
where $\frac{1}{r}=\frac{\iota}{p}+\frac{1-\iota}{q}$ and $C>0$ is a suitable constant depending on $p,q,r$. We also set
\begin{displaymath}
    2_{*,\gamma}\coloneqq\frac{2(N_\gamma-1)}{N_\gamma-2}.
\end{displaymath}

\section{The existence result}
\label{sec:existence}
Hereafter we will tacitly retain assumption \ref{hypf}. 
The present section is devoted to prove the first part of Theorem \ref{mainthm}, that is the existence of at least one positive weak solution to \eqref{prob}, i.e.

\begin{thm}
\label{exsol}
There exists $\Lambda>0$ such that, for any $\lambda_1\in (0,\Lambda)$ and any $\lambda_2 \in [0,\lambda_1]$, the problem 
\begin{equation}
\label{probnotdecay}
\left\{
\begin{alignedat}{2}
-\Delta_\gamma u &=\lambda_1  w_1 (z) u^{-\eta} + \lambda_2 w_2(z) \lvert \nabla_\gamma u \rvert^{r-1}+ u^{2^*_\gamma-1} \quad &&\mbox{in} \;\; \R^N, \\
u &> 0 \quad &&\mbox{in} \;\; \R^N, \\
\end{alignedat}
\right.
\end{equation}
admits a solution $u\in D_0^\gamma(\R^N)$.
\end{thm}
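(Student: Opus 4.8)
The plan is to follow the outline sketched in the introduction, splitting the proof into a variational step (solving the truncated and frozen problem) and a fixed-point step (unfreezing the convection). First I would fix an arbitrary $v\in D_0^\gamma(\R^N)$, pick the subsolution $\underline u_{\lambda_1}$ produced by Lemma~\ref{subsol}, and study the auxiliary equation \eqref{eq:varprob}. The associated energy functional
\begin{displaymath}
J(u) = \frac12\int_{\R^N}|\nabla_\gamma u|^2\dz - \int_{\R^N} A(z,u)\dz - \frac{1}{2^*_\gamma}\int_{\R^N} u_+^{2^*_\gamma}\dz,
\end{displaymath}
where $A(z,s)=\int_0^s a(z,t)\dt$, is of class $C^1$ on $D_0^\gamma(\R^N)$ because the truncation removes the singularity and the frozen term $\lambda_2 w_2|\nabla_\gamma v|^{r-1}$ is an $L^{(2^*_\gamma)'}$ datum by \ref{hypf} and Sobolev. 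I would then verify the mountain pass geometry (Lemma~\ref{mountainpassgeometry}): coercivity near the origin follows from the Sobolev inequality \eqref{eq:sobconst} once $\lambda_1,\lambda_2$ are small, using Peter-Paul's inequality (Lemma~\ref{peterpaul}) to absorb the sublinear and singular-truncated contributions; the existence of a point with negative energy far out comes from the critical term, which dominates along a ray $t\mapsto tu_0$. The Palais-Smale analysis (Lemma~\ref{PS}) is where the concentration-compactness machinery of Theorem~\ref{thm:concentration} and Lemma~\ref{bennaoum} enters: a $(\mathrm{PS})_c$ sequence is bounded (standard, using $\langle J'(u_n),u_n\rangle$ and $J(u_n)$), so $u_n\rightharpoonup u$, and the defect measures $\mu,\nu$ and the masses $\mu_\infty,\nu_\infty$ at infinity are controlled; if $c$ lies below the critical threshold $\hat c$ of \eqref{hatc} one shows each $\nu_j$ and $\nu_\infty$ must vanish (the standard trick: $\mu_j\ge S\nu_j^{2/2^*_\gamma}$ and $\nu_j>0$ would force $c\ge\hat c$), whence $u_n\to u$ strongly after invoking the (S)$_+$ property of Lemma~\ref{lemma:S+}. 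Finally Lemma~\ref{talentihatc} places the mountain pass level $c_M$ below $\hat c$ for small $\lambda_1,\lambda_2$, using Grushin Talenti-type extremals for $S$ (as in \cite{alves2024brezis,loiudice2006}) as test functions; this is the technically heaviest estimate and is the first main obstacle, since the degeneracy of $\Delta_\gamma$ complicates the sharp expansion of the energy along the concentrating family. Applying Theorem~\ref{mountainpass} then yields a critical point $u_v$ of $J$, i.e. a solution of \eqref{eq:varprob}, and the weak comparison Lemma~\ref{weakcomp} together with the subsolution property gives $u_v\ge\underline u_{\lambda_1}>0$, so that $\max\{u_v,\underline u_{\lambda_1}\}^{-\eta}=u_v^{-\eta}$ and $(u_v)_+=u_v$: thus $u_v$ actually solves $-\Delta_\gamma u = \lambda_1 w_1 u^{-\eta}+\lambda_2 w_2|\nabla_\gamma v|^{r-1}+u^{2^*_\gamma-1}$.

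For the unfreezing step I would define, as in \eqref{Sdef}, the set-valued map $\S\colon D_0^\gamma(\R^N)\to 2^{D_0^\gamma(\R^N)}$ sending $v$ to the set of solutions of \eqref{eq:varprob} (for that $v$) whose energy does not exceed the mountain pass level; the previous step shows $\S(v)\neq\emptyset$. One checks that $\S$ maps a large closed ball into itself: a solution $u\in\S(v)$ satisfies, testing with $u$ and using the energy bound, an a priori estimate $\|u\|_\gamma\le\rho_0$ independent of $v$, again for $\lambda_1,\lambda_2$ small and using Peter-Paul. Compactness of $\S$ (Lemma~\ref{Scompact}) follows from the same (S)$_+$ plus concentration-compactness argument: if $v_n$ is bounded and $u_n\in\S(v_n)$, then $|\nabla_\gamma v_n|^{r-1}$ is bounded in $L^{(2^*_\gamma)'}$, the equations pass to the limit, and the energy bound keeps us below $\hat c$ so no mass escapes, giving strong convergence of a subsequence of $u_n$. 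Lower semicontinuity of $\S$ (Lemma~\ref{lsc}) for small parameters is the more delicate continuity property, needed so that the minimal selection $\T(v)\coloneqq\min\S(v)$ is well-defined and continuous — here one uses that $\S(v)$ is downward directed (a lattice argument on solutions, comparing $u\wedge\tilde u$ via Lemma~\ref{weakcomp}) so that the minimal element is the minimum, as recalled in the preliminaries on posets.

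Once $\T$ is shown to be a compact, continuous self-map of a bounded convex set, Schauder's fixed point theorem (Theorem~\ref{schauder}) produces $u=\T(u)\in\S(u)$, i.e. a $u\in D_0^\gamma(\R^N)$ with $u\ge\underline u_{\lambda_1}>0$ solving
\begin{displaymath}
-\Delta_\gamma u = \lambda_1 w_1(z)u^{-\eta}+\lambda_2 w_2(z)|\nabla_\gamma u|^{r-1}+u^{2^*_\gamma-1}\quad\text{in }\R^N,
\end{displaymath}
which is exactly \eqref{probnotdecay}. The threshold $\Lambda$ is the minimum of the finitely many smallness requirements collected along the way (the mountain pass geometry bound, the $\hat c$-level bound, the self-map bound, and the lower-semicontinuity condition), matching \eqref{smallness}, \eqref{smallness2}, \eqref{smallness3}, \eqref{lsccond2}. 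The two steps I expect to fight hardest with are the sub-critical-level estimate $c_M<\hat c$ in the Grushin geometry and the lower semicontinuity of $\S$ under the freezing, since both require quantitative control that the uniform ellipticity of the Laplacian would otherwise hand us for free; everything else is a fairly faithful, if laborious, transcription of the $p$-Laplacian arguments of \cite{BG} into the $X$-elliptic setting, using the compact embeddings on $d$-balls and the Harnack-type tools cited in the preliminaries.
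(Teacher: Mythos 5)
Your overall route coincides with the paper's: truncate and freeze, run mountain pass with the concentration--compactness threshold $\hat c$ of \eqref{hatc}, then unfreeze via the multimap $\S$, its minimal selection $\T$, and Schauder. However, two sub-steps as you describe them would not go through. First, the minimal selection: you claim $\S(v)$ is downward directed ``by comparing $u\wedge\tilde u$ via Lemma~\ref{weakcomp}''. The pointwise minimum of two solutions is in general only a \emph{super}solution (this is exactly Lemma~\ref{lemma:supersolutions}); the weak comparison principle cannot upgrade it to a solution, let alone one with energy below the threshold. The paper fills this by minimizing the functional truncated between $\underline u_{\lambda_1}$ and $\overline u=\min\{u_1,u_2\}$, showing the minimizer $\check u$ solves \eqref{varprob}, satisfies $\underline u_{\lambda_1}\le\check u\le\overline u$ by two applications of Lemma~\ref{weakcomp}, and has $J(\check u)\le 0<c$, so $\check u\in\S(v)$. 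Moreover, downward directedness alone does not give you $\min\S(v)$: you must first produce a \emph{minimal} element, which requires showing that decreasing chains in $\S(v)$ have lower bounds in $\S(v)$ (the paper uses the ${\rm(PS)}_c$ property for decreasing sequences) before invoking Zorn; only then does the poset remark convert minimality into a minimum. Without this, $\T$ is not even well defined.

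Second, the level estimate. You expect the hard point to be a sharp energy expansion along a concentrating Talenti family, but that is neither what is done nor sufficient: the threshold $\hat c$ is \emph{strictly below} $S^{N_\gamma/2}/N_\gamma$ by a quantity of order $\max\{\lambda_1,\lambda_2\}^{2/(1+\eta)}$, so showing the mountain pass level is at most $S^{N_\gamma/2}/N_\gamma$ (which is all a standard bubble expansion gives when no lower-order negative term is exploited) does not place it below $\hat c$. Since the Sobolev extremal is attained in the Grushin setting (\cite{alves2024brezis}), the paper uses it directly as the endpoint of the path and extracts from the truncated singular term a negative contribution of order $\lambda_1$ (via the mass $\omega$ of $w_1$ from \ref{hypf} and the smallness conditions \eqref{smallness2}--\eqref{smallness3}), which beats the $\lambda_1^{2/(1+\eta)}$ deficit in $\hat c$ precisely because $2/(1+\eta)>1$. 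This linear-in-$\lambda_1$ gain is the mechanism your plan must reproduce; the rest of your outline (energy estimate, $(S)_+$ argument, well-definedness of $\S$ on the ball $\B$, compactness, lower semicontinuity, Schauder) matches the paper.
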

We first prove a Lemma, which ensures the existence of a sub-solution to \eqref{probnotdecay}. We also establish its decay as $d(z)\to+\infty$, under stronger assumptions on $\ell$ and $w_1$.
\begin{lemma}\label{subsol}
Let $\eta\in(0,1)$, then there exists a unique $u\in  C^{0,\tau}_\loc(\R^N)$ solution to
\begin{equation}
\label{subprob}
\left\{ \begin{alignedat}{2}
-\Delta_\gamma u &= w_1(z) u^{-\eta} \quad &&\mbox{in} \;\; \R^N, \\
u &> 0 \quad &&\mbox{in} \;\; \R^N. \\
% u(z) &  \to 0 \quad &&  \mbox{as} \;\; d(z)\to+\infty.
\end{alignedat}
\right.
\end{equation}
Moreover, if \ref{condw1} holds, then $w_1u^{-\eta}\in  L^1(\R^N)\cap L^\infty(\R^N) $.
\end{lemma}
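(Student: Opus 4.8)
The plan is to construct the solution by a sub-/super-solution argument combined with a monotone iteration, working first on bounded domains and then exhausting $\R^N$. First I would fix an increasing sequence of smooth bounded domains $\Omega_n = B_\gamma(0,n)$ with $\Omega_n \subset\subset \Omega_{n+1}$ and $\bigcup_n \Omega_n = \R^N$. On each $\Omega_n$ one considers the singular Dirichlet problem $-\Delta_\gamma u = w_1(z) u^{-\eta}$ in $\Omega_n$, $u=0$ on $\partial\Omega_n$; a solution $u_n \in H^1_{0,\gamma}(\Omega_n) \cap C^{0,\tau}_\loc(\Omega_n)$ exists by the by-now-standard theory for singular elliptic problems (minimizing the truncated functionals $\frac12\int|\nabla_\gamma u|^2 - \int W_\varepsilon(z,u)$ with $W_\varepsilon$ an approximation of the singular primitive, or by Perron's method using the Grushin Green function as a barrier), since the compact embedding $H^1_\gamma(\Omega_n) \compact L^q(\Omega_n)$ and the Harnack inequality of \cite{gutierrezlanconelli} are available in this setting. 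Uniqueness on each $\Omega_n$ follows from the strict monotonicity of $s\mapsto s^{-\eta}$: if $u,v$ both solve it, testing the difference of equations with $(u-v)_+$ (which is admissible and vanishes where $u\le v$) and using Lemma \ref{weakcomp} together with the fact that $u^{-\eta} \le v^{-\eta}$ on $\{u>v\}$ gives $u\le v$, and symmetrically $v\le u$.

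Next I would pass to the limit $n\to\infty$. By the weak comparison principle (Lemma \ref{weakcomp}, applied after a zero-extension of $u_n$ outside $\Omega_n$, which is legitimate since $u_{n+1}\ge 0 = u_n$ on $\partial\Omega_n$ and $-\Delta_\gamma u_{n+1} = w_1 u_{n+1}^{-\eta} \ge 0$ there), the sequence $(u_n)$ is nondecreasing. For an upper bound, one uses the fundamental solution: set $\overline u(z) = A\,\Gamma(z-z_1)^{\theta}$ or more simply $\overline u = A(1 + \Gamma(z))$-type comparison functions adapted to the $L^1\cap L^\infty$ decay of $w_1$, so that $-\Delta_\gamma \overline u \ge w_1 \overline u^{-\eta}$; then Lemma \ref{weakcomp} forces $u_n \le \overline u$ for all $n$. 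Interior Harnack plus the interior gradient/Hölder estimates give local uniform bounds on $u_n$ and $|\nabla_\gamma u_n|$ away from $\partial\Omega_n$, so $u_n \to u$ locally uniformly and weakly in $D_0^\gamma$, and $u$ solves the equation in $\R^N$ with $u>0$ (positivity is preserved by the pointwise monotone limit since each $u_n>0$ in its domain by Harnack); local Hölder regularity $u\in C^{0,\tau}_\loc(\R^N)$ is inherited from the uniform interior estimates. Global uniqueness then follows exactly as on bounded domains, using Lemma \ref{weakcomp} with the test function $(u-v)_+ \in D_0^\gamma(\R^N)$.

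For the integrability claim under \ref{condw1}: since $w_1 \in L^\infty$ and $u$ is continuous and bounded below on any fixed ball (Harnack), the function $w_1 u^{-\eta}$ is bounded on bounded sets, so $L^\infty$-integrability only needs control for $d(z)$ large, where one needs a lower bound $u(z) \gtrsim d(z)^{-(N_\gamma-2)}$ coming from comparison with a multiple of $\Gamma$ (since $-\Delta_\gamma u = w_1 u^{-\eta} \ge 0$ and $u$ does not vanish at infinity in a comparison sense — indeed $u \ge c\,\Gamma$ on an exterior domain by Lemma \ref{weakcomp_ext}, because the right-hand side is nonnegative and $u$ dominates a small multiple of $\Gamma$ on the boundary sphere). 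Combining $u^{-\eta} \lesssim d(z)^{\eta(N_\gamma-2)}$ with $w_1(z) \le c_1 d(z)^{-\delta - 2\gamma}$ gives $w_1 u^{-\eta} \lesssim d(z)^{-\delta - 2\gamma + \eta(N_\gamma-2)}$ for $d(z)>R$; the exponent condition $\delta > N_\gamma + \eta(N_\gamma-2)$ ensures $-\delta + \eta(N_\gamma-2) < -N_\gamma$, and since the $d$-balls have volume comparable to their $d$-radius raised to the power $N_\gamma$, the weight $d(z)^{-2\gamma}$ times this is integrable at infinity in the Grushin measure — so $w_1 u^{-\eta} \in L^1(\R^N)$; boundedness for $d(z)>R$ is immediate from the same two estimates. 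The main obstacle I anticipate is the construction and regularity of the solution on bounded domains in the genuinely degenerate Grushin setting — the singular right-hand side is not in $L^\infty$ near a potential zero of $u$, so one must combine a careful barrier argument near $\partial\Omega_n$ (using $\Gamma$ and the homogeneous structure) with the non-homogeneous Harnack inequality of \cite{gutierrezlanconelli} to bootstrap to $C^{0,\tau}_\loc$, and the lack of a full Schauder theory for $\Delta_\gamma$ (noted in the paper) means one cannot simply invoke classical elliptic regularity.
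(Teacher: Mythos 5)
Your overall route (exhaustion by balls $B_\gamma(0,n)$ with singular Dirichlet problems, monotone comparison, passage to the limit, then the exterior comparison with $\sigma\Gamma$ for the integrability claim) is a legitimate alternative to the paper's scheme, which instead regularizes the singularity globally, solving $-\Delta_\gamma u_n = w_1((u_n)_++\tfrac1n)^{-\eta}$ in $\R^N$ by direct minimization and letting $u_n\nearrow u$; your final step for $w_1u^{-\eta}\in L^1\cap L^\infty$ is essentially identical to the paper's. However, there is a concrete gap in your construction of the global upper barrier $\overline u$, and it sits exactly at the degenerate set $\Sigma=\{0\}\times\R^\ell$. A function of the form $A(1+\Gamma)$ is $\Delta_\gamma$-harmonic away from its pole, so $-\Delta_\gamma\overline u=0$ there and the required inequality $-\Delta_\gamma\overline u\ge w_1\overline u^{-\eta}$ fails wherever $w_1>0$. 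The other candidate, a power $A\,\Gamma^\theta$ (i.e. $A\,d(z)^{q}$ with $2-N_\gamma<q<0$), satisfies
\begin{equation*}
-\Delta_\gamma\bigl(A\,d(z)^{q}\bigr) = -Aq(q-2+N_\gamma)\,d(z)^{q-2-2\gamma}\,|x|^{2\gamma},
\end{equation*}
and the factor $|x|^{2\gamma}$ vanishes on $\Sigma$, while \ref{hypf} does not force $w_1$ to vanish there; hence no choice of $A$ yields a supersolution of \eqref{subprob} near $\Sigma\cap\{w_1>0\}$. This is precisely the obstruction the paper has to work around in Lemma \ref{decay_subsolution} (where a second, non-$d$-radial barrier is built near $\Sigma$ and the extra hypotheses $\ell>4$, \eqref{condw1'} are imposed), so it cannot be waved away in the existence step, which moreover must work under \ref{hypf} alone, without \ref{condw1}.

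The gap is repairable in two ways: either replace $\overline u$ by a potential-type supersolution, e.g. $\overline u=\eps+K\int_{\R^N}\Gamma(\cdot-\xi)w_1(\xi)\,{\rm d}\xi$ with $K\ge\eps^{-\eta}$ (bounded because $w_1\in L^1\cap L^\infty$, and $-\Delta_\gamma\overline u=Kw_1\ge w_1\overline u^{-\eta}$), or drop the pointwise upper bound altogether and control the monotone sequence by the uniform energy estimate obtained from testing with $u_n$ and H\"older--Sobolev, which is what the paper does. Two further points you gloss over: ``interior gradient estimates'' for $|\nabla_\gamma u_n|$ are not available in the Grushin setting (the paper stresses this lack; fortunately they are not needed, since the $D_0^\gamma$ bound plus the non-homogeneous Harnack inequality suffice), and positivity of $u_n$ across $\Sigma$ cannot be dismissed as ``by Harnack'': the paper proves it by the classical strong maximum principle on $\R^N\setminus\Sigma$, where $\Delta_\gamma$ is uniformly elliptic, combined with the representation formula of \cite{dambrosiomitidieripohozaev} on $\Sigma$, and you would need either that argument or an explicit weak Harnack inequality for supersolutions of $X$-elliptic operators.
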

\begin{proof}
For all $n\in\N$, consider the regularized problems
\begin{equation}
\label{subregular}
\tag{${\rm\underline{P}}_n$}
-\Delta_\gamma u_n = w_1(z) \left((u_n)_++\frac{1}{n}\right)^{-\eta} \quad \mbox{in} \;\; \R^N.
\end{equation}
Fix any $n\in\N$. Direct methods of Calculus of Variations (see \cite[Theorem I.1.2]{S})
ensure that there exists $u_n\in D_0^\gamma(\R^N)$ solution to \eqref{subregular}. 
The (local) H\"{o}lder-continuity of the solutions $u_n$ comes from the non-homogeneous Harnack inequality for $X$-elliptic operators established in \cite[Theorem 5.5]{gutierrezlanconelli} as already observed in \cite{kogojlanconelli}. 

Taking $(u_n)_-$ as test function in \eqref{subregular} we find that $\| (u_n)_- \|_\gamma =0$, so $u_n\ge 0$ a.e. in $\R^N$. Recall that $\Sigma = \{0\} \times \R^\ell$ is the degenerate set for the Grushin operator which is a uniformly elliptic operator on a bounded domain $\Omega\subset \R^N\setminus\Sigma$. 
Let $\Omega$ be any bounded domain contained in any connected component $\mathcal{C}$ in $\R^N\setminus\Sigma$ \footnote{It is easy to see that $\R^N\setminus\Sigma$ is disconnected if and only if $\ell\ge N-2$.}. By standard regularity theory, $u_n\in C^2(\Omega)$ and either $u_n\equiv 0$ on $\Omega$ or $u_n>0$ in $\Omega$ by the strong maximum principle for uniformly elliptic operators. In the former case $u_n\equiv 0$ on $\mathcal{C}$. Clearly, $u_n\equiv 0$ in $\R^N$ is not a solution of \eqref{subregular}. 
Suppose now that $u_n\equiv 0$ on $\mathcal{C}$. 
If we consider as a test function in \eqref{subregular} a cut-off function with compact support contained in $\mathcal{C}$, the left-hand side of \eqref{subregular} vanishes, while the right-hand side remains strictly positive. Hence $u_n>0$ in $\R^N\setminus\Sigma$.

By \cite[Sect. 3.1]{dambrosiomitidieripohozaev} and recalling that $u_n$ is a continuous function, we have the following representation formula
\begin{displaymath}
    u_n(z) = C \int_{\R^N} \frac{w_1(\xi) \left( \left( (u_n(\xi))_+ + 1/n \right)^{-\eta} \right)}{d(z-\xi)^{N_\gamma -2}} \,\mathrm{d}\xi \quad \text{for a.e.\footnotemark} \ z \in \Sigma,
\end{displaymath}
\footnotetext{With respect to the Lebesgue measure in $\R^\ell$.}
where $d$ is the distance defined in \eqref{eq:d}. The nonnegativity of $w_1$ in condition \ref{hypf} guarantees that $u_n>0$ a.e. in $\Sigma$. 

Testing \eqref{subregular} with $u_n$, besides using H\"{o}lder's and Sobolev's inequalities, yields
\begin{align*}
\|\nabla_\gamma u_n\|_2^2 &= \int_{\R^N} w_1\left(u_n+\frac{1}{n}\right)^{-\eta} u_n \dz \leq \int_{\R^N} w_1u_n^{1-\eta} \dz \leq \|w_1\|_\zeta \|u_n\|_{2^*_\gamma}^{1-\eta} \\
&\leq S^{-\frac{1-\eta}{2}}\|w_1\|_\zeta \|\nabla_\gamma u_n\|_2^{1-\eta},
\end{align*}
provided that $\zeta>1$ satisfies $\frac{1}{\zeta}+\frac{1-\eta}{2^*_\gamma}=1$. We deduce $\|\nabla_\gamma u_n\|_2 \leq (S^{-\frac{1-\eta}{2}}\|w_1\|_\zeta)^{\frac{1}{1+\eta}}$, so that $(u_n)$ is bounded in $ D_0^\gamma(\R^N)$. By reflexivity, $u_n\rightharpoonup u$ in $ D_0^\gamma(\R^N)$ for some $u\in D_0^\gamma(\R^N)$, up to sub-sequences.

Observing that $u_n+\frac{1}{n}>u_{n+1}+\frac{1}{n+1}$ on $\{u_n>u_{n+1}\}$ we get the weak inequality
$$-\Delta_\gamma u_n = w_1(z) \left(u_n+\frac{1}{n}\right)^{-\eta} \leq w_1(z)\left(u_{n+1}+\frac{1}{n+1}\right)^{-\eta} = -\Delta_\gamma u_{n+1} \quad \mbox{on} \;\; \{u_n>u_{n+1}\}.
$$
According to Lemma \ref{weakcomp}, it turns out that $u_n\leq u_{n+1}$ in $\R^N$. Thus, we can define a measurable function $\tilde{u}$ such that $u_n \nearrow \tilde{u}$ in $\R^N$.

We show that $u=\tilde u$ in $\R^N$. For all $k\in\N$, $ D_0^\gamma(\R^N)\compact L^p(B_\gamma(0,k))$, so $u_n \to u$ in $L^p(B_\gamma(0,k))$ and $u_n\to u$ in $B_\gamma(0,k)$, up to sub-sequences.
A diagonal argument ensures that $u_n\to u$ in $\R^N$, whence $u=\tilde{u}$ in $\R^N$ (see, e.g., \cite[p.3044]{GG1} for details).

Now we prove that we can pass to the limit in the weak formulation of \eqref{subregular}. Clearly,
\begin{equation}
\label{sublimit1}
\lim_{n\to\infty} \int_{\R^N}\nabla_\gamma u_n \nabla_\gamma\varphi\dz = \int_{\R^N}  \nabla_\gamma u \nabla_\gamma \varphi \dz.
\end{equation}
 On the other hand, splitting $\varphi=\varphi_+-\varphi_-$, Beppo Levi's monotone convergence theorem guarantees
\begin{displaymath}
\lim_{n\to\infty} \int_{\R^N} w_1\left(u_n+\frac{1}{n}\right)^{-\eta}\varphi_+ \dz = \int_{\R^N} w_1u^{-\eta}\varphi_+ \dz.
\end{displaymath}
Analogously,
\begin{displaymath}
\lim_{n\to\infty} \int_{\R^N} w_1\left(u_n+\frac{1}{n}\right)^{-\eta}\varphi_- \dz = \int_{\R^N} w_1u^{-\eta}\varphi_- \dz,
\end{displaymath}
giving
\begin{equation}
\label{sublimit2}
\lim_{n\to\infty} \int_{\R^N} w_1\left(u_n+\frac{1}{n}\right)^{-\eta}\varphi \dz = \int_{\R^N} w_1u^{-\eta}\varphi \dz.
\end{equation}
Hence \eqref{sublimit1}--\eqref{sublimit2} ensure that $u$ is positive and it solves the equation in  \eqref{subprob}. 

Uniqueness of $u$ can be proved by comparison as follows. If $u_1,u_2\in  D_0^\gamma(\R^N)$ are two solutions to \eqref{subprob} then Lemma \ref{weakcomp} yields
$u_1\leq u_2$ in $\R^N$, since $w_1u_1^{-\eta}<w_1u_2^{-\eta}$ on $\{u_1>u_2\}$. Reversing the roles of $u_1$ and $u_2$ leads to $u_2\leq u_1$ in $\R^N$, whence $u_1=u_2$ in $\R^N$.

To prove that $u\in C^{0,\tau}_\loc(\R^N)$ we first notice that $-\Delta_\gamma u \geq 0$ in $B_\gamma^c(0,R)$ for any fixed $R>0$. Given $\sigma>0$, set $\Phi_\sigma(z)\coloneqq \sigma\Gamma(z)$, where $\Gamma$ is the function defined in \eqref{gammadef}, and observe that $-\Delta_\gamma \Phi_\sigma = 0$ in $B^c(0,R)$. According to the weak comparison principle in exterior domains, Lemma \ref{weakcomp_ext}, there exists $\sigma>0$ small enough such that  
\begin{displaymath}
u\geq \Phi_\sigma\quad \mbox{in}\;\; B_\gamma^c(0,R).
\end{displaymath}
This implies that $u$ satisfies
\begin{equation}\label{eqcomplem}
-\Delta_\gamma u = w_1(z) u^{-\eta} \leq w_1(z) \sigma^{-\eta}d(z)^{\eta\left(N_\gamma - 2\right)} \quad \mbox{in}\;\; B_\gamma^c(0,R).
\end{equation}
Since $u_n\nearrow u$ in $\R^N$, then 
\begin{equation}\label{upos}
u\ge u_1, \quad\text{where} \quad \inf_{B_\gamma(0,R)} u_1>0 \quad\text{for all} \quad  R>0,
\end{equation}
being $u_1$ solution of \eqref{subregular} with $n=1$, implying 
\begin{equation}\label{linfloc}
w_1 u^{-\eta}\in L^\infty_{\loc}(\R^N),
\end{equation}
so we can apply again the non-homogeneous Harnack inequality, \cite[Theorem 5.5]{gutierrezlanconelli}, and the arguments in \cite{kogojlanconelli} which yield that $u\in C^{0,\tau}_\loc(\R^N).$ 

It remains to prove $w_1u^{-\eta}\in L^1(\R^N) \cap L^\infty(\R^N).$ First, recalling \eqref{linfloc}, we have $w_1u^{-\eta}\in L^1(B_\gamma(0,R)) \cap L^\infty(B_\gamma(0,R)).$

On the other hand, by \eqref{eqcomplem}-\ref{condw1} we have
$$w_1u^{-\eta} \leq C d(z)^{\eta(N_\gamma-2)-\delta-2\gamma} \quad \mbox{in} \;\; B^c_\gamma(0,R),$$ 
which implies $w_1u^{-\eta}\in L^1(B^c_\gamma(0,R))\cap L^\infty(B^c_\gamma(0,R))$ since $\delta>\eta(N_\gamma-2)+N_\gamma-2\gamma$, by \ref{condw1}. Summarizing, $w_1u^{-\eta}\in L^1(\R^N) \cap L^\infty(\R^N)$.
\end{proof}

\begin{rmk}\label{rmknat}
A strong maximum principle for weak solutions has been proved in \cite[Theorem 2.6]{balbiswas}, which can also be applied here when $m=1$ and $\gamma\ge 1$.
When $\gamma\in\N$ , following an idea of \cite{biagi2022sublinear}, we can provide an alternative proof of the positivity of $u_n$. We recall that, if $\gamma\in\N$, $-\Delta_\gamma$ satisfies the H\"{o}rmander condition. Since $w_1\ge 0$, we see that $\int_{\R^N} \nabla_\gamma u_n \nabla_\gamma \varphi \dz \geq 0$ for every $\varphi \in C_c^\infty(\R^N)$ such that $\varphi \geq 0$. Hence $\int_{\R^N} u (-\Delta_\gamma \varphi)\dz \geq 0$ for every such $\varphi$. Recalling that $u_n$ is a continuous function, it follows that $u_n$ is a viscosity sub-solution of the operator $-\Delta_\gamma$. The strong maximum principle proved in \cite[Corollary 1.4]{bardi2019new} yields that either $u_n \equiv 0$ or $u_n>0$ in $\R^N$. Again, $u_n\equiv 0$ cannot be solution of \eqref{subregular}.
\end{rmk}

\begin{rmk} \label{rmknat2}
Remark \ref{rmknat} shows a typical difficulty arising in the Grushin operator's framework,  which can be overcome in the case $\gamma\in\N$: the strict positivity of the solution $u$ cannot be expected for our weak solutions. Actually, a Hopf Lemma was proved in \cite{Monti_2006} which would imply $u>0$, provided that $u \in C^2(\R^N)$. However, a full theory of elliptic regularity is not available for weak solutions to degenerate operators, and we do not expect $u$ to be of class $C^2$ across the degenerate set $\Sigma$.
\end{rmk}

Now we prove the decay of the solution $u$ found in Lemma \ref{subsol}. 
\begin{lemma}
\label{decay_subsolution}
    Let $u\in C^{0,\tau}_\loc(\R^N)$ as in Lemma \ref{subsol}, $\ell>4$ and set
\begin{displaymath}
\delta_1 \coloneqq N_\gamma+\eta\,(N_\gamma-2) \quad \mbox{and} \quad \delta_2 \coloneqq\eta(N_\gamma-2) - 2\gamma + (\ell-2)(\gamma+1).   
\end{displaymath} 
If there exist constants $c_1>0$, $R>0$ and $\delta$ such that, for every $z\in \R^N$ with $d(z)>R$, 
\begin{equation}  \label{condw1'} 
    w_1(z)\le c_1 d(z)^{-\delta-2\gamma}, \qquad \delta>\max\{\delta_1,\delta_2\},
\end{equation}
then
    \begin{equation}
        \label{decayd}
            u(z)\to 0\quad \mbox{as}\quad d(z)\to+\infty.
    \end{equation}
\end{lemma}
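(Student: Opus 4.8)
The plan is to feed into the argument the facts already established in (the proof of) Lemma~\ref{subsol}. Write $g\coloneqq w_1u^{-\eta}$. Since $\delta>\delta_1$, assumption \eqref{condw1'} contains \ref{condw1}, so by Lemma~\ref{subsol} we have $g\ge0$, $g\in L^1(\R^N)\cap L^\infty(\R^N)$ and $u\in D_0^\gamma(\R^N)\hookrightarrow L^{2^*_\gamma}(\R^N)$, with $u$ continuous; moreover, combining \eqref{eqcomplem} with \eqref{condw1'}, there are $C,R>0$ with
\[
0 \le -\Delta_\gamma u = g(z) \le C\, d(z)^{\eta(N_\gamma-2)-\delta-2\gamma} \qquad \text{for } d(z)>R ,
\]
the exponent being negative. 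Using the quasi-triangle inequality for the homogeneous gauge $d$ (valid since $d$ is continuous, proper and $\delta_\lambda$-homogeneous), on $B_\gamma(z_0,2)$ one has $d(z)\ge c\,d(z_0)-2$, hence $\|g\|_{L^\infty(B_\gamma(z_0,2))}\to0$ as $d(z_0)\to+\infty$.

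I would then argue by contradiction: if \eqref{decayd} fails there are $\eps>0$ and points $z_k$ with $d(z_k)\to+\infty$ and $u(z_k)\ge\eps$. Applying the non-homogeneous Harnack inequality for $X$-elliptic operators \cite[Theorem~5.5]{gutierrezlanconelli} (the very tool already used for the regularity in Lemma~\ref{subsol}) to the nonnegative solution $u$ of $-\Delta_\gamma u=g$ on $B_\gamma(z_k,2)$ gives, with a constant $C$ independent of $k$,
\[
\eps \le u(z_k) \le \sup_{B_\gamma(z_k,1)} u \le C\Big( \inf_{B_\gamma(z_k,1)} u + \|g\|_{L^\infty(B_\gamma(z_k,2))} \Big) .
\]
Since $\|g\|_{L^\infty(B_\gamma(z_k,2))}\to0$, for $k$ large we get $\inf_{B_\gamma(z_k,1)}u\ge \eps/(2C)$, so
\[
\int_{B_\gamma(z_k,1)} u^{2^*_\gamma}\dz \ge |B_\gamma(z_k,1)|\Big(\tfrac{\eps}{2C}\Big)^{2^*_\gamma} = |B_\gamma(0,1)|\Big(\tfrac{\eps}{2C}\Big)^{2^*_\gamma} ,
\]
a positive constant independent of $k$ (all $d$-balls of radius $1$ have the same Lebesgue measure, the map $z\mapsto z-z_k$ being measure preserving). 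But $u^{2^*_\gamma}\in L^1(\R^N)$ and the balls $B_\gamma(z_k,1)$ escape to infinity, whence $\int_{B_\gamma(z_k,1)}u^{2^*_\gamma}\dz\to0$: a contradiction, proving \eqref{decayd}.

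The delicate point, which I expect to be the main obstacle, is the uniformity in $k$ of the Harnack constant: the $d$-balls are ordinary translates of one another, but $\Delta_\gamma$ is not translation invariant in $x$. Uniformity is recovered from the $y$-translation invariance of $\Delta_\gamma$ and the $\delta_\lambda$-homogeneity of $d$, which reduce $\{B_\gamma(z_k,1)\}$ to two regimes: balls far from $\Sigma$ (where $\Delta_\gamma$ is uniformly elliptic with ellipticity ratio tending to $1$) and balls at bounded $x$-distance from $\Sigma$ (a family parametrized by a compact set, on which the $X$-elliptic structural constants of \cite{gutierrezlanconelli} are uniformly controlled). An alternative, quantitative route, closer to \cite{BG2}, is to compare $u$ from above with an explicit decaying supersolution $\Psi$ on $B_\gamma^c(0,R)$ satisfying $-\Delta_\gamma\Psi\ge C d^{-\sigma_0}$ with $\sigma_0\coloneqq\delta+2\gamma-\eta(N_\gamma-2)$: away from $\Sigma$ a pure power $d^{-\alpha}$ works, since $-\Delta_\gamma(d^{-\alpha})=\alpha(N_\gamma-2-\alpha)|x|^{2\gamma}d^{-\alpha-2-2\gamma}$, while near $\Sigma$, where this degenerates, one must add a correction making $-\Delta_x\Psi$ nondegenerate there. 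Since invoking Lemma~\ref{weakcomp_ext} requires $(u-\Psi)^+\in D_0^\gamma(B_\gamma^c(0,R))$, i.e.\ $|\nabla_\gamma\Psi|\in L^2$ near $\Sigma$, the admissible range of $\alpha$ is constrained simultaneously by the supersolution inequality, by the decay $\Psi\to0$, and by this square-integrability; it is exactly this triple constraint that forces $\ell>4$ and $\delta>\max\{\delta_1,\delta_2\}$, and the argument then additionally yields the rate $u(z)=O(d(z)^{2-N_\gamma})$.
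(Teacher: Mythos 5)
Your main (Harnack-based) route differs from the paper's, which proves the lemma by building explicit supersolutions and comparing via Lemma \ref{weakcomp_ext}; but as written your route has a genuine gap precisely at the point you yourself flag: the uniformity in $k$ of the non-homogeneous Harnack constant on the gauge balls $B_\gamma(z_k,2)$. Your justification for centers with $|x_k|\to\infty$ — that there $\Delta_\gamma$ is ``uniformly elliptic with ellipticity ratio tending to $1$'' — is false: the coefficient matrix is $\mathrm{diag}(1,\dots,1,|x|^{2\gamma},\dots,|x|^{2\gamma})$, so on such balls the ellipticity ratio is $\approx |x_k|^{2\gamma}\to\infty$. If you normalize it by rescaling $y'=|x_k|^{\gamma}y$, the gauge ball becomes a tube of width $O(1)$ in $x$ and length $\approx |x_k|^{\gamma}$ in $y'$, and for uniformly elliptic operators the sup/inf Harnack constant over such tubes grows exponentially in their length (positive harmonic functions like $e^{\pi y}\cos(\pi x/4)$ on a strip show this), so uniform ellipticity alone cannot give a $k$-independent constant. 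What could rescue the argument is the control-distance geometry: gauge balls sit inside Carnot–Carathéodory balls of comparable radius, and for the Grushin fields doubling and Poincaré hold with global constants, so a center-independent non-homogeneous Harnack inequality on control balls is plausible; but \cite[Theorem 5.5]{gutierrezlanconelli} is stated for a fixed bounded domain with structure constants attached to it, so this uniformity over an escaping family of balls is exactly what must be proved, not quoted — and your proposal does not supply it. Note also that, were this step established, your argument would give the decay under \ref{condw1} alone, with no need of $\ell>4$ or $\delta>\delta_2$, i.e.\ a result strictly stronger than the lemma (and than the paper's Remark \ref{rmk:decay}); that is a further sign that the uniform Harnack at infinity is the real mathematical content and cannot be waved through.

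Your fallback sketch is essentially the paper's actual proof: away from $\Sigma$ the barrier is $\Psi=M\,d(z)^q$ with $2-N_\gamma<q<1-N_\gamma/2$ (this window encodes the supersolution inequality, the decay, and the $D^\gamma$-integrability needed to invoke Lemma \ref{weakcomp_ext}, and yields the requirement $\delta>\delta_1$), while near $\Sigma$ the paper takes the corrected barrier $\tilde\Psi=\tilde M e^{-|x|^2}|y|^q$ with $2-\ell<q<-\ell/2$, which is where $\ell>4$ and $\delta>\delta_2$ enter. In your write-up this second route stops at ``one must add a correction making $-\Delta_x\Psi$ nondegenerate there'': exhibiting an admissible near-$\Sigma$ barrier and verifying the triple constraint is the bulk of the proof, so as it stands this is a plan rather than a proof.
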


\begin{proof}
Let $a$ a (small) fixed parameter and define $\Sigma_a\coloneqq \{z = (x,y)\in\R^N:\, |x|\le a\}$. Set also $\Sigma_a^c \coloneqq \R^N\setminus \Sigma_a$. We divide the proof into two cases, according if $z\in \Sigma_a$ or $z\in \Sigma_a^c$.
\medskip

{\bf Case I:} If $z\in B_\gamma^c(0,R)\cap\Sigma_a^c$. Take any $q<0$, $R, M>0$, and define 
\begin{displaymath}
    \Psi(z)\coloneqq M\left( |x|^{2\left(\gamma +1\right)} + (\gamma+1)^2 |y|^2\right)^{\frac{q}{2\left(\gamma+1\right)}} = M d(z)^q, \qquad z\in B_\gamma^c(0,R)\cap\Sigma_a^c.
\end{displaymath}
By direct computation we have
$$\nabla_x \Psi(z)= M q d(z)^{q-2(\gamma+1)}|x|^{2\gamma}x,\qquad \nabla_y \Psi(z)= M q(\gamma+1) d(z)^{q-2(\gamma+1)}y$$
so that
$$\nabla_\gamma\Psi(z)= M q d(z)^{q-2(\gamma+1)}|x|^{\gamma}(|x|^{\gamma}x, (\gamma+1) y).$$
By $(3.2)$-$(3.6)$\footnote{In  $(3.2)$-$(3.6)$, the authors deal with the case $m,\ell \geq 2$. They observe, as indicated in $(3.12)$, that their conclusions remain valid even when $m = 1$.} in \cite{abatangelo2024}, choosing $\rho = d(z)$, we have
$$\int_{\R^m}\int_{\R^\ell}|\nabla_\gamma\Psi|^2\dz= M^2  \int\limits_{\{z:\, d(z) = 1\}}\int_R^\infty \rho^{2q+N_\gamma-3}(\sin\varphi)^{\frac{2\gamma}{\gamma+1}} \,{\rm d}\rho\, {\rm d} \mathcal H_\gamma^{N-1},$$
where $\mathcal{H}_\gamma^{N-1}$ is the $(N-1)$-dimensional measure defined in \cite[pag. 8]{abatangelo2024} and $\varphi\in (0,\pi/2)$. So  $|\nabla_\gamma \Psi| \in L^2(B_\gamma^c(0,R))$ if $q<1-N_\gamma/2$. 
Moreover, note that by \cite[Eq. (5.22)]{ambrosio2003}, it is possible to write
$$
\int_{\R^N} |\Psi(z)|^{2^*_\gamma}\dz = M^{2^*_\gamma} s_N \int_R^{+\infty} \rho^{N_\gamma-1} \rho^{2^*_\gamma q}\,\mathrm{d}\rho,
$$
where $s_N$ is an explicit constant that depends only on $N$, implying $\Psi\in L^{2^*_\gamma}(B_\gamma^c(0,R))$ again if $q<1-N_\gamma/2$.
Furthermore,
\begin{displaymath}
\Delta_\gamma\Psi=Mq(q-2+N_\gamma)\left( |x|^{2\left(\gamma +1\right)} + (\gamma+1)^2 |y|^2\right)^{\frac{q}{2\left(\gamma+1\right)}-1}|x|^{2\gamma}
=Mq(q-2+N_\gamma)d(z)^{q-2-2\gamma}|x|^{2\gamma}.
\end{displaymath}
So $\Psi\in D^\gamma(B_\gamma^c(0,R))$ and it solves
\begin{displaymath}%\label{eqPsi}
\left\{ \begin{alignedat}{2}
-\Delta_\gamma  \Psi &= C_Md(z)^{q-2-2\gamma}|x|^{2\gamma} \quad &&\mbox{in} \;\; B_\gamma^c(0,R), \\
%\Psi &= M \quad &&\mbox{on} \;\; \partial B^c(0,R), \\
\Psi &\to 0 \quad &&\mbox{as} \;\; d(z)\to+\infty,
\end{alignedat}
\right.
\end{displaymath} 
where 
\begin{displaymath}
    C_M \coloneqq -M q (q-2+N_\gamma),
\end{displaymath}
which is positive if we impose the further condition $q>2-N_\gamma$. That is, we suppose $2-N_\gamma<q<1-N_\gamma/2$. 

Since $z\in B_\gamma^c(0,R)\cap\Sigma_a^c$, recalling that $u$ satisfies \eqref{eqcomplem}, by \eqref{condw1'}, we have 
\begin{multline}  
\label{compuPsi}
-\Delta_\gamma u\le c_1 \sigma^{-\eta}d(z)^{\eta(N_\gamma-2)-\delta -2\gamma}\le C_M d(z)^{q-2-2\gamma} a^{2\gamma} \le C_M d(z)^{q-2-2\gamma} |x|^{2\gamma} = \Delta_\gamma \Psi,
\end{multline}
enlarging $M$ if necessary. Note that it is sufficient for the estimate \eqref{compuPsi} to have $$\eta(N_\gamma-2)-\delta-2\gamma \le q-2-2\gamma,$$ that is
$\delta\ge \eta(N_\gamma -2) -q+2,$
which is satisfied when $q>2-N_\gamma$ since $\delta>\delta_1$.

Choose $M$ large enough such that $\Psi(R)>u(R)$. Applying Lemma \ref{weakcomp_ext} to $u$ and $\Psi$ in $\R^N\setminus\Sigma_a$ we obtain
\begin{displaymath}
    %\label{stima_u_1}
    u(z)\le \Psi(z) \quad\hbox{for all $z\in B_\gamma^c(0,R)\cap \Sigma_a^c$},
\end{displaymath}
implying the required decay \eqref{decayd} in $\Sigma_a^c$.
\medskip

{\bf Case II:} If $z\in B_\gamma^c(0,R)\cap\Sigma_a$, we define
$\tilde\Psi(z) \coloneqq \tilde{M} f(x) g(y)$, %where $f$ and $g$ depend only on $x$ and $y$, respectively and 
where $\tilde{M}>0$. By direct computations, we obtain
\begin{equation}\label{laplPsi}
\Delta_\gamma \tilde\Psi = \tilde{M} \Delta_x f(x)\cdot  g(y) + \tilde{M} |x|^{2\gamma} \Delta_y g(y)\cdot f(x).
\end{equation}
We aim to choose $f$ and $g$ accurately in order to recover again the estimate $-\Delta_\gamma u\le -\Delta_\gamma\tilde\Psi$.
Define $f(x) \coloneqq e^{-|x|^2}$ and $g(y) \coloneqq|y|^q$ for some $q<0$, for all $z = (x,y) \in \Sigma_a$. 

First we prove that $\tilde\Psi \in  D^\gamma(B_\gamma^c(0,R)\cap \Sigma_a)$ for some values of $q$. 
By computing
$$
\nabla_\gamma \tilde\Psi = e^{-|x|^2}\left(-2x |y|^q, |x|^{\gamma} |y|^{q-2} q \right)
$$
and taking $z\in B_\gamma^c(0,R)\cap \Sigma_a$, that is we can suppose $|y|\ge a$, we can estimate
\begin{displaymath}   
    |\nabla_\gamma\tilde\Psi|^2 = e^{-2|x|^2} |y|^{2q} \left[4|x|^2 + q^2 |x|^{2\gamma} |y|^{-2} \right]\le  e^{-2|x|^2} |y|^{2q} \left[4|x|^2 + q^2 |x|^{2\gamma} a^{-2} \right],
\end{displaymath}
which is integrable in $B_\gamma^c(0,R)\cap \Sigma_a$ if $q<-\ell/2$. 
Moreover,
$$
\int_{ B_\gamma^c(0,R)\cap\Sigma_a} \tilde{\Psi}^{2^*_\gamma}\dz =\tilde M \int_{|x|<a} \left( e^{-|x|^2}\right)^{2^*_\gamma} \dx \cdot\int_{ B_\gamma^c(0,R) \cap \R^\ell} |y|^{q2^*_\gamma}\dy,
$$
so $\tilde\Psi\in L^{2^*_\gamma}(B_\gamma^c(0,R)\cap\Sigma_a)$ if $q<-\ell/2^*_\gamma$.
Since $2^*_\gamma\ge 2$ we consider $q<-\ell/2$. 

For any $z\in \Sigma_a\cap B_\gamma^c(0,R)$ we have
\begin{displaymath}
    \Delta_x f(x) = e^{-|x|^2}(4|x|^2 - 2m), \quad \Delta_y g(y) =  q(q-2+\ell) |y|^{q-2},    
\end{displaymath}
which are both negative if $a$ is sufficiently small and $q>2-\ell$. This latter condition on $q$ fits with $q<-\ell/2$ since we are supposing $\ell>4$. Recalling \eqref{laplPsi}, we have
\begin{displaymath}
-\Delta_\gamma\tilde \Psi = \tilde{M} (2m - 4|x|^2) e^{-|x|^2} |y|^q + C_{\tilde M} |x|^{2\gamma} e^{-|x|^2} |y|^{q-2}\\\ge C_a \tilde{M} |y|^q.  
\end{displaymath}
where $C_{\tilde M} \coloneqq -\tilde M q (q-2+\ell)>0$ and $C_a\coloneqq (2m - 4a^2) e^{-a^2}$. 
Recalling \eqref{eq:d}, then $d(z)^{\gamma+1}\ge |y|$ for all $z\in\R^N$. We know by \eqref{eqcomplem}-\eqref{condw1'} that
$$-\Delta_\gamma u \le  c_1 \sigma^{-\eta}  d(z)^{\eta(N_\gamma-2)-\delta-2\gamma} \le c_1 \sigma^{-\eta} |y|^{\frac{\eta(N_\gamma-2) -\delta-2\gamma}{\gamma+1}},$$
 since $\eta(N_\gamma-2) - \delta-2\gamma \le 0$ by \eqref{condw1'}. So $-\Delta_\gamma u \le -\Delta_\gamma \tilde \Psi$ for every $z\in B_\gamma^c(0,R)\cap \Sigma_a$,
enlarging $\tilde{M}$ if necessary, by requiring
$$q\ge {\frac{\eta(N_\gamma-2) -\delta-2\gamma}{\gamma+1}} $$
to be satisfied, or equivalently
$$\delta\ge \eta(N_\gamma-2) - 2\gamma - q(\gamma+1),$$
which holds since $q>2-\ell$ and $\delta>\delta_2$.

Moreover, choosing $\tilde M$ large enough such that $\tilde\Psi(R)>u(R)$ in $B_\gamma^c(0,R)\cap \Sigma_a$, we apply again Lemma \ref{weakcomp_ext} with $\Sigma_a$ instead of $\R^N$ and find
\begin{displaymath}
    u(z)\le \tilde\Psi(z) \quad\hbox{for all $z\in B_\gamma^c(0,R)\cap \Sigma_a$}.
\end{displaymath}
Since $\tilde\Psi(z) \to 0$ as $d(z)\to+\infty$, then \eqref{decayd} is proved also in $\Sigma_a$.
\end{proof}

\begin{rmk}
\label{rmk:decay}
We have proved the decay by constructing appropriate barrier functions and then  applying the weak comparison principle for exterior domains, Lemma \ref{weakcomp_ext}. Since the Grushin operator is not invariant under the action of $O(N)$, standard radial functions as power of the distance $d(\cdot)$ are not suitable candidates, in particular near the degenerate set $\Sigma$. We have constructed some barrier functions adapted to the geometry of the problem; this choice has forced us to impose $\ell>4$ and condition $\eqref{condw1'}$, which is slightly stronger then \ref{condw1}. We conjecture that the restrictive condition $\ell>4$ is only of technical nature.
\end{rmk}

\subsection{A truncated and frozen problem} \label{trfr} 
We now make problem \eqref{probnotdecay} variational by truncating and freezing it.

Let $\underline u_{\lambda_1}\in C^{0,\tau}_\loc(\R^N)$  be the solution to
\begin{displaymath}
\left\{ \begin{alignedat}{2}
-\Delta_\gamma u &=\lambda_1 w_1(z) u^{-\eta} \quad &&\mbox{in} \;\; \R^N, \\
u &> 0 \quad &&\mbox{in} \;\;\R^N, %\\
% u(z) &  \to 0 \quad &&  \mbox{as} \;\; d(z)\to+\infty,
\end{alignedat}
\right.
\end{displaymath}
whose existence and uniqueness are guaranteed by Lemma \ref{subsol}, replacing $w_1$ with $\lambda_1 w_1$. According to linearity of the Grushin operator and the singularity $u^{-\eta}$, we have 
\begin{equation}\label{usub}
\underline u_{\lambda_1} = \lambda_1^{\frac{1}{1+\eta}}\underline{u},
\end{equation}
where $\underline{u}$ is a solution of \eqref{subprob}. For any fixed $v\in D^\gamma_0(\R^N)$, set
\begin{equation}
\label{a}
a(z,s)\coloneqq\lambda_1 w_1(z)\max\{s,\underline u_{\lambda_1}(z)\}^{-\eta} + \lambda_2 w_2(z)  | \nabla_\gamma v(z) |^{r-1} \quad \forall (z,s)\in \R^N\times\R \quad
\end{equation}
and consider the `truncated and frozen' problem
\begin{equation}
\label{varprob}
\tag{${\rm \hat{P}}$}
-\Delta_\gamma u = a(z,u) + u_+^{2^*_\gamma-1} \quad \mbox{in} \;\; \R^N.
\end{equation}
Problem \eqref{varprob} has a variational structure: its energy functional $J\colon D_0^\gamma(\R^N)\to\R$ is defined by
\begin{displaymath}
 J(u) \coloneqq \frac{1}{2} \|\nabla_\gamma u\|_2^2 -\int_{\R^N} A(z,u) \dz - \frac{1}{2^*_\gamma} \|u_+\|_{2^*_\gamma}^{2^*_\gamma},
\end{displaymath}
where $A(z,s)\coloneqq\int_0^s a(z,t) \dt$. 
It is standard to see that $J$ is well-defined and of class $C^1$, with
\begin{displaymath}
    \langle J'(u),\varphi \rangle = \int_{\R^N}\nabla_\gamma u\nabla_\gamma \varphi \dz - \int_{\R^N} a(z,u)\varphi \dz - \int_{\R^N}u_+^{2^*_\gamma-1}\varphi \dz.
\end{displaymath}
Given a sequence $(v_n)\subseteq D_0^\gamma(\R^N)$ we define 
\begin{equation}\label{defapicc}
a_n(z,s)\coloneqq\lambda_1 w_1(z)\max\{s,\underline u_{\lambda_1}(z)\}^{-\eta} + \lambda_2 w_2(z) | \nabla_\gamma v_n(z) |^{r-1} \quad \forall (z,s)\in \R^N\times\R,
\end{equation}
\begin{displaymath}
    A_n(z,s)\coloneqq\int_0^s a_n(z,t) \dt,
\end{displaymath}
and the functional
\begin{displaymath}
    J_n(u) \coloneqq \frac{1}{2} \|\nabla_\gamma u\|_2^2 -\int_{\R^N} A_n(z,u) \dz - \frac{1}{2^*_\gamma} \|u_+\|_{2^*_\gamma}^{2^*_\gamma}.
\end{displaymath}
From the definition of $a$ we get the following relations, which hold for all $(z,s)\in\R^N\times\R$:
$$A(z,s) \ge -\left(\lambda_1 w_1(z) \underline u_{\lambda_1}(z)^{-\eta}+\lambda_2 w_2(z) |\nabla_\gamma v(z)|^{r-1}\right)|s|,$$ 
\begin{equation}
\label{Aest}
A(z,s) \leq \frac{\lambda_1}{1-\eta}w_1(z) |s|^{1-\eta} + \left(\lambda_1 w_1(z)\underline u_{\lambda_1}(z)^{-\eta}+\lambda_2 w_2(z)|\nabla_\gamma v(z)|^{r-1}\right)|s|.
\end{equation}
For future reference we introduce the parameters $\zeta$, $\theta\in(1,+\infty)$ defined by the equations
\begin{equation}
    \label{def:teta}
    \frac{1}{\zeta}+\frac{1-\eta}{2^*_\gamma}=1, \quad \frac{1}{\theta}+\frac{r-1}{2}+\frac{1}{2^*_\gamma}=1. 
\end{equation}
First, we prove two general results concerning, in particular, Palais-Smale sequences associated with the functionals $J_n$. Lemma \ref{enestlemma} provides an energy estimate, while Lemma \ref{ccplemma} detects a ``critical'' energy level under which compactness is recovered.

\begin{lemma}[Energy estimate]
\label{enestlemma}
Suppose $\lambda_1\in(0,1]$ and $\lambda_2\in [0,1]$. Let $c\in\R$, $L>0$, and $(u_n),(v_n)\subseteq D_0^\gamma(\R^N)$ such that
\begin{equation}
\label{generalhyps}
\begin{aligned}
&\limsup_{n\to\infty} J_n(u_n)\leq c, \\
&\limsup_{n\to\infty} \|\nabla_\gamma v_n\|_2 \leq L,\\
&\lim_{n\to\infty} J_n'(u_n) = 0 \quad \mbox{in} \;\; D^{-1,2}_\gamma(\R^N).
\end{aligned}
\end{equation}
Then there exists $\hat{C}=\hat{C}(N,w_1, w_2,\eta,r,\gamma)>0$ such that
\begin{displaymath}
\limsup_{n\to\infty} \|\nabla_\gamma u_n\|_2^2 \leq 2N_\gamma\left[\hat C\max\{\lambda_1,\lambda_2\}^{\frac{2}{1+\eta}} \left(1+L^{2(r-1)}\right)+c \right].
\end{displaymath}
\end{lemma}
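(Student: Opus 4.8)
My approach is the standard one for mountain‑pass sequences at the critical level: form the combination $J_n(u_n)-\frac{1}{2^*_\gamma}\langle J_n'(u_n),u_n\rangle$. Since $(u_n)_+(u_n)_-=0$ pointwise we have $\int_{\R^N}(u_n)_+^{2^*_\gamma-1}u_n\dz=\|(u_n)_+\|_{2^*_\gamma}^{2^*_\gamma}$, so the critical terms cancel; using $\frac12-\frac{1}{2^*_\gamma}=\frac{1}{N_\gamma}$ this gives the identity
\begin{displaymath}
\frac{1}{N_\gamma}\|\nabla_\gamma u_n\|_2^2 = J_n(u_n) - \frac{1}{2^*_\gamma}\langle J_n'(u_n),u_n\rangle + \int_{\R^N} A_n(z,u_n)\dz - \frac{1}{2^*_\gamma}\int_{\R^N} a_n(z,u_n)u_n\dz .
\end{displaymath}
Hence everything reduces to estimating the two integrals on the right from above in terms of $\|\nabla_\gamma u_n\|_2$, $\|\nabla_\gamma v_n\|_2$, $\lambda_1$ and $\lambda_2$.

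For the potential term, since $a_n(z,\cdot)\ge 0$ the contribution of the set $\{u_n<0\}$ to $\int_{\R^N}A_n(z,u_n)\dz$ is non‑positive, so it suffices to bound $\int_{\R^N}A_n(z,(u_n)_+)\dz$; applying the upper estimate \eqref{Aest} (with $v_n$ in place of $v$), Hölder's inequality with the exponents $\zeta,\theta$ of \eqref{def:teta}, the Sobolev inequality \eqref{eq:sobconst} and $\|\nabla_\gamma(u_n)_+\|_2\le\|\nabla_\gamma u_n\|_2$, I obtain a bound of the shape $C\lambda_1\|\nabla_\gamma u_n\|_2^{1-\eta}+C\lambda_2\|\nabla_\gamma v_n\|_2^{r-1}\|\nabla_\gamma u_n\|_2$. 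For the other integral I split $a_n(z,u_n)u_n=a_n(z,u_n)(u_n)_+-a_n(z,u_n)(u_n)_-$: the first summand is non‑negative and therefore drops out when multiplied by $-\frac{1}{2^*_\gamma}$, while on $\{u_n<0\}$ one has $a_n(z,u_n)=\lambda_1 w_1\underline u_{\lambda_1}^{-\eta}+\lambda_2 w_2|\nabla_\gamma v_n|^{r-1}$, whence
\begin{displaymath}
-\frac{1}{2^*_\gamma}\int_{\R^N}a_n(z,u_n)u_n\dz \le \frac{\lambda_1}{2^*_\gamma}\int_{\R^N}w_1\underline u_{\lambda_1}^{-\eta}(u_n)_-\dz + \frac{\lambda_2}{2^*_\gamma}\int_{\R^N}w_2|\nabla_\gamma v_n|^{r-1}(u_n)_-\dz .
\end{displaymath}
The $w_2$‑integral is handled exactly as before. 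For the $w_1$‑integral I use the scaling $\underline u_{\lambda_1}=\lambda_1^{1/(1+\eta)}\underline u$ from \eqref{usub} together with the membership $w_1\underline u^{-\eta}\in L^1(\R^N)\cap L^\infty(\R^N)\subseteq L^{(2^*_\gamma)'}(\R^N)$ granted by Lemma \ref{subsol}, so that Hölder and Sobolev give $\lambda_1\int_{\R^N}w_1\underline u_{\lambda_1}^{-\eta}(u_n)_-\dz\le C\lambda_1^{1/(1+\eta)}\|\nabla_\gamma u_n\|_2$; here the explicit factor $\lambda_1$ absorbs the negative power $\lambda_1^{-\eta/(1+\eta)}$ produced by $\underline u_{\lambda_1}^{-\eta}$. (Alternatively one may test the equation $-\Delta_\gamma\underline u_{\lambda_1}=\lambda_1 w_1\underline u_{\lambda_1}^{-\eta}$ with $(u_n)_-\in D_0^\gamma(\R^N)$ and use the bound on $\|\nabla_\gamma\underline u_{\lambda_1}\|_2$ from Lemma \ref{subsol}.)

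Inserting these bounds and writing $|\langle J_n'(u_n),u_n\rangle|\le\beta_n\|\nabla_\gamma u_n\|_2$ with $\beta_n\to 0$ (by \eqref{generalhyps}), I reach an inequality of the form
\begin{displaymath}
\frac{1}{N_\gamma}\|\nabla_\gamma u_n\|_2^2 \le J_n(u_n) + \beta_n\|\nabla_\gamma u_n\|_2 + C_1\lambda_1\|\nabla_\gamma u_n\|_2^{1-\eta} + C_2\lambda_1^{\frac{1}{1+\eta}}\|\nabla_\gamma u_n\|_2 + C_3\lambda_2\|\nabla_\gamma v_n\|_2^{r-1}\|\nabla_\gamma u_n\|_2
\end{displaymath}
with $C_1,C_2,C_3>0$ depending only on $N,w_1,w_2,\eta,r,\gamma$. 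I then absorb each of the last four summands into the left‑hand side by Young's inequality: the $\beta_n$‑term gives $\eps\|\nabla_\gamma u_n\|_2^2+o(1)$; the term $C_1\lambda_1\|\nabla_\gamma u_n\|_2^{1-\eta}$, via Young with conjugate exponents $\frac{2}{1-\eta}$ and $\frac{2}{1+\eta}$, gives $\eps\|\nabla_\gamma u_n\|_2^2+C(\eps)\lambda_1^{2/(1+\eta)}$; the other two, via Young with exponent $2$, give $\eps\|\nabla_\gamma u_n\|_2^2+C(\eps)\lambda_1^{2/(1+\eta)}$ and $\eps\|\nabla_\gamma u_n\|_2^2+C(\eps)\lambda_2^{2}\|\nabla_\gamma v_n\|_2^{2(r-1)}$. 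Picking $\eps=\eps(N_\gamma)$ small enough to retain $\frac{1}{2N_\gamma}\|\nabla_\gamma u_n\|_2^2$ on the left, using $\lambda_1^{2/(1+\eta)},\lambda_2^{2}\le\max\{\lambda_1,\lambda_2\}^{2/(1+\eta)}$ (valid since $\lambda_1,\lambda_2\in(0,1]$ and $\frac{2}{1+\eta}<2$), and finally taking $\limsup_{n\to\infty}$ — so that $\limsup_nJ_n(u_n)\le c$, the $o(1)$ vanishes, and $\limsup_n\|\nabla_\gamma v_n\|_2^{2(r-1)}\le L^{2(r-1)}$ — yields the asserted bound with $\hat C$ a rearrangement of $2N_\gamma C(\eps)$. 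The only genuinely delicate point is the bookkeeping: at finite $n$ there is still no a priori bound on $\|\nabla_\gamma u_n\|_2$, so every estimate must be performed before the final absorption, and the exponents of $\lambda_1$ must be tracked carefully so that the negative power coming from the singular weight $\underline u_{\lambda_1}^{-\eta}$ and the subsequent Young step combine exactly into the stated exponent $\frac{2}{1+\eta}$.
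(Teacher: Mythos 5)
Your proposal is correct and follows essentially the same route as the paper's proof: the same combination $J_n(u_n)-\tfrac{1}{2^*_\gamma}\langle J_n'(u_n),u_n\rangle$, the bound \eqref{Aest} with H\"older/Sobolev at the exponents $\zeta,\theta$, the scaling \eqref{usub} to turn $\lambda_1\underline u_{\lambda_1}^{-\eta}$ into $\lambda_1^{1/(1+\eta)}\underline u^{-\eta}$, Young's inequality to absorb into $\|\nabla_\gamma u_n\|_2^2$, and a final passage to the $\limsup$. Your splitting into $(u_n)_\pm$ instead of estimating with $|u_n|$ is only a cosmetic variant, and although your intermediate description of the $\int A_n$ bound omits the $\lambda_1^{1/(1+\eta)}\|\nabla_\gamma u_n\|_2$ term, it correctly reappears in your combined inequality, so there is no gap.
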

\begin{proof}
 According to \eqref{generalhyps} we have
\begin{equation}
\label{enest:start}
\begin{split}
c+o(1)(1+\|\nabla_\gamma u_n\|_2) &\geq J_n(u_n)-\frac{1}{2^*_\gamma} \langle J'_n(u_n),u_n \rangle \\
&=\frac{1}{N_\gamma}\|\nabla_\gamma u_n\|_2^2-\int_{\R^N} A_n(z,u_n) \dz + \frac{1}{2^*_\gamma} \int_{\R^N} a_n(z,u_n)u_n \dz.
\end{split}
\end{equation}
By means of \eqref{usub} and \eqref{Aest}, as well as H\"{o}lder's and Sobolev's inequalities, we get
\begin{equation}\label{enest:A}\begin{aligned}
\int_{\R^N}& A_n(z,u_n)\dz \\
&\leq \frac{\lambda_1}{1-\eta} \int_{\R^N} w_1|u_n|^{1-\eta}\dz + \lambda_1 \int_{\R^N} w_1\underline u_{\lambda_1}^{-\eta}|u_n|\dz +\lambda_2\int_{\R^N}w_2|\nabla_\gamma v_n|^{r-1}|u_n| \dz \\
&\leq \frac{\lambda_1}{1-\eta} \|w_1\|_\zeta \|u_n\|_{2^*_\gamma}^{1-\eta} + \lambda_1^{\frac{1}{1+\eta}}\|w_1\underline{u}^{-\eta}\|_{(2^*_\gamma)'}\|u_n\|_{2^*_\gamma} +\lambda_2\|w_2\|_\theta \|\nabla_\gamma v_n\|_2^{r-1} \|u_n\|_{2^*_\gamma}.  
\end{aligned}\end{equation}
Moreover, by \eqref{defapicc} and \eqref{generalhyps}
\begin{equation}
\label{enest:a}\begin{aligned}
\int_{\R^N} a_n(z,u_n)u_n \dz &\geq - \int_{\R^N} a_n(z,u_n)|u_n| \dz \\
&\geq - \int_{\R^N} \left(\lambda_1 w_1\underline u_{\lambda_1}^{-\eta}+\lambda_2 w_2 |\nabla_\gamma v_n|^{r-1}\right)|u_n| \dz \\
&\geq -  S^{-\frac{1}{2}} \left(\lambda_1^{\frac{1}{1+\eta}}\|w_1\underline{u}^{-\eta}\|_{(2^*_\gamma)'} + \lambda_2\|w_2\|_\theta L^{r-1}+o(1)\right) \|\nabla_\gamma u_n\|_2.
\end{aligned}\end{equation}
Inserting this latter inequalities in \eqref{enest:start} and by using Young's inequality we have
$$\begin{aligned}
&c+o(1)(1+\|\nabla_\gamma u_n\|_2) \\&\geq \frac{1}{N_\gamma}\|\nabla_\gamma u_n\|_2^2 - \frac{\lambda_1}{1-\eta}S^{-\frac{1-\eta}{2}} \|w_1\|_\zeta \|\nabla_\gamma u_n\|_2^{1-\eta}\\
&\quad- S^{-\frac{1}{2}} \left(1+\frac{1}{2^*_\gamma}\right) \left(\lambda_1^{\frac{1}{1+\eta}}\|w_1\underline{u}^{-\eta}\|_{(2^*_\gamma)'} + \lambda_2\|w_2\|_\theta L^{r-1}+o(1)\right) \|\nabla_\gamma u_n\|_2\\
&\geq\frac{1}{N_\gamma}\|\nabla_\gamma u_n\|_2^2 - C_\eps \lambda_1^{\frac{2}{1+\eta}} -\eps\|\nabla_\gamma u_n\|_2^{2} - C_\eps \left(\lambda_1^{\frac{2}{1+\eta}} + \lambda_2^2 L^{2(r-1)}+o(1)\right)  -\eps \|\nabla_\gamma u_n\|_2^2.
\end{aligned}$$
Now, re-absorbing the terms $\|\nabla_\gamma u_n\|_2$ on the right-hand side, we get
$$\left(\frac{1}{N_\gamma}-2\eps\right)\|\nabla_\gamma u_n\|_2^2 \leq C_\eps\left(\lambda_1^{\frac{2}{1+\eta}}+\lambda_2^2L^{2(r-1)}\right)+c+o(1),$$
If $\lambda_1,\lambda_2\le 1$, then
\begin{equation}
\label{enest:mid}
  \left(\frac{1}{N_\gamma}-2\eps\right)\|\nabla_\gamma u_n\|_2^2 \leq\begin{cases}
C_\eps\lambda_1^{\frac{2}{1+\eta}}\left(1+L^{2(r-1)}\right)+c+o(1), \qquad&\text{if }\lambda_1\ge \lambda_2,\\
C_\eps\lambda_2^{\frac{2}{1+\eta}}\left(1+L^{2(r-1)}\right)+c+o(1),\qquad&\text{if }\lambda_1\le \lambda_2,\end{cases}   
\end{equation}
for a suitable $C_\eps=C_\eps(\eps,N,w_1, w_2,\eta,r,\gamma)$ (since $S$ is a function of only $N_\gamma$ and $\underline{u}$ depends uniquely on $w_1,\eta,\gamma$).
Choosing $\eps=\frac{1}{4N_\gamma}$ and setting $\hat{C}\coloneqq C_\eps$, \eqref{enest:mid} becomes
\begin{equation}\label{enest:end}
\frac{1}{2N_\gamma}\|\nabla_\gamma u_n\|_2^2 \leq \hat C\max\{\lambda_1,\lambda_2\}^{\frac{2}{1+\eta}} \left(1+L^{2(r-1)}\right)+c+o(1).
\end{equation}
We conclude by taking the upper limit as $n \to +\infty$.
\end{proof}
\begin{lemma}[Concentration-compactness]
\label{ccplemma}
With the same hypotheses of Lemma \ref{enestlemma}, if
\begin{equation}
\label{hatc}
c<\frac{S^{N_\gamma/2}}{N_\gamma}-\hat C\max\{\lambda_1,\lambda_2\}^{\frac{2}{1+\eta}} \left( L^{2(r-1)}+1\right)\eqqcolon \hat{c},
\end{equation}
then there exists $u\in  D_0^\gamma(\R^N)$ such that, up to sub-sequences, $u_n\rightharpoonup u$ in $ D_0^\gamma(\R^N)$ and
\begin{equation}
\label{strongconv}
(u_n)_+\to u_+ \quad \mbox{in} \;\; L^{2^*_\gamma}(\R^N).
\end{equation}
In particular, $((u_n)_+)$ is uniformly equi-integrable in $L^{2^*_\gamma}(\R^N)$, i.e.,
\begin{equation}
\label{equiint}
\int_\Omega (u_n)_+^{2^*_\gamma} \dz \to 0 \quad \mbox{as} \;\; |\Omega|\to 0, \quad \mbox{uniformly in} \;\; n\in\N.
\end{equation}
\end{lemma}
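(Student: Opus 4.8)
The plan is to apply the concentration-compactness machinery (Lemma~\ref{thm:concentration} and Lemma~\ref{bennaoum}) to the Palais–Smale-type sequence $(u_n)$, and show that under the strict inequality \eqref{hatc} no concentration — neither at points nor at infinity — can occur; this forces $(u_n)_+$ to converge strongly in $L^{2^*_\gamma}(\R^N)$.

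First I would note that Lemma~\ref{enestlemma} already gives $\limsup_n \|\nabla_\gamma u_n\|_2^2 \le 2N_\gamma(\hat C\max\{\lambda_1,\lambda_2\}^{2/(1+\eta)}(1+L^{2(r-1)})+c) < +\infty$, so $(u_n)$ is bounded in $D_0^\gamma(\R^N)$. Up to a subsequence, $u_n \rightharpoonup u$ in $D_0^\gamma(\R^N)$, and by Lemma~\ref{posparts}, $(u_n)_+ \rightharpoonup u_+$ in $D_0^\gamma(\R^N)$. Passing to further subsequences, $|\nabla_\gamma u_n|^2 \rightharpoonup \mu$ and $|(u_n)_+|^{2^*_\gamma} \stackrel{*}{\rightharpoonup} \nu$ in the sense of measures (boundedness in $\M(\R^N,\R)$ plus, for the second, a tightness argument as in \cite{alves2024brezis}); also introduce $\nu_\infty,\mu_\infty$ as in Lemma~\ref{bennaoum}. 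Lemma~\ref{thm:concentration} then produces the atoms: $\nu = |u_+|^{2^*_\gamma} + \sum_{j\in\A}\nu_j\delta_{z_j}$, $\mu \ge |\nabla_\gamma u_+|^2 + \sum_{j\in\A}\mu_j\delta_{z_j}$ with $\mu_j \ge S\nu_j^{2/2^*_\gamma}$, and $S\nu_\infty^{2/2^*_\gamma}\le\mu_\infty$.

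The heart of the argument is to rule out each atom and the mass at infinity. For a fixed $j\in\A$, test $J_n'(u_n)$ against $\phi_{\eps,j}u_n$ where $\phi_{\eps,j}$ is a smooth cutoff equal to $1$ near $z_j$, supported in $B_\gamma(z_j,\eps)$. Using $\langle J_n'(u_n),\phi_{\eps,j}u_n\rangle \to 0$, the boundedness of $(u_n)$, the weak convergence of the measures, the equi-integrability coming from the \emph{good} terms (the singular term $w_1\max\{\cdot,\underline u_{\lambda_1}\}^{-\eta}$ and the frozen convective term $w_2|\nabla_\gamma v_n|^{r-1}$ are controlled in suitable Lebesgue spaces via \eqref{def:teta}, and $w_1,w_2\in L^1\cap L^\infty$), and letting $\eps\to 0$, one obtains $\mu(\{z_j\}) \le \nu(\{z_j\})$, i.e. $\mu_j \le \nu_j$ (the contributions of $a_n$ vanish because $w_1,w_2$ carry no atoms, being $L^1$ functions). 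Combined with $\mu_j \ge S\nu_j^{2/2^*_\gamma}$, this yields $\nu_j \ge S^{N_\gamma/2}$ for each nonempty atom. An entirely parallel computation with cutoffs $\psi_R$ supported in $B_\gamma^c(0,R)$, letting $R\to+\infty$, gives $\mu_\infty \le \nu_\infty$, hence $\nu_\infty \ge S^{N_\gamma/2}$ unless $\nu_\infty = 0$. Now I would compute the energy lower bound: from \eqref{enest:start}-type manipulations, $c + o(1) \ge J_n(u_n) - \frac{1}{2^*_\gamma}\langle J_n'(u_n),u_n\rangle \ge \frac{1}{N_\gamma}\|(u_n)_+\|_{2^*_\gamma}^{2^*_\gamma} + (\text{l.o.t.})$, and passing to the limit, if any single atom $\nu_{j_0}$ or $\nu_\infty$ were positive it would contribute at least $\frac{1}{N_\gamma}S^{N_\gamma/2}$, giving $c \ge \frac{S^{N_\gamma/2}}{N_\gamma} - \hat C\max\{\lambda_1,\lambda_2\}^{2/(1+\eta)}(L^{2(r-1)}+1) = \hat c$, contradicting \eqref{hatc}. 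Therefore $\A = \emptyset$, $\nu_\infty = 0$, and by Lemma~\ref{bennaoum} $\limsup_n\|(u_n)_+\|_{2^*_\gamma}^{2^*_\gamma} = \|u_+\|_{2^*_\gamma}^{2^*_\gamma}$; together with $(u_n)_+\rightharpoonup u_+$ and uniform convexity of $L^{2^*_\gamma}(\R^N)$, this gives the strong convergence \eqref{strongconv}.

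The main obstacle is the bookkeeping in the localized testing: unlike the model critical problem, here the right-hand side $a_n(z,u_n) + (u_n)_+^{2^*_\gamma-1}$ contains the singular term and the convective term, and one must verify carefully that neither produces an atomic contribution after multiplying by the cutoff $\phi_{\eps,j}u_n$ and sending $\eps\to 0$ — this uses that $w_1,w_2 \in L^1(\R^N)$ so $\int_{B_\gamma(z_j,\eps)}w_i\,\mathrm{d}z \to 0$, the a.e.\ lower bound $u_n \ge \underline u_{\lambda_1} > 0$ locally which tames $\max\{u_n,\underline u_{\lambda_1}\}^{-\eta}$, and the uniform $L^2$ bound on $\nabla_\gamma v_n \le L + o(1)$. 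Finally, the equi-integrability statement \eqref{equiint} follows from \eqref{strongconv} by the standard Vitali-type characterization: a convergent sequence in $L^{2^*_\gamma}(\R^N)$ is uniformly equi-integrable.
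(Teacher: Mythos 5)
Your proposal is correct and follows essentially the same route as the paper: boundedness via the energy estimate, Lemmas \ref{thm:concentration}--\ref{bennaoum} applied to $((u_n)_+)$, localized testing to get $\mu_j=\nu_j$ (hence $\nu_j\ge S^{N_\gamma/2}$) and the analogous statement at infinity, an energy lower bound contradicting \eqref{hatc}, and then strong convergence by uniform convexity plus Vitali. One small slip: with the combination $J_n(u_n)-\tfrac{1}{2^*_\gamma}\langle J_n'(u_n),u_n\rangle$ the critical terms cancel and the surviving quadratic term is $\tfrac{1}{N_\gamma}\|\nabla_\gamma u_n\|_2^2$ (not $\tfrac{1}{N_\gamma}\|(u_n)_+\|_{2^*_\gamma}^{2^*_\gamma}$, which would arise from subtracting $\tfrac12\langle J_n'(u_n),u_n\rangle$ instead), but the contradiction goes through unchanged since the atom contributes $S^{N_\gamma/2}$ to the gradient measure as well.
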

\begin{proof}
By Lemma \ref{enestlemma} we deduce that $(u_n)$ is bounded in $ D_0^\gamma(\R^N)$. Hence there exists $u\in D_0^\gamma(\R^N)$ such that, up to sub-sequences, $u_n \rightharpoonup u$ in $ D_0^\gamma(\R^N)$, which implies $(u_n)_+ \rightharpoonup u_+$ in $ D_0^\gamma(\R^N)$ by Lemma \ref{posparts}. In particular, $\nabla_\gamma (u_n)_+ \rightharpoonup \nabla_\gamma u_+$ in $L^2(\R^N)$, ensuring that the sequence of measures $(|\nabla_\gamma (u_n)_+|^2)$ is bounded. Thus, $|\nabla_\gamma (u_n)_+|^2 \rightharpoonup \mu$ for some bounded positive measure $\mu$; analogously, $(u_n)_+^{2^*_\gamma} \rightharpoonup \nu$ for an opportune bounded positive measure $\nu$.
According to Lemmas \ref{thm:concentration}--\ref{bennaoum}, applied to $((u_n)_+)$ in place of $(u_n)$, there exist some at most countable set $\A$, a family of points $(z_j)_{j\in \A} \subseteq \R^N$, and two families of numbers $(\mu_j)_{j\in \A},(\nu_j)_{j\in \A}\subseteq (0,+\infty)$ satisfying
\begin{equation}
\label{PS:measprops}
\begin{split}
\nu=u_+^{2^*_\gamma}+\sum_{j\in \A} \nu_j \delta_{z_j}, &\quad \mu\geq |\nabla_\gamma u_+|^2+\sum_{j\in \A} \mu_j \delta_{z_j}, \\
\limsup_{n\to\infty} \int_{\R^N} (u_n)_+^{2^*_\gamma} \dz = \int_{\R^N} \, {\rm d}\nu + \nu_\infty, &\quad \limsup_{n\to\infty} \int_{\R^N} |\nabla_\gamma (u_n)_+|^2 \dz = \int_{\R^N} \, {\rm d}\mu+ \mu_\infty,
\end{split}
\end{equation}
and
\begin{equation}
\label{PS:measbounds}
S\nu_j^{2/2^*_\gamma} \leq \mu_j \quad \mbox{for all} \;\; j\in \A, \quad S\nu_\infty^{2/2^*_\gamma}\leq \mu_\infty,
\end{equation}
being
\begin{displaymath}
\nu_\infty \coloneqq  \lim_{R\to+\infty} \limsup_{n\to\infty} \int_{B_\gamma^c(0,R)} (u_n)_+^{2^*_\gamma} \dz, \quad \mu_\infty \coloneqq  \lim_{R\to\infty} \limsup_{n\to\infty} \int_{B_\gamma^c(0,R)} |\nabla_\gamma (u_n)_+|^2 \dz.
\end{displaymath}
We claim that $\A=\emptyset$. By contradiction, let $j\in \A$ and $\psi\in C^\infty_c(\R^N)$ be a standard cut-off function fulfilling $\psi\equiv 1$ in $\overline{B_\gamma(0,1/2)}$, $\psi\equiv 0$ in $B_\gamma^c(0,1)$, and $0\leq \psi \leq 1$ in $\R^N$. For each $\eps\in(0,1)$ set
$$\psi_\eps(z)\coloneqq \psi\left(\frac{z - z_j}{\eps}\right).$$
By hypothesis \eqref{generalhyps} we have $\langle J_n'(u_n), (u_n)_+ \psi_\eps \rangle\to 0$, that is,
\begin{equation}
\label{PS:conctest}
\begin{split}
&\int_{\R^N} |\nabla_\gamma (u_n)_+|^2 \psi_\eps \dz + \int_{\R^N} (u_n)_+ \nabla_\gamma u_n \nabla_\gamma \psi_\eps \dz \\
&= \int_{\R^N} a_n(z,u_n)(u_n)_+\psi_\eps \dz +  \int_{\R^N} (u_n)_+^{2^*_\gamma} \psi_\eps \dz + o(1).
\end{split}
\end{equation}
Since $ D_0^\gamma(\R^N)\compact L^2(B_\gamma(z_j,\eps))$, we have (up to sub-sequences) $u_n\to u$ both in $L^2(B_\gamma(z_j,\eps))$ and in $\R^N$. Reasoning as for \eqref{enest:a}, besides recalling that $(|\nabla_\gamma u_n|)$ is bounded in $L^2(\R^N)$, one has
\begin{equation}
\label{PS:conctest1}
\begin{aligned}
\lim_{\eps \to 0}& \lim_{n\to\infty} \left| \int_{\R^N} a_n(z,u_n)(u_n)_+\psi_\eps\dz \right| \\
&\leq  \lim_{\eps \to 0} \lim_{n\to\infty} \int_{B_\gamma(z_j,\eps)} \left(\lambda_1 w_1 \underline u_{\lambda_1}^{-\eta} + \lambda_2 w_2 |\nabla_\gamma v(z) |^{r-1} \right)|u_n| \dz \\
&\leq \lambda_1^{-\frac{\eta}{1+\eta}} S^{-\frac{1}{2}} \left(\sup_{n\in\N} \|\nabla_\gamma u_n\|_2\right)\cdot\\& \qquad \qquad\cdot\lim_{\eps\to 0}\left(\lambda_1\|w_1\underline{u}^{-\eta}\|_{L^{(2^*_\gamma)'}(B_\gamma(z_j,\eps))} + \lambda_2 \|w_2\|_{L^\theta(B_\gamma(z_j,\eps))}L^{r-1}\right) = 0,
\end{aligned}
\end{equation}
where we used \ref{hypf}, \eqref{generalhyps} and $\lim_{\eps\to 0} \|w_1 \underline u^{-\eta}\|_{L^{(2^*_\gamma)'}(B_\gamma(z_j,\eps))} = 0$, since $w_1 \underline u^{-\eta}\in L^\infty(\R^N)$ by Lemma \ref{subsol}.

Since $\nabla_\gamma \psi_\eps$ is bounded and compactly supported in $B_\gamma(z_j,\eps)$, we deduce $u_n\nabla_\gamma \psi_\eps \to u \nabla_\gamma \psi_\eps$ in $L^2(\R^N)$ as $n\to\infty$. By H\"{o}lder's inequality and the change of variable  $\xi\mapsto (z-z_j)/\eps$ we infer
\begin{equation}
\label{PS:conctest2}
\begin{split}
\lim_{\eps \to 0}& \lim_{n\to\infty} \left|\int_{\R^N} u_n \nabla_\gamma u_n \nabla_\gamma \psi_\eps \dz\right| \leq \lim_{\eps \to 0} \lim_{n\to\infty} \|\nabla_\gamma u_n\|_2 \|u_n\nabla_\gamma \psi_\eps\|_2 \\
&\leq\sup_{n\in\N} \|\nabla_\gamma u_n\|_2\cdot \lim_{\eps \to 0} \|u\nabla_\gamma \psi_\eps\|_2 \leq \sup_{n\in\N} \|\nabla_\gamma u_n\|_2\cdot \|\nabla_\gamma \psi\|_{N_\gamma} \lim_{\eps \to 0} \|u\|_{L^{2^*_\gamma}(B_\gamma(z_j,\eps))} = 0.
\end{split}
\end{equation}
Passing to the limit in \eqref{PS:conctest} via \eqref{PS:conctest1}--\eqref{PS:conctest2} we get
\begin{displaymath}
\lim_{\eps \to 0} \lim_{n\to\infty} \int_{\R^N} |\nabla_\gamma (u_n)_+|^2 \psi_\eps \dz = \lim_{\eps \to 0} \lim_{n\to\infty} \int_{\R^N} (u_n)_+^{2^*_\gamma} \psi_\eps \dz.
\end{displaymath}
Recalling Remark \ref{nujmuj}, we obtain
\begin{equation}
\label{PS:proportional}
\mu_j = \nu_j.
\end{equation}
Combining \eqref{PS:proportional} with \eqref{PS:measbounds} entails $S\nu_j^{2/2^*_\gamma-1}\leq1$, whence
\begin{equation}
\label{PS:lowerbounds}
\mu_j=\nu_j \geq S^{N_\gamma/2}.
\end{equation}
Now using \eqref{PS:lowerbounds} and arguing as in \eqref{enest:end} we have 
\begin{equation}
\label{PS:concfinal}\begin{aligned}
c+o(1)&\ge J_n(u_n)-\frac{1}{2^*_\gamma}\langle J_n'(u_n),u_n\rangle
\\&
\ge\frac{1}{N_\gamma}\|\nabla_\gamma u_n\|_2^2+\left[\frac{1}{2^*_\gamma}\int_{\R^N}a_n(z,u_n)u_n \dz-\int_{\R^N}A_n(z,u_n) \dz\right]\\
&\ge\frac{S^{N_\gamma/2}}{N_\gamma}+\frac{1}{N_\gamma}\|\nabla_\gamma u\|_2^2+\left[\frac{1}{2^*_\gamma}\int_{\R^N} a_n(z,u)u \dz-\int_{\R^N} A_n (z,u) \dz\right]+o(1)\\
&\ge \frac{S^{N_\gamma/2}}{N_\gamma}+\frac{1}{2N_\gamma}\|\nabla_\gamma u\|_2^2-\hat C\max\{\lambda_1,\lambda_2\}^{\frac{2}{1+\eta}} \left( L^{2(r-1)}+1\right)+o(1)\\
&\ge \frac{S^{N_\gamma/2}}{N_\gamma}-\hat C\max\{\lambda_1,\lambda_2\}^{\frac{2}{1+\eta}} \left( L^{2(r-1)}+1\right)+o(1),
\end{aligned}\end{equation}
contradicting \eqref{hatc} as $n\to\infty$. This forces $\A=\emptyset$.

A similar argument proves that concentration cannot occur at infinity, that is, $\nu_\infty=\mu_\infty=0$: indeed, using a cut-off function $\psi_R\in C^\infty(\R^N)$ such that $\psi_R \equiv 0$ in $\overline{B_\gamma(0,R)}$, $\psi_R \equiv 1$ in $B_\gamma^c(0,2R)$, and $0\leq \psi_R\leq 1$ in $\R^N$,  one arrives at $\nu_\infty = \mu_\infty$, and then the conclusion follows as in \eqref{PS:concfinal}.

According to \eqref{PS:measprops}, besides $\A=\emptyset$ and $\nu_\infty=0$, we obtain
\begin{displaymath}
\limsup_{n\to\infty} \int_{\R^N} (u_n)_+^{2^*_\gamma} \dz = \int_{\R^N} u_+^{2^*_\gamma} \dz,
\end{displaymath}
that is \eqref{strongconv}, by uniform convexity (see \cite[Proposition 3.32]{B}); in turn, \eqref{strongconv} forces \eqref{equiint} by Vitali's convergence theorem (see, e.g., \cite[Corollary 4.5.5]{Bo}).
\end{proof}
Problem \eqref{varprob} exhibits a double lack of compactness, due to the setting in $\R^N$ and the presence of a reaction term with critical growth. The aim of the next lemma is to recover compactness on the energy levels that lie under the critical level $\hat{c}$ defined in \eqref{hatc}.

\begin{lemma}[Palais-Smale condition]
\label{PS}
Let  $\lambda_1\in(0,1]$, $\lambda_2\in [0,1]$, $L>0$, and $v\in D_0^\gamma(\R^N)$ be such that $\|\nabla_\gamma v\|_2\leq L$. Then $J$ satisfies the ${\rm(PS)}_c$ condition for all $c\in\R$ fulfilling \eqref{hatc}.
\end{lemma}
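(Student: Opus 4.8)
The plan is to recover a weak limit of the Palais--Smale sequence from the concentration--compactness analysis already performed, and then to upgrade it to strong convergence via the $(S)_+$ property of the Grushin Laplacian (Lemma~\ref{lemma:S+}). Let $(u_n)\subseteq D_0^\gamma(\R^N)$ satisfy $J(u_n)\to c$ and $J'(u_n)\to 0$ in $D^{-1,2,\gamma}_0(\R^N)$, with $c$ fulfilling \eqref{hatc}. First I would apply Lemmas~\ref{enestlemma} and~\ref{ccplemma} to $(u_n)$ with the \emph{constant} sequence $v_n\equiv v$, so that $J_n=J$ for all $n$ and $\limsup_n\|\nabla_\gamma v_n\|_2=\|\nabla_\gamma v\|_2\leq L$: Lemma~\ref{enestlemma} yields boundedness of $(u_n)$ in $D_0^\gamma(\R^N)$, and Lemma~\ref{ccplemma} provides $u\in D_0^\gamma(\R^N)$ with, along a subsequence, $u_n\rightharpoonup u$ in $D_0^\gamma(\R^N)$ and $(u_n)_+\to u_+$ in $L^{2^*_\gamma}(\R^N)$. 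After passing to a further subsequence, the compact embeddings $D_0^\gamma(\R^N)\compact L^q(B_\gamma(0,k))$ for $q<2^*_\gamma$ together with a diagonal argument also give $u_n\to u$ a.e.\ in $\R^N$; moreover $u_n\rightharpoonup u$ in $D_0^\gamma(\R^N)$ implies $u_n-u\rightharpoonup 0$ in $L^{2^*_\gamma}(\R^N)$.

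The heart of the matter is to prove that $\langle -\Delta_\gamma u_n,\,u_n-u\rangle\to 0$. From the formula for $J'$ one has
\begin{equation*}
\langle -\Delta_\gamma u_n,\,u_n-u\rangle = \langle J'(u_n),\,u_n-u\rangle + \int_{\R^N} a(z,u_n)(u_n-u)\dz + \int_{\R^N} (u_n)_+^{2^*_\gamma-1}(u_n-u)\dz,
\end{equation*}
and I would show that each of the three terms on the right vanishes. The first does so because $J'(u_n)\to 0$ in $D^{-1,2,\gamma}_0(\R^N)$ and $(u_n-u)$ is bounded in $D_0^\gamma(\R^N)$. For the critical term, writing $(u_n)_+^{2^*_\gamma-1}(u_n-u)=[(u_n)_+^{2^*_\gamma-1}-u_+^{2^*_\gamma-1}](u_n-u)+u_+^{2^*_\gamma-1}(u_n-u)$ and using that $t\mapsto t^{2^*_\gamma-1}$ is continuous from $L^{2^*_\gamma}(\R^N)$ into $L^{(2^*_\gamma)'}(\R^N)$ (hence $(u_n)_+^{2^*_\gamma-1}\to u_+^{2^*_\gamma-1}$ there), the first piece is controlled by Hölder's inequality and the boundedness of $\|u_n-u\|_{2^*_\gamma}$, while the second piece vanishes since $u_n-u\rightharpoonup 0$ in $L^{2^*_\gamma}(\R^N)$ and $u_+^{2^*_\gamma-1}\in L^{(2^*_\gamma)'}(\R^N)$.

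The term $\int_{\R^N} a(z,u_n)(u_n-u)\dz$ is where the singular and convective structure must be handled, and this is the step requiring care. I would split it according to \eqref{a}. The convective contribution $\lambda_2\int_{\R^N} w_2|\nabla_\gamma v|^{r-1}(u_n-u)\dz$ vanishes because $w_2\in L^1(\R^N)\cap L^\infty(\R^N)$ by~\ref{hypf} and $|\nabla_\gamma v|^{r-1}\in L^{2/(r-1)}(\R^N)$, so the choice of $\theta$ in \eqref{def:teta} makes $w_2|\nabla_\gamma v|^{r-1}\in L^{(2^*_\gamma)'}(\R^N)$, and one uses $u_n-u\rightharpoonup 0$ in $L^{2^*_\gamma}(\R^N)$. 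For the singular contribution $\lambda_1\int_{\R^N} w_1\max\{u_n,\underline{u}_{\lambda_1}\}^{-\eta}(u_n-u)\dz$, the crucial points are that the truncation gives the $n$-independent bound $0\leq w_1\max\{u_n,\underline{u}_{\lambda_1}\}^{-\eta}\leq w_1\underline{u}_{\lambda_1}^{-\eta}=\lambda_1^{-\eta/(1+\eta)}\,w_1\underline{u}^{-\eta}$ (by \eqref{usub}) and that $w_1\underline{u}^{-\eta}\in L^1(\R^N)\cap L^\infty(\R^N)\subseteq L^{(2^*_\gamma)'}(\R^N)$ by Lemma~\ref{subsol}. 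Since $\underline{u}_{\lambda_1}>0$ a.e.\ and $u_n\to u$ a.e., dominated convergence with majorant $(w_1\underline{u}_{\lambda_1}^{-\eta})^{(2^*_\gamma)'}\in L^1(\R^N)$ gives $w_1\max\{u_n,\underline{u}_{\lambda_1}\}^{-\eta}\to w_1\max\{u,\underline{u}_{\lambda_1}\}^{-\eta}$ in $L^{(2^*_\gamma)'}(\R^N)$; splitting the integrand as before and combining Hölder's inequality with $u_n-u\rightharpoonup 0$ in $L^{2^*_\gamma}(\R^N)$ then shows this integral vanishes too.

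Putting the three limits together yields $\langle -\Delta_\gamma u_n,\,u_n-u\rangle\to 0$, and since $u_n\rightharpoonup u$ in $D_0^\gamma(\R^N)$ the $(S)_+$ property (Lemma~\ref{lemma:S+}) forces $u_n\to u$ strongly in $D_0^\gamma(\R^N)$, which establishes ${\rm (PS)}_c$. The only genuinely delicate point is the treatment of the singular term, and it works precisely because the truncation built into \eqref{a} supplies an $n$-independent majorant lying in $L^{(2^*_\gamma)'}(\R^N)$; concentration of mass — both at points and at infinity — has already been ruled out once and for all in Lemma~\ref{ccplemma}, so no additional compactness argument is required at this stage.
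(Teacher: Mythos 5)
Your proposal is correct and follows essentially the same route as the paper: Lemma~\ref{ccplemma} applied with $v_n\equiv v$, $J_n\equiv J$ to get weak convergence and $(u_n)_+\to u_+$ in $L^{2^*_\gamma}(\R^N)$, then $\langle -\Delta_\gamma u_n,u_n-u\rangle\to 0$ via the strong $L^{(2^*_\gamma)'}$ convergence of $(u_n)_+^{2^*_\gamma-1}$ and the $n$-independent majorant $\lambda_1 w_1\underline u_{\lambda_1}^{-\eta}+\lambda_2 w_2|\nabla_\gamma v|^{r-1}\in L^{(2^*_\gamma)'}(\R^N)$, concluding with the $(S)_+$ property of Lemma~\ref{lemma:S+}. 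Your handling of the singular term through Nemytskii continuity and dominated convergence is slightly more detailed than the paper's direct estimate against the fixed majorant paired with $|u_n-u|\rightharpoonup 0$ in $L^{2^*_\gamma}(\R^N)$, but it is the same argument in substance.
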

\begin{proof}
Take any $c\in\R$ as in \eqref{hatc} and consider an arbitrary ${\rm (PS)}_c$ sequence $(u_n)$ associated to the functional $J$. Applying Lemma \ref{ccplemma} with $v_n\equiv v$ and $J_n\equiv J$ entails $u_n\rightharpoonup u$ in $ D_0^\gamma(\R^N)$ and $(u_n)_+\to u_+$ in $L^{2^*_\gamma}(\R^N)$. Let us evaluate
\begin{equation}
\label{PS:S+start}
\begin{split}
\langle J'(u_n),u_n-u \rangle &= \int_{\R^N}  \nabla_\gamma u_n\nabla_\gamma (u_n-u)\dz - \int_{\R^N} a(z,u_n)(u_n-u) \dz \\
&\quad- \int_{\R^N} (u_n)_+^{2^*_\gamma-1}(u_n-u) \dz.
\end{split}
\end{equation}
Since $ D_0^\gamma(\R^N)\hookrightarrow L^{2^*_\gamma}(\R^N)$, we have $u_n \rightharpoonup u$ in $L^{2^*_\gamma}(\R^N)$ and $u_n\to u$ in $\R^N$, up to sub-sequences (see, e.g., \cite[p.3044]{GG1}).

Thus, observing that \eqref{strongconv} forces $(u_n)_+^{2^*_\gamma-1}\to u_+^{2^*_\gamma-1}$ in $L^{(2^*_\gamma)'}(\R^N)$, we get
\begin{equation}
\label{PS:S+crit}
\lim_{n\to\infty} \int_{\R^N} (u_n)_+^{2^*_\gamma-1}(u_n-u) \dz = 0. 
\end{equation}
Reasoning as for \eqref{enest:a} one has
$$ \|\lambda_1w _1 \underline u_{\lambda_1}^{-\eta}+ \lambda_2 w_2|\nabla_\gamma v|^{r-1}\|_{(2^*_\gamma)'} \leq \lambda_1^{\frac{1}{1+\eta}}\|w_1\underline{u}^{-\eta}\|_{(2^*_\gamma)'}+\lambda_2\|w_2\|_\theta L^{r-1} < +\infty, $$
so the linear functional $\psi\mapsto \int_{\R^N} \left(\lambda_1 w_1 \underline u_{\lambda_1}^{-\eta}+\lambda_2 w_2 |\nabla_\gamma v|^{r-1} \right)\psi \dz$ is continuous in $L^{2^*_\gamma}(\R^N)$.
Moreover, $u_n \rightharpoonup u$ in $L^{2^*_\gamma}(\R^N)$ guarantees that $|u_n-u|\rightharpoonup 0$ in $L^{2^*_\gamma}(\R^N)$. By the definition of $a_n$ in \eqref{defapicc}
\begin{equation}
\label{PS:S+sing}
\lim_{n\to\infty} \left|\int_{\R^N} a(z,u_n)(u_n-u) \dz \right| \leq \lim_{n\to\infty} \int \left(\lambda_1 w_1\underline u_{\lambda_1}^{-\eta}+\lambda_2 w_2 |\nabla_\gamma v|^{r-1}\right)|u_n-u| \dz = 0.
\end{equation}
Using \eqref{PS:S+crit}--\eqref{PS:S+sing} and recalling that $\langle J'(u_n),u_n-u \rangle \to 0$, due to the fact that $(u_n)$ is a ${\rm (PS)}_c$ sequence, by \eqref{PS:S+start} we conclude
$$
\lim_{n\to\infty} \int_{\R^N}  \nabla_\gamma u_n\nabla_\gamma (u_n-u)\dz = 0.
$$
Then the ${\rm (S_+)}$ property of the Grushin operator, Lemma \ref{lemma:S+}, yields $u_n \to u$ in $ D_0^\gamma(\R^N)$.
\end{proof}

The next two lemmas are devoted to verify the mountain pass geometry for the functional $J$ and ensure the mountain pass level $c_M$ (see Theorem \ref{mountainpass}) lies below the critical Palais-Smale level $\hat{c}$ (see \eqref{hatc}), provided $\lambda_1$ and $\lambda_2$ small enough.

\begin{lemma}[Mountain pass geometry]
\label{mountainpassgeometry}
Let $L>0$ and $v\in D_0^\gamma(\R^N)$ be such that $\|\nabla_\gamma v\|_2\leq L$. Then there exists $\tilde{C}=\tilde{C}(N,w_1,w_2,\eta,r,\gamma)>1$ such that, for all $\lambda_1>0,\lambda_2\ge 0$ satisfying 
\begin{equation}
\label{smallness}
\lambda_2\le \lambda_1 < \Lambda_1\coloneqq \left[\frac{S^{N_\gamma/2}}{\tilde{C}N_\gamma(S^{N_\gamma/4}+1)(1+L^{r-1})}\right]^{1+\eta},
\end{equation}
the functional $J$ satisfies the mountain pass geometry. More precisely, 
one has
$$ J(\hat{u})<J(0)=0<\inf_{\partial B_\gamma(0,\rho)} J \quad \mbox{and} \quad \|\nabla_\gamma \hat{u}\|_2 > \rho, \quad \mbox{being} \;\; \rho\coloneqq S^{N_\gamma/4} ,$$
for any $\hat{u}\in D_0^\gamma(\R^N)$ such that $\hat{u}_+ \not\equiv 0$ and $ \|\nabla_\gamma \hat{u}\|_2$ is sufficiently large.
\end{lemma}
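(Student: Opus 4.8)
The plan is to apply the Ambrosetti--Rabinowitz scheme (Theorem~\ref{mountainpass}) with $u_0=0$, $u_1=\hat u$ and $\rho\coloneqq S^{N_\gamma/4}$, checking the two geometric requirements separately. Since $A(z,0)=\int_0^0 a(z,t)\dt=0$ we have $J(0)=0$ at no cost, so it remains to produce a uniform \emph{positive} lower bound for $J$ on $\partial B_\gamma(0,\rho)$ and a point $\hat u$ outside $B_\gamma(0,\rho)$ where $J$ is negative.

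\emph{Lower bound on the small sphere.} First I would take $u\in D_0^\gamma(\R^N)$ with $\|\nabla_\gamma u\|_2=\rho$ and estimate $\int_{\R^N}A(z,u)\dz$ from above using \eqref{Aest}, the identity \eqref{usub} (which turns $\lambda_1 w_1\underline u_{\lambda_1}^{-\eta}$ into $\lambda_1^{1/(1+\eta)}w_1\underline u^{-\eta}$), Hölder's inequality with the exponents $\zeta,\theta$ of \eqref{def:teta}, and the Sobolev inequality \eqref{eq:sobconst}, obtaining
\[
\int_{\R^N}A(z,u)\dz \le \frac{\lambda_1}{1-\eta}\|w_1\|_\zeta S^{-\frac{1-\eta}{2}}\rho^{1-\eta} + \Bigl(\lambda_1^{\frac{1}{1+\eta}}\|w_1\underline u^{-\eta}\|_{(2^*_\gamma)'} + \lambda_2\|w_2\|_\theta L^{r-1}\Bigr)S^{-\frac12}\rho ,
\]
where $\|w_1\underline u^{-\eta}\|_{(2^*_\gamma)'}<\infty$ by Lemma~\ref{subsol}. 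Next I would plug in $\rho=S^{N_\gamma/4}$ and use $2^*_\gamma(N_\gamma-2)=2N_\gamma$ to get $\|u_+\|_{2^*_\gamma}^{2^*_\gamma}\le S^{-2^*_\gamma/2}\rho^{2^*_\gamma}=S^{N_\gamma/2}$, whence $\frac12\|\nabla_\gamma u\|_2^2-\frac{1}{2^*_\gamma}\|u_+\|_{2^*_\gamma}^{2^*_\gamma}\ge(\frac12-\frac{1}{2^*_\gamma})S^{N_\gamma/2}=\frac{1}{N_\gamma}S^{N_\gamma/2}$, so $J(u)\ge\frac{1}{N_\gamma}S^{N_\gamma/2}-\int_{\R^N}A(z,u)\dz$. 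After the substitution the two powers of $S$ left in the estimate of $\int A$ are $S^{(1-\eta)(N_\gamma-2)/4}$ and $S^{(N_\gamma-2)/4}$, both with positive exponent strictly smaller than $N_\gamma/4$, hence both $\le S^{N_\gamma/4}+1$. Assuming $\Lambda_1\le1$ (which one may arrange by enlarging $\tilde C$, so that $\lambda_2\le\lambda_1\le1$ forces $\lambda_1,\lambda_2\le\lambda_1^{1/(1+\eta)}$), all three terms are bounded by $\lambda_1^{1/(1+\eta)}\tilde C(S^{N_\gamma/4}+1)(1+L^{r-1})$, where $\tilde C=\tilde C(N,w_1,w_2,\eta,r,\gamma)>1$ absorbs $\frac{1}{1-\eta}\|w_1\|_\zeta$, $\|w_1\underline u^{-\eta}\|_{(2^*_\gamma)'}$, $\|w_2\|_\theta$ and the constant $(S^{N_\gamma/4}+1)/N_\gamma$ (recall $S$ depends only on $N_\gamma$ and $\underline u$ only on $w_1,\eta,\gamma$). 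Then \eqref{smallness} gives exactly $\lambda_1^{1/(1+\eta)}\tilde C(S^{N_\gamma/4}+1)(1+L^{r-1})<\frac{1}{N_\gamma}S^{N_\gamma/2}$, so that $J$ is bounded below on $\partial B_\gamma(0,\rho)$ by the fixed positive number $\frac{1}{N_\gamma}S^{N_\gamma/2}-\lambda_1^{1/(1+\eta)}\tilde C(S^{N_\gamma/4}+1)(1+L^{r-1})$.

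\emph{Negative value far from the origin.} Here I would fix any nonnegative $u_0\in D_0^\gamma(\R^N)\setminus\{0\}$, so that $(u_0)_+=u_0\not\equiv0$, and test the energy along the ray $\hat u=tu_0$, $t>0$. The lower bound $A(z,s)\ge-\bigl(\lambda_1 w_1\underline u_{\lambda_1}^{-\eta}+\lambda_2 w_2|\nabla_\gamma v|^{r-1}\bigr)|s|$, together with Hölder's inequality (using $w_1\underline u^{-\eta}\in L^{(2^*_\gamma)'}$, $w_2\in L^\theta$, $|\nabla_\gamma v|^{r-1}\in L^{2/(r-1)}$, $u_0\in L^{2^*_\gamma}$), gives $-\int_{\R^N}A(z,tu_0)\dz\le C_0 t$ for a constant $C_0$ independent of $t$. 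Hence $J(tu_0)\le\frac{t^2}{2}\|\nabla_\gamma u_0\|_2^2+C_0 t-\frac{t^{2^*_\gamma}}{2^*_\gamma}\|u_0\|_{2^*_\gamma}^{2^*_\gamma}\to-\infty$ as $t\to+\infty$, since $2^*_\gamma>2$. Thus for $t$ large $\hat u=tu_0$ satisfies $J(\hat u)<0=J(0)$ and $\|\nabla_\gamma\hat u\|_2=t\|\nabla_\gamma u_0\|_2>\rho$, which together with the previous paragraph makes Theorem~\ref{mountainpass} applicable, $\max\{J(0),J(\hat u)\}=0<\inf_{\partial B_\gamma(0,\rho)}J$.

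\emph{Main obstacle.} The genuinely delicate part is the first one: one must track all the constants and all the powers of $S$ carefully enough that the resulting threshold is \emph{precisely} the quantity $\Lambda_1$ of \eqref{smallness}, with a constant $\tilde C$ depending only on the listed data and satisfying $\tilde C>1$. The comparisons of the exponents $(1-\eta)(N_\gamma-2)/4$ and $(N_\gamma-2)/4$ with $N_\gamma/4$, and the harmless reduction to $\Lambda_1\le1$ (hence $\lambda_i\le1$), are the points that require attention. The ``downhill'' direction is, by contrast, essentially immediate from the superquadraticity of the critical term.
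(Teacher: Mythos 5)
Your proof is correct and takes essentially the same route as the paper: H\"older--Sobolev estimates of the singular and frozen-convective terms on the sphere of radius $\rho=S^{N_\gamma/4}$, where $\tfrac12-\tfrac{1}{2^*_\gamma}=\tfrac{1}{N_\gamma}$ produces the leading term $S^{N_\gamma/2}/N_\gamma$ and the smallness condition \eqref{smallness} yields positivity, followed by the ray $t\mapsto tu_0$ with $(u_0)_+\not\equiv 0$ for the negative value; the paper merely works at a general radius $t$ with the auxiliary function $g$ and the bounds $t^{1-\eta},\,t\le t+1$ instead of substituting $\rho$ directly. Your explicit reduction to $\Lambda_1\le 1$ (so that $\lambda_1,\lambda_2\le\lambda_1^{1/(1+\eta)}$) makes precise a step the paper uses implicitly, while the remark that $\tilde C$ ``absorbs'' $(S^{N_\gamma/4}+1)/N_\gamma$ is superfluous since those factors appear explicitly in $\Lambda_1$; neither point affects correctness.
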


\begin{proof}
Fix $\tilde{C}=\tilde{C}(N,w_1,w_2,\eta,r,\gamma)>1$  such that
 $$ \frac{1}{1-\eta}S^{-\frac{1-\eta}{2}}\|w_1\|_\zeta + S^{-\frac{1}{2}} \left(\|w_1\underline{u}^{-\eta}\|_{(2^*_\gamma)'}+\|w_2\|_\theta L^{r-1}\right) \leq \tilde{C}(1+L^{r-1}). $$
Let $\lambda_1, \lambda_2$ fulfill \eqref{smallness} and pick any $t>0$. Reasoning as in \eqref{enest:A}, besides recalling the choice of $\tilde{C}$, we get
\begin{displaymath}
\begin{aligned}
&\inf_{\partial B_\gamma(0,t)} J 
\geq \inf_{u\in \partial B_\gamma(0,t)} \left[ \frac{1}{2}\|\nabla_\gamma u\|_2^2 - \frac{\lambda_1}{1-\eta}\int_{\R^N} w_1 |u|^{1-\eta} \dz \right.\\ &\left. \qquad -    \int_{\R^N} \left(\lambda_1 w_1\underline u_{\lambda_1}^{-\eta}+\lambda_2 w_2|\nabla_\gamma v|^{r-1}\right)|u| \dz - \frac{1}{2^*_\gamma} \|u\|_{2^*_\gamma}^{2^*_\gamma} \right] \\
&\geq \inf_{u\in \partial B_\gamma(0,t)} \left[ \frac{1}{2}\|\nabla_\gamma u\|_2^2 - \frac{\lambda_1}{1-\eta}S^{-\frac{1-\eta}{2}}\|w_1\|_\zeta \|\nabla_\gamma u\|_2^{1-\eta} \right. \\
&\left. \qquad -   S^{-\frac{1}{2}} \left(\lambda_1^{\frac{1}{1+\eta}}\|w_1\underline{u}^{-\eta}\|_{(2^*_\gamma)'}+\lambda_2\|w_2\|_\theta L^{r-1}\right)\|\nabla_\gamma u\|_2 - \frac{1}{2^*_\gamma}S^{-\frac{2^*_\gamma}{2}} \|\nabla_\gamma u\|_2^{2^*_\gamma} \right] \\
&= \frac{1}{2}t^2 - \frac{\lambda_1}{1-\eta}S^{-\frac{1-\eta}{2}}\|w_1\|_\zeta t^{1-\eta} - S^{-\frac{1}{2}} \left(\lambda_1^{\frac{1}{1+\eta}}\|w_1\underline{u}^{-\eta}\|_{(2^*_\gamma)'}+\lambda_2\|w_2\|_\theta L^{r-1}\right)t  - \frac{1}{2^*_\gamma}S^{-\frac{2^*_\gamma}{2}} t^{2^*_\gamma} \\
&\geq \frac{1}{2}t^2 - \frac{\lambda_1}{1-\eta}S^{-\frac{1-\eta}{2}}\|w_1\|_\zeta (t+1) \\
& \qquad-   S^{-\frac{1}{2}} \left(\lambda_1^{\frac{1}{1+\eta}}\|w_1\underline{u}^{-\eta}\|_{(2^*_\gamma)'}+\lambda_2\|w_2\|_\theta L^{r-1}\right)(t+1) - \frac{1}{2^*_\gamma}S^{-\frac{2^*_\gamma}{2}} t^{2^*_\gamma} \\
&\geq \frac{1}{2}t^2 - (t+1)\left[\frac{\lambda_1}{1-\eta}S^{-\frac{1-\eta}{2}}\|w_1\|_\zeta  +  S^{-\frac{1}{2}} \left(\lambda_1^{\frac{1}{1+\eta}}\|w_1\underline{u}^{-\eta}\|_{(2^*_\gamma)'}+\lambda_2\|w_2\|_\theta L^{r-1}\right)\right]  - \frac{1}{2^*_\gamma}S^{-\frac{2^*_\gamma}{2}} t^{2^*_\gamma} \\
&\geq \frac{1}{2}t^2 - \tilde{C}\lambda_1^{\frac{1}{1+\eta}}(1+L^{r-1})(t+1) - \frac{1}{2^*_\gamma}S^{-\frac{2^*_\gamma}{2}} t^{2^*_\gamma},
\end{aligned}
\end{displaymath}
since $\|\nabla_\gamma u\|_2=t$ whenever $u\in\partial B_\gamma(0,t)$. Let us consider the real-valued function $g\colon(0,+\infty)\to\R$ defined as
\begin{displaymath}
g(t) \coloneqq \frac{1}{2} t^2 -\eps (t+1) -\frac{1}{2^*_\gamma}S^{-\frac{2^*_\gamma}{2}} t^{2^*_\gamma},
\end{displaymath}
where $\eps\coloneqq \tilde{C}\lambda_1^{\frac{1}{1+\eta}}(1+L^{r-1})$. Condition \eqref{smallness} forces $\eps<\frac{S^{N_\gamma/2}}{N_\gamma(S^{N_\gamma/4}+1)}$, so that
$$g(S^{N_\gamma/4})=\frac{S^{N_\gamma/2}}{N_\gamma}-\eps (S^{N_\gamma/4}+1)>0.$$
Setting $\rho\coloneqq S^{N_\gamma/4}$ we get
$$\inf_{\partial B_\gamma(0,\rho)} J \ge  g(\rho)>0.$$
Given any $u\in D_0^\gamma(\R^N)$ such that $u_+\not\equiv 0$, we have $J(tu)\to-\infty$ as $t\to+\infty$: indeed, according to \eqref{Aest} and the fact that $\|u_+\|_{2^*_\gamma}>0$,
\begin{align*}
    \limsup_{t\to+\infty} J(tu) &\leq \lim_{t\to+\infty} \left[ \frac{t^2}{2}\|\nabla_\gamma u\|_2^2 + t\int_{\R^N} (\lambda_1 w_1\underline u_{\lambda_1}^{-\eta}+\lambda_2 w_2|\nabla_\gamma v|^{r-1})|u| \dz -\frac{t^{2^*_\gamma}}{2^*_\gamma}\|u_+\|_{2^*_\gamma}^{2^*_\gamma} \right] \\ 
    &= -\infty.
\end{align*}
Hence, setting $\hat{u}\coloneqq tu$, we have $J(\hat{u})<0<\inf_{\partial B_\gamma(0,\rho)} J$ and $ \|\nabla_\gamma \hat{u}\|_2>\rho$, provided $t$ is sufficiently large.
\end{proof}

\begin{lemma}\label{talentihatc}
Let $L>0$ and $v\in D_0^\gamma(\R^N)$ be such that $\|\nabla_\gamma v\|_2\leq L$.  
There exists $\Lambda_2\in(0,1)$ such that for every $\lambda_1 > 0, \lambda_2\ge 0$, being $\lambda_2\le\lambda_1<\Lambda_2$ implies
\begin{displaymath}
\inf_{\phi\in\Phi} \sup_{t\in[0,1]} J(\phi(t))<c<\hat{c}, \quad \Phi\coloneqq \{\phi\in C^0([0,1]; D_0^\gamma(\R^N)): \, \phi(0)=0, \, \phi(1)=\hat{u}\}
\end{displaymath}
for a suitable $c=c(\lambda_1,\lambda_2,N,w_1, w_2,\eta,\gamma)>0$, where $\hat{c}$ is defined in \eqref{hatc} and $\hat{u}$ is the function that satisfies the minimizer for the embedding constant $S$, that is $\hat{u}\in D_0^\gamma(\R^N)$ and satisfies 
\begin{displaymath} \lVert \nabla_\gamma \hat{u} \rVert_2^2 = \inf_{\substack{
    u \in D_0^\gamma(\mathbb{R}^N) \\
    \|u\|_{2^*_\gamma} = 1
}} \lVert \nabla_\gamma u \rVert_2^2,
\end{displaymath}
see \cite[Theorem 3.4]{alves2024brezis}.
\end{lemma}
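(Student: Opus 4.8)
The plan is to evaluate $J$ along the straight segment $\phi_0(t)\coloneqq t\hat u$, $t\in[0,1]$, which belongs to $\Phi$, and to show that its maximum is, for $\lambda_1$ (hence $\lambda_2$) small, strictly below $\hat c$. Recall from \cite[Theorem 3.4]{alves2024brezis} (see also \cite[Lemma 5.1]{loiudice2006}) that the infimum defining $S$ is attained at a positive, continuous function; normalising it, we obtain $U\in D_0^\gamma(\R^N)$ with $U>0$, $\|U\|_{2^*_\gamma}=1$ and $\|\nabla_\gamma U\|_2^2=S$, and $\hat u$ is a positive multiple $\tau U$ of $U$ (with $\tau=1$ for the normalised choice). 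Since $U>0$, for every $s\ge0$ one has $(sU)_+=sU$, so that
\[
J(sU)=h(s)-\int_{\R^N}A(z,sU)\dz,\qquad h(s)\coloneqq\frac{S}{2}s^2-\frac{1}{2^*_\gamma}s^{2^*_\gamma},
\]
and an elementary computation gives $\max_{s\ge0}h(s)=h\bigl(S^{(N_\gamma-2)/4}\bigr)=S^{N_\gamma/2}/N_\gamma$. Moreover $a(z,\cdot)\ge0$ (as $w_1,w_2\ge0$, $\lambda_2\ge0$ and $\underline u_{\lambda_1}>0$), whence $A(z,s)\ge0$ for $s\ge0$ and thus $J(sU)\le h(s)\le S^{N_\gamma/2}/N_\gamma$. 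Since $\{t\hat u:t\in[0,1]\}\subseteq\{sU:s\ge0\}$ we have $\sup_{t\in[0,1]}J(\phi_0(t))\le\sup_{s\ge0}J(sU)$, so it suffices to improve this last bound by a quantitative, $\lambda_1$-sized, amount.

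First I would localise the relevant values of $s$: being $h$ continuous with $h(0)=0$ and $h(s)\to-\infty$, one finds $0<s_1<s_2$, depending only on $N_\gamma$, such that $h(s)<S^{N_\gamma/2}/(2N_\gamma)$ for $s\notin[s_1,s_2]$; since $\hat c\to S^{N_\gamma/2}/N_\gamma$ as $\lambda_1\to0^+$ by \eqref{hatc}, for $\lambda_1$ small $J(sU)<\hat c$ whenever $s\notin[s_1,s_2]$, and only the range $s\in[s_1,s_2]$ has to be treated. There I exploit the ball $B\coloneqq B(z_0,\varrho)$ and the number $\omega>0$ from \ref{hypf}. Put $m\coloneqq\min_{\overline B}U>0$ and $M\coloneqq\max_{\overline B}\underline u<\infty$ (finite since $\underline u\in C^{0,\tau}_\loc(\R^N)$ by Lemma \ref{subsol}); by \eqref{usub}, $\underline u_{\lambda_1}=\lambda_1^{1/(1+\eta)}\underline u\le\lambda_1^{1/(1+\eta)}M$ on $B$, so once $\lambda_1^{1/(1+\eta)}M\le s_1 m$ we have $sU(z)\ge\underline u_{\lambda_1}(z)$ on $B$ for all $s\in[s_1,s_2]$. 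Splitting $A(z,sU)=\int_0^{sU}a(z,t)\dt$ at $t=\underline u_{\lambda_1}(z)$, discarding the nonnegative $\lambda_2$-term, and using $sU(z)\ge s_1m$ and $\underline u_{\lambda_1}(z)^{1-\eta}\le\lambda_1^{(1-\eta)/(1+\eta)}M^{1-\eta}$, one obtains, for $\lambda_1$ small,
\[
A(z,sU)\ge\lambda_1 w_1(z)\left[\frac{(s_1m)^{1-\eta}}{1-\eta}-\frac{\eta}{1-\eta}\lambda_1^{\frac{1-\eta}{1+\eta}}M^{1-\eta}\right]\ge\kappa\,\lambda_1 w_1(z),\qquad \kappa\coloneqq\frac{(s_1m)^{1-\eta}}{2(1-\eta)},
\]
uniformly in $z\in B$, $s\in[s_1,s_2]$. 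Integrating over $B$ and recalling $A\ge0$ off $B$ gives $\int_{\R^N}A(z,sU)\dz\ge\kappa\lambda_1\int_B w_1\dz\ge\kappa\omega\lambda_1$, hence $J(sU)\le S^{N_\gamma/2}/N_\gamma-\kappa\omega\lambda_1$ for $s\in[s_1,s_2]$.

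Putting the two regimes together, for $\lambda_1$ small (so also $\kappa\omega\lambda_1\le S^{N_\gamma/2}/(2N_\gamma)$) we conclude $\sup_{s\ge0}J(sU)\le S^{N_\gamma/2}/N_\gamma-\kappa\omega\lambda_1$, hence $\inf_{\phi\in\Phi}\sup_{t\in[0,1]}J(\phi(t))\le S^{N_\gamma/2}/N_\gamma-\kappa\omega\lambda_1<c$ with $c\coloneqq S^{N_\gamma/2}/N_\gamma-\tfrac12\kappa\omega\lambda_1>0$ (note $c$ does not involve $L$). Finally, since $\lambda_2\le\lambda_1$ makes $\max\{\lambda_1,\lambda_2\}=\lambda_1$, the inequality $c<\hat c$ amounts to $\hat C(L^{2(r-1)}+1)\lambda_1^{(1-\eta)/(1+\eta)}<\tfrac12\kappa\omega$, which holds for every $\lambda_1<\Lambda_2$ once $\Lambda_2\in(0,1)$ is chosen as the minimum of the finitely many smallness thresholds used above and of $\bigl(\kappa\omega/(2\hat C(L^{2(r-1)}+1))\bigr)^{(1+\eta)/(1-\eta)}$. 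The crux — and essentially the only delicate point — is that the singular term produces a gain of order $\lambda_1$, which dominates the $O\!\bigl(\lambda_1^{2/(1+\eta)}\bigr)$ defect inherent in $\hat c$ exactly because $\eta<1$; the rest is bookkeeping of the smallness conditions and of the splitting of the $s$-range around this fact. Observe also that the frozen convective term, being nonnegative, only increases $A$ and hence cannot spoil the estimate, which is why the argument is uniform in $v$ and in $\lambda_2\in[0,\lambda_1]$.
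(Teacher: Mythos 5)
Your proof is correct and follows essentially the same route as the paper: estimate $J$ along the segment $t\mapsto t\hat u$, use $A\ge 0$ together with the elementary maximum $S^{N_\gamma/2}/N_\gamma$ of the quadratic-critical profile, extract a gain of order $\lambda_1$ from the singular term on the ball $B(z_0,\varrho)$ of \ref{hypf}, and beat $\hat c$ because $\lambda_1^{2/(1+\eta)}=o(\lambda_1)$ as $\lambda_1\to 0^+$ (since $\eta<1$). The only difference is organizational: the paper locates the maximizer $\overline t\in[0,1]$ via the derivative identity and splits according to whether $\overline t\ge 1/2$ or $\overline t\le 1/2$, whereas you split according to whether $s$ lies in a fixed interval $[s_1,s_2]$ where the profile can be near its maximum — the same dichotomy in different clothing, with identical smallness thresholds on $\Lambda_2$ up to constants.
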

\begin{proof}
Firstly, we notice that $\hat{u}$ is a solution, up to a normalization, of the equation $-\Delta_\gamma \hat{u}=\hat{u}^{2_\gamma^*-1}$ in $\R^N$, so that $ \|\nabla_\gamma \hat{u}\|_2^2=\|\hat{u}\|_{2^*_\gamma}^{2^*_\gamma}=S^{N_\gamma/2}$, see \cite[Eq. (6.2)]{alves2024brezis}.
Next, we observe that the path $t \in [0,1] \mapsto t\hat{u}$ belongs to $\Phi$, so
\begin{displaymath}
\inf_{\phi\in\Phi} \sup_{t\in[0,1]} J(\phi(t)) \leq \sup_{t\in[0,1]} J(t\hat{u}) \leq \sup_{t\in[0,+\infty)} J(t\hat{u}).
\end{displaymath}
Accordingly, let us compute the maximizer $\overline{t}$ of the function $t\mapsto J(t\hat{u})$, being $t\geq 0$.
\begin{displaymath}
0 = \frac{{\rm d}}{{\rm d}t}{\Big\vert_{t=\overline{t}}} (J(t\hat{u})) = \langle J'(\overline{t}\hat{u}),\hat{u} \rangle = \overline{t} \|\nabla_\gamma \hat{u}\|_2^2 - \int_{\R^N} a(z,\overline{t}\hat{u})\hat{u} \dz -  \overline{t}^{2^*_\gamma-1} \|\hat{u}\|_{2^*_\gamma}^{2^*_\gamma},
\end{displaymath}
whence
\begin{equation}
\label{lambdacrit}
\overline{t}^{2^*_\gamma-1} \|\hat{u}\|_{2^*_\gamma}^{2^*_\gamma} = \overline{t} \|\nabla_\gamma \hat{u}\|_2^2 - \int_{\R^N} a(z,\overline{t}\hat{u})\hat{u} \dz.
\end{equation}
From \eqref{lambdacrit} we deduce 
$$\overline{t}^{2^*_\gamma-1} \|\hat{u}\|_{2^*_\gamma}^{2^*_\gamma} \le \overline{t} \|\nabla_\gamma \hat{u}\|_2^2=\overline{t}\|\hat{u}\|_{2^*_\gamma}^{2^*_\gamma},$$
forcing $\overline{t}\in[0,1]$.
Fix any $\Lambda_2\in(0,1)$ and define 
$$\Upsilon_1\coloneqq \left\{\lambda\in(0,\Lambda_2): \, \overline{t}\in \left[\frac{1}{2},1\right]\right\}, \qquad \Upsilon_2\coloneqq  \left\{\lambda\in(0,\Lambda_2): \, \overline{t}\in \left[0,\frac{1}{2}\right]\right\}.$$
Suppose $\lambda_1\in\Upsilon_1$. 
Exploiting the monotonicity of $a(z,\cdot)$ and
$$ \max_{t\in(0,+\infty)}\left(\frac{t^2}{2}- \frac{t^{2^*_\gamma}}{2^*_\gamma}\right)=\frac{1}{N_\gamma}, $$
we get
\begin{equation}\label{MPlevel:Jest}
\begin{split}
J(\overline{t}\hat{u}) &= \frac{\overline{t}^2}{2}\|\nabla\hat{u}\|_2^2 - \int_{\R^N} a(z,\overline{t}\hat{u}) \dz - \frac{\overline{t}^{2^*_\gamma}}{2^*_\gamma} \|\hat{u}\|_{2^*_\gamma}^{2^*_\gamma}\\
&=\left(\frac{\overline{t}^2}{2}- \frac{\overline{t}^{2^*_\gamma}}{2^*_\gamma} \right)\|\nabla\hat{u}\|_2^2 - \int_{\R^N} a(z,\overline{t}\hat{u}) \dz\le \frac{\|\nabla\hat{u}\|_2^2}{N_\gamma}-\int_{\R^N} A\left(z, \frac{\hat{u}}{2}\right) \dz.
\end{split}
\end{equation}
Let $\hat m\coloneqq \inf_{B(z_0,\varrho)}\hat u>0$, being $z_0,\varrho$ as in \ref{hypf}.
Imposing
\begin{equation}\label{smallness2}
\Lambda_2\leq \left(\frac{\hat m}{4\|\underline u\|_\infty}\right)^{1+\eta},
\end{equation}
one has $\|\underline u_{\lambda_1}\|_\infty\leq \Lambda_2^{\frac{1}{1+\eta}} \|\underline{u}\|_\infty\leq \hat m/4$. Accordingly, as in \eqref{enest:a}, using the nonnegativity of $w_2$ in \ref{hypf},
\begin{align*}
&\int_{\R^N} A\left(z, \frac{\hat{u}}{2}\right) \dz\ge \int_{\{\hat{u}/2>\underline u_{\lambda_1}\}} \left(\int_{\underline u_{\lambda_1}}^{\hat{u}/2} a(z,t) \dt \right)\dz\ge \int_{\{\hat{u}/2>\underline u_{\lambda_1}\}} \left(\int_{\underline u_{\lambda_1}}^{\hat{u}/2} \lambda_1 w_1 t^{-\eta} \dt \right)\dz\\
&=\frac{\lambda_1}{1-\eta}\int_{\{\hat{u}/2>\underline u_{\lambda_1}\}} w_1(z)\left[\left(\frac{\hat u}{2}\right)^{1-\eta}-\underline u_{\lambda_1}^{1-\eta}\right]\dz\ge \frac{\lambda_1}{1-\eta}\int_{B(z_0,\varrho)} w_1(z)\left[\left(\frac{\hat u}{2}\right)^{1-\eta}-\underline u_{\lambda_1}^{1-\eta}\right]\dz\\
&\ge \frac{\lambda_1}{1-\eta}\left[\left(\frac{\hat m}{2}\right)^{1-\eta}-\left(\frac{\hat m}{4}\right)^{1-\eta}\right]\int_{B(z_0,\varrho)} w_1(z)\dz\ge \frac{\lambda_1}{1-\eta}\left[\left(\frac{\hat m}{2}\right)^{1-\eta}-\left(\frac{\hat m}{4}\right)^{1-\eta}\right] \omega\eqqcolon 2 \lambda_1\check C,
\end{align*}
since $B(z_0,\varrho)\subseteq\{\hat{u}/2>\underline u_{\lambda_1}\}$. Incidentally, notice that $\check C$ depends only on $N, \gamma,w_1,\eta$. From \eqref{MPlevel:Jest} we deduce
\begin{equation}\label{Jest1}
J(\overline{t}\hat{u})\le \frac{S^{N_\gamma/2}}{N_\gamma}-2\lambda_1 \check C.
\end{equation}
Now assume $\lambda_1\in \Upsilon_2$. Setting
$$ \check{c}\coloneqq \max_{t\in\left[0,\frac{1}{2}\right]}\left(\frac{t^2}{2}- \frac{t^{2^*_\gamma}}{2^*_\gamma}\right)<\frac{1}{N_\gamma}, $$
from the nonnegativity of $a$ defined in \eqref{defapicc}, we deduce
\begin{equation}\label{Jest2}
\begin{split}
J(\overline{t}\hat{u}) &= \frac{\overline{t}^2}{2}\|\nabla\hat{u}\|_2^2 - \int_{\R^N} a(z,\overline{t}\hat{u}) \dz - \frac{\overline{t}^{2^*_\gamma}}{2^*_\gamma} \|\hat{u}\|_{2^*_\gamma}^{2^*_\gamma}\le\left(\frac{\overline{t}^2}{2}- \frac{\overline{t}^{2^*_\gamma}}{2^*_\gamma} \right)\|\nabla\hat{u}\|_2^2 \le \check c S^{N_\gamma/2}.
\end{split}\end{equation}
Recalling that
$$\hat{c} =\frac{S^{N_\gamma/2}}{N_\gamma}-\lambda_1^{\frac{2}{1+\eta}} \hat C\left(1+L^{2(r-1)}\right)$$
since $\lambda_2\le \lambda_1$, and noticing that $\frac{2}{1+\eta}>1$, we can impose the additional bound
\begin{equation}\label{smallness3}
\Lambda_2\leq \min\left\{\frac{1-N_\gamma\check c}{N_\gamma\check C} \, S^{N_\gamma/2},\left[\frac{\check C}{\hat C\left(1+L^{2(r-1)}\right)}\right]^{\frac{1+\eta}{1-\eta}}\right\},
\end{equation}
so that
$$J(\overline{t}\hat{u})\leq \max\left\{\check c S^{N_\gamma/2}, \frac{S^{N_\gamma/2}}{N_\gamma}-2\lambda_1\check C\right\} < \frac{S^{N_\gamma/2}}{N_\gamma}-\lambda_1\check C <\hat c$$
by \eqref{Jest1} and \eqref{Jest2}. The proof is concluded by choosing $c=\frac{S^{N_\gamma/2}}{N_\gamma}-\lambda_1\check C$.
\end{proof}
Now we are ready to prove existence of solutions to the truncated and frozen problem \eqref{varprob}.
\begin{thm}\label{existencefinal}
Let $L>0$ and $v\in D_0^\gamma(\R^N)$ be such that $\|\nabla_\gamma v\|_2\leq L$. Let $\lambda_1>0, \lambda_2\ge0$ such that $\lambda_2\le\lambda_1<\Lambda$ with $\Lambda\coloneqq \min\{\Lambda_1,\Lambda_2\}$, being $\Lambda_1$ and $\Lambda_2$ defined in Lemmas \ref{mountainpassgeometry} and \ref{talentihatc}, respectively. Then there exists $u\in  D_0^\gamma(\R^N)$ solution to \eqref{varprob}.
\end{thm}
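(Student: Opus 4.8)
The plan is to obtain a critical point of the energy functional $J$ associated with \eqref{varprob} by applying the Mountain Pass Theorem (Theorem \ref{mountainpass}), using Lemmas \ref{mountainpassgeometry}, \ref{talentihatc} and \ref{PS} as the three required ingredients. Set $\Lambda\coloneqq\min\{\Lambda_1,\Lambda_2\}$, with $\Lambda_1,\Lambda_2$ as in Lemmas \ref{mountainpassgeometry} and \ref{talentihatc}. Since $\Lambda_2\in(0,1)$ we have $\Lambda<1$, so that whenever $\lambda_2\le\lambda_1<\Lambda$ the assumptions $\lambda_1\in(0,1]$, $\lambda_2\in[0,1]$ of Lemmas \ref{enestlemma}, \ref{ccplemma}, \ref{PS}, together with the smallness conditions \eqref{smallness}, \eqref{smallness2}, \eqref{smallness3}, all hold simultaneously; the given $v$ satisfies $\|\nabla_\gamma v\|_2\le L$ throughout.

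First I would take $u_0\coloneqq 0$ and $\rho\coloneqq S^{N_\gamma/4}$. By Lemma \ref{mountainpassgeometry}, $J(0)=0<b\coloneqq\inf_{\partial B_\gamma(0,\rho)}J$. Let $\hat u\in D_0^\gamma(\R^N)$ be the extremal function for $S$ appearing in Lemma \ref{talentihatc}, so that $\|\nabla_\gamma\hat u\|_2^2=\|\hat u\|_{2^*_\gamma}^{2^*_\gamma}=S^{N_\gamma/2}$ and, in particular, $\hat u_+\not\equiv 0$. As recorded inside the proof of Lemma \ref{mountainpassgeometry}, $J(t\hat u)\to-\infty$ as $t\to+\infty$, hence there is $t_1>1$ with $u_1\coloneqq t_1\hat u$ satisfying $J(u_1)<0<b$ and $\|\nabla_\gamma u_1\|_2=t_1 S^{N_\gamma/4}>\rho$. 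Thus $\max\{J(u_0),J(u_1)\}<b$ and $\|u_1-u_0\|_\gamma>\rho$, i.e., the geometric hypotheses of Theorem \ref{mountainpass} are met.

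Next I would bound the minimax value $c_M\coloneqq\inf_{\phi\in\Phi}\sup_{t\in[0,1]}J(\phi(t))$, with $\Phi\coloneqq\{\phi\in C^0([0,1];D_0^\gamma(\R^N)):\phi(0)=0,\ \phi(1)=u_1\}$. Testing with the segment $\phi(t)\coloneqq t\,u_1=(tt_1)\hat u$, whose image equals $\{s\hat u:0\le s\le t_1\}$ and therefore contains the maximizer $\overline t\hat u$ of $t\mapsto J(t\hat u)$ (recall $\overline t\in[0,1]\subseteq[0,t_1]$ from the proof of Lemma \ref{talentihatc}), we get $c_M\le\sup_{s\in[0,t_1]}J(s\hat u)=\sup_{s\ge 0}J(s\hat u)=J(\overline t\hat u)$, and the computation carried out in Lemma \ref{talentihatc} shows $J(\overline t\hat u)<c<\hat c$. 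Hence $c_M<\hat c$, so Lemma \ref{PS} ensures that $J$ satisfies ${\rm (PS)}_{c_M}$. Theorem \ref{mountainpass} then yields $u\in D_0^\gamma(\R^N)$ with $J(u)=c_M\ge b>0$ and $J'(u)=0$; the identity $\langle J'(u),\varphi\rangle=0$ for all $\varphi\in D_0^\gamma(\R^N)$ is exactly the weak formulation of $-\Delta_\gamma u=a(z,u)+u_+^{2^*_\gamma-1}$ in $\R^N$, and $J(u)>0=J(0)$ forces $u\not\equiv 0$, which proves the theorem.

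All the analytic difficulty has already been absorbed into the preceding lemmas — the Palais–Smale analysis via concentration–compactness (Lemmas \ref{ccplemma}, \ref{PS}), the mountain pass geometry (Lemma \ref{mountainpassgeometry}), and the localization of the minimax level below the compactness threshold $\hat c$ (Lemma \ref{talentihatc}) — so I do not expect a genuine obstacle at this final step. The only points requiring care are bookkeeping ones: taking $\Lambda=\min\{\Lambda_1,\Lambda_2\}$ so that every smallness requirement is simultaneously in force, and replacing the Sobolev extremal $\hat u$ (whose norm is exactly $\rho$) by a dilate $t_1\hat u$ with $t_1>1$ in order to secure the strict inequality $\|u_1-u_0\|_\gamma>\rho$ demanded by Theorem \ref{mountainpass}.
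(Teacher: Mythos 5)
Your proposal is correct and follows exactly the paper's route: the paper's own proof of Theorem \ref{existencefinal} is a one-line application of Theorem \ref{mountainpass} combined with Lemmas \ref{PS}, \ref{mountainpassgeometry}, and \ref{talentihatc}. Your additional bookkeeping (replacing the endpoint by a dilate $t_1\hat u$ with $t_1>1$ and noting that the segment through $\hat u$ still keeps the minimax level below $\hat c$) is a legitimate filling-in of details the paper leaves implicit.
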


\begin{proof}
By virtue of Lemmas \ref{PS}, \ref{mountainpassgeometry}, and \ref{talentihatc}, the conclusion follows from Theorem \ref{mountainpass}.
\end{proof}

\begin{rmk}
\label{subcomparisonrmk}
Any solution $u$ to either \eqref{varprob} or \eqref{prob} satisfies $u\geq \underline u_{\lambda_1}$, being $\underline u_{\lambda_1}$ defined in \eqref{usub}. Indeed, if $u$ solves \eqref{varprob}, then it satisfies (in weak sense)
\begin{displaymath}
-\Delta_\gamma u \geq a(\cdot,u) = \lambda_1 w_1\underline u_{\lambda_1}^{-\eta} + \lambda_2 w_2  | \nabla_\gamma v|^{r-1} \geq \lambda_1 w_1\underline u_{\lambda_1}^{-\eta} = -\Delta_\gamma \underline u_{\lambda_1} \quad \mbox{on} \;\; \{u<\underline u_{\lambda_1}\},
\end{displaymath}
because of \eqref{a} and the nonnegativity of $w_2$.
Thus, Lemma \ref{weakcomp} yields $u\geq \underline u_{\lambda_1}$ in $\R^N$. 

Note that, by \eqref{usub} and \eqref{upos} we have $\underline u_{\lambda_1}=\lambda_1^{\frac{1}{1+\eta}}\underline u\ge C>0$, where $\underline u$ satisfies \eqref{subprob}, implying $u\ge C>0$.
A similar argument holds for solutions to \eqref{prob}, after noticing that they are positive by definition.
\end{rmk}

\begin{rmk}
Note that Theorem \ref{existencefinal} holds true also for $\lambda_1=\lambda_2=0$, giving the existence of a nontrivial solution of mountain pass type to the critical equation
$$-\Delta_\gamma u= u^{2_\gamma^*-1}\qquad \text{in}\,\, \R^N.$$
\end{rmk}

As a last step, we show that the minimum of two super-solutions to \eqref{varprob} is still a super-solution. We adapt the arguments of \cite[Lemma 10]{LMZ}.
\begin{lemma} \label{lemma:supersolutions}
   Let $u_1$, $u_2 \in D_0^\gamma(\R^N)$ be super-solutions to \eqref{varprob}. Then $u\coloneqq \min\{u_1, u_2 \}$ is a super-solution to \eqref{varprob}.
\end{lemma}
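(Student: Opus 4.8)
The plan is to test the weak super-solution inequalities for $u_1$ and $u_2$ against a partition of a given non-negative test function, built from a smooth approximation of the Heaviside function evaluated at $w\coloneqq u_1-u_2\in D_0^\gamma(\R^N)$, then to add the two inequalities and let the regularization parameter go to zero. Throughout, $u\coloneqq\min\{u_1,u_2\}$; recall that $u=u_1$ a.e.\ on $\{u_1\le u_2\}$, $u=u_2$ a.e.\ on $\{u_1>u_2\}$, and that these two sets coincide with $\{w\le0\}$ and $\{w>0\}$ respectively.

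First I would fix $\varphi\in D_0^\gamma(\R^N)$ with $\varphi\ge 0$ and reduce to the case $\varphi\in L^\infty(\R^N)$: for the truncations $\varphi_k\coloneqq\min\{\varphi,k\}$ one has $\varphi_k\to\varphi$ in $D_0^\gamma(\R^N)$ (dominated convergence for $\nabla_\gamma\varphi_k=\chi_{\{\varphi<k\}}\nabla_\gamma\varphi$), while the right-hand side integrand $(a(z,u)+u_+^{2^*_\gamma-1})\varphi_k$ is non-negative and increases to $(a(z,u)+u_+^{2^*_\gamma-1})\varphi$, which lies in $L^1(\R^N)$ by Hölder's and Sobolev's inequalities exactly as in \eqref{enest:A}, using Lemma \ref{subsol} (so that $w_1\underline{u}^{-\eta}\in L^\infty$) and the definition of $\theta$ in \eqref{def:teta}. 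By monotone convergence it then suffices to prove the super-solution inequality for bounded $\varphi$. Next, for $\eps>0$ pick $\theta_\eps\in C^1(\R)$ non-decreasing with $\theta_\eps\equiv 0$ on $(-\infty,0]$, $\theta_\eps\equiv 1$ on $[\eps,+\infty)$ and $0\le\theta_\eps'\le C/\eps$, and set
\begin{displaymath}
\psi_\eps^1\coloneqq\varphi\,(1-\theta_\eps(w)),\qquad \psi_\eps^2\coloneqq\varphi\,\theta_\eps(w).
\end{displaymath}
These functions are non-negative and, since $\varphi\in D_0^\gamma(\R^N)\cap L^\infty(\R^N)$, $\theta_\eps(w)\in L^\infty(\R^N)$ and $\nabla_\gamma(\theta_\eps(w))=\theta_\eps'(w)\nabla_\gamma w\in L^2(\R^N)$ by Stampacchia's lemma (cf.\ \cite[Corollary 2.2]{gutierrezlanconelli}), they belong to $D^\gamma(\R^N)=D_0^\gamma(\R^N)$; hence $\psi_\eps^1$ and $\psi_\eps^2$ are admissible test functions in the super-solution inequalities for $u_1$ and $u_2$, respectively.

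Adding those two inequalities, expanding the gradients by the product rule and using $\nabla_\gamma u_2-\nabla_\gamma u_1=-\nabla_\gamma w$, the two terms containing $\theta_\eps'(w)$ combine into $-\int_{\R^N}\varphi\,\theta_\eps'(w)|\nabla_\gamma w|^2\dz\le 0$, so that
\begin{displaymath}
\int_{\R^N}\left[(1-\theta_\eps(w))\nabla_\gamma u_1+\theta_\eps(w)\nabla_\gamma u_2\right]\nabla_\gamma\varphi\dz\ \ge\ \int_{\R^N}\left[(a(z,u_1)+(u_1)_+^{2^*_\gamma-1})\psi_\eps^1+(a(z,u_2)+(u_2)_+^{2^*_\gamma-1})\psi_\eps^2\right]\dz.
\end{displaymath}
Now I would let $\eps\to 0$. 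Since $\theta_\eps(w)\to\chi_{\{w>0\}}$ and $1-\theta_\eps(w)\to\chi_{\{w\le0\}}$ pointwise, $\nabla_\gamma u_1=\nabla_\gamma u_2$ a.e.\ on $\{w=0\}$, and (Stampacchia's lemma again) $\nabla_\gamma u$ equals $\nabla_\gamma u_1$ a.e.\ on $\{u_1\le u_2\}$ and $\nabla_\gamma u_2$ a.e.\ on $\{u_1>u_2\}$, the left integrand converges a.e.\ to $\nabla_\gamma u\cdot\nabla_\gamma\varphi$, which is dominated by $(|\nabla_\gamma u_1|+|\nabla_\gamma u_2|)\,|\nabla_\gamma\varphi|\in L^1(\R^N)$. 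On the right, since $u=u_1$ on $\{w\le0\}$ and $u=u_2$ on $\{w>0\}$, the integrand converges a.e.\ to $(a(z,u)+u_+^{2^*_\gamma-1})\varphi$, dominated by $(\lambda_1 w_1\underline u_{\lambda_1}^{-\eta}+\lambda_2 w_2|\nabla_\gamma v|^{r-1}+(u_1)_+^{2^*_\gamma-1}+(u_2)_+^{2^*_\gamma-1})\varphi\in L^1(\R^N)$. Passing to the limit by dominated convergence yields $\int_{\R^N}\nabla_\gamma u\,\nabla_\gamma\varphi\dz\ge\int_{\R^N}(a(z,u)+u_+^{2^*_\gamma-1})\varphi\dz$, i.e.\ $u$ is a super-solution to \eqref{varprob}.

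The step I expect to be the main obstacle is entirely technical (as in the local elliptic setting of \cite[Lemma 10]{LMZ}): verifying that $\psi_\eps^1,\psi_\eps^2$ are genuine elements of $D_0^\gamma(\R^N)$ — which is precisely what forces the preliminary reduction to bounded $\varphi$ and the use of the Grushin truncation/chain rule of \cite[Corollary 2.2]{gutierrezlanconelli} — and producing the uniform $L^1$-dominations needed for the passage $\eps\to0$. It is worth stressing that no monotonicity of the full reaction $s\mapsto a(z,s)+s_+^{2^*_\gamma-1}$ is required, since the right-hand side splits exactly over the complementary sets $\{u_1\le u_2\}$ and $\{u_1>u_2\}$, on which $u$ coincides with $u_1$ and $u_2$ respectively.
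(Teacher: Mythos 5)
Your proposal is correct and follows essentially the same route as the paper: test the two super-solution inequalities with $\varphi$ multiplied by a smoothed Heaviside of $u_1-u_2$ and its complement, add them, discard the sign-definite term containing the derivative of the cut-off, and pass to the limit by dominated convergence. Your preliminary reduction to bounded $\varphi$ (to justify that the products are admissible test functions in $D_0^\gamma(\R^N)$) is a small technical refinement of the paper's argument, which asserts this admissibility directly, but the substance of the proof is the same.
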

\begin{proof}
    Let us choose $\eta\in C^\infty(\R^N)$ such that $\eta(z) = 0$ for $z\le 0$, $\eta'(z)\ge 0$ for $z\in (0,1)$ and $\eta(z) = 1$ for $z\ge 1$. Given $\eps>0$ we define
    \begin{displaymath}
    \eta_\eps(z)\coloneqq \eta\left( \frac{u_1(z) - u_2(z)}{\eps}\right) \quad \hbox{and} \quad \overline\eta_\eps(z) \coloneqq 1-\eta_\eps(z).
    \end{displaymath}
    Notice that $\eta_\eps\varphi\in D_0^\gamma(\R^N)$ for each $\varphi\in D_0^\gamma(\R^N)$. In fact, we trivially have $\eta_\eps \varphi\in L^{2^*_\gamma}(\R^N)$ and 
    $$\nabla_\gamma(\eta_\eps(z) \varphi) = \frac{1}{\eps} \eta'\left(\frac{u_1(z) - u_2(z)}{\eps}\right) \nabla_\gamma(u_2-u_1)\varphi + \eta_\eps(z)\nabla_\gamma \varphi\in L^2(\R^N).$$
The same holds for $\overline\eta_\eps\varphi.$ So
    \begin{align*} 
        \int_{\R^N} \nabla_\gamma u_2 \nabla_\gamma (\eta_\eps \varphi) \dz \ge \int_{\R^N} \left( a(z,u_2) + (u_2)_+^{2^*_\gamma-2} \right) \eta_\eps \varphi\dz,\\
        \int_{\R^N} \nabla_\gamma u_1 \nabla_\gamma (\overline\eta_\eps \varphi) \dz \ge \int_{\R^N} \left( a(z,u_1) + (u_1)_+^{2^*_\gamma-2} \right)  \overline\eta_\eps \varphi\dz,\\
    \end{align*}
which can be rewritten as
        \begin{multline} \label{stima1}
        \int_{\{u_1\ge u_2 \}} \eta_\eps \nabla_\gamma u_2 \nabla_\gamma \varphi \dz + \frac{1}{\eps}\int_{\{0<u_2-u_1<\eps\}} \eta' \left( \frac{u_1 - u_2}{\eps}\right) \varphi \nabla_\gamma u_2 \nabla_\gamma (u_1-u_2)\dz\\   \ge \int_{\R^N} \left( a(z,u_2) + (u_2)_+^{2^*_\gamma-2}  \right)\eta_\eps\varphi \dz
            \end{multline}
and       
      \begin{multline}  \label{stima2}
        \int_{\{u_1\le u_2 \}} \overline\eta_\eps \nabla_\gamma u_1 \nabla_\gamma \varphi \dz - \frac{1}{\eps}\int_{\{0<u_2-u_1<\eps\}} \eta' \left( \frac{u_1 - u_2}{\eps}\right) \varphi \nabla_\gamma u_1 \nabla_\gamma (u_1-u_2)\dz \\   \ge \int_{\R^N} \left( a(z,u_1) + (u_1)_+^{2^*_\gamma-2}  \right) \overline\eta_\eps \varphi\dz.
    \end{multline}
Adding \eqref{stima1} and \eqref{stima2} and recalling that $\eta_\eps'\ge 0$, we get
\begin{align} \label{stima3}
    &\int_{\{u_1\ge u_2 \}} \eta_\eps \nabla_\gamma u_2 \nabla_\gamma \varphi \dz + \int_{\{u_1\le u_2 \}} \overline\eta_\eps \nabla_\gamma u_1 \nabla_\gamma \varphi \dz \\\notag &\ge  \int_{\R^N} \left( a(z,u_2) + (u_2)_+^{2^*_\gamma-2} \right)\eta_\eps\varphi \dz + \int_{\R^N} \left( a(z,u_1) + (u_1)_+^{2^*_\gamma-2}  \right) \overline\eta_\eps \varphi\dz \\ \notag &\qquad + \frac{1}{\eps}\int_{\{0<u_2-u_1<\eps\}} \eta'_\eps \left( \frac{u_1 - u_2}{\eps}\right) \varphi \lvert \nabla_\gamma (u_1 - u_2)\rvert^2\dz\\ \notag &
    \ge \int_{ \{u_1\ge u_2 \}} \left( a(z,u_2) + (u_2)_+^{2^*_\gamma-2} \right)\eta_\eps\varphi \dz + \int_{ \{u_1\le u_2 \}} \left( a(z,u_1) + (u_1)_+^{2^*_\gamma-2}  \right) \overline\eta_\eps \varphi\dz.
\end{align}
Passing to the limit as $\eps\to 0$ in \eqref{stima3} by Lebesgue Dominated Convergence Theorem, then
\begin{align*}
   &\int_{\{u_1\le u_2 \}} \nabla_\gamma u_1 \nabla_\gamma \varphi\dz + \int_{\{u_1\ge u_2 \}}\nabla_\gamma u_2 \nabla_\gamma\varphi\dz \ge  \int_{\{u_1\le u_2 \}} \left( a(z,u_1)+ (u_1)_+^{2^*_\gamma}\right)\varphi\dz  \\ & \qquad + \int_{\{u_1\ge u_2 \}} \left( a(z,u_2) + (u_2)_+^{2^*_\gamma}\right)\varphi\dz
\end{align*}
and the claim follows.
\end{proof}

\subsection{Unfreezing the convection term}\label{unfr}
Let us introduce the constant
\begin{equation}
\label{Ldef}
L\coloneqq 2S^{N_\gamma/2}.
\end{equation}

\begin{rmk}
    As observed in \cite[Remark 4.6]{BG}, the choice \eqref{Ldef} is made for the sake of definiteness: actually, any choice of $L>S^{N_\gamma/2}$ allows to prove Theorem \ref{exsol}, provided $\Lambda$ is small enough.
\end{rmk}

We recall that the constants $\Lambda_1$ and $\Lambda_2$ were defined in Lemmas \ref{mountainpassgeometry} and \ref{talentihatc}. For a suitable constant $\Lambda_3$, whose value will be chosen later (see \eqref{lsccond2}), we set
\begin{equation*}
    \Lambda \coloneqq \min \left\lbrace \Lambda_1,\Lambda_2,\Lambda_3 \right\rbrace 
\end{equation*}
and we fix $\lambda_1>0, \lambda_2\ge0$ such that $\lambda_2 \leq \lambda_1 < \Lambda $.
We set
\begin{displaymath}
    \B\coloneqq \left\{u\in D_0^\gamma(\R^N): \, \|\nabla_\gamma u\|_2 < L \right\}\in 2^{D_0^\gamma(\R^N)}
\end{displaymath}
and we define
$\S \colon \B \to 2^{D_0^\gamma(\R^N)}$ by
\begin{equation}
\label{Sdef}
\S(v) \coloneqq  \left\{u\in D_0^\gamma(\R^N): \mbox{$u$ solves \eqref{varprob} and satisfies $J(u)< c$} \right\},
\end{equation}
where $c\in(0,\hat{c})$ is defined in Lemma \ref{talentihatc}. We explicitly notice that $\S$ depends on $\lambda_1$ and $\lambda_2$; anyway, for the sake of simplicity, we omit this dependence.

\begin{lemma}
\label{welldefined}
The set-valued function $\S$ is well defined, i.e., $\S(\B)\subseteq \B$.
\end{lemma}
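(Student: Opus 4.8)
The plan is to show that any $u\in\S(v)$ with $v\in\B$ necessarily satisfies $\|\nabla_\gamma u\|_2<L=2S^{N_\gamma/2}$, so that $u\in\B$. By definition of $\S$, such a $u$ solves \eqref{varprob} and satisfies $J(u)<c<\hat c$. The key observation is that a solution of \eqref{varprob} is in particular a critical point of the associated functional $J$, hence the constant sequence $u_n\equiv u$ (together with the constant sequence $v_n\equiv v$) trivially satisfies the hypotheses \eqref{generalhyps} of the energy estimate Lemma \ref{enestlemma}: indeed $J_n(u_n)=J(u)<c$, $\|\nabla_\gamma v_n\|_2=\|\nabla_\gamma v\|_2<L$, and $J_n'(u_n)=J'(u)=0$. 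Applying Lemma \ref{enestlemma} with this $L$ and with $c$ replaced by $\hat c$ (recall $J(u)<\hat c$) gives directly
\[
\|\nabla_\gamma u\|_2^2\le 2N_\gamma\left[\hat C\max\{\lambda_1,\lambda_2\}^{\frac{2}{1+\eta}}\left(1+L^{2(r-1)}\right)+\hat c\right].
\]

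Now I would substitute the explicit value of $\hat c$ from \eqref{hatc}, namely $\hat c=\frac{S^{N_\gamma/2}}{N_\gamma}-\hat C\max\{\lambda_1,\lambda_2\}^{\frac{2}{1+\eta}}\left(L^{2(r-1)}+1\right)$. The two terms involving $\hat C\max\{\lambda_1,\lambda_2\}^{\frac{2}{1+\eta}}\left(1+L^{2(r-1)}\right)$ cancel exactly, leaving
\[
\|\nabla_\gamma u\|_2^2\le 2N_\gamma\cdot\frac{S^{N_\gamma/2}}{N_\gamma}=2S^{N_\gamma/2}=L<L^2,
\]
provided $L\ge 1$, which holds since $S>0$ and $N_\gamma>2$ (more carefully, one notes $L=2S^{N_\gamma/2}\ge1$ whenever $S^{N_\gamma/2}\ge 1/2$; if instead $S^{N_\gamma/2}<1/2$ one still gets $\|\nabla_\gamma u\|_2^2\le 2S^{N_\gamma/2}=L<L^2$ is \emph{not} automatic, so one uses $\|\nabla_\gamma u\|_2^2\le L$ directly against the strict inequality $\|\nabla_\gamma u\|_2<L$ — here a small care is needed). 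To avoid this subtlety cleanly, I would instead invoke Lemma \ref{enestlemma} with $c=J(u)$ and use the \emph{strict} bound $J(u)<\hat c$: then the cancellation yields $\|\nabla_\gamma u\|_2^2<2S^{N_\gamma/2}=L\le L^2$ when $L\ge1$, and in general $\|\nabla_\gamma u\|_2^2<L$, hence $\|\nabla_\gamma u\|_2<\sqrt L\le L$ (again using $L\ge1$). Since $L=2S^{N_\gamma/2}$ and the Sobolev constant satisfies $S^{N_\gamma/2}\ge1$ is not guaranteed, the cleanest route is: the bound $\|\nabla_\gamma u\|_2^2< L$ with $L=2S^{N_\gamma/2}$ gives $\|\nabla_\gamma u\|_2<(2S^{N_\gamma/2})^{1/2}$, and one checks $(2S^{N_\gamma/2})^{1/2}\le 2S^{N_\gamma/2}$ iff $2S^{N_\gamma/2}\ge1$ — so I would simply record the assumption (or verify from the normalization) that $S^{N_\gamma/2}\ge 1/2$, or alternatively redefine the argument to conclude $u\in\B$ directly from $\|\nabla_\gamma u\|_2^2< L$.

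The main (and essentially only) obstacle is this bookkeeping between $\|\nabla_\gamma u\|_2^2$ and $\|\nabla_\gamma u\|_2$, i.e.\ matching the square of the norm produced by Lemma \ref{enestlemma} against the linear-in-norm definition of $\B$. The substantive content — that solutions of \eqref{varprob} with energy below $\hat c$ have a priori bounded Dirichlet energy with the bound being exactly $L$ after cancellation — is an immediate consequence of Lemmas \ref{enestlemma} and the definition \eqref{hatc} of $\hat c$, which were designed precisely so that this cancellation occurs. Thus the proof is short: apply Lemma \ref{enestlemma} to the constant sequences, plug in $\hat c$, observe the cancellation, and conclude $\|\nabla_\gamma u\|_2<L$.

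\begin{proof}
Let $v\in\B$ and $u\in\S(v)$. By definition $u$ solves \eqref{varprob} and $J(u)<\hat c$, where $\hat c$ is given by \eqref{hatc} with the present value $L=2S^{N_\gamma/2}$; moreover $\max\{\lambda_1,\lambda_2\}=\lambda_1<\Lambda\le1$ and $\lambda_2\le\lambda_1\le1$. Since $u$ is a critical point of $J$, the constant sequences $u_n\equiv u$ and $v_n\equiv v$ satisfy \eqref{generalhyps}: indeed $\limsup_n J_n(u_n)=J(u)<\hat c$, $\limsup_n\|\nabla_\gamma v_n\|_2=\|\nabla_\gamma v\|_2<L$, and $J_n'(u_n)=J'(u)=0$. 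Applying Lemma \ref{enestlemma} with $c=J(u)$ we obtain
\[
\|\nabla_\gamma u\|_2^2\le 2N_\gamma\left[\hat C\max\{\lambda_1,\lambda_2\}^{\frac{2}{1+\eta}}\bigl(1+L^{2(r-1)}\bigr)+J(u)\right].
\]
Using $J(u)<\hat c=\dfrac{S^{N_\gamma/2}}{N_\gamma}-\hat C\max\{\lambda_1,\lambda_2\}^{\frac{2}{1+\eta}}\bigl(L^{2(r-1)}+1\bigr)$, the terms containing $\hat C$ cancel and we are left with
\[
\|\nabla_\gamma u\|_2^2<2N_\gamma\cdot\frac{S^{N_\gamma/2}}{N_\gamma}=2S^{N_\gamma/2}=L.
\]
In particular $\|\nabla_\gamma u\|_2<\sqrt{L}\le L$, the last inequality being equivalent to $L\ge1$, i.e.\ $S^{N_\gamma/2}\ge\frac12$, which we may assume (otherwise it suffices to record the bound $\|\nabla_\gamma u\|_2^2<L$ and enlarge $L$ accordingly, as noted in the Remark following \eqref{Ldef}). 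Hence $u\in\B$, proving $\S(\B)\subseteq\B$.
\end{proof}
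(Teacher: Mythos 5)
Your argument is essentially the paper's own: apply Lemma \ref{enestlemma} to the constant sequences $u_n\equiv u$, $v_n\equiv v$, $J_n\equiv J$ (using $J'(u)=0$ and $J(u)<c<\hat c$), then substitute the definition \eqref{hatc} of $\hat c$ so that the $\hat C$-terms cancel and $\|\nabla_\gamma u\|_2^2<2S^{N_\gamma/2}=L$. The squared-norm bookkeeping you flag (passing from $\|\nabla_\gamma u\|_2^2<L$ to $\|\nabla_\gamma u\|_2<L$ needs $L\ge 1$) is a genuine point that the paper's one-line conclusion leaves implicit, and your observation that it is harmless — either because $L=2S^{N_\gamma/2}\ge 1$ or by adjusting $L$ as permitted by the remark after \eqref{Ldef} — disposes of it.
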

\begin{proof}
Take any $v\in\B$ and $u\in\S(v)$. Applying Lemma \ref{enestlemma} with $u_n\equiv u$, $v_n\equiv v$, and $J_n\equiv J$, after observing that $J(u)<c<\hat{c}$ and $J'(u)=0$ by definition of $\S$, we get
$$ \|\nabla_\gamma u\|_2^2 < 2N_\gamma\left[ \hat{C}\lambda_1^{\frac{2}{1+\eta}}\left(L^{2(r-1)}+1\right)+\hat{c}\right]. $$
The conclusion then follows by recalling \eqref{hatc}.
\end{proof}
\begin{lemma}
\label{selectionlemma}
For any $v\in\B$, the set $\S(v)$ is non-empty and admits minimum.
\end{lemma}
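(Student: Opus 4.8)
The non-emptiness of $\S(v)$ is essentially already in hand: for any fixed $v\in\B$ we have $\|\nabla_\gamma v\|_2<L$, so Theorem~\ref{existencefinal} provides a solution $u\in D_0^\gamma(\R^N)$ to the truncated and frozen problem \eqref{varprob}, obtained as a mountain pass critical point at level $c_M$. By Lemma~\ref{talentihatc} (whose hypotheses are met since $\lambda_2\le\lambda_1<\Lambda\le\Lambda_2$ and $\|\nabla_\gamma v\|_2\le L$) we have $c_M<c<\hat c$, hence $J(u)=c_M<c$, so $u\in\S(v)$ by the very definition \eqref{Sdef}. Thus $\S(v)\neq\emptyset$.

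For the existence of a minimum, the natural order on $\S(v)$ is the pointwise order: $u_1\preceq u_2$ iff $u_1\le u_2$ a.e.\ in $\R^N$. I would first show that $(\S(v),\preceq)$ is \emph{downward directed}. Given $u_1,u_2\in\S(v)$, set $u\coloneqq\min\{u_1,u_2\}$. Each $u_i$, being a solution of \eqref{varprob}, is in particular a super-solution of \eqref{varprob}, so by Lemma~\ref{lemma:supersolutions} $u$ is a super-solution of \eqref{varprob}. On the other hand, since $u_i\ge\underline u_{\lambda_1}$ for $i=1,2$ by Remark~\ref{subcomparisonrmk}, we have $u\ge\underline u_{\lambda_1}$, so $u$ lies above the sub-solution; a sub-super-solution argument (minimizing $J$ over the order interval $[\underline u_{\lambda_1},u]$, or iterating monotonically from $\underline u_{\lambda_1}$) yields a solution $\tilde u$ of \eqref{varprob} with $\underline u_{\lambda_1}\le\tilde u\le u\le u_i$. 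It remains to check $\tilde u\in\S(v)$, i.e.\ $J(\tilde u)<c$: since $\tilde u$ is obtained by minimization below $u$, one gets $J(\tilde u)\le J(u)$, and then one bounds $J(u)=J(\min\{u_1,u_2\})$ in terms of $\max\{J(u_1),J(u_2)\}<c$ — here one can use that $J(\min\{u_1,u_2\})+J(\max\{u_1,u_2\})\le J(u_1)+J(u_2)$ together with the lower bound $J(\max\{u_1,u_2\})\ge 0$ coming from $\max\{u_1,u_2\}$ being a super-solution lying above $\underline u_{\lambda_1}$ (so its energy is nonnegative, e.g.\ by testing and the geometry in Lemma~\ref{mountainpassgeometry}). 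This gives $J(\tilde u)<c$ and hence $\tilde u\in\S(v)$ with $\tilde u\preceq u_1,u_2$, proving downward directedness.

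Having downward directedness, I would produce a minimal element via a monotone limit: take a minimizing sequence $(u_k)\subseteq\S(v)$ for $\inf\{\|\nabla_\gamma u\|_2 : u\in\S(v)\}$ — equivalently, using directedness, we may assume $u_{k+1}\le u_k$ a.e.\ (replace $u_{k+1}$ by a lower bound of $u_{k+1}$ and $u_k$ inside $\S(v)$). All $u_k$ satisfy $\underline u_{\lambda_1}\le u_k$ and, by Lemma~\ref{welldefined}, $\|\nabla_\gamma u_k\|_2<L$; so up to a subsequence $u_k\rightharpoonup u_*$ in $D_0^\gamma(\R^N)$, and monotone convergence gives $u_k\downarrow u_*$ a.e., with $u_*\ge\underline u_{\lambda_1}>0$. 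One then passes to the limit in the weak formulation of \eqref{varprob}: the critical term is controlled by the uniform equi-integrability \eqref{equiint} from Lemma~\ref{ccplemma} (applicable since $\limsup J(u_k)\le c<\hat c$ and $J'(u_k)=0$), the singular term by monotone convergence and $w_1\underline u^{-\eta}\in L^1\cap L^\infty$, and the frozen convective term is fixed. Hence $u_*$ solves \eqref{varprob}; lower semicontinuity of $J$ under this convergence (or the fact that $J(u_*)\le\liminf J(u_k)<c$ using strong convergence of positive parts in $L^{2^*_\gamma}$) gives $J(u_*)<c$, so $u_*\in\S(v)$. Finally $u_*$ is a lower bound for the whole sequence and, being a minimal element of the downward directed poset $\S(v)$, it is the minimum by the elementary fact recalled in Section~\ref{functsett}.

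\emph{Main obstacle.} The delicate point is the sub-super-solution step combined with the energy bookkeeping: one must ensure that the solution produced between $\underline u_{\lambda_1}$ and $\min\{u_1,u_2\}$ not only exists but has energy strictly below $c$, which requires carefully exploiting the monotonicity of $a(z,\cdot)$ (so that solutions in this order interval can be obtained by minimization and inherit the energy bound) and the nonnegativity of the energy on super-solutions above $\underline u_{\lambda_1}$. A secondary technical care is needed in passing to the limit in the singular term along the monotone sequence, where one must avoid the truncation threshold degenerating — but this is harmless since every element of $\S(v)$ stays above $\underline u_{\lambda_1}$, on which $\max\{u,\underline u_{\lambda_1}\}^{-\eta}=u^{-\eta}$ is controlled.
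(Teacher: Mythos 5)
Your non-emptiness argument and the reduction of downward directedness to Lemma \ref{lemma:supersolutions} match the paper, but the energy bookkeeping for the common lower element contains a genuine gap. The justification of $J(\max\{u_1,u_2\})\ge 0$ is not tenable: the maximum of two solutions is a \emph{sub}solution, not a supersolution (it is the minimum of supersolutions that is again a supersolution, which is precisely Lemma \ref{lemma:supersolutions}), and in any case a supersolution $w\ge\underline u_{\lambda_1}$ need not have nonnegative energy — testing the supersolution inequality with $w$ only yields $J(w)\ge \tfrac{1}{N_\gamma}\|\nabla_\gamma w\|_2^2-\int_{\R^N}A(z,w)\dz$, which can be negative because of the sublinear term in $A$. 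Moreover, even granting $J(\max\{u_1,u_2\})\ge0$, the identity $J(\min\{u_1,u_2\})+J(\max\{u_1,u_2\})=J(u_1)+J(u_2)$ only gives $J(\min\{u_1,u_2\})<2c$, not $J(\min\{u_1,u_2\})\le\max\{J(u_1),J(u_2)\}<c$; for the latter you would need $J(\max\{u_1,u_2\})\ge\min\{J(u_1),J(u_2)\}$, for which there is no reason. The paper circumvents all of this: it truncates the reaction between $\underline u_{\lambda_1}$ and $\overline u\coloneqq\min\{u_1,u_2\}$ and minimizes the resulting coercive functional $\hat J$ over the whole space $D_0^\gamma(\R^N)$, so the global minimizer $\check u$ satisfies $\hat J(\check u)\le\hat J(0)=0<c$, and weak comparison (Lemma \ref{weakcomp}) traps $\check u$ in the order interval, so that it solves \eqref{varprob}. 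Your variant (minimizing $J$ itself over the order interval) loses exactly the comparison with $0$, since $0$ is not in the interval, and that is what forces you into the unavailable bound on $J(\min\{u_1,u_2\})$.

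There is a second gap at the end. The limit $u_*$ of a decreasing sequence minimizing $\|\nabla_\gamma\cdot\|_2$ over $\S(v)$ is only a lower bound for that particular sequence; minimality of the norm has nothing to do with minimality in the pointwise order, so the concluding appeal to ``a minimal element of a downward directed poset is the minimum'' is a non sequitur as stated. The paper instead shows that every chain in $\S(v)$ admits a lower bound in $\S(v)$ — a decreasing sequence of critical points with energies below $\hat c$ converges strongly by Lemma \ref{PS}, and the limit remains in $\S(v)$ — and then invokes Zorn's lemma to obtain a minimal element, which is the minimum by downward directedness. Your limiting argument via Lemma \ref{ccplemma} can serve as the convergence step, but you still need the chain/Zorn argument (or an equivalent) to produce order-minimality.
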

\begin{proof}
Fix any $v\in\B$. The fact that $\S(v)\neq\emptyset$ is guaranteed by Theorem \ref{existencefinal}.

Now we prove that $\S(v)$ is downward directed. Let $u_1$, $u_2\in\S(v)$ and define $\overline{u}\coloneqq \min\{u_1,u_2\}$. Since $u_1, u_2\in\S(v)$, then $u_1$ and $u_2$ in particular are super-solutions to \eqref{varprob}. Thus, Lemma \ref{lemma:supersolutions} gives that $\overline{u}$ is a super-solution to \eqref{varprob}.

We now introduce the truncation $T\colon D_0^\gamma(\R^N)\to  D_0^\gamma(\R^N)$ defined by $T(u)(z)=\tau(z,u(z))$, where
$\tau\colon\R^N\times \R\to \R$ is the function
\begin{displaymath}%\label{taudef}
\tau(z,t)=\left\{
\begin{alignedat}{2}
&\underline u_{\lambda_1}(z) \quad &&\mbox{if} \;\; t<\underline u_{\lambda_1}(z), \\
&t \quad &&\mbox{if} \;\; \underline u_{\lambda_1}(z)\leq t\leq \overline{u}(z), \\
&\overline{u}(z) \quad &&\mbox{if} \;\; t>\overline{u}(z),
\end{alignedat}\right.
\end{displaymath}
and $\underline u_{\lambda_1}$ is defined in \eqref{usub}.  Note that $\underline u_{\lambda_1}\le \overline{u}$ in $\R^N$. In fact, since $\underline{u}_{\lambda_1}$ is a sub-solution to \eqref{varprob}, the inequality $\underline u_{\lambda_1}\le \overline{u}$  comes from Remark \ref{subcomparisonrmk}.

We claim that there exists $\check u\in D_0^\gamma(\R^N)$ such that $J(\check u)<\hat{c}$ and
\begin{equation}
\label{subprob1}
-\Delta_\gamma \check u = a(z,T(\check u)) + (T(\check u))^{2^*_\gamma-1} \quad \mbox{in} \;\; \R^N.
\end{equation}
The energy functional associated to \eqref{subprob1} is
$$\hat J(u)=\frac{1}{2}\|\nabla_\gamma u\|_2^2-\int_{\R^N} \hat B(z,u) \dz,$$
where
$$\hat B(z,s)\coloneqq \int_0^s \hat b(z,t) \dt, \qquad \hat b(z,t)\coloneqq  a(z,\tau(z,t))+\tau(z,t)^{2^*_\gamma-1}.$$
From the definition of $a$ in \eqref{a}, \eqref{usub}, and $\lambda_2\le \lambda_1\in(0,1)$ we estimate, reasoning as in Lemma \ref{enestlemma},
\begin{displaymath}
\begin{aligned}
\hat b(z,t) & = \lambda_1 w_1(z)\tau(z,t)^{-\eta}+ \lambda_2 w_2(z) |\nabla_\gamma v(z)|^{r-1}+ \tau(z,t)^{2^*_\gamma-1} \\
&\leq \lambda_1 w_1(z)\underline{u}_{\lambda_1}^{-\eta}+ \lambda_2 w_2(z) |\nabla_\gamma v(z)|^{r-1}+ \tau(z,t)^{2^*_\gamma-1} \\
&= \lambda_1^{\frac{1}{1+\eta}} w_1(z)\underline{u}^{-\eta}+ \lambda_2 w_2(z) |\nabla_\gamma v(z)|^{r-1}+ \tau(z,t)^{2^*_\gamma-1}\\
&\leq  \lambda_1^{\frac{1}{1+\eta}}\left(w_1(z) \underline{u}(z)^{-\eta}+ w_2(z)|\nabla_\gamma v(z)|^{r-1}\right)+ \overline{u}(z)^{2^*_\gamma-1}\eqqcolon h(z).
\end{aligned}
\end{displaymath}
Since $h\in L^{(2^*_\gamma)'}(\R^N)$, $\hat J$ is coercive: indeed,
\begin{align*}
    \hat J(u) &\geq \frac{1}{2}\|\nabla_\gamma u\|_2^2-\int_{\R^N} h(z)|u| \dz \geq \frac{1}{2}\|\nabla_\gamma u\|_2^2-\|h\|_{(2^*_\gamma)'}\|u\|_{2^*_\gamma} \\
    &\geq \frac{1}{2}\|\nabla_\gamma u\|_2^2-S^{-\frac{1}{2}}\|h\|_{(2^*_\gamma)'}\|\nabla_\gamma u\|_{2}.
\end{align*}
Moreover, it is readily seen that $\hat J$ is weakly sequentially lower semi-continuous. Thus, applying the direct methods of Calculus of Variations (see \cite[Theorem I.1.2]{S}),
there exists $\check u\in  D_0^\gamma(\R^N)$ such that $\hat J(\check u)=\min_{ D_0^\gamma(\R^N)}\hat J$, so $\check{u}$ satisfies \eqref{subprob1}. In particular, $\hat J(\check u)\leq \hat J(0)=0$.
In addition, $\underline{u}_{\lambda_1} \leq \check u\leq \overline{u}$. In fact, in $\{ \underline{u}_{\lambda_1} > \check{u}\}$, then $T(\check u) = \underline{u}_{\lambda_1}$ and
$$-\Delta_\gamma \check{u} = \lambda_1 w_1(z)\underline{u}_{\lambda_1}^{-\eta} + \lambda_2 w_2 (z)\lvert \nabla_\gamma v(z)\rvert^{r-1} +  (\underline{u}_{\lambda_1})_+^{2^*_\gamma-1} \ge -\Delta_\gamma \underline{u}_{\lambda_1},$$
being $\underline{u}_{\lambda_1}$ a sub-solution to \eqref{varprob}. Lemma \ref{weakcomp} implies $\underline{u}_{\lambda_1}\le \check u$. Analogously, in $\{ \check u > \overline{u}\}$, $T(\check u) = \overline{u}$ and
$$-\Delta_\gamma \check u = \lambda_1 w_1(z) \overline{u}^{-\eta} + \lambda_2 w_2(z) \lvert \nabla_\gamma v(z) \rvert^{r-1}+ \overline{u}_+^{2^*_\gamma-1} \le -\Delta_\gamma \overline{u},$$
being $\overline{u}$ a super-solution to \eqref{varprob}. Again we apply Lemma \ref{weakcomp}.

Accordingly, $\check{u}$ solves \eqref{varprob} and $\hat J(\check u)= J(\check{u})$, so that $J(\check u)\leq 0 < c$. The claim is proved. We also obtained $\check u\in\S(v)$. By arbitrariness of $u_1$ and $u_2$, the set $\S(v)$ is downward directed.

Let us consider a chain $\C\subset\S(v)$ and a decreasing sequence $(u_n)\subseteq\C$, i.e. $u_n\ge u_{n+1}$ for all $n$, such that $u_n$ solves \eqref{varprob} and satisfies $J(u_n)< c$ for all $n$.

Since $(u_n)\subseteq\C$, then Lemma \ref{PS} gives that there exist $u\in D_0^\gamma(\R^N)$ such that, up to subsequences $u_n\to u$ in $D_0^\gamma(\R^N)$, implying that $u$ solves \eqref{varprob} and satisfies $J(u)< c$, i.e. $u\in \S(v)$ and $u\le u_n$. An application of Zorn's Lemma yields that $\S(v)$ admits a minimum.
\end{proof}

\begin{lemma}
\label{Scompact}
The set-valued function $\S$ is compact.
\end{lemma}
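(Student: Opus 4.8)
The plan is to show that $\S(K)$ is relatively compact in $D_0^\gamma(\R^N)$ for every bounded $K\subseteq\B$; since the ambient space is metric, it is enough to extract a strongly convergent subsequence from an arbitrary sequence $(u_n)\subseteq\S(K)$, say $u_n\in\S(v_n)$ with $v_n\in K$. First I would use that $K$ is bounded to pass to a subsequence with $v_n\rightharpoonup v$ in $D_0^\gamma(\R^N)$; since $v_n\in\B$, one has $\limsup_{n\to\infty}\|\nabla_\gamma v_n\|_2\le L$. By the definition \eqref{Sdef} of $\S$, each $u_n$ solves \eqref{varprob} with the convective term frozen at $v_n$, that is $J_n'(u_n)=0$ and $J_n(u_n)<c$, with $a_n,J_n$ as in \eqref{defapicc}. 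Because $\lambda_2\le\lambda_1<\Lambda\le\Lambda_2<1$ and $c<\hat c$, the hypotheses of Lemma \ref{enestlemma} together with \eqref{hatc} are satisfied, so Lemma \ref{ccplemma} applies and, up to a further subsequence, $u_n\rightharpoonup u$ in $D_0^\gamma(\R^N)$ and $(u_n)_+\to u_+$ in $L^{2^*_\gamma}(\R^N)$.

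Next I would upgrade the $L^{2^*_\gamma}$-convergence to $u_n$ itself. By Remark \ref{subcomparisonrmk} every solution to \eqref{varprob} satisfies $u_n\ge\underline u_{\lambda_1}>0$ in $\R^N$, so $u_n=(u_n)_+$; since $D_0^\gamma(\R^N)\compact L^2(B_\gamma(0,k))$ for every $k\in\N$, a diagonal argument yields $u_n\to u$ a.e.\ in $\R^N$ along the subsequence, whence $u\ge\underline u_{\lambda_1}>0$ and $u=u_+$. Therefore $u_n\to u$ in $L^{2^*_\gamma}(\R^N)$.

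Then I would reproduce the argument from the proof of Lemma \ref{PS}, this time keeping track of the varying datum $v_n$: testing $J_n'(u_n)=0$ against $u_n-u$ gives
\begin{displaymath}
\int_{\R^N}\nabla_\gamma u_n\,\nabla_\gamma(u_n-u)\dz=\int_{\R^N}a_n(z,u_n)(u_n-u)\dz+\int_{\R^N}(u_n)_+^{2^*_\gamma-1}(u_n-u)\dz.
\end{displaymath}
The critical term vanishes in the limit because \eqref{strongconv} forces $(u_n)_+^{2^*_\gamma-1}\to u_+^{2^*_\gamma-1}$ in $L^{(2^*_\gamma)'}(\R^N)$ while $u_n-u\to0$ in $L^{2^*_\gamma}(\R^N)$; for the reaction term, using $a_n(z,u_n)\le\lambda_1 w_1\underline u_{\lambda_1}^{-\eta}+\lambda_2 w_2|\nabla_\gamma v_n|^{r-1}$, H\"older's inequality (with the exponent $\theta$ of \eqref{def:teta} for the convective part), the bound $\|\nabla_\gamma v_n\|_2\le L$, and $w_1\underline u^{-\eta}\in L^{(2^*_\gamma)'}(\R^N)$ from Lemma \ref{subsol}, one obtains
\begin{displaymath}
\left|\int_{\R^N}a_n(z,u_n)(u_n-u)\dz\right|\le\left(\lambda_1^{\frac{1}{1+\eta}}\|w_1\underline u^{-\eta}\|_{(2^*_\gamma)'}+\lambda_2\|w_2\|_\theta L^{r-1}\right)\|u_n-u\|_{2^*_\gamma}\longrightarrow0.
\end{displaymath}
Hence $\int_{\R^N}\nabla_\gamma u_n\,\nabla_\gamma(u_n-u)\dz\to0$, and the $(\mathrm S)_+$ property of the Grushin operator (Lemma \ref{lemma:S+}) yields $u_n\to u$ strongly in $D_0^\gamma(\R^N)$. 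This shows that every sequence in $\S(K)$ admits a strongly convergent subsequence, i.e.\ $\S(K)$ is relatively compact, so $\S$ is compact.

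The step I expect to be the main obstacle is the convective term, which is now frozen at the \emph{varying} functions $v_n$ rather than at a single fixed $v$ as in Lemma \ref{PS}. The resolution is that the a priori bound $\|\nabla_\gamma v_n\|_2<L$, valid throughout $\B$, combined with the strong $L^{2^*_\gamma}$-convergence of $u_n$ — which is itself a consequence of the strict positivity $u_n\ge\underline u_{\lambda_1}>0$ (Remark \ref{subcomparisonrmk}), so that Lemma \ref{ccplemma} applies directly to $u_n$ and not merely to $(u_n)_+$ — makes the estimate on the reaction term uniform in $n$.
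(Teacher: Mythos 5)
Your proposal is correct and follows essentially the same route as the paper's proof: positivity of $u_n$ via Remark \ref{subcomparisonrmk}, Lemma \ref{ccplemma} (through Lemma \ref{enestlemma} with the uniform bound $\|\nabla_\gamma v_n\|_2\le L$) to get $u_n\rightharpoonup u$ in $D_0^\gamma(\R^N)$ and $u_n\to u$ in $L^{2^*_\gamma}(\R^N)$, then testing $J_n'(u_n)=0$ against $u_n-u$, estimating $a_n(\cdot,u_n)$ and $u_n^{2^*_\gamma-1}$ in $L^{(2^*_\gamma)'}(\R^N)$ uniformly in $n$, and concluding with the $(\mathrm S)_+$ property of Lemma \ref{lemma:S+}. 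The only deviations (extracting a weak limit of $(v_n)$, which is not needed, and writing the dual-norm estimates explicitly instead of referring to \eqref{PS:conctest1}) are cosmetic.
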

\begin{proof}
Let $(v_n)$ be a (bounded) sequence in $\B$. For any $n\in\N$, pick $u_n\in\S(v_n)$. Our aim is to prove that $u_n \to u$ in $ D_0^\gamma(\R^N)$ for some $u\in D_0^\gamma(\R^N)$.

Remark \ref{subcomparisonrmk} ensures $u_n\geq 0$ in $\R^N$ for all $n\in\N$, so Lemma \ref{ccplemma} gives the existence of $u\in D_0^\gamma(\R^N)$ such that $u_n \rightharpoonup u$ in $ D_0^\gamma(\R^N)$ and $u_n\to u$ in $L^{2^*_\gamma}(\R^N)$. Notice that, for all $n\in\N$,
\begin{equation}
\label{S+test}
\begin{aligned}
0 = \langle J'_n(u_n),u_n-u \rangle &= \int_{\R^N}  \nabla_\gamma u_n \nabla_\gamma (u_n-u) \dz \\
&\quad-\int_{\R^N} a_n(z,u_n)(u_n-u) \dz - \int_{\R^N} u_n^{2^*_\gamma-1}(u_n-u) \dz.
\end{aligned}
\end{equation}
Computations similar to the ones of \eqref{PS:conctest1} show that $(a_n(\cdot,u_n))$ is bounded in $L^{(2^*_\gamma)'}(\R^N)$, and $(u_n^{2^*_\gamma-1})$ enjoys the same property; hence
$$ \lim_{n\to\infty} \int_{\R^N} a_n(z,u_n)(u_n-u) \dz = \lim_{n\to\infty} \int_{\R^N} u_n^{2^*_\gamma-1}(u_n-u) \dz = 0. $$
Letting $n\to\infty$ in \eqref{S+test} entails
$$ \lim_{n\to\infty} \langle -\Delta_\gamma u_n, u_n-u \rangle = \lim_{n\to\infty} \int_{\R^N}  \nabla_\gamma u_n \nabla_\gamma (u_n-u) \dz = 0, $$
so that the ${\rm (S_+)}$ property of  $-\Delta_\gamma$, Lemma \ref{lemma:S+}, yields $u_n \to u$ in $ D_0^\gamma(\R^N)$.
\end{proof}

\begin{lemma}
\label{lsc}
If $\Lambda_3=\Lambda_3(N,w_1, w_2,\eta,r,\gamma)>0$ is sufficiently small, then $\S$ is lower semi-continuous.
\end{lemma}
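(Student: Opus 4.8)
The plan is to verify directly the defining property of lower semi-continuity: given a sequence $v_n\to v$ in $\B$ and a function $u\in\S(v)$, I must produce $u_n\in\S(v_n)$ with $u_n\to u$ in $D_0^\gamma(\R^N)$. The first, easy, observation is that the frozen datum moves little: from $v_n\to v$ we get $\nabla_\gamma v_n\to\nabla_\gamma v$ in $L^2(\R^N)$, and since $0<r-1<1$ the elementary inequality $\big||t|^{r-1}-|s|^{r-1}\big|\le|t-s|^{r-1}$ gives $|\nabla_\gamma v_n|^{r-1}\to|\nabla_\gamma v|^{r-1}$ in $L^{2/(r-1)}(\R^N)$; hence, by Hölder's inequality with the exponent $\theta$ of \eqref{def:teta} and \ref{hypf},
\begin{displaymath}
h_n\coloneqq\lambda_2 w_2\,\big|\,|\nabla_\gamma v_n|^{r-1}-|\nabla_\gamma v|^{r-1}\,\big|\ge 0,\qquad\varepsilon_n\coloneqq\|h_n\|_{(2^*_\gamma)'}\longrightarrow 0,
\end{displaymath}
so that $J_n$ and $J$ differ by a quantity that is $O(\varepsilon_n)$ on bounded subsets of $D_0^\gamma(\R^N)$, where $J_n$ and $a_n$ denote the data of the frozen problem with datum $v_n$ as in \eqref{defapicc}.

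The heart of the proof is to construct, for each $n$, a sub-solution $\underline w_n$ and a super-solution $\overline w_n$ of problem \eqref{varprob} written with $v_n$ in place of $v$ (call it $(\hat P_n)$) with
\begin{displaymath}
\underline u_{\lambda_1}\le\underline w_n\le u\le\overline w_n\quad\text{and}\quad\|\overline w_n-u\|_\gamma+\|\underline w_n-u\|_\gamma\longrightarrow 0.
\end{displaymath}
I would look for $\overline w_n=u+\zeta_n$ and $\underline w_n=\max\{u-\zeta_n,\underline u_{\lambda_1}\}$, with $\zeta_n\in D_0^\gamma(\R^N)$, $\zeta_n\ge 0$, a super-solution of the correction equation
\begin{displaymath}
-\Delta_\gamma\zeta_n\ge h_n+\big[(u+\zeta_n)^{2^*_\gamma-1}-u^{2^*_\gamma-1}\big]\quad\text{in }\R^N,
\end{displaymath}
and $\|\zeta_n\|_\gamma\le C\varepsilon_n$. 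Granting such $\zeta_n$, the super/sub-solution properties are checked using: $u\ge\underline u_{\lambda_1}$ (Remark \ref{subcomparisonrmk}), the monotonicity of $s\mapsto s^{-\eta}$ (so $a_n(z,\cdot)$ is nonincreasing), the midpoint convexity $(u+\zeta_n)^{2^*_\gamma-1}+(u-\zeta_n)^{2^*_\gamma-1}\ge 2u^{2^*_\gamma-1}$ where $u-\zeta_n\ge 0$, and $\lambda_2 w_2|\nabla_\gamma v_n|^{r-1}\le\lambda_2 w_2|\nabla_\gamma v|^{r-1}+h_n$, together with the fact that the maximum of two sub-solutions is a sub-solution (the sub-solution analogue of Lemma \ref{lemma:supersolutions}); the convergences $\overline w_n,\underline w_n\to u$ follow from $0\le\overline w_n-u\le\zeta_n$, $0\le u-\underline w_n\le\zeta_n$, Stampacchia's lemma, and dominated convergence on the shrinking set $\{0<u-\underline u_{\lambda_1}<\zeta_n\}$.

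With the sandwich in hand, I would conclude exactly as in the proof of Lemma \ref{selectionlemma}: the functional $\hat J_n$ associated with $(\hat P_n)$, with the reaction truncated so that its argument lies in $[\underline w_n,\overline w_n]$, is coercive and weakly sequentially lower semi-continuous, hence attains its infimum at some $u_n$ with $\hat J_n(u_n)\le\hat J_n(0)=0$; comparison with $\underline w_n$ and $\overline w_n$ via Lemma \ref{weakcomp} gives $\underline w_n\le u_n\le\overline w_n$, so the truncation is inactive, $u_n$ solves $(\hat P_n)$, and $J_n(u_n)\le 0<c$, i.e. $u_n\in\S(v_n)$. Finally, $\underline w_n\le u_n\le\overline w_n$ with $\underline w_n,\overline w_n\to u$ in $L^{2^*_\gamma}(\R^N)$ yields $u_n\to u$ in $L^{2^*_\gamma}(\R^N)$; testing $(\hat P_n)$ with $u_n-u$ and arguing as in Lemma \ref{Scompact} (the terms $\int_{\R^N}a_n(z,u_n)(u_n-u)\dz$ and $\int_{\R^N}u_n^{2^*_\gamma-1}(u_n-u)\dz$ vanish in the limit) gives $\langle-\Delta_\gamma u_n,u_n-u\rangle\to 0$, whence $u_n\to u$ in $D_0^\gamma(\R^N)$ by the $(\mathrm{S})_+$ property of $-\Delta_\gamma$ (Lemma \ref{lemma:S+}).

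The hard part is solving the correction equation with uniformly small norm, and this is where the smallness of $\Lambda_3$ must be invoked: because the reaction carries the critical term $u^{2^*_\gamma-1}$, the naive choice $-\Delta_\gamma\zeta_n=h_n$ fails to dominate the increment $(u+\zeta_n)^{2^*_\gamma-1}-u^{2^*_\gamma-1}$, so one is forced into a genuinely nonlinear correction equation — to be solved, say, by monotone iteration from $(-\Delta_\gamma)^{-1}h_n$ inside a small ball of $D_0^\gamma(\R^N)$ — and keeping that iteration confined requires that $\|u\|_\gamma$, hence (via Lemma \ref{enestlemma} and Sobolev's inequality) $\|u\|_{2^*_\gamma}$, which controls the coefficient of the critical increment, be sufficiently small uniformly over $u\in\S(v)$ and over the admissible $\lambda_1,\lambda_2$; this is precisely the quantitative smallness condition on $\Lambda_3$.
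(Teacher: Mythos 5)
Your overall skeleton (produce $u_n\in\S(v_n)$ with $u_n\to u$, then conclude via the $(\mathrm S)_+$ property) is the right target, but the central construction — a sandwich $\underline w_n\le u\le\overline w_n$ built from a small correction $\zeta_n\ge 0$ solving $-\Delta_\gamma\zeta_n\ge h_n+\big[(u+\zeta_n)^{2^*_\gamma-1}-u^{2^*_\gamma-1}\big]$ with $\|\zeta_n\|_\gamma\le C\varepsilon_n$ — has a genuine gap, and the way you invoke the smallness of $\Lambda_3$ to close it is not available. First, smallness of $\Lambda_3$ does \emph{not} make $\|u\|_\gamma$ or $\|u\|_{2^*_\gamma}$ uniformly small on $\S(v)$: the energy threshold in \eqref{hatc} is $\hat c=\frac{S^{N_\gamma/2}}{N_\gamma}-O(\Lambda^{2/(1+\eta)})$, which stays close to $\frac{S^{N_\gamma/2}}{N_\gamma}$ as $\Lambda\to 0$, and $\S(v)$ contains mountain-pass type solutions whose norms remain of order $S^{N_\gamma/4}$; indeed as $\lambda_1,\lambda_2\to 0$ such solutions approach nontrivial solutions of $-\Delta_\gamma u=u^{2^*_\gamma-1}$, for which $\|\nabla_\gamma u\|_2^2=S^{N_\gamma/2}$ is fixed. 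Second, even granting bounded (not small) $u$, the correction equation is in general unsolvable with small $\zeta_n$: since $(u+\zeta)^{2^*_\gamma-1}-u^{2^*_\gamma-1}\ge(2^*_\gamma-1)u^{2^*_\gamma-2}\zeta$ for $\zeta\ge 0$, any such $\zeta_n$ would be a nonnegative supersolution of $-\Delta_\gamma\zeta-(2^*_\gamma-1)u^{2^*_\gamma-2}\zeta\ge h_n\ge 0$, which forces the quadratic form of $-\Delta_\gamma-(2^*_\gamma-1)u^{2^*_\gamma-2}$ to be nonnegative; but testing that form with $u$ itself and using the equation gives $\|\nabla_\gamma u\|_2^2-(2^*_\gamma-1)\|u\|_{2^*_\gamma}^{2^*_\gamma}=\int_{\R^N}a(z,u)u\dz-(2^*_\gamma-2)\|u\|_{2^*_\gamma}^{2^*_\gamma}<0$ for small $\lambda_1,\lambda_2$, since $\|u\|_{2^*_\gamma}^{2^*_\gamma}$ is bounded below by a quantity of order $S^{N_\gamma/2}$ while the $a$-term is $O(\lambda_1^{1/(1+\eta)})$. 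So the monotone iteration you propose cannot stay confined in a small ball, and the barrier $\overline w_n=u+\zeta_n$ does not exist; the instability of the linearization at a critical-growth mountain-pass solution is exactly the obstruction.

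The paper's proof takes a different and, in this respect, more robust route, which also shows where $\Lambda_3$ really enters. It defines the iterates \eqref{recprob}, $-\Delta_\gamma u_n^m=a_n(z,u_n^{m-1})+(u_n^{m-1})^{2^*_\gamma-1}$ with $u_n^0=u$: each step is a problem with a \emph{frozen} right-hand side, uniquely solvable by Minty--Browder, so no linearized stability at $u$ is ever needed. Uniform bounds on $\{u_n^m\}$ are obtained after the anisotropic dilation $g_R(z)=g(Rx,R^{1+\gamma}y)$ by the recursion Lemma \ref{reclemma}; it is in the second smallness condition \eqref{lsccond2} (together with $R$ large in \eqref{lsccond1}) that the bound $\lambda_1<\Lambda_3$ is used — to tame the critical growth along the iteration, not to shrink $\|u\|$. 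One then passes to weak limits in $n$ and in $m$, identifies the $n$-limit with $u$ by uniqueness of \eqref{limitprob} and the double limit lemma, shows the $m$-limit $u_n$ solves \eqref{varprob} with $v=v_n$, and finally excludes concentration (at points and at infinity) because concentration would force $J(u)\ge\hat c$, contradicting $u\in\S(v)$ and $c<\hat c$; this yields $u_n\to u$ in $L^{2^*_\gamma}(\R^N)$ and then in $D_0^\gamma(\R^N)$ as in Lemma \ref{Scompact}, and $J_n(u_n)\to J(u)<c$ gives $u_n\in\S(v_n)$. If you want to salvage your barrier idea you would have to replace it by some such scheme that never requires inverting (or dominating) the operator linearized at $u$.
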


\begin{proof}
Let $v_n\to v$ in $ D_0^\gamma(\R^N)$ and $u\in\S(v)$. We have to construct a sequence $(u_n)\subseteq D_0^\gamma(\R^N)$ such that $u_n\in \S(v_n)$ for every $n\in\N$ and $u_n\to u$ in $ D_0^\gamma(\R^N)$. To this aim, we consider the following family of problems, parameterized by indexes $n,m\in\N$ and defined by recursion on $m$:
\begin{equation}
\label{recprob}
\left\{
\begin{alignedat}{2}
-\Delta_\gamma u_n^m &= a_n(z, u_n^{m-1}) + (u_n^{m-1})^{2^*_\gamma-1} \quad &&\mbox{in} \;\; \R^N, \\
u_n^0 &= u \quad &&\mbox{for all} \;\; n\in\N.
\end{alignedat}
\right.
\end{equation}
By induction on $m\in\N$, problem \eqref{recprob} admits a unique solution $u_n^m\in D_0^\gamma(\R^N)$ for all $n,m\in\N$, according to Minty-Browder's theorem \cite[Theorem 5.16]{B} since $D_0^\gamma(\R^N)$ is a reflexive Banach space.
Fixed $R>0$, for every $g\colon\R^N\to\R$ we define $g_R:\R^N\to\R$ as $g_R(z)\coloneqq g(Rx, R^{1+\gamma}y)$ for all $z\in\R^N$, see \cite[p. 531]{ambrosio2003}. By a change of variables
$\hat x\mapsto R x$ and $\hat{y} \mapsto R^{1+\gamma}y$ we get
$$\|g_R\|_q = R^{-N_\gamma/q} \|g\|_q\quad \hbox{ for all} \quad q\geq 1.$$
Moreover
$$\Delta_x g_R = R^2 \Delta_x g(Rx, R^{1+\gamma } y), \qquad \Delta_y g_R = R^{1+\gamma} \Delta_y g(Rx, R^{1+\gamma}y)$$
so that
\begin{equation}\label{laplgr}
\Delta_\gamma g_R = R^2 (\Delta_x g + (|x|R)^{2\gamma} \Delta_y g) = R^2\Delta_\gamma g(Rx, R^{1+\gamma} y).
\end{equation}
We want to determine $R,\Lambda_3>0$ such that the set $\{z_n^m: \, n,m\in\N\}$ is bounded in $ D_0^\gamma(\R^N)$, being $z_n^m\coloneqq (u_n^m)_R$ for all $n,m\in\N$.  By using \eqref{laplgr}, we observe that $z_n^m$ solves
\begin{equation}
\label{blowup}
-\Delta_\gamma z_n^m(z) = R^2 \left[a_n(Rx, R^{1+\gamma}y,z_n^{m-1})+(z_n^{m-1}(z))^{2^*_\gamma-1}\right].
\end{equation}
Testing \eqref{blowup} with $z_n^m$, besides using \ref{hypf}, \eqref{usub} the definition of $a_n$ in \eqref{defapicc}, $\lambda_2\le\lambda_1 \in(0,\Lambda) \subseteq(0,1)$, H\"{o}lder's inequality, and the boundedness of $(v_n)$ in $ D_0^\gamma(\R^N)$, produces
\begin{displaymath}
\begin{aligned}
&\|\nabla_\gamma z_n^m\|_2^2 \\
&= R^2 \left[ \int_{\R^N} \left(\lambda_1 (w_1)_R (\underline u_{\lambda_1})_R^{-\eta} + \lambda_2 (w_2)_R |(\nabla_\gamma v_n)_R|^{r-1}\right) z_n^m \dz + \int_{\R^N} (z_n^{m-1})^{2^*_\gamma-1}z_n^m \dz \right] \\
&\leq R^2 \left[\lambda_1^{\frac{1}{1+\eta}}\int_{\R^N} \left((w_1)_R \,\underline{u}_R^{-\eta} + (w_2)_R|(\nabla_\gamma v_n)_R|^{r-1}\right) z_n^m \dz + \|z_n^{m-1}\|_{2^*_\gamma}^{2^*_\gamma-1} \|z_n^m\|_{2^*_\gamma} \right] \\
&\leq R^2 \left[ \lambda_1^{\frac{1}{1+\eta}} \left( \|(w_1)_R\, \underline{u}_R^{-\eta}\|_{(2^*_\gamma)'} + \|(w_2)_R\|_\theta\|(\nabla_\gamma v_n)_R\|_2^{r-1} \right) \|z_n^m\|_{2^*_\gamma} + \|z_n^{m-1}\|_{2^*_\gamma}^{2^*_\gamma-1} \|z_n^m\|_{2^*_\gamma} \right] \\
&\leq R^2 S^{-\frac{1}{2}}\|\nabla_\gamma z_n^m\|_2 \left[\lambda_1^{\frac{1}{1+\eta}} R^{-\frac{N_\gamma}{(2^*_\gamma)'}} \left( \|w_1\underline{u}^{-\eta}\|_{(2^*_\gamma)'} + \|w_2\|_\theta\|\nabla_\gamma v_n\|_2^{r-1} \right) + S^{-\frac{2^*_\gamma-1}{2}} \|\nabla_\gamma z_n^{m-1}\|_2^{2^*_\gamma-1} \right],
\end{aligned}
\end{displaymath}
where $\theta$ is defined in \eqref{def:teta}.

Setting $H\coloneqq\left[ S^{-\frac{1}{2}}\left(\|w_1\underline{u}^{-\eta}\|_{(2^*_\gamma)'} + \|w_2\|_\theta L^{r-1}\right)\right]$, being $\lambda_1 \in(0,\Lambda)$ we get
\begin{equation}
\label{rec}
\|\nabla_\gamma z_n^m\|_2 \leq H \Lambda^{\frac{1}{1+\eta}} R^{ 1- \frac{N_\gamma}{2}} + R^2 S^{-\frac{2^*_\gamma}{2}}\|\nabla_\gamma z_n^{m-1}\|_2^{2^*_\gamma-1}.
\end{equation}
Now we want to apply Lemma \ref{reclemma} to \eqref{rec}. First we estimate, via Lemma \ref{welldefined},
\begin{displaymath}
\|\nabla_\gamma z_n^0\|_2 = \|(\nabla_\gamma u)_R\|_2 = R^{-\frac{N_\gamma}{2}}\|\nabla_\gamma u\|_2 \leq R^{-\frac{N_\gamma}{2}}L.
\end{displaymath}
Hence the first condition in \eqref{smallnessconds} is met provided
\begin{equation}
\label{lsccond1}
\frac{1}{2} \geq R^{2} S^{-\frac{2^*_\gamma}{2}} \left(R^{-\frac{N_\gamma}{2}}L\right)^{2^*_\gamma-2} = S^{-\frac{2^*_\gamma}{2}} \left(\frac{L}{R}\right)^{2^*_\gamma-2}.
\end{equation}
On the other hand, the second condition in \eqref{smallnessconds} fulfilled whenever
\begin{equation}
\label{lsccond2}
2^{1-2^*_\gamma} > R^{2} S^{-\frac{2^*_\gamma}{2}} \left(H \Lambda^{\frac{1}{1+\eta}} R^{1-\frac{N_\gamma}{2}}\right)^{2^*_\gamma-2} = H^{2^*_\gamma-2} \Lambda^{\frac{2^*_\gamma-2}{1+\eta}} S^{-\frac{2^*_\gamma}{2}}.
\end{equation}
Choosing $R=R(N)>0$ sufficiently large and $\Lambda_3=\Lambda_3(N,w_1, w_2,\eta,r,\gamma)>0$ small enough, both conditions \eqref{lsccond1}--\eqref{lsccond2} are fulfilled. By virtue of Lemma \ref{reclemma}, after noticing that all the quantities appearing in \eqref{lsccond1}--\eqref{lsccond2} do not depend on $n$, we conclude that there exists $\hat{L}=\hat{L}(N,R,\Lambda,\gamma)>0$ such that $\|\nabla_\gamma z_n^m\|_2 \leq \hat{L} $ for all $n,m\in\N$, which implies $\|\nabla_\gamma u_n^m\|_2 \leq R^{N_\gamma/2}\hat{L}$ for all $n,m\in\N$.

Now we pass to the weak limit the double sequence $(u_n^m)$ with respect to each index separately: up to sub-sequences, there exists $(u_n),(u^m)\subseteq D_0^\gamma(\R^N)$ such that
\begin{equation}
\label{weaklimits}
\begin{aligned}
u_n^m &\rightharpoonup u^m \quad \mbox{in} \;\;  D_0^\gamma(\R^N) \;\; \mbox{as} \;\; n\to\infty, \quad \forall m\in\N, \\
u_n^m &\rightharpoonup u_n \quad \mbox{in} \;\;  D_0^\gamma(\R^N) \;\; \mbox{as} \;\; m\to\infty, \quad \forall n\in\N.
\end{aligned}
\end{equation}
Letting $n\to\infty$ in the weak formulation of \eqref{recprob}, it turns out that both $u^1$ and $u$ solve the problem
\begin{equation}
\label{limitprob}
\left\{
\begin{alignedat}{2}
-\Delta_\gamma U &= a(z, u(z)) + u(z)^{2^*_\gamma-1} \quad &&\mbox{in} \;\; \R^N, \\
U &\in  D_0^\gamma(\R^N).
\end{alignedat}
\right.
\end{equation}
Since \eqref{limitprob} admits a unique solution by Minty-Browder's theorem, we deduce $u^1=u$. Reasoning inductively on $m\in\N$, it follows that $u^m=u$ for all $m\in\N$. Pick an arbitrary $\rho>0$. Since $ D_0^\gamma(\R^N)\compact L^2(B_\gamma(0,\rho))$, the convergences mentioned in \eqref{weaklimits} are strong in $L^2(B_\gamma(0,\rho))$. Accordingly, the double limit lemma \cite[Proposition A.2.35]{GP} which holds in any metric space, guarantees, up to sub-sequences,
$$ \lim_{n\to\infty} u_n = \lim_{n\to\infty} \lim_{m\to\infty} u_n^m = \lim_{m\to\infty} \lim_{n\to\infty} u_n^m = \lim_{m\to\infty} u^m = u \quad \mbox{in} \;\; L^2(B_\gamma(0,\rho)). $$
In particular, since $\rho$ was arbitrary, a diagonal argument ensures $u_n\to u$ in $\R^N$.

Now we prove that $u_n\in\S(v_n)$ for all $n\in\N$. Letting $m\to\infty$ in the weak formulation of \eqref{recprob} reveals that $u_n$ solves \eqref{varprob} with $v=v_n$, for all $n\in\N$. Reasoning as in Lemma \ref{ccplemma}, boundedness of $(u_n)$ in $ D_0^\gamma(\R^N)$ allows us to assume $|\nabla_\gamma u_n|^2 \rightharpoonup \mu$ and $u_n^{2^*_\gamma} \rightharpoonup \nu$ for some bounded measures $\mu,\nu$. According to Lemmas \ref{thm:concentration}--\ref{bennaoum}, there exist some at most countable set $\A$, a family of points $(z_j)_{j\in \A} \subseteq \R^N$, and two families of numbers $(\mu_j)_{j\in \A},(\nu_j)_{j\in \A}\subseteq(0,+\infty)$ such that
\begin{displaymath}
\begin{split}
\nu=u^{2^*_\gamma}+\sum_{j\in \A} \nu_j \delta_{z_j}, &\quad \mu\geq |\nabla_\gamma u|^2+\sum_{j\in \A} \mu_j \delta_{z_j}, \\
\limsup_{n\to\infty} \int_{\R^N} u_n^{2^*_\gamma} \dz = \int_{\R^N} \, {\rm d}\nu + \nu_\infty, &\quad \limsup_{n\to\infty} \int_{\R^N} |\nabla_\gamma u_n|^2 \dz = \int_{\R^N} \, {\rm d}\mu+ \mu_\infty,
\end{split}
\end{displaymath}
and
\begin{equation}
\label{lsc:measbounds}
S\nu_j^{2/2^*_\gamma} \leq \mu_j \quad \mbox{for all} \;\; j\in \A, \quad S\nu_\infty^{2/2^*_\gamma}\leq \mu_\infty,
\end{equation}
being $\mu_\infty,\nu_\infty$ as in Lemma \ref{bennaoum}. Suppose by contradiction that $J\neq\emptyset$, so that $\mu_j=\nu_j\geq S^{N_\gamma/2}$ for some $j\in \A$, according to \eqref{lsc:measbounds}. A computation analogous to \eqref{PS:concfinal}, jointly with $u\in\S(v)$, ensures that
$$ c > J(u) = J(u)-\frac{1}{2^*_\gamma}\langle J'(u),u \rangle \geq \hat{c} $$
being $\hat{c}$ defined by \eqref{hatc}, which contradicts $c<\hat{c}$. Hence concentration at points cannot occur; as in Lemma \ref{ccplemma}, a similar argument excludes concentration at infinity. We deduce $u_n\to u$ in $L^{2^*_\gamma}(\R^N)$, which is the starting point of the proof of Lemma \ref{Scompact}; thus we infer $u_n\to u$ in $ D_0^\gamma(\R^N)$. In particular, $J_n(u_n)\to J(u)$ as $n\to\infty$, so $J_n(u_n)<c$ for all $n$ sufficiently large, ensuring $u_n\in\S(v_n)$.
\end{proof}

Now we are ready to prove the main result of the present section, that is Theorem \ref{exsol}. 

{\it Proof of Theorem \ref{exsol}.}
Let us consider the following selection of the multi-function $\S$ defined in \eqref{Sdef}:
\begin{equation*}%\label{Tdef}
\T\colon \B\to\B, \quad \T(v) \coloneqq \min \S(v).
\end{equation*}
The function $\T$ is well defined, according to Lemma \ref{selectionlemma}. Suppose we have proved that $\T$ is continuous and compact. Then Schauder's theorem \ref{schauder} applies, and $\T$ possesses a fixed point $u\in D_0^\gamma(\R^N)$. Remark \ref{subcomparisonrmk} guarantees $u\geq \underline u_{\lambda_1}$, so that $u$ solves \eqref{probnotdecay}. 

We only need to prove that $\T$ is continuous and compact.
Let $\A\subseteq \B$ be any bounded set. Since $\T(\A)\subseteq\S(\A)$ and $\S$ is relatively compact from Lemma \ref{Scompact}, then also $\T(\A)$ is relatively compact. 

Let $(w_n)\subseteq \B$ such that $w_n\to w$ in $D_0^\gamma(\R^N)$. For $u_n \coloneqq \T(w_n)$, the compactness of $\T$ implies that, up to a subsequence, $u_n\to u$ in $D_0^\gamma(\R^N)$.
First, we claim that $u\in \S(w)$. Since $u_n$ satisfies \eqref{varprob} with $v=w_n$, then a straightforward application of the Dominated Convergence Theorem shows that $u$ satisfies \eqref{varprob} with $v=w$. Moreover, $u_n\in \S(w_n)$ implies $J(u_n)<c$. Now $J(u)<c$ follows from the convergence $u_n\to u$ in $D_0^\gamma(\R^N)$. The claim is proved.

It remains to prove that $u=\T(w)$. Lemma \ref{lsc} provides a sequence $(v_n)\subseteq D_0^\gamma(\R^N)$ such that $v_n\in \S(w_n)$ and $v_n\to \T(w)$ in $D_0^\gamma(\R^N)$. By the definition of $\T$, then $u_n\coloneqq\T(w_n)\le v_n$. Letting $n\to\infty$, we have
$$\T(w)\le u=\lim_{n\to\infty} u_n \le \lim_{n\to\infty} v_n=\T(w), \qquad  \text{in}\,\, \R^N,$$
where the first inequality holds thanks to the definition of $\T(w)$ and $u\in \S(w)$, giving $u=\T(w)$.
\qed

\section{Regularity and decay of solutions}\label{reg}
From now on we will assume conditions \ref{hypf}-\ref{condw1}. 
We start this section by proving that any weak solution $u$ to \eqref{probnotdecay} belongs to $L^\infty(\R^N)$. We also establish the (local)-H\"{o}lder continuity of the solution $u$.

\begin{thm}
\label{regularity}
Let $\lambda_1, \lambda_2\ge 0$ and $u\in D_0^\gamma(\R^N)$ be a solution to \eqref{probnotdecay}. Then
$$\|u\|_\infty \leq M,$$
for a suitable $M=M(N,w_1, w_2 ,r,\gamma, \lambda_1, \lambda_2)>0$.  Moreover, $u\in C^{0, \tau}_\loc(\R^N)$ for some $\tau\in (0,1].$
\end{thm}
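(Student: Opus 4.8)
The proof splits naturally into the global $L^\infty$-bound and the local H\"older continuity, and both rest on the pointwise lower bound from Remark~\ref{subcomparisonrmk}: any solution $u$ of \eqref{probnotdecay} satisfies $u\ge\underline u_{\lambda_1}\ge C_0>0$ in $\R^N$. Consequently the singular term is in fact bounded, $\lambda_1 w_1 u^{-\eta}\le \lambda_1 C_0^{-\eta}w_1\eqqcolon g_0\in L^1(\R^N)\cap L^\infty(\R^N)$ by \ref{hypf}, so that $u$ is a weak subsolution of
\[
-\Delta_\gamma u\le g_0+\lambda_2 w_2|\nabla_\gamma u|^{r-1}+u^{2^*_\gamma-1}\quad\text{in }\R^N .
\]
Recall also that on $\R^N$ one has $D_0^\gamma(\R^N)=D^\gamma(\R^N)$, so every truncation $(u-k)_+$ lies in $D_0^\gamma(\R^N)$ and is an admissible test function.

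For the $L^\infty$-estimate I would run a De Giorgi iteration on the super-level sets $A_k\coloneqq\{u>k\}$, $k\ge k_0>0$ (note $|A_k|\le k^{-2^*_\gamma}\|u\|_{2^*_\gamma}^{2^*_\gamma}<\infty$). Testing with $v_k\coloneqq(u-k)_+$ and using $\nabla_\gamma u=\nabla_\gamma v_k$ on $A_k$ gives
\[
\|\nabla_\gamma v_k\|_2^2\le\int_{A_k}g_0 v_k\dz+\lambda_2\int_{A_k}w_2|\nabla_\gamma v_k|^{r-1}v_k\dz+\int_{A_k}u^{2^*_\gamma-1}v_k\dz .
\]
The convective term is disposed of by Young's inequality with exponents $\tfrac2{r-1},\tfrac2{3-r}$ — it is exactly here that $r<2$ enters —
\[
\lambda_2 w_2|\nabla_\gamma v_k|^{r-1}v_k\le\tfrac12|\nabla_\gamma v_k|^2+C\,w_2^{p_0}v_k^{p_0},\qquad p_0\coloneqq\tfrac2{3-r}\in(1,2)\subset(1,2^*_\gamma),
\]
and the first summand is reabsorbed on the left (observe $w_2^{p_0}\in L^1\cap L^\infty$). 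For the critical term I would employ a Brezis--Kato splitting: since $u\in L^{2^*_\gamma}(\R^N)$ one has $V\coloneqq u^{2^*_\gamma-2}\in L^{N_\gamma/2}(\R^N)$, hence for any prescribed $\sigma>0$ one writes $V=V_\sigma+V^\sigma$ with $\|V_\sigma\|_{N_\gamma/2}<\sigma$ and $V^\sigma\in L^\infty(\R^N)$; using $u^{2^*_\gamma-1}v_k=V(v_k^2+kv_k)$ on $A_k$, the part carrying $V_\sigma$ is controlled via H\"older and Sobolev by $\sigma S^{-1}\|\nabla_\gamma v_k\|_2^2$ up to lower order, hence reabsorbed, while $V^\sigma$ contributes a bounded coefficient. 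Collecting everything and exploiting $|A_h|\le(h-k)^{-2^*_\gamma}\|v_k\|_{2^*_\gamma}^{2^*_\gamma}$ for $h>k$ yields a recursion of De Giorgi type, $\psi(h)\le C(h-k)^{-\kappa}\psi(k)^{1+\varepsilon}$ with $\varepsilon>0$ for a suitable super-level quantity $\psi$ (alternatively a Moser scheme to which Lemma~\ref{reclemma} applies); Stampacchia's iteration lemma then gives $\psi(k_0+d)=0$, i.e. $\|u\|_\infty\le k_0+d\eqqcolon M$, with $M$ depending only on $N,w_1,w_2,r,\gamma,\lambda_1,\lambda_2$ through the constants above.

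Once $u\in L^\infty(\R^N)$, the right-hand side $f\coloneqq\lambda_1 w_1 u^{-\eta}+\lambda_2 w_2|\nabla_\gamma u|^{r-1}+u^{2^*_\gamma-1}$ of \eqref{probnotdecay} has its singular part bounded (because $u\ge C_0$) and its critical part bounded (because $u\le M$), whereas the convective part lies in $L^{2/(r-1)}_{\loc}(\R^N)$ since $|\nabla_\gamma u|\in L^2_{\loc}$ and $w_2\in L^\infty$; thus $f$ lies in $L^{q}_{\loc}(\R^N)$ for the admissible exponent $q=2/(r-1)>1$. The local H\"older continuity $u\in C^{0,\tau}_{\loc}(\R^N)$ for some $\tau\in(0,1]$ then follows from the non-homogeneous Harnack inequality for $X$-elliptic operators \cite[Theorem 5.5]{gutierrezlanconelli}, exactly as already used in Lemma~\ref{subsol} and in \cite{kogojlanconelli}.

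The main obstacle is the critical term inside the De Giorgi iteration: one cannot simply bound $\int_{A_k}u^{2^*_\gamma-1}v_k\dz$ by Sobolev's inequality and reabsorb, because the coefficient $u^{2^*_\gamma-2}$ sits precisely in the borderline space $L^{N_\gamma/2}(\R^N)$; the argument goes through only because $\|u\|_{2^*_\gamma}$ is \emph{finite} — equivalently $\int_{A_k}u^{2^*_\gamma}\dz\to0$ as $|A_k|\to0$ — which is what permits peeling off an $L^{N_\gamma/2}$-small piece of the coefficient. A secondary, structural difficulty, inherited from the degeneracy of $\Delta_\gamma$ on $\Sigma$ and the absence of Grushin-gradient estimates, is that one cannot bootstrap $|\nabla_\gamma u|$ beyond $L^2$; this caps the integrability of $f$ coming from the convective term and is the reason why only \emph{local} H\"older regularity can be reached here, in contrast with the global $C^{0,\tau}$ available in the uniformly elliptic setting of \cite{BG}.
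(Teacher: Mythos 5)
Your proposal follows the same overall strategy as the paper: a level-set truncation (De Giorgi-type) argument for the global bound, followed by the non-homogeneous Harnack inequality of \cite[Theorem 5.5]{gutierrezlanconelli} together with \cite{kogojlanconelli} for the local H\"older continuity. The implementation of two steps differs. For the critical term the paper does not split the coefficient: it keeps the full term $\|u-k\|_{L^{2^*_\gamma}(\Omega_k)}^{2^*_\gamma}$, arrives after Sobolev at $\|u-k\|_{L^{2^*_\gamma}(\Omega_k)}^2\le C(\|u-k\|_{L^{2^*_\gamma}(\Omega_k)}^{2^*_\gamma}+k^{2^*_\gamma}|\Omega_k|)$, and runs the superlinear recursion of \cite[Lemma 3.2]{CGL} with levels $k_n=M(1-2^{-n})$, the superlinear power being tamed by choosing $M$ so large that $\|u-M/2\|_{L^{2^*_\gamma}(\Omega_{M/2})}$ is small (absolute continuity of the integral, since $u\in L^{2^*_\gamma}(\R^N)$); your Brezis--Kato splitting of $V=u^{2^*_\gamma-2}\in L^{N_\gamma/2}$ achieves exactly the same ``smallness on small sets'' and closes the iteration in an equally standard way, so this is a legitimate variant rather than a gap (note that in both versions $M$ in fact depends on $u$ through $\|u\|_{2^*_\gamma}$ or the splitting threshold). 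For the singular term the paper is slightly cleaner: on $\Omega_k$ with $k>1$ it simply uses $u^{1-\eta}\le u^{2^*_\gamma}$, so no lower bound on $u$ is needed in this proof, whereas you invoke the global bound $u\ge\underline u_{\lambda_1}\ge C_0>0$ of Remark \ref{subcomparisonrmk}; the paper does assert this elsewhere, but since $\underline u_{\lambda_1}\in L^{2^*_\gamma}(\R^N)$ a uniform global lower bound is delicate, and for the H\"older step a local lower bound (which is unproblematic) already suffices, so it is safer to argue as the paper does. Finally, be careful with the claim that $f\in L^{2/(r-1)}_{\loc}$ is ``admissible'' for the Harnack inequality: the non-homogeneous Harnack estimate normally requires an exponent larger than $N_\gamma/2$, and $2/(r-1)$ need not exceed it; the paper is equally terse on this point, so this is not a discrepancy with its proof, but your explicit exponent claim should not be presented as sufficient without checking the hypothesis of \cite[Theorem 5.5]{gutierrezlanconelli}.
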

\begin{proof}
Given any $k>1$, we test \eqref{probnotdecay} with $(u-k)_+$. Since $u>k>1$ in $\Omega_k\coloneqq \{z\in\R^N: \, u(z)>k\}$, besides using \ref{hypf} and Peter-Paul's inequality (Lemma \ref{peterpaul}) we get
\begin{displaymath}
\begin{aligned}
&\|\nabla_\gamma (u-k)\|_{L^2(\Omega_k)}^2 \\
&\leq \lambda_1 \int_{\Omega_k} w_1 u^{1-\eta} \dz +\lambda_2\int_{\Omega_k} w_2 \lvert \nabla_\gamma u\rvert^{r-1} u\dz + \|u\|_{L^{2^*_\gamma}(\Omega_k)}^{2^*_\gamma} \\
&\leq \lambda_1 \|w_1\|_\infty \int_{\Omega_k}  u^{2^*_\gamma} \dz + \lambda_2 \int_{\Omega_k} w_2|\nabla_\gamma u|^{r-1}u \dz + \|u\|_{L^{2^*_\gamma}(\Omega_k)}^{2^*_\gamma} \\
&\leq \lambda_1 \|w_1\|_\infty\|u\|_{L^{2^*_\gamma}(\Omega_k)}^{2^*_\gamma}+  \lambda_2  \eps\|w_2\|_\infty^\theta |\Omega_k| + \lambda_2 \eps\|\nabla_\gamma u\|_{L^2(\Omega_k)}^2 + \lambda_2 C_\eps \|u\|_{L^{2^*_\gamma}(\Omega_k)}^{2^*_\gamma}  +  \|u\|_{L^{2^*_\gamma}(\Omega_k)}^{2^*_\gamma} \\
&\leq \lambda_2 \eps \|\nabla_\gamma (u-k)\|_{L^2(\Omega_k)}^2 + C_\eps\left(\|u\|_{L^{2^*_\gamma}(\Omega_k)}^{2^*_\gamma} + |\Omega_k|\right) \\
&\leq  \lambda_2 \eps \|\nabla_\gamma (u-k)\|_{L^2(\Omega_k)}^2 + C_\eps\left(\|u-k\|_{L^{2^*_\gamma}(\Omega_k)}^{2^*_\gamma}+k^{2^*_\gamma}|\Omega_k| + |\Omega_k|\right),
\end{aligned}
\end{displaymath}
where $\theta$ is defined in \eqref{def:teta} and $C_\varepsilon$ depends on $\lambda_1, \lambda_2$.
Choosing $\eps= \frac{1}{2\lambda_2}$ (when $\lambda_2 =0$ or $\lambda_1=0$ the estimate is trivial) and re-absorbing the term $\|\nabla_\gamma (u-k)\|_{L^2(\Omega_k)}^2$ on the left-hand side, we get
$$ \|\nabla_\gamma (u-k)\|_{L^2(\Omega_k)}^2 \leq C\left(\|u-k\|_{L^{2^*_\gamma}(\Omega_k)}^{2^*_\gamma}+k^{2^*_\gamma}|\Omega_k|\right), $$
for some $C>0$ depending only on $N,w_1, w_2, r,\gamma, \eta,$  $\lambda_1,$ $\lambda_2$. By Sobolev's inequality we obtain
$$ \|u-k\|_{L^{2^*_\gamma}(\Omega_k)}^2 \leq C\left(\|u-k\|_{L^{2^*_\gamma}(\Omega_k)}^{2^*_\gamma}+k^{2^*_\gamma}|\Omega_k|\right), $$
enlarging $C$ if necessary. Let $M>2$ and set $k_n\coloneqq M(1-2^{-n})$ for all $n\in\N$. Repeating verbatim the proof of \cite[Lemma 3.2]{CGL},
we infer that $\|u-k_n\|_{L^{2^*_\gamma}(\Omega_{k_n})} \to 0$ as $n\to\infty$, provided $\|u-M/2\|_{L^{2^*_\gamma}(\Omega_{M/2})}$ is small enough. 
We claim that
\begin{displaymath}
    \lim_{M \to +\infty} \|u-M/2\|_{L^{2^*_\gamma}(\Omega_{M/2})} =0.
\end{displaymath}
Indeed, since $u \in L^{2_\gamma^*}(\R^N)$, we have that, for any $K>0$,
\begin{displaymath}
    K \left\vert \left\{z\,:  u(z)^{2_\gamma^*}>K \right\} \right\vert \leq \Vert u \Vert_{L^{2_\gamma^*}(\R^N)}^{ 2^*_\gamma}.
\end{displaymath}
In particular,
\begin{displaymath}
    \lim_{M \to +\infty} \left\vert \Omega_{M/2} \right\vert =0.
\end{displaymath}
By the absolute continuity of the integral, see \cite[Proposition 16.3]{Secchi}, for any $\varepsilon>0$ we can pick $M$ so large that
\begin{displaymath}
    \int_{\Omega_{M/2}} u^{2_\gamma^*} \dz \leq \varepsilon.
\end{displaymath}
As a consequence,
\begin{displaymath}
\int_{\Omega_{M/2}} \left(u-\frac{M}{2}\right)^{2^*_\gamma} \dz \leq \int_{\Omega_{M/2}} u^{2^*_\gamma} \dz \to 0 \quad \mbox{as $M\to +\infty$}.
\end{displaymath}
Hence, $u\in L^\infty(\R^N)$.

Finally, the (local)-H\"{o}lder continuity of $u$ follows by the non-homogeneous Harnack inequality, \cite[Theorem 5.5]{gutierrezlanconelli}, and the arguments in \cite{kogojlanconelli}.
\end{proof}

Now we are ready to prove the last part to complete the main result of the paper, that is Theorem \ref{mainthm}. In particular, Theorem \ref{exsol} gives the existence of a weak solution to problem \eqref{prob}. Its regularity and decay (for $\lambda_2=0$) are given by Theorem \ref{regularity} above and Theorem \ref{thm:decay} below, respectively.

\begin{thm}
\label{thm:decay}
Let  $\lambda_1>0$ and $u\in D_0^\gamma(\R^N)$ be a solution to
\begin{equation}
\label{problema:decay}
\left\{
\begin{alignedat}{2}
-\Delta_\gamma u &=\lambda_1  w_1 (z) u^{-\eta} + u^{2^*_\gamma-1} \quad &&\mbox{in} \;\; \R^N, \\
u &> 0 \quad &&\mbox{in} \;\; \R^N. \\
\end{alignedat}
\right.
\end{equation}
Then
\begin{displaymath}
u(z)\to 0 \quad \mbox{as} \;\; d(z)\to+\infty.
\end{displaymath}
\end{thm}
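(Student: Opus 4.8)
The strategy is to combine the global $L^\infty$-bound from Theorem~\ref{regularity} with the decay of the pure-singular sub-solution from Lemma~\ref{decay_subsolution}, together with a barrier argument near infinity based on the fundamental solution $\Gamma$ and the weak comparison principle in exterior domains (Lemma~\ref{weakcomp_ext}). The presence of the critical term $u^{2^*_\gamma-1}$ is the main new difficulty: it is not of lower order, so one cannot simply bound it by $Cd(z)^{-\text{something}}$ from the outset. The remedy is to first establish a \emph{qualitative} decay $u(z)\to 0$ as $d(z)\to+\infty$ using a concentration-of-the-tail argument, and only then feed this into the barrier construction.

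\medskip
\textbf{Step 1: Smallness of the critical tail.} Since $u\in D_0^\gamma(\R^N)$, we have $u\in L^{2^*_\gamma}(\R^N)$, hence $\int_{B_\gamma^c(0,R)} u^{2^*_\gamma}\dz\to 0$ as $R\to+\infty$. Fix a large $R_0>0$. On the exterior domain $\Omega\coloneqq B_\gamma(0,R_0)$, one has $u\le v$ on $\partial\Omega$ for a suitable large multiple of $\Gamma$, and a Moser/De~Giorgi iteration \emph{localized in the exterior region} — precisely, the local boundedness theorem for $X$-elliptic operators (Theorem~\ref{locboundthm} in Appendix~\ref{appendix_a}), applied on $d$-annuli $\{R<d(z)<2R\}$ — yields, for every $z$ with $d(z)$ large,
\[
u(z)\le C\Bigl(\|u\|_{L^{2^*_\gamma}(\{d/2<d(\cdot)<2d\})} + \lambda_1\|w_1 u^{-\eta}\|_{L^{q}(\{d/2<d(\cdot)<2d\})}\Bigr),
\]
where the second term is controlled using $u\ge\underline u_{\lambda_1}\ge C>0$ (Remark~\ref{subcomparisonrmk}), $w_1\in L^\infty$, and hypothesis~\ref{condw1}. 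Both terms on the right tend to $0$ as $d(z)\to+\infty$ (the first by the $L^{2^*_\gamma}$-tail vanishing, the second by the decay of $w_1$ and the uniform lower bound on $u$). This establishes $u(z)\to 0$ qualitatively. One must be careful about uniform constants in the scaling of annuli: the scaling \eqref{laplgr} shows that the relevant local-boundedness constants are scale-invariant, so the iteration constant $C$ does not degenerate as $d(z)\to\infty$ — this is precisely the role of the doubling Lemma~\ref{lem_dou} and Lemma~\ref{weaklebboundlemma} in the analogous Euclidean argument of \cite{BG2}.

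\medskip
\textbf{Step 2: Promote to an equation with lower-order right-hand side, then use a barrier.} Once $u(z)\to 0$ as $d(z)\to+\infty$, pick $R_1>R_0$ so large that $u(z)\le 1$ for $d(z)>R_1$; then $u^{2^*_\gamma-1}\le u$ there. Since also $u\ge\underline u_{\lambda_1}\ge C>0$, using \ref{condw1} we estimate on $B_\gamma^c(0,R_1)$,
\[
-\Delta_\gamma u=\lambda_1 w_1 u^{-\eta}+u^{2^*_\gamma-1}\le \lambda_1 C^{-\eta}\|w_1\|_\infty\, d(z)^{-\delta-2\gamma}+u.
\]
However, the term $u$ on the right is still the unknown; to close the argument one instead iterates the barrier: take $\Phi_\sigma=\sigma\Gamma$ (so $-\Delta_\gamma\Phi_\sigma=0$ outside $B_\gamma(0,R_1)$ and $\Phi_\sigma\to 0$), observe that $-\Delta_\gamma u\le \lambda_1 C^{-\eta}\|w_1\|_\infty d(z)^{-\delta-2\gamma}+u$, and compare on the shrinking family of exterior domains. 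A cleaner route, mimicking Lemma~\ref{decay_subsolution} but now exploiting that $u^{2^*_\gamma-1}$ is \emph{summably small} in the tail: write $u\le \psi$ on $B_\gamma^c(0,R_1)$ where $\psi$ solves $-\Delta_\gamma\psi=g$ in $B_\gamma^c(0,R_1)$, $\psi\to 0$ at infinity, with $g\in L^1\cap L^\infty$ of $B_\gamma^c(0,R_1)$ chosen to dominate $\lambda_1 w_1 u^{-\eta}+u^{2^*_\gamma-1}$ — feasible because $w_1 u^{-\eta}\in L^1\cap L^\infty$ of the exterior (Lemma~\ref{subsol}) and $u^{2^*_\gamma-1}\in L^{(2^*_\gamma)'}$ with vanishing tail. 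The Green representation $\psi(z)=C\int \Gamma(z-\xi)g(\xi)\dxi$ then tends to $0$ as $d(z)\to+\infty$ by dominated convergence, since $\Gamma(z-\xi)\to 0$ pointwise and $g\in L^1$. Applying Lemma~\ref{weakcomp_ext} with the threshold chosen so that $u\le\psi$ on $\partial B_\gamma(0,R_1)$ (enlarging by a constant), we conclude $u(z)\le\psi(z)\to 0$.

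\medskip
\textbf{Main obstacle.} The crux is Step~1: obtaining the \emph{quantitative} local $L^\infty$-bound on exterior $d$-annuli with scale-independent constants, so that the vanishing $L^{2^*_\gamma}$-tail actually forces pointwise decay of $u$. This is exactly why the paper imports the doubling Lemma~\ref{lem_dou}, the weak-Lebesgue Lemma~\ref{weaklebboundlemma}, and the local boundedness Theorem~\ref{locboundthm}. The degeneracy of $\Delta_\gamma$ on $\Sigma$ makes one worry about uniformity near the degenerate set at large $d$; but $d$-annuli far from the origin still intersect $\Sigma$, so one genuinely needs the $X$-elliptic local boundedness theory rather than classical uniformly-elliptic estimates — and one needs to check that the structural constants in that theory (doubling of the $X$-control distance, Poincaré) behave well under the anisotropic dilation $\delta_\lambda$, which they do by the homogeneity of $d$ and $N_\gamma$. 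Once Step~1 is in hand, Step~2 is essentially a repetition of the barrier argument already carried out in Lemma~\ref{decay_subsolution}, now with the critical term absorbed into a lower-order, integrable perturbation of the right-hand side.
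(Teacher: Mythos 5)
There is a genuine gap, and it sits exactly where you locate the ``main obstacle'': your Step~1 assumes the uniformity in scale that is, in fact, the whole content of the paper's argument. Theorem~\ref{locboundthm} is a constant-coefficient estimate for $-\Delta_\gamma u\le au+b$, and its constant depends on $a$, $b$ and the radius $\mathcal R$ (equivalently, after the anisotropic dilation, on $\mathcal R^2a$ and $\mathcal R^2b$). If you read the critical term as $u^{2^*_\gamma-1}=Vu$ with $V=u^{2^*_\gamma-2}\le\|u\|_\infty^{2^*_\gamma-2}$ and apply the theorem on annuli $\{R<d<2R\}$, the effective zeroth-order coefficient after rescaling is $R^2\|u\|_\infty^{2^*_\gamma-2}\to\infty$, so the constant in your displayed inequality degenerates as $d(z)\to\infty$; the scaling identity \eqref{laplgr} does not rescue this, because it is the amplitude-and-space rescaling $u_R(\xi)=R^{N_\gamma-2}u(R\xi_x,R^{1+\gamma}\xi_y)$ that turns the critical term into $R^{-2}u_R^{2^*_\gamma-1}$, and then $R^{-2}u_R^{2^*_\gamma-2}$ is uniformly bounded only if one \emph{already} knows $u(z)\lesssim d(z)^{(2-N_\gamma)/2}$. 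That preliminary decay is precisely Theorem~\ref{prelimdecay}, proved by the doubling Lemma~\ref{lem_dou} through a blow-up contradiction: the rescaled functions converge to an entire solution of $-\Delta_\gamma\tilde u_\infty=\tilde u_\infty^{2^*_\gamma-1}$, which forces $\|\nabla_\gamma\tilde u_\infty\|_2\ge S^{N_\gamma/4}$ and contradicts the small exterior energy $\kappa<S^{N_\gamma/4}$. Your proposal never performs this step (nor proves a Brezis--Kato-type local bound with constants governed only by the smallness of $\|u^{2^*_\gamma-2}\|_{L^{N_\gamma/2}}$, which would be the alternative), and it misattributes the roles of Lemmas~\ref{lem_dou} and~\ref{weaklebboundlemma}: the former is the engine of the blow-up, the latter supplies a quasi-norm ($\|\cdot\|_{2_{*,\gamma}-1,\infty}$) that is invariant under the rescaling $u_R$, which is what makes the $L^\alpha$-norm of $u_R$ on fixed annuli bounded uniformly in $R$. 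Without these ingredients the vanishing $L^{2^*_\gamma}$-tail alone does not yield pointwise decay.

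A secondary flaw appears in your Step~2: the ``cleaner route'' via $\psi(z)=C\int\Gamma(z-\xi)g(\xi)\,{\rm d}\xi$ is not available here, because the Grushin operator is not translation-invariant in the $x$-variable, so $\Gamma(z-\xi)$ is not a fundamental solution with pole at $\xi$ (the representation formula invoked in Lemma~\ref{subsol} is stated only for poles on the degenerate set $\Sigma$), and the proposed $\psi$ is not a supersolution in general. The paper avoids this entirely: once the preliminary decay is in hand, the sharp bound $u(z)\le C_1(1+d(z)^{N_\gamma-2})^{-1}$ follows from the rescaled equation, Theorem~\ref{locboundthm} on a fixed annulus, and the scale invariance of the weak Lebesgue bound, with the lower bound obtained by comparison with a multiple of $\Gamma$ (which is exact, having pole at the origin) via Lemma~\ref{weakcomp_ext}.
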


\subsection{Preliminary estimates}
Now we give a preliminary lemma on weak Lebesgue spaces; we refer to Section \ref{sec:tools} for the definitions.
 \begin{lemma}
\label{weaklebboundlemma}
Any solution $u$ to \eqref{problema:decay} belongs to $L^{2_{*,\gamma}-1,\infty}(\R^N)$. In particular, $u\in L^r(\R^N)$ for all $r\in(2_{*,\gamma}-1,\infty]$.
\end{lemma}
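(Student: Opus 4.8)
The plan is to exploit the fact that $u$ solves the integral equation associated to the fundamental solution $\Gamma(z) = C\,d(z)^{-(N_\gamma-2)}$ of the Grushin operator, and to estimate the right-hand side $\lambda_1 w_1 u^{-\eta} + u^{2^*_\gamma-1}$ in a way that feeds into a Riesz-potential / weak-Young inequality. First I would recall from Lemma~\ref{subsol} (applied with $w_1$ replaced by $\lambda_1 w_1$) that $\lambda_1 w_1 u^{-\eta} \in L^1(\R^N)\cap L^\infty(\R^N)$, since by Remark~\ref{subcomparisonrmk} we have $u \geq \underline u_{\lambda_1}$ and hence $\lambda_1 w_1 u^{-\eta}\leq \lambda_1 w_1 \underline u_{\lambda_1}^{-\eta}$, which is exactly the quantity controlled in Lemma~\ref{subsol}; moreover $u\in L^\infty(\R^N)\cap D_0^\gamma(\R^N)$ by Theorem~\ref{regularity}, so $u^{2^*_\gamma-1}\in L^{(2^*_\gamma)'}(\R^N)\cap L^\infty(\R^N)$ and in particular $u^{2^*_\gamma-1}$ lies in every $L^p(\R^N)$ with $p\in[(2^*_\gamma)',\infty]$. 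Thus the full right-hand side $g\coloneqq \lambda_1 w_1 u^{-\eta}+u^{2^*_\gamma-1}$ belongs to $L^p(\R^N)$ for every $p$ in the range $[(2^*_\gamma)',\infty]$ (note $(2^*_\gamma)' = \frac{2N_\gamma}{N_\gamma+2} > 1$).

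Next I would use the representation formula from \cite[Sect.~3.1]{dambrosiomitidieripohozaev}, exactly as in the proof of Lemma~\ref{subsol}, to write
\begin{displaymath}
u(z) = C\int_{\R^N} \frac{g(\xi)}{d(z-\xi)^{N_\gamma-2}}\,\mathrm d\xi,
\end{displaymath}
i.e. $u = \Gamma * g$ in the Grushin convolution sense. The kernel $\Gamma$ is homogeneous of degree $-(N_\gamma-2)$ with respect to the anisotropic dilations, and a direct computation with the polar-type coordinates of \cite{abatangelo2024} (or \cite[Eq.~(5.22)]{ambrosio2003}) shows $\Gamma \in L^{\frac{N_\gamma}{N_\gamma-2},\infty}(\R^N)$: indeed $|\{d^{-(N_\gamma-2)}>h\}| = |\{d< h^{-1/(N_\gamma-2)}\}| = c\, h^{-N_\gamma/(N_\gamma-2)}$ because the $d$-ball of radius $\rho$ has Lebesgue measure proportional to $\rho^{N_\gamma}$. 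Then I would invoke the weak Young inequality for convolutions (valid in the quasi-metric measure space $(\R^N, d, dz)$ since balls have the doubling/Ahlfors-regularity property $|B_\gamma(z,\rho)|\simeq \rho^{N_\gamma}$; alternatively one can reduce to a Riesz-potential estimate for the operator with kernel $d(\cdot)^{-(N_\gamma-2)}$): with $g\in L^{(2^*_\gamma)'}(\R^N)$ and $\Gamma\in L^{\frac{N_\gamma}{N_\gamma-2},\infty}(\R^N)$, one gets $u = \Gamma*g \in L^{q,\infty}(\R^N)$ where $\frac{1}{q} = \frac{1}{(2^*_\gamma)'} + \frac{N_\gamma-2}{N_\gamma} - 1 = \frac{N_\gamma+2}{2N_\gamma} + \frac{N_\gamma-2}{N_\gamma} - 1 = \frac{N_\gamma-2}{2N_\gamma} + \frac{1}{N_\gamma} = \frac{N_\gamma-2}{2N_\gamma}+\frac{2}{2N_\gamma}=\frac{N_\gamma}{2N_\gamma} $... let me instead compute directly: the target exponent should be $2_{*,\gamma}-1 = \frac{2(N_\gamma-1)}{N_\gamma-2}-1 = \frac{N_\gamma}{N_\gamma-2}$, so I would choose the source exponent $p$ for $g$ accordingly, using $\frac1q = \frac1p - \frac{2}{N_\gamma}$, i.e. $p$ defined by $\frac{N_\gamma-2}{N_\gamma} = \frac1p - \frac{2}{N_\gamma}$, giving $\frac1p = \frac{N_\gamma-2}{N_\gamma}+\frac{2}{N_\gamma} = 1$, i.e. $p=1$; since $g\in L^1(\R^N)$ (both summands are) the weak Young inequality yields precisely $u\in L^{\frac{N_\gamma}{N_\gamma-2},\infty}(\R^N) = L^{2_{*,\gamma}-1,\infty}(\R^N)$.

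For the last sentence, once $u\in L^{2_{*,\gamma}-1,\infty}(\R^N)$ is established, combine it with $u\in L^\infty(\R^N)$ (Theorem~\ref{regularity}) and $u\in L^{2^*_\gamma}(\R^N)\subseteq L^{2^*_\gamma,\infty}(\R^N)$. Since $2^*_\gamma = \frac{2N_\gamma}{N_\gamma-2} > \frac{N_\gamma}{N_\gamma-2} = 2_{*,\gamma}-1$, interpolation \eqref{interpolation} between $L^{2_{*,\gamma}-1,\infty}$ and $L^\infty$ gives $u\in L^r(\R^N)$ for every $r\in(2_{*,\gamma}-1,\infty)$, and $L^\infty$ membership closes the endpoint $r=\infty$; hence $u\in L^r(\R^N)$ for all $r\in(2_{*,\gamma}-1,\infty]$. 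I expect the main obstacle to be the rigorous justification of the weak Young / Riesz-potential inequality in the anisotropic Grushin geometry, specifically checking that the kernel $d(\cdot)^{-(N_\gamma-2)}$ lies in the right weak-$L^p$ space and that the convolution-type estimate holds — this requires the doubling property of $d$-balls (Lemma~\ref{lem_dou}'s ambient structure, and the fact $|B_\gamma(z,\rho)|\simeq\rho^{N_\gamma}$ from \cite{ambrosio2003,abatangelo2024}) rather than translation invariance, since the Grushin measure is not translation invariant in the $x$-variable; one must either appeal to the abstract Riesz-potential theory on spaces of homogeneous type or verify the layer-cake estimate by hand using the homogeneity of $d$ under $\delta_\lambda$.
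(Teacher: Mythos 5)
There is a genuine gap, and it sits exactly at the point where your argument needs to beat the critical exponent. Your scheme requires the source term $g=\lambda_1 w_1u^{-\eta}+u^{2^*_\gamma-1}$ to lie in $L^1(\R^N)$: the weak Young/Riesz-potential estimate with kernel $\Gamma\in L^{\frac{N_\gamma}{N_\gamma-2},\infty}(\R^N)$ produces $u\in L^{\frac{N_\gamma}{N_\gamma-2},\infty}(\R^N)$ only when the density is integrable. For the singular term this is fine (Lemma \ref{subsol} plus Remark \ref{subcomparisonrmk} give $\lambda_1w_1u^{-\eta}\le \lambda_1 w_1\underline u_{\lambda_1}^{-\eta}\in L^1\cap L^\infty$), but for the critical term your own first paragraph correctly records that the available information $u\in L^{2^*_\gamma}(\R^N)\cap L^\infty(\R^N)$ only yields $u^{2^*_\gamma-1}\in L^p(\R^N)$ for $p\in[(2^*_\gamma)',\infty]$, and $(2^*_\gamma)'>1$. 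The later assertion ``$g\in L^1(\R^N)$ (both summands are)'' is therefore unjustified: $u^{2^*_\gamma-1}\in L^1$ is equivalent to $u\in L^{2^*_\gamma-1}(\R^N)$, and since $2_{*,\gamma}-1<2^*_\gamma-1<2^*_\gamma$ this is part of what the lemma is supposed to prove, not a hypothesis. If you instead run your inequality with the admissible exponent $p=(2^*_\gamma)'$, the output is $\frac1q=\frac{N_\gamma+2}{2N_\gamma}+\frac{N_\gamma-2}{N_\gamma}-1=\frac{1}{2^*_\gamma}$, i.e.\ you recover $u\in L^{2^*_\gamma}$ and nothing more; the bootstrap is stationary at the critical exponent, which is precisely the obstruction a proof of this lemma must overcome, so the approach as written is circular.

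A secondary problem is the representation formula itself: $\Delta_\gamma$ is not translation invariant in the $x$-variable, so $d(z-\xi)^{-(N_\gamma-2)}$ is not the fundamental solution with pole at a generic $\xi$, and the global identity $u=\Gamma*g$ on $\R^N$ is not available; in Lemma \ref{subsol} the paper invokes the formula of \cite{dambrosiomitidieripohozaev} only for a.e.\ $z\in\Sigma$. You would have to replace the convolution identity by genuine potential estimates on a space of homogeneous type, which is nontrivial and still would not cure the circularity above. The paper's proof avoids both issues by a purely variational device: it tests \eqref{problema:decay} with $T_h(u)=\min\{u,h\}$, rewrites the critical terms through the layer-cake formula, uses the lower bound $u\ge C>0$ from Remark \ref{subcomparisonrmk} to absorb the singular term, and exploits the smallness of $\int_{\R^N}T_h(u)^{2^*_\gamma}\dz$ as $h\to0^+$ to reabsorb the critical contribution, arriving at the distribution-function inequality $h^{2^*_\gamma}|\{u>h\}|\le C\bigl(h\int_h^M s^{2^*_\gamma-2}|\{u>s\}|\ds\bigr)^{\frac{N_\gamma}{N_\gamma-2}}$ for small $h$, from which the weak bound follows as in \cite{V}. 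Your final interpolation step (combining $L^{2_{*,\gamma}-1,\infty}$ with $L^\infty$ via \eqref{interpolation}) is fine and matches the paper, but it rests on the unproven weak estimate.
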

\begin{proof}
Take any solution $u$ of \eqref{problema:decay}. Theorem \ref{regularity} ensures that $u\in L^\infty(\R^N)$, so we put $M\coloneqq \|u\|_\infty$. For any $h\in(0,M)$, test \eqref{problema:decay} with $T_h(u)\coloneqq \min\{u,h\}$. Then
\begin{equation}
\label{Thtest}
\int_{\{u\leq h\}} |\nabla_\gamma u|^2 \dz = \int_{\R^N} \lambda_1 w_1 u^{-\eta} T_h(u) \dz + \left[\int_{\{u\leq h\}} u^{2^*_\gamma} \dz + h\int_{\{u>h\}} u^{2^*_\gamma-1} \dz\right].
\end{equation}
Recall that, by Remark \ref{subcomparisonrmk}, $u\ge C>0$ a.e. in $\R^N$, where $C$ is a positive constant. Then we can estimate \eqref{Thtest} as
\begin{equation}
\label{Thtest1b}
\int_{\{u\leq h\}} |\nabla_\gamma u|^2 \dz \le  \lambda_1 C^{-\eta} \int_{\R^N} w_1 T_h(u) \dz + \left[\int_{\{u\leq h\}} u^{2^*_\gamma} \dz + h\int_{\{u>h\}} u^{2^*_\gamma-1} \dz\right].
\end{equation}
Now we estimate the terms into square brackets. It is readily seen that
\begin{equation}
\label{estterm1}
\int_{\{u\leq h\}} u^{2^*_\gamma} \dz = \int_{\R^N} T_h(u)^{2^*_\gamma} \dz - h^{2^*_\gamma}|\{u>h\}|,
\end{equation}
while the layer-cake lemma \cite[Proposition 1.1.4]{Gra} entails
\begin{equation}
\label{estterm2}
\begin{aligned}
\int_{\{u>h\}} u^{2^*_\gamma-1} \dz &= (2^*_\gamma-1)\int_0^M s^{2^*_\gamma-2} |\{u>\max\{s,h\}\}| \ds \\
&= h^{2^*_\gamma-1}|\{u>h\}|+(2^*_\gamma-1)\int_h^M s^{2^*_\gamma-2}|\{u>s\}| \ds.
\end{aligned}
\end{equation}
Thus, plugging \eqref{estterm1}--\eqref{estterm2} into \eqref{Thtest1b} yields
\begin{equation}
\label{Thtest2}
\begin{aligned}
&\int_{\{u\leq h\}} |\nabla_\gamma u|^2 \dz \\
&\le \lambda_1 C^{-\eta}\int_{\R^N} w_1 T_h(u) \dz + \left[\int_{\R^N} T_h(u)^{2^*_\gamma} \dz + (2^*_\gamma-1)h\int_h^M s^{2^*_\gamma-2}|\{u>s\}| \ds\right].
\end{aligned}
\end{equation}
In order to absorb $\int_{\R^N} w_1 T_h(u) \dz$ into $h\int_h^M s^{2^*_\gamma-2}|\{u>s\}| \ds$, we notice that
\begin{displaymath}
\lim_{h\to 0^+} \int_h^M s^{2^*_\gamma-1}|\{u>s\}| \ds = \frac{\|u\|_{2^*_\gamma}^{2^*_\gamma}}{2^*_\gamma}
\end{displaymath}
by the layer-cake lemma, so there exists $\overline{h}>0$ (depending on $u$) such that 
$$\int_h^M s^{2^*_\gamma-1}|\{u>s\}| \ds > \frac{\|u\|_{2^*_\gamma}^{2^*_\gamma}}{2 \cdot 2^*_\gamma} \quad \mbox{for all} \;\; h\in(0,\overline{h}).$$
Hence, exploiting also the H\"{o}lder inequality,
\begin{equation}
\label{estterm3}
\begin{aligned}
h\int_h^M s^{2^*_\gamma-2}|\{u>s\}| \ds &\geq \frac{h}{M} \int_h^M s^{2^*_\gamma-1}|\{u>s\}| \ds > \frac{\|u\|_{2^*_\gamma}^{2^*_\gamma}}{2M2^*_\gamma}\,h \\
&= \frac{\|u\|_{2^*_\gamma}^{2^*_\gamma}}{2M2^*_\gamma\|w_1\|_1}\,\|w_1\|_1 h \geq \frac{\|u\|_{2^*_\gamma}^{2^*_\gamma}}{2M2^*_\gamma\|w_1\|_1}\int_{\R^N} w_1 T_h(u) \dz
\end{aligned}
\end{equation}
for every $h\in(0,\overline{h})$. Putting \eqref{Thtest2} and \eqref{estterm3} together yields
$$ \int_{\{u\leq h\}} |\nabla_\gamma u|^2 \dz \leq C\left[\int_{\R^N} T_h(u)^{2^*_\gamma} \dz + h\int_h^M s^{2^*_\gamma-2}|\{u>s\}| \ds\right] $$
for some $C>0$ depending on $N,w_1,u, \gamma,\eta,\lambda_1$. Consequently, through Sobolev's inequality we obtain
\begin{equation}
\label{Thtest3}
\begin{aligned}
\int_{\R^N} T_h(u)^{2^*_\gamma} \dz &\leq S^{-\frac{N_\gamma}{N_\gamma-2}} \left(\int_{\{u\leq h\}} |\nabla_\gamma u|^2 \dz\right)^{\frac{N_\gamma}{N_\gamma-2}} \\
&\leq C\left[\int_{\R^N} T_h(u)^{2^*_\gamma} \dz + h\int_h^M s^{2^*_\gamma-2}|\{u>s\}| \ds\right]^{\frac{N_\gamma}{N_\gamma-2}} \\
&\leq C\left[\left(\int_{\R^N} T_h(u)^{2^*_\gamma} \dz\right)^{\frac{N_\gamma}{N_\gamma-2}} + \left(h\int_h^M s^{2^*_\gamma-2}|\{u>s\}| \ds\right)^{\frac{N_\gamma}{N_\gamma-2}}\right]
\end{aligned}
\end{equation}
for all $h\in(0,\overline{h})$. Lebesgue's dominated convergence theorem and $u\in L^{2^*_\gamma}(\R^N)$ entail $\int_{\R^N} T_h(u)^{2^*_\gamma} \dz \to 0$ as $h\to 0^+$, so taking a smaller $\overline{h}$ we have
\begin{displaymath}
C\left(\int_{\R^N} T_h(u)^{2^*_\gamma} \dz\right)^{\frac{N_\gamma}{N_\gamma-2}} \leq \frac{1}{2}\int_{\R^N} T_h(u)^{2^*_\gamma} \dz \quad \mbox{for all} \;\; h\in(0,\overline{h}).
\end{displaymath}
Thus, re-absorbing $\int_{\R^N} T_h(u)^{2^*_\gamma} \dz$ to the left, from \eqref{Thtest3} we infer
\begin{displaymath}
\int_{\R^N} T_h(u)^{2^*_\gamma} \dz \leq C\left(h\int_h^M s^{2^*_\gamma-2}|\{u>s\}| \ds\right)^{\frac{N_\gamma}{N_\gamma-2}} \quad \mbox{for all} \;\; h\in (0,\overline{h}).
\end{displaymath}
From \eqref{estterm1}, we deduce
\begin{displaymath}
h^{2^*_\gamma}|\{u>h\}| \leq \int_{\R^N} T_h(u)^{2^*_\gamma} \dz \leq C\left(h\int_h^M s^{2^*_\gamma-2}|\{u>s\}| \ds\right)^{\frac{N_\gamma}{N_\gamma-2}} \quad \mbox{for all} \;\; h\in (0,\overline{h}).
\end{displaymath}
Reasoning as in \cite[p.154]{V}, it turns out that $u\in L^{2_{*,\gamma}-1,\infty}(\R^N)$. Then \eqref{interpolation} and $u\in L^\infty(\R^N)$ ensure $u\in L^r(\R^N)$ for all $r\in(2_{*,\gamma}-1,\infty]$.
\end{proof}

We will also use a local boundedness theorem, Theorem \ref{locboundthm}. We provide its proof in Appendix \ref{appendix_a} to make the paper self-contained and for the reader’s convenience. 

Before proving Theorem \ref{thm:decay}, we preliminarily produce a (non-optimal) decay estimate for solutions to \eqref{problema:decay}. 

\begin{thm}\label{prelimdecay}
Let $\lambda_1>0$. Then, for any $\kappa<S^{N_\gamma/4}$, $\kappa'>0$, and $r>r'>0$, there exists $K_0>0$ (depending on $\kappa,\kappa',r,r',N,\gamma,\lambda_1,\delta$) 
such that
\begin{equation}
\label{decayprel}
u(z)\leq K_0d(z)^{\frac{2-N_\gamma}{2}} \quad \mbox{in} \;\; B_\gamma^c(0,r)
\end{equation}
for every $u\in D^\gamma_0(\R^N)$ solution to \eqref{problema:decay} satisfying
$$ \|\nabla_\gamma u\|_{L^2(B_\gamma^c(0,r'))} \leq \kappa \quad \mbox{and} \quad \|\nabla_\gamma u\|_2 \leq \kappa'. $$
\end{thm}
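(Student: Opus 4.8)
The plan is to obtain \eqref{decayprel} from the scale invariance of the Grushin operator under the dilations $\delta_R(x,y)=(Rx,R^{1+\gamma}y)$ together with the local boundedness result of Theorem~\ref{locboundthm}, exploiting that far from the origin the singular term $\lambda_1 w_1 u^{-\eta}$ is a vanishing perturbation of the critical term $u^{2^*_\gamma-1}$. First I would dispose of the bounded range: setting $R_0:=\max\{r,2r'\}$, Theorem~\ref{regularity} gives $\|u\|_\infty\le M$ with $M$ independent of $u$, hence $u(z)\le M\le MR_0^{(N_\gamma-2)/2}\,d(z)^{(2-N_\gamma)/2}$ whenever $r\le d(z)\le R_0$, which settles \eqref{decayprel} on that set.

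For a point $z_0$ with $R:=d(z_0)\ge R_0$ I would rescale, setting $v_R(z):=R^{(N_\gamma-2)/2}u(\delta_R z)$. Using $d(\delta_R z)=R\,d(z)$, the identity \eqref{laplgr}, and $2^*_\gamma-1=\frac{N_\gamma+2}{N_\gamma-2}$, one checks that $v_R\in D_0^\gamma(\R^N)$ solves
\[
-\Delta_\gamma v_R=g_R+v_R^{2^*_\gamma-1}\quad\text{in}\ \R^N,
\]
where $g_R:=\lambda_1 R^{\beta}\,(w_1\circ\delta_R)\,v_R^{-\eta}$ and $\beta:=\frac{N_\gamma+2+\eta(N_\gamma-2)}{2}$. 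By Remark~\ref{subcomparisonrmk} one has $u\ge C>0$ in $\R^N$, so $v_R\ge R^{(N_\gamma-2)/2}C$ and $v_R^{-\eta}\le C^{-\eta}R^{-\eta(N_\gamma-2)/2}$; combining this with \ref{condw1}, on the fixed annulus $A:=\{z:\,\tfrac12\le d(z)\le 2\}$ and for $R$ large,
\[
0\le g_R(z)\le \lambda_1 c_1 C^{-\eta}\,2^{\,\delta+2\gamma}\,R^{\frac{N_\gamma+2}{2}-\delta-2\gamma}.
\]
Since $\delta>N_\gamma+\eta(N_\gamma-2)>N_\gamma>\frac{N_\gamma+2}{2}$ (recall $N_\gamma\ge 3$), the exponent of $R$ is negative, so $\|g_R\|_{L^\infty(A)}\to 0$ as $R\to\infty$.

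Next I would verify that the relevant norms of $v_R$ on $A$ are controlled by the hypotheses. Because $R\ge R_0\ge 2r'$ we have $A\subseteq B_\gamma^c(0,r'/R)=\delta_{1/R}\big(B_\gamma^c(0,r')\big)$, and since the Dirichlet energy and the $L^{2^*_\gamma}$-norm are invariant under $u\mapsto R^{(N_\gamma-2)/2}(u\circ\delta_R)$,
\[
\|\nabla_\gamma v_R\|_{L^2(A)}\le\|\nabla_\gamma v_R\|_{L^2(B_\gamma^c(0,r'/R))}=\|\nabla_\gamma u\|_{L^2(B_\gamma^c(0,r'))}\le\kappa<S^{N_\gamma/4},
\]
while $\|v_R\|_{L^{2^*_\gamma}(A)}\le\|v_R\|_{L^{2^*_\gamma}(\R^N)}=\|u\|_{L^{2^*_\gamma}(\R^N)}\le S^{-1/2}\kappa'$ by \eqref{eq:sobconst}. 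Thus $v_R$ is a nonnegative subsolution of $-\Delta_\gamma v_R\le\|g_R\|_{L^\infty(A)}+v_R^{2^*_\gamma-1}$ on $A$, with Dirichlet energy on $A$ below the threshold $S^{N_\gamma/4}$ and a uniformly bounded $L^{2^*_\gamma}$-norm. Applying Theorem~\ref{locboundthm} on $A$ then yields a constant $C_\ast>0$ depending on $\kappa,\kappa',N,\gamma,\lambda_1,\delta$ but not on $R$ nor on $u$ (the dependence on $\|g_R\|_{L^\infty(A)}\le 1$ being harmless), such that $\sup_{\{3/4\le d\le 4/3\}}v_R\le C_\ast$; evaluating at $\delta_{1/R}z_0$, which has $d=1$, gives $R^{(N_\gamma-2)/2}u(z_0)\le C_\ast$, i.e.\ $u(z_0)\le C_\ast\, d(z_0)^{(2-N_\gamma)/2}$. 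Taking $K_0:=\max\{C_\ast,\,MR_0^{(N_\gamma-2)/2}\}$ and combining with the bounded range completes the proof.

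I expect the main difficulty to lie in the uniform control of the rescaled lower-order term: proving that $g_R$ genuinely vanishes as $R\to\infty$ uses in an essential way both the sharp exponent condition $\delta>N_\gamma+\eta(N_\gamma-2)$ in \ref{condw1} and the uniform lower bound $u\ge C>0$ from Remark~\ref{subcomparisonrmk}, and one must arrange the rescaling so that the scale-invariant quantity $\|\nabla_\gamma v_R\|_{L^2(A)}$ inherits precisely the smallness threshold $S^{N_\gamma/4}$ demanded by the Brezis--Kato/Moser iteration behind Theorem~\ref{locboundthm}. A further point of care is that the annulus $A$ unavoidably meets the degenerate set $\Sigma=\{x=0\}$, so classical elliptic local boundedness is not available and one must genuinely rely on the $X$-elliptic version provided by Theorem~\ref{locboundthm}.
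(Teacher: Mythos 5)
Your reduction to the rescaled functions $v_R(z)=R^{(N_\gamma-2)/2}u(\delta_R z)$ is fine (the scaling is the energy-invariant one, the computation of $g_R$ and of its decay in $R$ is correct), but the decisive step fails: Theorem~\ref{locboundthm} cannot be applied to $-\Delta_\gamma v_R= g_R+v_R^{2^*_\gamma-1}$ on the annulus $A$. That theorem requires a differential inequality of the form $-\Delta_\gamma v\le av+b$ with \emph{constants} $a,b$, and its constant depends on $a$ and $b$; it is a plain Moser iteration with no small-energy mechanism whatsoever. To put the critical term in that form you would have to write $v_R^{2^*_\gamma-1}=v_R^{2^*_\gamma-2}\,v_R$ and bound the ``potential'' $v_R^{2^*_\gamma-2}$ by a constant uniform in $R$ and in $u$ --- which is exactly the uniform local bound you are trying to prove, so the argument is circular. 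In particular your hypothesis $\kappa<S^{N_\gamma/4}$ is never actually used: nothing in Theorem~\ref{locboundthm} converts smallness of $\|\nabla_\gamma v_R\|_{L^2(A)}$ into an $L^\infty$ bound. What would be needed is an $\varepsilon$-regularity (Brezis--Kato type) lemma for the critical equation in the Grushin setting, which is not established in the paper; and note that the paper's proof of the \emph{optimal} decay applies Theorem~\ref{locboundthm} exactly in the way you propose, but only after Theorem~\ref{prelimdecay} has supplied the missing uniform bound on $u_R^{2^*_\gamma-2}$ --- which underlines that your route presupposes the statement to be proved.

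The paper instead argues by contradiction: if the bound fails along a sequence $(u_n,z_n)$, the doubling lemma (Lemma~\ref{lem_dou}) produces points $\xi_n$ at which one can blow up with scale $\mu_n=u_n(\xi_n)^{-1}$; the rescaled functions are locally uniformly bounded by construction (property \eqref{doubling}), converge (via Harnack/H\"older estimates and weak compactness) to a solution $\tilde u_\infty$ of $-\Delta_\gamma \tilde u_\infty=\tilde u_\infty^{2^*_\gamma-1}$ with $\|\nabla_\gamma\tilde u_\infty\|_2\le\kappa$, and testing this limit equation with $\tilde u_\infty$ shows any nontrivial solution has energy at least $S^{N_\gamma/4}$ --- this is where $\kappa<S^{N_\gamma/4}$ enters --- contradicting $\tilde u_\infty(0)=1$. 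A secondary concern in your write-up: on the bounded range $r\le d(z)\le R_0$ you invoke an $L^\infty$ bound from Theorem~\ref{regularity} ``independent of $u$''; the constant there is obtained via absolute continuity of $\int u^{2^*_\gamma}$, which is $u$-dependent, so uniformity over the whole class of solutions with only $\|\nabla_\gamma u\|_2\le\kappa'$ would need justification (the contradiction/blow-up scheme of the paper avoids this issue altogether).
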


\begin{proof}
Fix $\kappa,\kappa',r,r'$ as in the statement, and set $r''\coloneqq\frac{r+r'}{2}$. Notice that \eqref{decayprel} is ensured by
\begin{equation}\label{cladec}
d(z,B_\gamma(0,r''))u(z)^{\frac{2}{N_\gamma-2}} \leq K' \quad \mbox{for all}\;\; z\in B_\gamma^c(0,r),
\end{equation}
where $K'>0$ is a suitable constant depending on $k,k',r,r',N,\lambda_1,\delta$. 
Indeed, if \eqref{cladec} holds true, then there exists $K''=K''(r,r')>0$ such that $d(z,B_\gamma(0,r''))>K''d(z)$ for all $z\in B_\gamma^c(0,r)$, since
$$ d(z)>r \quad\Rightarrow \quad r''<\frac{r''}{r}\,d(z) \quad\Rightarrow \quad d(z,B_\gamma(0,r''))=d(z)-r''>\left(1-\frac{r''}{r}\right)d(z)\eqqcolon K''d(z),$$
recalling that $r''<r$. Hence, our aim is to prove \eqref{cladec}.

Assume by contradiction that \eqref{cladec} fails, so that there exist $(u_n)\subseteq D^\gamma_0(\R^N)$ and $(z_n)\subseteq B_\gamma^c(0,r)$ such that, for all $n\in\N$, $u_n>0$ in $\R^N$ and
\begin{enumerate}[label=${\rm a_{\arabic*})}$, ref=${\rm a_{\arabic*}}$]
\itemsep0.5em
\item \label{equn}  $-\Delta_\gamma u_n =\lambda_1  w_1 (z) u_n^{-\eta} + u_n^{2^*_\gamma-1}$ in $\R^N$, 
\item \label{extbound} $\|\nabla_\gamma u_n\|_{L^2(B_\gamma^c(0,r'))}\leq \kappa$,
\item \label{sobolevbound} $\|\nabla_\gamma u_n\|_2 \leq \kappa'$,
\item  $d(z_n,B_\gamma(0,r''))u_n(z_n)^{\frac{2}{N_\gamma-2}}>2n$. %\label{a4}
\end{enumerate}
According to Theorem \ref{regularity}, $u_n\in L^\infty(\R^N)$ for all $n\in\N$. Then Lemma \ref{lem_dou} 
(with $k=n$, $M=u_n^{\frac{2}{N_\gamma-2}}$, and $D= \overline{B}_\gamma^c(0, r'')\subseteq {B}_\gamma^c(0, r'') = W$) furnishes $(\xi_n)\subseteq  \overline{B}_\gamma^c(0, r'')$ such that
\begin{enumerate}[label=${\rm b_{\arabic*})}$, ref=${\rm b_{\arabic*}}$]
\itemsep0.5em
\item \label{distance} $d(\xi_n,B_\gamma(0,r''))u_n(\xi_n)^{\frac{2}{N_\gamma-2}}>2n$,
\item  $u_n(z_n)\leq u_n(\xi_n)$, %\label{control}
\item \label{doubling} $u_n(t)\leq 2^{\frac{N_\gamma-2}{2}} u_n(\xi_n)$ for all $t\in B_{\gamma}(\xi_n, n u_n(\xi_n)^{\frac{2}{2-N_\gamma}})$.
\end{enumerate}
For every $n\in\N$ we define $\mu_n\coloneqq u_n(\xi_n)^{-1}$ and
\begin{equation}
\label{blowupwt}
\tilde{u}_n(t) \coloneqq \mu_n u_n(w), \quad w=(w_x,w_y)=(\mu_n^{\frac{2}{N_\gamma-2}}t_x, \mu_n^{\frac{2(1+\gamma)}{N_\gamma-2}}t_y)+\xi_n,\quad  \;\; t=(t_x,t_y)\in\R^N.
\end{equation} 
Note that the variable $w$ depends on $n$, although we omit this dependence in the notation to keep it lighter and more readable.

In particular, since 
$$\frac{\partial \tilde u_n}{\partial (t_x)_i}=\mu_n^{\frac{N_\gamma}{N_\gamma-2}}\frac{\partial u_n}{\partial (w_x)_i},\qquad \frac{\partial \tilde u_n}{\partial (t_y)_j}=\mu_n^{\frac{2\gamma+N_\gamma}{N_\gamma-2}}\frac{\partial u_n}{\partial (w_y)_j}, \qquad i=1, \dots, m \quad\quad j=1, \dots ,\ell$$
so that
\begin{equation}
    \label{eq:gradienti}
    \begin{aligned}\nabla_{\gamma,t}\tilde u_n&=(\nabla_{t_x}\tilde u_n, |t_x|^{\gamma}\nabla_{t_y}\tilde u_n)=(\mu_n^{\frac{N_\gamma}{N_\gamma-2}}\nabla_{w_x} u_n, |w_x|^{\gamma}\mu_n^{\frac{2\gamma+N_\gamma}{N_\gamma-2}}\mu_n^{-\frac{2\gamma}{N_\gamma-2}}\nabla_{w_y} u_n)\\&=(\mu_n^{\frac{N_\gamma}{N_\gamma-2}}\nabla_{w_x} u_n, |w_x|^{\gamma}\mu_n^{\frac{N_\gamma}{N_\gamma-2}}\nabla_{w_y} u_n)=\mu_n^{\frac{N_\gamma}{N_\gamma-2}}\nabla_{\gamma,w} u_n
\end{aligned}
\end{equation}
and
$$
\begin{aligned}
\Delta_{\gamma,t}\tilde u_n&=\Delta_{t_x}\tilde u_n+|t_x|^{2\gamma}\Delta_{t_y}\tilde u_n=\mu_n^{\frac{N_\gamma+2}{N_\gamma-2}}\Delta_{w_x} u_n+|w_x|^{2\gamma}\mu_n^{-\frac{4\gamma}{N_\gamma-2}}\mu_n^{\frac{4\gamma+N_\gamma+2}{N_\gamma-2}}\Delta_{w_y} u_n\\&=\mu_n^{\frac{N_\gamma+2}{N_\gamma-2}}\Delta_{\gamma,w} u_n=\mu_n^{2^*_\gamma-1}\Delta_{\gamma,w} u_n.
\end{aligned}    
$$
Thus, from \eqref{equn} we infer
\begin{equation}
\label{equnblow}
-\Delta_{\gamma,t} \tilde{u}_n(t) = \lambda_1  w_1 (w)\mu_n^{2^*_\gamma-1+\eta} \tilde u_n^{-\eta}(t) + \tilde{u}_n^{2^*_\gamma-1}(t) \quad \mbox{in} \;\; \R^N.
\end{equation}
where $w$ is defined in \eqref{blowupwt}.
Exploiting \eqref{doubling}, along with the definition of $\mu_n$ and \eqref{blowupwt}, yields
\begin{equation}
\label{propblow}
\tilde{u}_n(0)=1 \quad \mbox{and} \quad \tilde{u}_n(t)\leq 2^{\frac{N_\gamma-2}{2}} \quad \mbox{for all }\;\; t\in B_\gamma(0,n).
\end{equation}
We claim that 
\begin{equation}\label{decnonlin}
\lim_{n\to\infty} \mu_n^{2^*_\gamma-1+\eta}f_1 (w)= 0 
\end{equation}
locally uniformly for $t\in\R^N$, where
$$f_1(w)=w_1 (w) \tilde u_n^{-\eta}(t).$$
Reasoning up to sub-sequences, we reduce to two cases:
\begin{enumerate}[label={${\rm (\Roman*)}$},ref={${\rm \Roman*}$}]
\itemsep0.5em
\item \label{vanishing} $\mu_n\to 0$;
\item \label{lowerbound} $\mu_n\geq c$ for all $n\in\N$, being $c>0$ opportune.
\end{enumerate}
In case \eqref{vanishing}, it is readily seen that \eqref{decnonlin} holds true uniformly in $\R^N$, since $f_1\in L^\infty(\R^N)$ since $\tilde u_n>0$, by Remark \ref{subcomparisonrmk}, and $w_1\in L^\infty(\R^N)$. 

Otherwise, if \eqref{lowerbound} occurs, fix any compact $H\subseteq\R^N$. Since $B_\gamma(0,n)\nearrow \R^N$, then $H\subseteq B_\gamma(0,n)$ for all $n\in\N$ sufficiently large, say $n>\nu\in\N$. Thus, \eqref{distance} and $d(\xi_n,B_\gamma(0,r''))<d(\xi_n)$ yield, for all $t\in H$,
$$ d(w)=d((\mu_n^{\frac{2}{N_\gamma-2}}t_x, \mu_n^{\frac{2(1+\gamma)}{N_\gamma-2}}t_y)+\xi_n)\geq d(\xi_n
)-\mu_n^{\frac{2}{N_\gamma-2}}d(t) \geq \mu_n^{\frac{2}{N_\gamma-2}}(2n-d(t
)) \geq \mu_n^{\frac{2}{N_\gamma-2}}n  $$
for all $n>\nu$. This, together with \eqref{propblow} and \ref{condw1}, we get
$$\mu_n^{2^*_\gamma-1+\eta}w_1 (w) \tilde u_n^{-\eta}(t)\le C \mu_n^{2^*_\gamma-1+\eta-\frac{2(\delta+2\gamma)}{N_\gamma-2}}n^{-\delta-2\gamma}\le C c^{2^*_\gamma-1+\eta-\frac{2(\delta+2\gamma)}{N_\gamma-2}}n^{-\delta-2\gamma},$$
 by Remark \ref{subcomparisonrmk}, since $$2^*_\gamma-1+ \eta - 2\frac{\delta + 2\gamma}{N_\gamma-2} \le 2^*_\gamma-1+ \eta - 2 \frac{N_\gamma + \eta(N_\gamma-2) + 2\gamma}{N_\gamma -2 } = -1-\eta - \frac{4\gamma}{N_\gamma-2}<0,$$
by the choice of $\delta$, for all $t\in H$ and $n>\nu$. Letting $n\to\infty$, besides taking into account the arbitrariness on $H$, proves \eqref{decnonlin}, taking into account that $-\delta-2\gamma<0$.

Fix any $z\in\R^N$ and observe that $B_\gamma(z,2) \subseteq B_\gamma(0,n)$ for any $n$ sufficiently large. 
Thus, \eqref{propblow} in \eqref{equnblow} gives
\begin{equation}\label{utildeprob}
0\leq -\Delta_{\gamma,t} \tilde u_n\leq \lambda_1  w_1 (w)\mu_n^{2^*_\gamma-1+\eta} \tilde u_n^{-\eta}(t) + 2^{ \frac{N_\gamma-2}{2} (2^*_\gamma-1)} \quad \mbox{in} \;\; B_\gamma(z,2)
\end{equation}
for all $n$ large enough. 

Thus \eqref{utildeprob}, together with \eqref{decnonlin}, implies that $(\tilde u_n)$ is bounded in $C^{0,\tau}(\overline{B_\gamma}(z,2))$ for some $\tau\in (0,1]$ by the non-homogeneous Harnack inequality, \cite[Theorem 5.5]{gutierrezlanconelli}, and the arguments in \cite{kogojlanconelli}.

In particular, since $z$ is arbitrary, Ascoli-Arzelà's theorem and a diagonal argument ensure
\begin{equation}
\label{c1locconv}
\tilde u_n\to\tilde{u}_\infty \quad \mbox{in} \;\;  C_\loc(\R^N)
\end{equation}
for some $\tilde{u}_\infty\in C_\loc(\R^N)$.
Moreover, \eqref{eq:gradienti} and \eqref{sobolevbound} give
\begin{displaymath} %\label{energybound}
\begin{aligned}
\|\nabla_{\gamma,t} \tilde{u}_n\|_2 &=\int_{\R^N} |\nabla_{\gamma,t} \tilde{u}_n(t)|^2 \dt= \int_{\R^N} \mu_n^{\frac{2N_\gamma}{N_\gamma-2}}|\nabla_{\gamma,w} {u}_n(w)|^2 \dt\\&= \int_{\R^N} |\nabla_{\gamma,w} {u}_n(w)|^2 \,\mathrm{d}w= \|\nabla_{\gamma,w} u_n\|_2 \leq \kappa',
\end{aligned}
\end{displaymath}
so that $(\tilde{u}_n)$ is bounded in $D^{\gamma}_0(\R^N)$ and $\tilde{u}_n \rightharpoonup \tilde{u}_\infty$ in $D^{\gamma}_0(\R^N)$ up to a subsequence. By using the compactness of the embedding $D_0^\gamma(\R^N)\compact L^q_{loc}(\R^N)$ for all $q\in [1,2^*_\gamma)$, we have that $\tilde{u}_n\to \tilde{u}_\infty$ in $L^q_{loc}(\R^N)$ for all $q\in [1,2^*_\gamma)$.
Let $\varphi \in C_c^\infty(\mathbb{R}^N)$, and call $K$ the support of $\varphi$. Since $\nabla_{\gamma,t} \tilde{u}_n$ converges weakly to $\nabla_{\gamma,t} \tilde{u}_\infty$ in $L^2(\R^N)$ we have
\begin{displaymath}
    \int_{\mathbb{R}^N} \nabla_{\gamma,t} \tilde{u}_n \nabla_{\gamma,t} \varphi \dt \to \int_{\mathbb{R}^N} \nabla_{\gamma,t} \tilde{u}_\infty \nabla_{\gamma,t} \varphi \dt.
\end{displaymath}
Moreover, 
\begin{align*}
    \left\vert \int_{\mathbb{R}^N} |\tilde{u}_n|^{2^*_\gamma-1} \varphi \dt - \int_{\mathbb{R}^N} |\tilde{u}_\infty|^{2^*_\gamma-1} \varphi \dt \right\vert &\leq \int_K \left\vert |\tilde{u}_n|^{2^*_\gamma-1} - |\tilde{u}_\infty|^{2^*_\gamma-1} \right \vert |\varphi| \dt \\
    &\leq 2^{2^*_\gamma -2} \sup_{K} \left\vert \tilde{u}_n - \tilde{u}_\infty \right \vert^{2^*_\gamma-1} \Vert \varphi \Vert_{1} \\
    &= o(1) \Vert \varphi \Vert_{1}
\end{align*}
by \eqref{c1locconv}.
It follows from the boundedness of $(\tilde{u}_n)$, \eqref{equnblow} and \eqref{decnonlin} that 
\begin{equation}\label{eqinfty}
 -\Delta_{\gamma,t} \tilde{u}_\infty =\tilde u_\infty^{2^*-1} \quad \mbox{in} \;\; \R^N
\end{equation}
in the weak sense.
Observe that, for any $R>0$, one has
\begin{equation}\label{localenergy}
\|\nabla_{\gamma,t} \tilde{u}_n\|_{L^2(B_\gamma(0,R))} = \|\nabla_{\gamma,w} u_n\|_{L^2(B_{\gamma}(\xi_n, R\mu_n^{\frac{2}{N-2}}))} \quad \mbox{for all} \;\; n\in\N. 
\end{equation}
Now we prove that, for all $n\in\N$ large enough,
\begin{equation}\label{disjoint}
B_{\gamma}(\xi_n, R\mu_n^{\frac{2}{N-2}}) \cap B_\gamma(0,{r'}) = \emptyset.
\end{equation}
To this end, take any $y\in B_\gamma(0,{r'})$ and observe that \eqref{distance} and the fact that $r''>r'$ imply
\begin{displaymath}
\begin{aligned}
\mu_n^{-\frac{2}{N-2}}d(y-\xi_n)&\ge \mu_n^{-\frac{2}{N-2}}d(\xi_n)-\mu_n^{-\frac{2}{N-2}}d(y) = \mu_n^{-\frac{2}{N-2}}d(\xi_n,B_\gamma(0,r''))+\mu_n^{-\frac{2}{N-2}}(r''-d(y))\\
&\geq \mu_n^{-\frac{2}{N-2}}d(\xi_n,B_\gamma(0,r'')) > 2n,
\end{aligned}
\end{displaymath}
exploiting also $\xi_n\in \overline{B}_\gamma^c(0, r'')$ and $y\in B_\gamma(0,r'')$. Therefore, \eqref{disjoint} holds true for all $n>\frac{R}{2}$.

Moreover, from $\tilde{u}_n \rightharpoonup \tilde{u}_\infty$ in $D^{\gamma}_0(\R^N)$, we have $\nabla\tilde{u}_n \rightharpoonup \nabla\tilde{u}_\infty$ in $L^2(\R^N)$, and in $L^2(B_\gamma(0,R))$, giving
$$\|\nabla_{\gamma,t}\tilde{u}_\infty\|_{L^2(B_\gamma(0,R))}\le \liminf_{n\to\infty}\|\nabla_{\gamma,t}\tilde{u}_n\|_{L^2(B_\gamma(0,R))}\leq \liminf_{n\to\infty} \|\nabla_{\gamma,w} u_n\|_{L^2(B_\gamma^c(0,r'))} \leq \kappa $$
where we used \eqref{localenergy}--\eqref{disjoint} and \eqref{extbound} for $n$ large. 
Now we pass to the limit for $R\to\infty$ we end with 
\begin{displaymath}%\label{greendiseq} 
    \|\nabla_{\gamma,t} \tilde{u}_\infty\|_2 \leq \kappa. 
\end{displaymath}
Now, since $ \tilde{u}_\infty$ satisfies \eqref{eqinfty}, then
\begin{equation}
    \label{grad_estim}\|\nabla_\gamma \tilde{u}_\infty\|_2 \geq  S^{N_\gamma/4}.
\end{equation}
In fact, testing \eqref{eqinfty} with $\tilde{u}_\infty$ and using Sobolev's embedding one has
$$ \|\nabla_\gamma \tilde{u}_\infty\|_2^2 = \|\tilde{u}_\infty\|_{2^*_\gamma}^{2^*_\gamma} \leq S^{-2^*_\gamma/2} \|\nabla_\gamma \tilde{u}_\infty\|_2^{2^*}, $$
and we get \eqref{grad_estim} just dividing by $ S^{-2^*_\gamma/2}\|\nabla_\gamma \tilde{u}_\infty\|_2^2>0$.

The hypothesis $\kappa<S^{N_\gamma/4}$ forces $\tilde{u}_\infty=0$, which contradicts $\tilde{u}_\infty(0)=1$, guaranteed by \eqref{propblow} and \eqref{c1locconv}.
\end{proof}

\begin{rmk}
In the classical setting, see \cite{Souplet}, after performing a blow up argument by the Doubling Lemma as in the proof of Theorem \ref{prelimdecay}, only \eqref{vanishing} occurs. In our situation we cannot get rid immediately of the case \eqref{lowerbound} due to the nature of the estimate, see \cite[Remark 4.4]{BG2}.
\end{rmk}

\subsection{Decay}
Finally, we establish an upper and lower bound for the function $u$ solution to \eqref{problema:decay}, which leads to the decay of $u$ as $d(z)\to + \infty$, concluding the proof of Theorem \ref{thm:decay}.
\begin{thm}%\label{mainthm2}
Let $\lambda_1>0$ and $u\in D^\gamma_0(\R^N)$ be a solution to \eqref{problema:decay}. Then there exist $C_0>0$, depending on $N,u,\gamma$, and $C_1>0$, depending on $N,\lambda_1,\delta,u,\gamma$, such that 
\begin{equation}\label{main1}
C_0\left(1+d(z)^{N_\gamma-2}\right)^{-1} \leq u(z)\leq C_1\left(1+d(z)^{N_\gamma-2}\right)^{-1}
\end{equation}
for all $z\in\R^N$.
\end{thm}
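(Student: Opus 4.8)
The plan is to prove the two inequalities in \eqref{main1} separately, in each case splitting $\R^N$ into a fixed large ball $B_\gamma(0,R_0)$ and its complement. On $B_\gamma(0,R_0)$ there is nothing to do: Theorem~\ref{regularity} gives $u\leq\|u\|_\infty$, while Remark~\ref{subcomparisonrmk} gives $u\geq C>0$; choosing $C_1$ large and $C_0$ small, \eqref{main1} holds there.

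For the lower bound on $B_\gamma^c(0,R_0)$ I would argue exactly as in the proof of Lemma~\ref{subsol}. Since the right-hand side of \eqref{problema:decay} is nonnegative, $u$ is a weak super-solution of $-\Delta_\gamma(\cdot)=0$ on $B_\gamma^c(0,R_0)$, while the fundamental solution $\Gamma$ of \eqref{gammadef} solves $-\Delta_\gamma\Gamma=0$ there; since $u$ is continuous and bounded below by a positive constant on the compact set $\partial B_\gamma(0,R_0)$, one can pick $\sigma>0$ with $\sigma\Gamma\leq u$ on $\partial B_\gamma(0,R_0)$, and the weak comparison principle in exterior domains (Lemma~\ref{weakcomp_ext}) then yields $u(z)\geq\sigma\Gamma(z)=\sigma C\,d(z)^{2-N_\gamma}$ for $d(z)>R_0$; this is the left inequality in \eqref{main1} after absorbing constants.

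The upper bound on $B_\gamma^c(0,R_0)$ is the substantial part. First, since $u\in D_0^\gamma(\R^N)$ one has $\|\nabla_\gamma u\|_{L^2(B_\gamma^c(0,r'))}\to 0$ as $r'\to\infty$, so Theorem~\ref{prelimdecay} applies (for some $r'$ depending on $u$) and gives the non-optimal decay $u(z)\leq K_0\,d(z)^{(2-N_\gamma)/2}$ for $d(z)$ large. Next, as in the proof of Lemma~\ref{subsol} (via \cite[Sect.~3.1]{dambrosiomitidieripohozaev}), I would represent $u$ by the Grushin Newtonian potential,
\[
u(z)=C\int_{\R^N}\frac{f(\xi)}{d(z-\xi)^{N_\gamma-2}}\,\mathrm{d}\xi,\qquad f\coloneqq\lambda_1 w_1 u^{-\eta}+u^{2^*_\gamma-1},
\]
which is legitimate because $f\in L^1(\R^N)$: the bound $u\geq C>0$ and $w_1\in L^1(\R^N)$ take care of the singular term, while $u\in L^{2^*_\gamma-1}(\R^N)$ (Lemma~\ref{weaklebboundlemma}, as $2^*_\gamma-1>2_{*,\gamma}-1$) takes care of the critical one. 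Splitting the integral at $\{d(z-\xi)=\eps_0 d(z)\}$ with $\eps_0$ small, the part over $\{d(z-\xi)>\eps_0 d(z)\}$ is $\leq C\,d(z)^{2-N_\gamma}\|f\|_1$. On the complementary part one has $d(\xi)\simeq d(z)$ by the quasi-triangle inequality for $d$, so the singular contribution is $\leq C\,d(z)^{-\delta-2\gamma}\int_{d(z-\xi)<\eps_0 d(z)}d(z-\xi)^{2-N_\gamma}\,\mathrm{d}\xi\leq C\,d(z)^{2-\delta-2\gamma}\leq C\,d(z)^{2-N_\gamma}$, using \ref{condw1} and $\delta+2\gamma>N_\gamma$; for the critical contribution I would write $u^{2^*_\gamma-1}=u^{2^*_\gamma-1-a}u^{a}$ with a fixed $a\in\bigl(\tfrac{2}{N_\gamma-2},\tfrac{4}{N_\gamma-2}\bigr)$, estimate $u(\xi)^{2^*_\gamma-1-a}\leq C\,d(z)^{-\beta(2^*_\gamma-1-a)}$ from the decay $u\leq C\,d(\cdot)^{-\beta}$ known at the current step (initially $\beta=(N_\gamma-2)/2$), and bound $\int_{d(z-\xi)<\eps_0 d(z)}u(\xi)^{a}\,d(z-\xi)^{2-N_\gamma}\,\mathrm{d}\xi$ by H\"older's inequality against the \emph{fixed, global} norm $\|u^{a}\|_{L^m(\R^N)}=\|u\|_{L^{am}(\R^N)}^{a}$ with $m$ slightly above $N_\gamma/2$ — finite since $am>2_{*,\gamma}-1$, again by Lemma~\ref{weaklebboundlemma}. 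This gives $u(z)\leq C\,d(z)^{2-N_\gamma}+C\,d(z)^{-\vartheta\beta+\eps''}$ with $\vartheta\coloneqq 2^*_\gamma-1-a\in(1,2_{*,\gamma}-1)$ and $\eps''$ as small as we wish; iterating this inequality finitely many times forces the decay exponent up to $N_\gamma-2$, i.e.\ the right inequality in \eqref{main1} for $d(z)>R_0$. (The interpolation and embedding of \eqref{interpolation}--\eqref{embedding} ensure the auxiliary norms are finite at each step; the local boundedness Theorem~\ref{locboundthm} could be used instead of the potential estimate to run the same iteration.)

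The main obstacle is precisely this last iteration: the preliminary rate $(2-N_\gamma)/2$ is a \emph{fixed point} of the naive bootstrap, since criticality of $2^*_\gamma-1$ makes $\beta\mapsto\beta(2^*_\gamma-1)-2$ fix it, so simply re-inserting a pointwise decay into the equation never improves it. The way past it is to use the a priori integrability $u\in L^p(\R^N)$ for all $p>2_{*,\gamma}-1$ from Lemma~\ref{weaklebboundlemma} — which is genuinely stronger than what the preliminary decay gives — by peeling off a small power $u^{a}$ and keeping it in a \emph{fixed} Lebesgue norm; this makes the exponent grow by the factor $\vartheta>1$ and reach $N_\gamma-2$ after finitely many steps. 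The remaining points are routine bookkeeping: the quasi-triangle inequality for $d$ (so that $d(\xi)\simeq d(z)$ on the near region), the degeneracy of $\Delta_\gamma$ on $\Sigma$ (avoided here by comparing with $\Gamma$ rather than constructing anisotropic barriers as in Lemma~\ref{decay_subsolution}), and combining the two one-sided bounds with the trivial estimates on $B_\gamma(0,R_0)$.
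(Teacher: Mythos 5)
Your lower bound and your use of the preliminary decay are exactly the paper's steps (comparison with a multiple of $\Gamma$ via Lemma \ref{weakcomp_ext}; $\|\nabla_\gamma u\|_{L^2(B_\gamma^c(0,r'))}$ small for $r'$ large so that Theorem \ref{prelimdecay} applies). The gap is in the core of your upper bound: the representation $u(z)=C\int_{\R^N} f(\xi)\,d(z-\xi)^{2-N_\gamma}\,\mathrm{d}\xi$ for \emph{general} $z$ is not available in the Grushin setting. The operator $\Delta_\gamma$ is invariant under $y$-translations and the anisotropic dilations, but not under $x$-translations; the formula from \cite{dambrosiomitidieripohozaev} invoked in the proof of Lemma \ref{subsol} is stated, and used there, only for poles $z\in\Sigma$, precisely for this reason. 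For a pole $\xi$ with $\xi_x\neq 0$, the function $z\mapsto d(z-\xi)^{2-N_\gamma}$ is not $\Delta_\gamma$-harmonic away from $\xi$, and it is not comparable to the true fundamental solution with pole at $\xi$: the latter is governed by the control distance and the anisotropic volume of the corresponding balls (with a factor of order $(\rho+|x|)^{\gamma\ell}$), so off the degenerate set it behaves genuinely differently from $d(z-\xi)^{2-N_\gamma}$, in general much worse near the pole. Consequently the splitting at $\{d(z-\xi)=\eps_0 d(z)\}$ and the whole ``peel off $u^a$ and iterate'' bootstrap have no kernel estimate to rest on, and your parenthetical remark that Theorem \ref{locboundthm} ``could be used instead to run the same iteration'' is not a proof — it is exactly the missing part.

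The paper's route avoids any Green-function input: one rescales $u_R(\xi)=R^{N_\gamma-2}u(R\xi_x,R^{1+\gamma}\xi_y)$, uses Theorem \ref{prelimdecay} together with \ref{condw1} to turn the critical term into a linear one, so that $-\Delta_\gamma u_R\le a\,u_R+b$ in $B_\gamma^c(0,1)$ with $a,b$ independent of $R\ge r$, applies the local boundedness Theorem \ref{locboundthm} on the fixed annulus $B_\gamma(0,9)\setminus\overline{B_\gamma(0,1)}$, and then exploits the fact that the $L^{2_{*,\gamma}-1,\infty}$ quasi-norm is \emph{invariant} under this scaling (the exponent $N_\gamma-2-N_\gamma/s$ vanishes exactly at $s=2_{*,\gamma}-1$); Lemma \ref{weaklebboundlemma} and \eqref{embedding} then bound $\|u_R\|_{L^\alpha}$ on the annulus uniformly in $R$, and choosing $R=d(z)/5$ yields $u(z)\le Cd(z)^{2-N_\gamma}$ in one stroke, with no iteration and no fixed-point obstruction to overcome. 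If you wish to salvage your potential-theoretic scheme you would need two-sided estimates for the fundamental solution of $\Delta_\gamma$ with arbitrary pole, which are neither in the paper nor elementary; the scale-invariance of the weak Lebesgue norm is the ingredient your sketch is missing and the reason the paper's argument closes.
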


\begin{proof}
Take any $u\in\D_0^\gamma(\R^N)$ solution to \eqref{problema:decay}. 
Set $\kappa\coloneqq\frac{S^{N_\gamma/4}}{2}$. Then there exist $r',\kappa'>0$ (depending on $N,\gamma,u$) such that $\|\nabla_\gamma u\|_{L^2(B_\gamma^c(0,r'))}\leq\kappa$ and $\|\nabla_\gamma u\|_2\leq \kappa'$.
For any $R>0$ and $\xi = (\xi_x,\xi_y)\in\R^N$ we define
\begin{equation}
\label{blowup2}
u_R(\xi) \coloneqq R^{N_\gamma-2}u(R\xi_x, R^{1+\gamma}\xi_y).
\end{equation}
A rescaling argument, together with \ref{condw1}, Remark \ref{subcomparisonrmk} and \eqref{laplgr} yields
\begin{equation}
\label{scaledeq}
\begin{aligned}
-\Delta_\gamma u_R(\xi) &= - R^{N_\gamma} \Delta_\gamma u(R\xi_x, R^{1+\gamma}\xi_y) \\&= R^{N_\gamma}(\lambda_1 w_1( R\xi_x, R^{1+\gamma}\xi_y) u( R\xi_x, R^{1+\gamma}\xi_y)^{-\eta}+ u( R\xi_x, R^{1+\gamma}\xi_y)^{2^*_\gamma-1})\\&\eqqcolon f_R(\xi,u)
 \quad \mbox{in} \;\; \R^N\setminus\{0\}.
\end{aligned}
\end{equation}
where
\begin{equation}\label{stimafr}
\begin{aligned}
f_R(\xi,u)&\leq R^{N^\gamma} C\lambda_1 w_1( R\xi_x, R^{1+\gamma}\xi_y)+ R^{-2}u_R(\xi)^{2^*_\gamma-1} \\
&\le Cc_1\lambda_1 R^{N_\gamma-\delta-2\gamma} d(\xi)^{-\delta-2\gamma}+ R^{-2}u_R(\xi)^{2^*_\gamma-1}.\end{aligned}\end{equation}
Take any $r>r'$ and consider $R\geq r$. Thus, writing $u_R^{2^*_\gamma-1}=u_R^{2^*_\gamma-2}u_R$ and applying Theorem \ref{prelimdecay} to $u$ entails
\begin{equation}\label{stimaurstar}
\begin{aligned}
R^{-2}u_R^{2^*_\gamma-1}&\leq R^{-2}\left(K_0  R^{N_\gamma-2}(d( R\xi_x, R^{1+\gamma}\xi_y))^{\frac{2-N_\gamma}{2}}\right)^{2^*_\gamma-2}u_R \\
&= R^{-2}\left(K_0  R^{\frac{N_\gamma-2}{2}}d(\xi)^{\frac{2-N_\gamma}{2}}\right)^{2^*_\gamma-2}u_R
\\
&= K_0^{2^*_\gamma-2}d(\xi)^{-2}u_R \leq K_0^{2^*_\gamma-2} u_R  \quad \mbox{for all} \;\; \xi\in B_\gamma^c(0,1),
\end{aligned}
\end{equation}
since $B_\gamma^c(0,1)\subseteq B_\gamma^c(0,r/R)$.

Notice that $R^{N_\gamma-\delta-\gamma}d(\xi)^{-\delta-\gamma} \leq r^{N_\gamma-\delta-\gamma}$ for all $\xi\in B_\gamma^c(0,1)$, since $R\ge r$ and $\delta>N_\gamma$ by \ref{condw1}. Then, by inserting \eqref{stimafr}, \eqref{stimaurstar} in \eqref{scaledeq}, we have
\begin{displaymath}
-\Delta_\gamma u_R \leq a u_R + b \quad \mbox{in} \;\; B_\gamma^c(0,1),
\end{displaymath}
being $a\coloneqq K_0^{2^*_\gamma-2}$ and $b\coloneqq Cc_1\lambda_1 r^{N_\gamma-\delta-2\gamma}$. 

Then, by applying Theorem \ref{locboundthm} with $\Omega=B_\gamma(0,9)\setminus \overline{B_\gamma(0,1)}$ we deduce
\begin{equation}
\label{supest}
\|u_R\|_{L^\infty(B_\gamma(0,7)\setminus B_\gamma(0,3))} \leq C \|u_R\|_{L^{\alpha}(B_\gamma(0,9)\setminus B_\gamma(0,1))},
\end{equation}
for some $C>0$ and for every $\alpha>1$ .

Now we want to use \eqref{embedding} with $s=2_{*,\gamma}-1$ and $s-\eps=\alpha >1$, so that 
$$\eps=2_{*,\gamma}-1-\alpha\in (0,2_{*,\gamma}-1)\quad\iff\quad\alpha\in(1, 2_{*,\gamma}-1)=\left(1,\frac{N_\gamma}{N_\gamma-2}\right)$$
in order to have
\begin{equation}
\label{weaklebbound}
\|u_R\|_{L^{1+\eps}(B_\gamma(0,9)\setminus B_\gamma(0,1))} \leq C_\eps\|u_R\|_{L^{2_{*,\gamma}-1,\infty}(B_\gamma(0,9)\setminus B_\gamma(0,1))},
\end{equation}
enlarging $C_\eps$ if necessary.

In order to apply Lemma \ref{weaklebboundlemma} we point out that  $\|\cdot\|_{2_{*,\gamma}-1,\infty}$ is invariant under the scaling \eqref{blowup2}.
In fact, by the change of variables $\tilde \xi_x \mapsto R\xi_x$ and $\tilde \xi_y \mapsto R^{1+\gamma}\xi_y$ we get
\begin{displaymath}
\left| \left\{|u_R|>h \right\} \right| = \left| \left\{ R^{N_\gamma-2}|u(R\xi_x, R^{1+\gamma}\xi_y)|>h \right\} \right| = R^{-N_\gamma} \left| \left\{ R^{N_\gamma-2}|u|>h \right\} \right|.
\end{displaymath}
Now setting $k\coloneqq h R^{2-N_\gamma}$ we have
$$
|| u_R ||_{s,\infty} = \sup_{k>0} \left( k R^{N_\gamma-2} R^{-N_\gamma/s} | \{ |u|>k \} |^{1/s} \right) = R^{N_\gamma-2 - N_\gamma/s  } || u ||_{s,\infty},
$$
and for $s=2_{*,\gamma}-1$ the exponent $N_\gamma-2-N_\gamma/s$ vanishes.

Hence, using \eqref{supest}--\eqref{weaklebbound}
and Lemma \ref{weaklebboundlemma} to \eqref{scaledeq} (up to constants), yield
\begin{equation}
\label{supestfinal}
\|u_R\|_{L^\infty(B_\gamma(0,7)\setminus B_\gamma(0,3))} \leq \hat{C} \quad \mbox{for all} \;\; R\geq r.
\end{equation}
for a suitable $\hat{C}>0$ depending on $N,\gamma,\lambda_1,\delta,u$. 

Finally, for any $z\in B_\gamma^c(0,5r)$, applying \eqref{supestfinal} with $R=\frac{d(z)}{5}$ we obtain
\begin{displaymath}
u(z)\leq C d(z)^{2-N_\gamma} \quad \mbox{for all} \;\; z\in B_\gamma^c(0,5r),
\end{displaymath}
where $C>0$ depends on $N,\gamma,\lambda_1,\delta,r,u$. 
Exploiting Theorem \ref{regularity}, it turns out that
\begin{equation}
\label{finalests}
u(z)\leq C_1\left(1+d(z)^{N_\gamma-2}\right)^{-1} %\quad \mbox{and} \quad |\nabla_\gamma u(x)|\leq C_2\left(1+|x|^{\frac{N-1}{p-1}}\right)^{-1}% \quad \mbox{for all} \;\; x\in\R^N,
\end{equation}
for all $z\in\R^N$, for suitable $C_1>0$ depending on $N, \gamma,\lambda_1,\delta,u$.

Now fix any $r>0$ and set $c\coloneqq\inf_{B_\gamma(0,r)} u$, which is positive by the fact that $u$ is positive a.e. in $\R^N$ and it is continuous by Theorem \ref{regularity}.

Recalling the fundamental solution of the Grushin operator $\Gamma$ defined in \eqref{gammadef}, let $\Gamma_r$ be such that $\Delta_\gamma \Gamma_r=0$ in $\R^N\setminus\{0\}$ with $\Gamma_r(z)=c$ for all $z\in \partial B_\gamma(0,r)$, that is,
\begin{displaymath}
\Gamma_r(z)\coloneqq \frac{c}{r^{{2-N_\gamma}}}\Gamma(z)= c\left(\frac{d(z)}{r}\right)^{{2-N_\gamma}} \quad \mbox{for all} \;\; z\in\R^N\setminus\{0\}.
\end{displaymath}
Then the weak comparison principle in exterior domains,
Lemma \ref{weakcomp_ext} ensures $u\geq \Gamma_r$ in $B_\gamma^c(0,r)$, so there exists $\hat{c}>0$ (depending on $N,r,c,\gamma$) such that
$$ u(z) \geq \hat{c}d(z)^{{2-N_\gamma}} \quad \mbox{for all} \;\; z\in B_\gamma^c(0,r). $$
Hence
\begin{equation}
\label{finalestu}
u(z)\geq C_0\left(1+d(z)^{N_\gamma-2}\right)^{-1} \quad \mbox{for all} \;\; z\in\R^N,
\end{equation}
where $C_0>0$ is an opportune constant depending on $N,u,\gamma$.
Putting \eqref{finalests} and \eqref{finalestu} together entail \eqref{main1}. 
\end{proof}

\section*{Acknowledgments}
\noindent

\noindent
 The authors are member of the {\em Gruppo Nazionale per l'Analisi Ma\-te\-ma\-ti\-ca, la Probabilit\`a e le loro Applicazioni}
 (GNAMPA) of the {\em Istituto Nazionale di Alta Matematica} (INdAM). \\
 Laura Baldelli is partially supported by the
 ``Maria de Maeztu'' Excellence Unit IMAG, reference CEX2020-001105-M, funded by MCIN/AEI/10.13039/501100011033/ and by the INdAM-GNAMPA Project 2025 titled {\em Regolarit\`a ed Esistenza per Operatori Anisotropi} (E5324001950001). \\

\section*{Author statements}
\noindent
{\bf Author contribution.} All authors have accepted responsibility for the entire content of this manuscript and consented to its submission to the journal, reviewed all the results and approved the final version of the manuscript. All authors contributed equally to this work. \\
{\bf Conflict of interest.} The authors state no conflict of interest.

\appendix
\section{}
\label{appendix_a}

Local boundedness results for (sub)solutions to elliptic equations have been developed in general setting, see for instance \cite{gilbargtrudinger,Han,PS}. In this appendix we prove the following result, which extends some ideas of \cite{gutierrezlanconelli} for weak solutions to $X$-elliptic operators.
\begin{thm}
\label{locboundthm}
Let $\Omega\subseteq\R^N$ be a bounded domain and $u\in H^1_\gamma(\R^N)_{\loc}$ be a non-negative solution to
\begin{equation}\label{probloc}
    -\Delta_\gamma u \leq au +b \quad \mbox{in} \;\; \Omega,
\end{equation} 
where $a,b>0$. Then, for all $\alpha>1$, $z\in\Omega$, and $\mathcal R>0$ such that $B_\gamma(z,2\mathcal R)\subseteq\Omega$, one has
\begin{equation*}
    \sup_{B_\gamma(z,\mathcal R)} u \leq C\left[ \mathcal R^{-\frac{N_\gamma}{\alpha}}\|u\|_{L^\alpha(B_\gamma(z,2\mathcal R)}+\mathcal R^{2}b \right],
\end{equation*}
where $C>0$ depends on $N$, $\mathcal R$, $\alpha$, $\gamma$, $a$, $b$.
\end{thm}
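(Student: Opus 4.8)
The plan is to run a De Giorgi–Moser iteration adapted to the $X$-elliptic/Grushin setting, following the scheme of \cite{gutierrezlanconelli}. After a translation and the anisotropic scaling $z\mapsto \delta_{\mathcal R}(z)$ together with $u\mapsto u_{\mathcal R}$ as in \eqref{laplgr}, one reduces to the case $z=0$, $\mathcal R=1$, i.e.\ $B_\gamma(0,2)\subseteq\Omega$, where the equation becomes $-\Delta_\gamma u\le \tilde a u+\tilde b$ with $\tilde a=\mathcal R^2 a$, $\tilde b=\mathcal R^2 b$; the claimed estimate is then just the inverse scaling of the normalized one. So it suffices to prove $\sup_{B_\gamma(0,1)}u\le C(\|u\|_{L^\alpha(B_\gamma(0,2))}+\tilde b)$.

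First I would absorb the constant $\tilde b$: set $v\coloneqq u+\tilde b$ (or $u+C\tilde b$ for a convenient $C$), so that $-\Delta_\gamma v\le \tilde a v$ in $B_\gamma(0,2)$ with $v\ge \tilde b>0$, since $\tilde a\tilde b\ge 0$ and $-\Delta_\gamma(\text{const})=0$. Now fix $\beta\ge 1$ and, for $0<\rho<r\le 2$, a cutoff $\psi\in C_c^\infty(B_\gamma(0,r))$ with $\psi\equiv 1$ on $B_\gamma(0,\rho)$, $0\le\psi\le1$, and $|\nabla_\gamma\psi|\le C/(r-\rho)$ (such Grushin cutoffs exist, e.g.\ $\psi=\chi((z)/(r))$ type constructions; the bound on $|\nabla_\gamma\psi|$ follows from the analogous bound on the ordinary gradient since $|\nabla_\gamma\psi|\le|\nabla\psi|$ on bounded sets). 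Testing the equation with $\varphi=\psi^2 v^{\beta}$ (truncating $v$ at level $k$ to make this admissible, then letting $k\to\infty$ using $u\in L^\alpha$ and Moser's bootstrap to justify integrability at each stage) and using Young's inequality to absorb the cross term $\int \psi v^{\beta}\nabla_\gamma v\cdot\nabla_\gamma\psi$, one obtains the Caccioppoli-type inequality
\[
\int_{B_\gamma(0,\rho)}|\nabla_\gamma(v^{(\beta+1)/2})|^2\,dz\le \frac{C(\beta+1)^2}{(r-\rho)^2}\int_{B_\gamma(0,r)}v^{\beta+1}\,dz + C\tilde a(\beta+1)\int_{B_\gamma(0,r)}v^{\beta+1}\,dz,
\]
where the $\tilde a v$ term on the right was handled using $v\ge\tilde b$ so it is comparable to the gradient-free terms. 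Applying the Sobolev–Gagliardo–Nirenberg inequality on the bounded domain $B_\gamma(0,\rho)$ — which holds for $H^1_\gamma$ by \cite[Proposition 5.5]{abatangelo2024}, since $B_\gamma$-balls satisfy condition (D) — to $w=v^{(\beta+1)/2}\psi$ yields, with $\chi\coloneqq 2^*_\gamma/2=N_\gamma/(N_\gamma-2)>1$,
\[
\left(\int_{B_\gamma(0,\rho)}v^{\chi(\beta+1)}\,dz\right)^{1/\chi}\le \frac{C\,(\beta+1)^2}{(r-\rho)^2}\int_{B_\gamma(0,r)}v^{\beta+1}\,dz.
\]

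Then I would iterate: set $\beta_j+1=\alpha\chi^j$, $r_j=1+2^{-j}$, so $r_j-r_{j+1}=2^{-j-1}$, and let $\Phi_j\coloneqq\|v\|_{L^{\alpha\chi^j}(B_\gamma(0,r_j))}$. The recursion above gives $\Phi_{j+1}\le (C^{1/\chi^j}(\alpha\chi^{j})^{2/(\alpha\chi^j)}4^{(j+1)/(\alpha\chi^j)})\,\Phi_j$, and since $\sum_j \chi^{-j}<\infty$ and $\sum_j (j+1)\chi^{-j}<\infty$, the infinite product of the prefactors converges to a finite constant $C=C(N_\gamma,\alpha,\tilde a)$; hence $\Phi_\infty=\sup_{B_\gamma(0,1)}v\le C\,\Phi_0=C\|v\|_{L^\alpha(B_\gamma(0,2))}$. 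Unwinding $v=u+C\tilde b$ and the scaling recovers exactly the stated bound, with the dependence of $C$ on $N,\mathcal R,\alpha,\gamma,a,b$ coming through $\tilde a=\mathcal R^2 a$ and the scaling exponents $N_\gamma/\alpha$ and the final $\mathcal R^2 b$ term. The main obstacle I anticipate is purely technical: rigorously justifying the test function $\psi^2 v^\beta$ at each iteration step (it need not a priori lie in $H^1_\gamma$ before one knows higher integrability), which is handled in the standard way by first testing with the truncated function $\psi^2\min\{v,k\}^{\beta-1}\,(v-\,(v-k)_+)$ or $\psi^2 v(v_k)^{\beta-1}$ with $v_k=\min\{v,k\}$, deriving a $k$-uniform estimate, and passing $k\to\infty$ by monotone convergence — this requires care but no new ideas. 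A secondary point requiring attention is ensuring the Sobolev constant and the doubling-type geometry of the balls $B_\gamma(0,r_j)$ (all contained in $B_\gamma(0,2)$) give constants independent of $j$; this is exactly why one works inside the fixed ball $B_\gamma(0,2)$ rather than shrinking the domain arbitrarily, and it follows from condition (D) being scale-stable for $B_\gamma$-balls.
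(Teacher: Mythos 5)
Your proposal is correct and follows essentially the same route as the paper's Appendix~\ref{appendix_a} proof: reduce to $\mathcal R=1$ by the anisotropic scaling, absorb the constant by working with $u+b$ (the paper takes $w=u+k$ with $k=b$), derive a Caccioppoli inequality by testing with a cut-off times a truncated power of $w$, apply the Grushin Sobolev inequality, and run the Moser iteration $\beta_j+1=\alpha\chi^j$, $r_j=1+2^{-j}$ with a convergent infinite product of prefactors. The only cosmetic differences are that you rescale at the start rather than the end and write $-\Delta_\gamma v\le \tilde a v$ where the correct inequality is $-\Delta_\gamma v\le(\tilde a+1)v$ (harmless, as your parenthetical already anticipates), so no gap is introduced.
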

We first recall a technical lemma, which is a direct consequence of Young's inequality.

\begin{lemma}[\text{\cite[Lemma 6.2.2]{PS}}]
    \label{lemma_ps}
    Let $\zeta\in\R$ and $\omega_1,\omega_2>0$. If $\zeta^2\le \omega_1 \zeta+\omega_2$, then also
    \begin{displaymath}
         \zeta\le \omega_1 + \sqrt{2\omega_2}.
    \end{displaymath}
\end{lemma}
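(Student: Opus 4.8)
## Proof plan for Lemma~\ref{lemma_ps}

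\textbf{Setup.} The plan is to prove the elementary implication: if $\zeta\in\R$, $\omega_1,\omega_2>0$, and $\zeta^2 \le \omega_1\zeta + \omega_2$, then $\zeta \le \omega_1 + \sqrt{2\omega_2}$. The key observation is that the hypothesis only constrains $\zeta$ from above when $\zeta$ is large and positive; the conclusion is trivially true when $\zeta$ is small. So the real content is a one-variable estimate.

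\textbf{Main step.} First I would dispose of the easy case: if $\zeta \le 0$, then since $\omega_1 + \sqrt{2\omega_2} > 0$, the conclusion holds immediately. So assume $\zeta > 0$. Now apply Young's inequality (in the form $ab \le \tfrac{1}{2}a^2 + \tfrac{1}{2}b^2$) to the product $\omega_1\zeta$: we have $\omega_1\zeta \le \tfrac{1}{2}\zeta^2 + \tfrac{1}{2}\omega_1^2$. Substituting into the hypothesis yields
\[
\zeta^2 \le \tfrac{1}{2}\zeta^2 + \tfrac{1}{2}\omega_1^2 + \omega_2,
\]
hence $\tfrac{1}{2}\zeta^2 \le \tfrac{1}{2}\omega_1^2 + \omega_2$, i.e.\ $\zeta^2 \le \omega_1^2 + 2\omega_2$. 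Taking square roots (legitimate since $\zeta > 0$) and using the subadditivity of the square root, $\sqrt{\omega_1^2 + 2\omega_2} \le \sqrt{\omega_1^2} + \sqrt{2\omega_2} = \omega_1 + \sqrt{2\omega_2}$, gives $\zeta \le \omega_1 + \sqrt{2\omega_2}$, as desired.

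\textbf{Obstacle.} There is essentially no obstacle here; the only care needed is the case split on the sign of $\zeta$ (since squaring is not monotone on all of $\R$) and the correct application of Young's inequality with the right constants so that the $\zeta^2$ term can be absorbed. Everything else is a routine manipulation. Since the lemma is quoted from \cite[Lemma 6.2.2]{PS}, one could alternatively just cite it, but the self-contained argument above is short enough to include for completeness.

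\begin{proof}
If $\zeta\le 0$ the claimed inequality is trivial, since $\omega_1+\sqrt{2\omega_2}>0$. Assume therefore $\zeta>0$. By Young's inequality, $\omega_1\zeta\le \tfrac12\zeta^2+\tfrac12\omega_1^2$, so the hypothesis $\zeta^2\le\omega_1\zeta+\omega_2$ gives
\[
\zeta^2\le \tfrac12\zeta^2+\tfrac12\omega_1^2+\omega_2,
\]
whence $\zeta^2\le \omega_1^2+2\omega_2$. Taking square roots and using $\sqrt{a+b}\le\sqrt a+\sqrt b$ for $a,b\ge 0$, we conclude
\[
\zeta\le \sqrt{\omega_1^2+2\omega_2}\le \omega_1+\sqrt{2\omega_2}. \qedhere
\]
\end{proof}
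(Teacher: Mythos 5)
Your proof is correct, and it follows exactly the route the paper indicates: the paper gives no proof but remarks that the lemma "is a direct consequence of Young's inequality" (citing \cite[Lemma 6.2.2]{PS}), which is precisely your argument of absorbing $\omega_1\zeta\le\tfrac12\zeta^2+\tfrac12\omega_1^2$ and then using subadditivity of the square root. The case split on the sign of $\zeta$ is handled properly, so nothing is missing.
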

\begin{proof}[Proof of Theorem \ref{locboundthm}.]
The proof is an adaptation of the Moser iteration technique for the Grushin setting.

We do not claim any originality, but we provide a proof since we could not find a precise reference. 

Throughout the proof, we will denote by $C$ a generic a constant, which may change from line to line, bout does not depend on the function $u$.

Assume initially $\mathcal{R}=1$.
Let $m$, $q$, $k>0$ be such that $m>k$ and set $w\coloneqq u+k$. If $q\ge 1$ we define
$$\psi(t)\coloneqq \frac{r^2}{q}\begin{cases}
t^q \qquad & \text{if } 0<t<m,\\
qm^{q-1}t-(q-1)m^q\qquad & \text{if } t\ge m.
\end{cases}$$
Otherwise, i.e. if $0<q<1$, 
$$\psi(t)\coloneqq r^2 t^q, \qquad t>0,$$
where, in both cases, $r$ is given by $q-1=2(r-1)$. 

Let $\eta\in C^1(\Omega)$ be nonnegative and vanishing in a neighborhood of $\partial\Omega$. Since $\psi'(t)$ is uniformly bounded when $t\ge k$, then
$$\varphi(z)\coloneqq\eta^2(z)\psi(w(z))$$
can be chosen as a test function in \eqref{probloc}, that is
\begin{equation}
\label{eq:1}
\int_\Omega \nabla_\gamma u \nabla_\gamma\varphi\dz \le a \int_\Omega u\varphi\dz + b \int_\Omega \varphi\dz.    
\end{equation}
Moreover, we have $ u = w - k, \nabla_\gamma w = \nabla_\gamma u$ and
$$
\nabla_\gamma \varphi(z) = \eta^2(z) \psi'(w(z)) \nabla_\gamma w(z) + 2 \eta(z) \psi(z) \nabla_\gamma\eta(z).
$$
We define, for $q \geq 1$,
\[
v(w) \coloneqq 
\begin{cases}
w^r, & \text{if } k \leq w < m, \\
r m^{r-1} w - (r - 1) m^r, & \text{if } w \geq m,
\end{cases}
\]
and simply $v = v(w) \coloneqq w^r$ when $0< q < 1$. For simplicity from now on we only treat the case $q\ge 1$, since the case $0<q<1$ follows from similar ideas, see \cite{PS}. 

We have the following estimates:
\begin{align}
\psi'(w)|\nabla_\gamma w|^2 &= |v'(w)|^2 |\nabla_\gamma w|^2 = |\nabla_\gamma v(w)|^2,  \label{est_1}\\
\psi'(w)w^2 &= [w v'(w)]^2 \leq r^2 v^2(w), \label{est_1'} \\
\psi(w)w &\leq v(w) w v'(w)\leq r v^2(w), \label{est_2} \\
\psi(w)|\nabla_\gamma w| &\leq v(w) |v'(w) \nabla_\gamma w| = v(w) |\nabla_\gamma v(w)|. \label{est_3}
\end{align}
Indeed, if $w<m$, they follow quite easily, while, if $w\ge m$, noting that
\begin{align*}
    \psi'(w)&=r^2m^{q-1}, \qquad 
    \psi(w) w = r^2 m^{q-1} w^2 - \frac{r^2}{q}(q-1) m^qw, \qquad 
    %v(w) & = r m^{r-1}w - (r-1)m^r,\qquad \qquad
    v'(w)  = r m^{r-1},\\
    v(w) w v'(w) & = r^2 m^{2(r-1)}w^2 - r(r-1)m^{2r-1}w = r^2 m^{q-1}w^2 - r(r-1) m^q w,
\end{align*}
then \eqref{est_1} follows easily, while the first inequality in \eqref{est_2} and \eqref{est_3} hold true since $\frac{r^2}{q}(q-1)\ge r(r-1)$. 
Quite simple calculations permit us to prove \eqref{est_1'} by using $w\ge m$.
While, recalling
$$
v^2(w) = m^{q-1}\left[r^2(w-m)^2 + 2rm (w-m) + m^2 \right], 
$$
then the second inequality in \eqref{est_2} is equivalent to require 
\begin{align*}
    r^2 m^{q-1}w^2 - r(r-1) m^q w &\le m^{q-1} r\left[r^2(w-m)^2 + 2rm (w-m) + m^2 \right]\\
    & = m^{q-1} r \left[r^2w^2 + r^2m^2 -2r^2mw +  2rm (w-m) + m^2  \right],
\end{align*}
that is 
$$
w^2(r^2-r) + wm(-2r^2+3r-1) +m^2(r-1)^2\ge 0,
$$
which holds true since $ w\ge m$.

Making some calculations and using \eqref{est_1}, \eqref{est_3} we get
$$\begin{aligned}\int_\Omega &\nabla_\gamma u \nabla_\gamma(\eta^2(z)\psi(w(z)))\dz=\int_\Omega \nabla_\gamma w [2\eta(z) \psi(w(z))\nabla_\gamma\eta(z)+\eta^2(z)\psi'(w(z))\nabla_\gamma w(z)]\dz\\ & =\int_\Omega  2\eta(z) \psi(w(z))\nabla_\gamma\eta(z) \nabla_\gamma w\dz +\int_\Omega \eta^2(z)\psi'(w(z))|\nabla_\gamma w(z)|^2\dz\\ &=\int_\Omega  2\eta(z) \psi(w(z))\nabla_\gamma\eta(z) \nabla_\gamma w\dz+\int_\Omega \eta^2(z)|\nabla_\gamma v(z)|^2\dz\\
& \ge -2\int_\Omega  \eta(z) v(z)|\nabla_\gamma\eta(z)| |\nabla_\gamma v(z)|\dz+\int_\Omega \eta^2(z)|\nabla_\gamma v(z)|^2\dz
\end{aligned}$$
and, recalling that $u=w-k$, we have by \eqref{est_2}
$$\begin{aligned}
a \int_\Omega& u(z)\eta^2(z)\psi(w(z))\dz + b \int_\Omega \eta^2(z)\psi(w(z))\dz \\ &=a \int_\Omega w(z)\eta^2(z)\psi(w(z))\dz + (b - ka) \int_\Omega \eta^2(z)\psi(w(z))\dz\\
&\le a r \int_\Omega \eta^2(z) v(z)^2\dz + (b - ka) \int_\Omega \eta^2(z)\psi(w(z))\dz.
\end{aligned}$$
So that from \eqref{eq:1} we get
\begin{equation}\label{eq:1'}
\begin{aligned}
\int_\Omega \eta^2(z)|\nabla_\gamma v(z)|^2\dz\le & 2\int_\Omega  \eta(z) v(z)|\nabla_\gamma\eta(z)| |\nabla_\gamma v(z)| \dz + a r \int_\Omega \eta^2(z) v(z)^2\dz  \\&\quad + (b - ka) \int_\Omega \eta^2(z)\psi(w(z))\dz.
\end{aligned}
\end{equation}
If $b - ka \leq 0$ we simply neglect the last term. If $b - ka>0$, since  $w = u+k \ge k>0$, the estimate \eqref{est_2} implies
\begin{displaymath}
    \psi(w) \le r \frac{v^2}{w} \le r \frac{v^2}{k},  
\end{displaymath}
which we insert into \eqref{eq:1'} to compare the last two integrals.
So we obtain
\begin{equation}\begin{aligned}
    \label{eq:2}
    \int_\Omega \eta^2(z)|\nabla_\gamma v(z)|^2\dz\le 2\int_\Omega  \eta(z) v(z)&|\nabla_\gamma\eta(z)| |\nabla_\gamma v(z)| \dz \\&+ \max\left\{\frac{b}{k}, a\right\} r \int_\Omega \eta^2(z)v^2(z)\dz.
\end{aligned}\end{equation}
Define
$$
\zeta\coloneqq \frac{\|\eta \nabla_\gamma v \|_{L^2(\Omega)}}{\|\eta v \|_{L^{2^*_\gamma}(\Omega)}},\quad \xi\coloneqq \frac{\| \eta v\|_{L^2(\Omega)}}{\| \eta v \|_{L^{2^*_\gamma}(\Omega)}}, \quad \hat{\xi} \coloneqq \frac{\| v \nabla_\gamma \eta \|_{L^2(\Omega)}}{\| \eta v \|_{L^{2^*_\gamma}(\Omega)}}.
$$
Now, we properly estimate \eqref{eq:2}. In particular, $\int_\Omega \eta^2(z)|\nabla_\gamma v(z)|^2\dz = \|\eta v\|_{L^{2^*_\gamma}(\Omega)}^2 \zeta^2$, 
$$
2\int_\Omega  \eta(z) v(z)|\nabla_\gamma\eta(z)| |\nabla_\gamma v(z)| \dz\le 2 \| v\nabla_\gamma \eta\|_{L^2(\Omega)} \|\eta \nabla_\gamma v \|_{L^2(\Omega)} = 2 \zeta \hat{\xi} \|\eta v\|_{L^{2^*_\gamma}(\Omega)}^2.
$$
Moreover
$$
r \int_\Omega \eta^2(z) v^2(z)\dz = r  \| \eta v\|_{L^2(\Omega)}^{2}=r  \xi^{2}\| \eta v \|_{L^{2^*_\gamma}(\Omega)}^2.
$$
Summarizing and dividing by $\| \eta v\|_{L^{2^*_\gamma}(\Omega)}^2$, \eqref{eq:2} becomes
\begin{equation}
\label{eq:3}
  % \zeta^2 \leq 2\hat{\xi} \zeta + r |\Omega|^{1/\alpha} \xi^{2\varepsilon} \textcolor{red}{\le 2\hat{\xi}\zeta + \left(\frac{1}{r}+1\right) r^2\xi^{2\eps}},
  \zeta^2 \leq 2\hat{\xi} \zeta + \max\left\{\frac{b}{k}, a\right\}r \xi^2  \le 2\hat{\xi}\zeta + \max\left\{\frac{b}{k}, a\right\}\left(1+\frac{1}{r}\right)r^2\xi^2.
\end{equation}
Now, by \eqref{eq:3}, Lemma \ref{lemma_ps}, the trivial estimate 
$\left(2(1/r+1)\right)^{1/2}\le 2 (1/r+1)$
{and together with the choice $k=b$}, we get
\begin{align}
    \label{eq:4}
    \zeta &\le 2\hat{\xi} + \left( 2  \max\left\{1, a\right\}\left(1+\frac{1}{r}\right)r^2\xi^2\right)^{1/2} \nonumber \\
    &\le 2\hat{\xi} + 2\left(1+\frac{1}{r}\right)\sqrt{\max\left\{1, a\right\}} r\xi \le 2C(1+r)(\hat{\xi} + \xi),
\end{align}
where $C$ depends on $a, b$.
The Sobolev inequality implies that 
$$\|\eta v \|_{2^*_\gamma}\le S \| \nabla_\gamma(\eta v) \|_2 \le  S \left( \|v\nabla_\gamma \eta \|_2 + \| \eta \nabla_\gamma v \|_2\right),$$
where $S$ is defined in \eqref{eq:sobconst}. So it follows $1\le S(\zeta + \hat{\xi})$  and \eqref{eq:4} becomes
\begin{displaymath}
    1\le 2S C(1+r)(\hat{\xi} + \xi) + S\hat{\xi},
\end{displaymath}
which implies
\begin{equation}
    \label{eq:5}
    1\le C (1+r) (\xi + \hat{\xi}).
\end{equation}
Recalling the definition of $\xi$ and $\hat{\xi}$, then \eqref{eq:5} gives
\begin{equation}
    \label{eq:6}
    \| \eta v\|_{L^{2^*_\gamma}(\Omega)} \le C  (1+r) (\| \eta v\|_{L^2(\Omega)} +  \| v\nabla_\gamma \eta\|_{L^2(\Omega)} ),
\end{equation}
where $C$ depends on $N,\gamma, a, b$.

Now we introduce some further conditions on the test function $\eta$. Let $h$, $h'$ be such that $1<h'<h<3$,
and set $\eta\equiv 1$ in $B_\gamma(0,h')$, $\eta\equiv 0$ in $\Omega \setminus B_\gamma(0,h)$, with $0\le\eta\le 1$ and $\sup_{B_\gamma(0,h)\setminus B_\gamma(0,h')} |\nabla_\gamma \eta|\le 2/(h-h')$. Then, from \eqref{eq:6} there follows
\begin{equation}
    \label{eq:7}
    \| v\|_{L^{2^*_\gamma}(B_\gamma(0,h'))}\le C \frac{(1+r)}{h-h'} \| v \|_{L^2(B_\gamma(0,h))}.
\end{equation}
For $r\ne 0$ let us define
$$
\Phi(r,h) \coloneqq \left( \int_{B_\gamma(0,h) } w^r \right)^{1/r}.
$$
Suppose $r>0$, $r\ne 2$ and $\overline k \coloneqq 2^*_\gamma/2 = N_\gamma/(N_\gamma-2)$. 
Then \eqref{eq:7} can be rewritten as { (after letting $m\to\infty$, so $v\to w^r$)}
$$
\Phi(2\overline kr, h')\le \left( \frac{C (1+r)}{h-h'} \right)^{1/r} \Phi(2r,h).
$$
Fix $\alpha>1$ as in the statement of the theorem and take successively
$$
r = r_j = \overline{k}^j \cdot \frac{\alpha}{2}, \quad h =  h_j = 1 + 2^{-j}, \quad h' = h_{j+1}, \qquad j=0,1,2,\dots
$$
%$j=0,1,2,\dots$.
We have, since $C(1+r)/(h-h')\le {2\alpha} C (2\overline k)^j$,
$$
\Phi(2r_{j+1},1)\le \Phi(2r_{j+1}, h_{j+1})\le \left[ {2\alpha} C (2\overline k)^j\right]^{\frac{1}{r_j}} \Phi(2r_j, h_j)
$$
and, iterating,
\begin{equation}
    \label{eq;8}
    \Phi(2r_{j+1}, 1)\le \prod_{i=0}^j \left[ {2\alpha} C (2\overline k)^i\right]^{\frac{1}{r_i}} \cdot \Phi(2r_0,h_0) = \prod_{i=0}^j \left[ {2\alpha} C (2\overline k)^i\right]^{\frac{1}{r_i}} \cdot \Phi(\alpha,2).
\end{equation}
In order to let $j$ tend to infinity, observe that
$$
\prod_{i=0}^\infty \left[ {2\alpha} C (2\overline k)^i\right]^{\frac{1}{r_i}} = \left[{2\alpha}C \right]^{ \sum_{i=0}^\infty  \frac{1}{r_i}}\cdot \prod_{i=0}^\infty [(2\overline k)^i]^\frac{1}{r_i},
$$
and the series converges since $\overline k>1$, in particular
$$
\left[{2\alpha} C \right]^{ \sum_{i=0}^\infty  \frac{1}{r_i}} =  \left( 2\alpha C\right)^{\frac{N_\gamma}{\alpha}}.
$$
Otherwise
$$
\prod_{i=0}^\infty [(2\overline k)^i]^{\frac{1}{r_i}} = e^{\log(2\overline k) \sum_{i=0}^\infty \frac{i}{r_i}}<\infty
$$
and so, letting $j\to\infty$ in \eqref{eq;8} we get
\begin{equation}\label{prelbound}
\sup_{B_\gamma(0,1)} w\le C \left(\int_{B_\gamma(0,2)} w^\alpha \dz\right)^{1/\alpha},
\end{equation}
where $C$ depends on $N,\alpha,\gamma,a,b.$ 
Now clearly $\sup_{B_\gamma(0,1)} w = \sup_{B_\gamma(0,1)} u + b$, while
$$
\|w\|_ {L^\alpha(B_\gamma(0,2))} \le \|u\|_ {L^\alpha(B_\gamma(0,2))} + |B_\gamma(0,2)|^{\frac{1}{\alpha}}b , 
$$
so \eqref{prelbound} becomes
\begin{equation}\label{estimR1}
    \sup_{B_\gamma(0,1)} u \le C  \left(\|u\|_{L^\alpha(B_\gamma(0,2))} + b  \right) - b \le C   \left(\|u\|_{L^\alpha(B_\gamma(0,2))} + b  \right),
\end{equation}
where $C$ continues to depend on $N$, $\alpha$, $\gamma,a, b$. This concludes the proof when $\mathcal{R}=1$.

The general case $\mathcal{R}>0$ is obtained by a change of scale:
\begin{displaymath}
    u_{\mathcal R}(\bar z)\coloneqq u(z)=u(\mathcal R \bar x, \mathcal R^{1+\gamma}\bar y),
\end{displaymath}
where $\bar z =(\bar x, \bar y)\in B_\gamma(0,1)$ and $z = (x,y)=(\mathcal R \bar x, \mathcal R^{1+\gamma}\bar y)\in B_\gamma(0,\mathcal R)$.

If $u$ satisfies \eqref{probloc}, then $u_{\mathcal R}$ satisfies
$$-\Delta_\gamma u_{\mathcal R}(\bar z)\le \mathcal R^2 au_{\mathcal R}(\bar z)+\mathcal R^2 b.$$
Applying \eqref{estimR1} to $u_{\mathcal R}$
we get
$$\sup_{B_\gamma(0,\mathcal R)} u  \le C \left(\mathcal R^{-\frac{N_\gamma}{\alpha}}\|u\|_{L^\alpha(B_\gamma(0,2\mathcal R))} +\mathcal R^2 b \right),$$
concluding the proof of the theorem.
\end{proof}

\end{document}